 \theoremstyle{plain}
 \newcommand\dd{\,\mathrm{d}}
\newtheorem{theorem}{Theorem}[section]
\newtheorem{lemma}[theorem]{Lemma}
\theoremstyle{definition}
\newtheorem{definition}[theorem]{Definition}
\newtheorem{corollary}[theorem]{Corollary}
\newtheorem{proposition}[theorem]{Proposition}
\newtheorem{problem}[theorem]{Problem}
\theoremstyle{remark}
\newtheorem{remark}[theorem]{Remark}
\numberwithin{equation}{section}
\begin{document}
	
	\title[]{The structure of fully nonlinear  equations and its applications to prescribed problems  on  complete conformal metrics}

\author{Rirong Yuan}
\address{School of Mathematics, South China University of Technology, Guangzhou 510641, China}
\email{yuanrr@scut.edu.cn}

 \subjclass[2020]{53C18, 35J15, 26D10, 26B35}

\date{}

\dedicatory{}

\begin{abstract}
	
	This paper investigates the structure of fully nonlinear equations and their geometric applications. We solve some fully nonlinear version of the Loewner-Nirenberg and Yamabe problems. Notably, we introduce Morse theory technique to construct admissible metrics 
	 under minimal assumptions on the underlying metric, 
	 which can be further relaxed in a broad setting. Additionally, we provide topological obstructions to show the optimality of our structural conditions.

\end{abstract}

\maketitle

 \tableofcontents

  \section{Introduction}  
  

Let $f$ be a smooth symmetric function defined in an open symmetric convex cone  $\Gamma\subsetneq\mathbb{R}^n$,  with vertex at origin and  nonempty 
boundary 
$\partial \Gamma\neq\emptyset$,  containing the  positive cone 
$$\Gamma_n:= \{(\lambda_1,\cdots,\lambda_n)\in \mathbb{R}^n: \mbox{ each } \lambda_i>0\}\subseteq\Gamma.$$
Moreover, $f$ satisfies the fundamental hypotheses as in the following:
\begin{equation}\label{concave}\begin{aligned}
		\mbox{$f$ is a concave function in $\Gamma$,}
\end{aligned}\end{equation} 
\begin{equation}
	\label{elliptic-weak-3}
	\begin{aligned}
		\lim_{t\to+\infty}f(t\lambda)>-\infty, \mbox{ } \forall\lambda\in\Gamma_n,
	\end{aligned}
\end{equation} 
or equivalently (according to Lemma \ref{coro-4} below)  
\begin{equation}
	\label{elliptic-weak}
	\begin{aligned}
		f_{i}(\lambda):=\frac{\partial f}{\partial \lambda_{i}}(\lambda)
		\geq 0 \mbox{ in } \Gamma, \,\, \forall 1\leq i\leq n.
	\end{aligned}
\end{equation}   
In some case, we may assume
\begin{equation}
	\label{sumfi>0}
	\begin{aligned}
		\sum_{i=1}^n f_i(\lambda)>0 \mbox{ in } \Gamma.
	\end{aligned}
\end{equation}  
Such operators   include   the $k$-Hessian operator  as a special case,  where the corresponding function is   \begin{equation}	f=\sigma_k^{1/k}, \,\, \Gamma=\Gamma_k. \nonumber\end{equation} Here  $\sigma_k$ is the $k$-th elementary symmetric function (denote $\sigma_0\equiv 1$), and $\Gamma_k$ is the  $k$-th G{\aa}rding cone.  

Assume that $(M,g)$ is a smooth connected Riemannian manifold of dimension $n\geq 3$
with Levi-Civita connection $\nabla$. 
Let ${Ric}_g$ and 
${R}_g$ be the Ricci and scalar curvature of $g$, 
respectively. 
Denote the modified Schouten tensor by  (see \cite{Gursky2003Viaclovsky})
\begin{equation}
	\begin{aligned}
		A_{{g}}^{\tau,\alpha}=\frac{\alpha}{n-2} \left({Ric}_{g}-\frac{\tau}{2(n-1)}   {R}_{g}\cdot {g}\right), \,\, \alpha=\pm1,  \mbox{  }\tau \in \mathbb{R}.\nonumber
	\end{aligned}
\end{equation}
When $\tau=\alpha=1$, it is the Schouten tensor 
$$A_g=\frac{1}{n-2} \left({Ric}_g-\frac{1}{2(n-1)}{R_g}\cdot g \right).$$ 
For $\tau=n-1$, $\alpha=1$, it corresponds to the Einstein tensor  
$$G_g={Ric}_g-\frac{1}{2}{R}_g\cdot g,$$ 
which is of special importance  in Einstein's theory of gravitation due to the divergence free. 

Given a symmetric $(0,2)$ tensor $A$, denote $\lambda(g^{-1}A)$ the eigenvalues of $A$ with respect to $g$. 
%
Denote $g_u=e^{2u}g$ for some function $u$. 

This paper is devoted to  looking for 
conformal   metrics with further conditions on  (modified) 
Schouten tensor
\begin{equation}
\label{main-equ0-modifiedSchouten}
	\begin{aligned} 
		f(\lambda(g_u^{-1}A_{g_u}^{\tau,\alpha})) =\psi.
	\end{aligned} 
\end{equation}
In particular, when $\tau=1$ and $\alpha=-1$ it reads as follows
\begin{equation}
	\label{main-equ0-Schouten}
	\begin{aligned} 
		f(\lambda(-g_u^{-1}A_{g_u})) =\psi.
	\end{aligned} 
\end{equation}  

The special case of deforming to constant
 scalar curvature on closed manifolds is 
referred  to as  the
 Yamabe problem, which is  a higher dimensional analogue of uniformization theorem for Riemann surface. The
 Yamabe problem was  proved combining the work of Aubin \cite{Aubin1976}, Schoen \cite{Schoen1984} and Trudinger \cite{Trudinger1968}. 
The $\sigma_k$-Yamabe problem  was initiated by Viaclovsky \cite{Viaclovsky2000} in an attempt to 
 find a conformal metric satisfying
\begin{equation}
	\begin{aligned}
		\sigma_k(\lambda({g_u}^{-1}A_{g_u}))=\mbox{const.} \nonumber
	\end{aligned}
\end{equation}
The $\sigma_2$-Yamabe problem  was   solved by Chang-Gursky-Yang \cite{ChangGurskyYang2002,ChangGurskyYang2002-JAM} in dimension 4 with  significant 
topological applications, and further 
by Ge-Wang \cite{Ge2006Wang} on nonlocally conformally flat manifold of dimension   $n>8$.
When $k> {n}/{2}$, the $\sigma_k$-Yamabe problem was solved by Gursky-Viaclovsky \cite{Gursky2007Viaclovsky}.
The remaining case 
$k\leq {n}/{2}$ 
was settled by Sheng-Trudinger-Wang \cite{ShengTrudingerWang2007} under
an additional
 hypothesis that the problem is variational.
When $(M,g)$ is locally conformally flat, the 
problem is well-understood due to the  works by Li-Li \cite{Li2003YYLi} via elliptic theory,  independently by Guan-Wang \cite{Guan2003Wang-CrelleJ} $(k\neq {n}/{2})$  and Brendle-Viaclovsky \cite{Brendle04Viaclovsky} $(k= {n}/{2})$ using flow method. 

The modified Schouten tensor has drawn significant interest in the mathematical literature, in part because \eqref{main-equ0-modifiedSchouten} can approximate fully nonlinear equations for the Schouten tensor, enabling deeper analysis and applications.
Under the  assumption
\begin{equation}
	\label{tau-alpha-2-sigmak1}
	\begin{cases}
		\tau<1 \,&\mbox{ if } \alpha=-1, \\ 
		\tau>n-1 \,&\mbox{ if } \alpha=1,
	\end{cases}
\end{equation} 
it was considered  in  existing literature since the work of 
\cite{Gursky2003Viaclovsky},   among which 
\cite{LiJY2005Sheng,Sheng2006Zhang,Guan2008IMRN,Gursky-Streets-Warren2011,LiGang-2022,Li2011Sheng,FuShengYuan,Chen-Guo-He2022}, to name just a few, focusing on  $\sigma_k$-curvature equation 
 \begin{equation}
	\label{equation1-sigmak-taualpha}	
	\begin{aligned} 
		\sigma_k^{1/k} (\lambda(g_u^{-1}A_{g_u}^{\tau,\alpha}))= \psi 	\end{aligned} \end{equation} 
and the  Krylov type equation  
 \begin{equation}
	\label{main-flow1-sigmak}  
	\begin{aligned}
		\sigma_{k}(\lambda(g_u^{-1}A_{g_u}^{\tau,\alpha}))=\sum_{j=0}^{k-2} \beta_j(x)   \sigma_j (\lambda(g_u^{-1}A_{g_u}^{\tau,\alpha})) +  \psi(x) \sigma_{k-1} (\lambda(g_u^{-1}A_{g_u}^{\tau,\alpha})).
	\end{aligned} 
\end{equation} 
Their approach relies critically on the assumption \eqref{tau-alpha-2-sigmak1}, which guarantees uniform ellipticity or parabolicity. Unfortunately, this assumption excludes the case 
$\tau=n-1$, corresponding to the Einstein tensor, thus limiting the generality of their results.
 In earlier drafts \cite{yuan2020conformal,yuan-PUE-conformal,yuan-PUE2-note} of this paper, 
 by studying \eqref{main-equ0-Schouten},
  we investigated \eqref{main-equ0-modifiedSchouten} and found the optimal range for $(\tau,\alpha)$, but under the assumption that $f$ is positive, homogeneous  and concave. In this paper, we aim to remove this restriction and further develop structural theorems that build on but go beyond our previous work, introducing new perspectives and techniques.
\subsection{The structure of nonlinear  operators} 

The  operator $f$  has drawn significant interest due to its connection with fully nonlinear equations of the similar form 
\begin{equation} 
	\label{equation-cns} 
	\begin{aligned}
		f(\lambda(D^2 u))=\psi \mbox{  } \mbox{ in } \Omega\subset\mathbb{R}^n.  
	\end{aligned} 
\end{equation}
The study of such equations was initiated by \cite{CNS3}, and significant progress on  subsolutions and a priori estimates has recently been made in  \cite{Guan12a,Gabor,Guo-Phong2024,Guo-Phong2024-2}.
Notice that the eigenvalues of coefficient matrix of the linearlized operator 
 of \eqref{equation-cns} 
at $u$ are   
given by \[f_1(\lambda), \cdots, f_n(\lambda), \mbox{ for } \lambda= \lambda(D^2 u).\] 

In the first part of this paper, we briefly study  
the distribution of $f_1(\lambda), \cdots, f_n(\lambda)$  for  fully nonlinear  equations. To this end, we introduce the notion of a test cone.
\begin{definition}
	\label{def-testcone}
	
	Let $\mathcal{S}$ be a 
	symmetric  subset  of  
	$\{\lambda\in\Gamma: f(\lambda)<\sup_\Gamma f \}.$ 
  Denote  
	\begin{equation} 
		\label{def1-MTC}
		\begin{aligned}\mathcal{C}^{\mathfrak{m}}_{\mathcal{S},f}=\left\{\mu\in\mathbb{R}^n: \sum_{i=1}^n f_i(\lambda)\mu_i\geq 0, \,\, \forall\lambda\in \mathcal{S}\right\}.
		\end{aligned}
	\end{equation}  
 We call ${\mathcal{C}}^{\mathfrak{m}}_{\mathcal{S},f}$ the \textit{maximal test cone} of $(f,\mathcal{S})$.
Denote $\mathring{\mathcal{C}}^{\mathfrak{m}}_{\mathcal{S},f}$  
the interior of $\mathcal{C}^{\mathfrak{m}}_{\mathcal{S},f}$.
	
	In addition, for simplicity
	we say that $\mathcal{C}_{\mathcal{S},f}$ is a	\textit{test cone} for $(f,\mathcal{S})$,  if   $\bar{\Gamma}_n\subseteq\mathcal{C}_{\mathcal{S},f}\subseteq {\mathcal{C}}^{\mathfrak{m}}_{\mathcal{S},f}$, and 
	 $\mathcal{C}_{\mathcal{S},f}$ is a symmetric convex cone with vertex at origin.

\end{definition}

We can verify that the maximal test cone is a closed, symmetric, convex cone with vertex at the origin and containing 
 $\bar{\Gamma}_n$.
  This notion is intrinsically connected to the concept of partial uniform ellipticity, as defined in Definition \ref{def-PUE} below.

 One of  the primary aims of this paper 
 is to determine the maximal test cone in the context of genericity.  
To accomplish this, we first introduce and review relevant notions for the pair 
$(f,\Gamma)$. 
Define  the  {\em asymptotical cone} as follows:
\begin{equation}
	\label{component1}
	\begin{aligned}
		\Gamma_{\mathcal{G}}^{f} =
		\left\{\lambda\in\Gamma: \lim_{t\to +\infty}f(t\lambda)>-\infty\right\}.
	\end{aligned}
\end{equation}
In addition, 
$\mathring{\Gamma}_{\mathcal{G}}^{f}$
denotes the interior of $\Gamma_{\mathcal{G}}^{f}$. This
is an  open symmetric convex cone with vertex at origin. 
 We stress that such a cone is fundamental to the study of the maximal test cone and, consequently, to exploring the fully nonlinear equations. 
   
   For   $\sigma\in (\sup_{\partial\Gamma}f, \sup_\Gamma f)$,  
denote the superlevel set and level set respectively by
\begin{equation}
	\begin{aligned} 
		\Gamma^\sigma=  \{\lambda\in\Gamma: f(\lambda)>\sigma\}, \nonumber
		\,\, 
		\partial\Gamma^\sigma= \{\lambda\in\Gamma: f(\lambda)=\sigma\}. \nonumber 
	\end{aligned} 
\end{equation} 
Moreover,  $\bar{\Gamma}^\sigma\equiv\Gamma^\sigma \cup \partial\Gamma^\sigma$ denotes the closure of $\Gamma^\sigma.$  Henceforth   
$$\sup_{\partial\Gamma}f=\sup_{\lambda^0\in\partial\Gamma}\limsup_{\lambda\to\lambda^0}f(\lambda).$$

 Our first result  may be stated as follows.
\begin{theorem} 
	\label{lemma-Y6} 
	Let $n\geq 2$, and
let $\mathcal{S}$ be  as 
	in Definition \ref{def-testcone}.
	Suppose, in addition to \eqref{concave} and \eqref{elliptic-weak-3}, that $\partial\Gamma^\sigma\subseteq\mathcal{S}$ for some 
	$\sigma\in(\sup_{\partial\Gamma}f, \sup_\Gamma f).$
	Then 
	$\mathcal{C}^{\mathfrak{m}}_{\mathcal{S},f}
	=\bar{\Gamma}_{\mathcal{G}}^{f}.$
	
\end{theorem}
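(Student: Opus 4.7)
The plan is to prove the two inclusions of $\mathcal{C}^{\mathfrak{m}}_{\mathcal{S},f} = \bar{\Gamma}_{\mathcal{G}}^{f}$ separately, each extracted from concavity of $f$. For the easy direction $\bar{\Gamma}_{\mathcal{G}}^{f} \subseteq \mathcal{C}^{\mathfrak{m}}_{\mathcal{S},f}$, I would fix $\mu \in \Gamma_{\mathcal{G}}^{f}$ and any $\lambda \in \mathcal{S} \subseteq \Gamma$, and apply the concavity inequality $f(t\mu) \leq f(\lambda) + \sum_{i=1}^{n} f_i(\lambda)(t\mu_i - \lambda_i)$. If $\sum_i f_i(\lambda)\mu_i$ were strictly negative, the right-hand side would diverge to $-\infty$ as $t \to +\infty$, contradicting $\lim_{t\to+\infty} f(t\mu) > -\infty$. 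Hence $\mu \in \mathcal{C}^{\mathfrak{m}}_{\mathcal{S},f}$, and since the latter is closed, passing to closure gives $\bar{\Gamma}_{\mathcal{G}}^{f} \subseteq \mathcal{C}^{\mathfrak{m}}_{\mathcal{S},f}$.

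For the reverse inclusion, take $\mu \in \mathcal{C}^{\mathfrak{m}}_{\mathcal{S},f}$ and fix any $\lambda_0 \in \Gamma^\sigma$. The first step is to show that $\lambda_0 + s\mu \in \bar{\Gamma}^\sigma$ for every $s \geq 0$, i.e., that the concave function $g(s) = f(\lambda_0 + s\mu)$ stays $\geq \sigma$ on its full ray of definition, and that ray is all of $[0,\infty)$. I would argue by contradiction: suppose either $g(s_1) < \sigma$ at some finite $s_1$, or the ray $\lambda_0 + s\mu$ exits $\Gamma$ at some finite $s^*$, in which case $\limsup_{s\to s^*} g(s) \leq \sup_{\partial\Gamma} f < \sigma$ by the hypothesis on $\sigma$. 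Combined with $g(0) = f(\lambda_0) > \sigma$ and continuity, the intermediate value theorem produces $s_0 > 0$ with $g(s_0) = \sigma$. Since $\lambda_0 + s_0\mu \in \partial\Gamma^\sigma \subseteq \mathcal{S}$, the defining inequality of $\mathcal{C}^{\mathfrak{m}}_{\mathcal{S},f}$ yields $g'(s_0) = \sum_i f_i(\lambda_0 + s_0\mu)\mu_i \geq 0$. But concavity of $g$ combined with $g(0) > g(s_0)$ forces $g'(s_0) \leq (g(s_0) - g(0))/s_0 < 0$, a contradiction. Dividing $\lambda_0 + s\mu \in \bar{\Gamma}$ by $s$ and letting $s \to +\infty$ then also gives $\mu \in \bar{\Gamma}$.

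To upgrade this recession-type conclusion to $\mu \in \bar{\Gamma}_{\mathcal{G}}^{f}$, I would perturb. Fix an auxiliary $\mu_0 \in \Gamma_n \cap \Gamma^\sigma$; this intersection is non-empty because monotonicity \eqref{elliptic-weak} implies $\lambda^* + t(1,\ldots,1) \in \Gamma_n \cap \Gamma^\sigma$ for any $\lambda^* \in \Gamma^\sigma$ and all sufficiently large $t$. Since $\mu_0 \in \Gamma_n$, monotonicity further gives $f(t\mu_0) \geq f(\mu_0) \geq \sigma$ for every $t \geq 1$, so the ray $\{t\mu_0 : t \geq 1\}$ lies in $\Gamma^\sigma$. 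Set $\mu_\epsilon := \mu + \epsilon \mu_0$ for $\epsilon > 0$; this lies in $\Gamma$ since $\mu \in \bar{\Gamma}$ and $\mu_0 \in \Gamma$. Applying the first step with $\lambda_0$ replaced by $t\epsilon\mu_0 \in \Gamma^\sigma$ (valid once $t\epsilon \geq 1$) and $s$ replaced by $t$ yields $t\mu_\epsilon = t\epsilon\mu_0 + t\mu \in \bar{\Gamma}^\sigma$, so $f(t\mu_\epsilon) \geq \sigma$ for all large $t$ and hence $\mu_\epsilon \in \Gamma_{\mathcal{G}}^{f}$. Letting $\epsilon \to 0$ concludes $\mu \in \bar{\Gamma}_{\mathcal{G}}^{f}$. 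The main technical difficulty is precisely this upgrade: a priori the ray $\{t\mu\}$ need not enter $\Gamma^\sigma$ at all, and $\mu$ may lie on $\partial\Gamma$, so the first step by itself does not place $\mu$ in $\Gamma_{\mathcal{G}}^{f}$; the perturbation by an element of $\Gamma_n \cap \Gamma^\sigma$ is what bridges the gap, relying on \eqref{elliptic-weak} to keep positive scalings of $\mu_0$ inside $\Gamma^\sigma$.
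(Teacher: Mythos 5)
Your proof is correct and follows the same blueprint as the paper: the easy inclusion $\bar{\Gamma}_{\mathcal{G}}^{f}\subseteq\mathcal{C}^{\mathfrak{m}}_{\mathcal{S},f}$ is exactly the concavity estimate behind the paper's Lemma~\ref{lemma2-key}, and the hard inclusion combines the same persistence-of-superlevel-set argument (your Step~1 is the content of the paper's Lemma~\ref{lemma-key1-levelset} and Lemma~\ref{lemma-Y3}) with the same perturbation trick (your Step~2, the final step of the paper's Proposition~\ref{lemma-Y4}). The minor differences are cosmetic: you argue directly with $\mathcal{C}^{\mathfrak{m}}_{\mathcal{S},f}$ rather than its interior, supplying the needed strict inequality from the secant bound $g'(s_0)\leq (g(s_0)-g(0))/s_0<0$, and for the perturbation you re-run Step~1 with the $t$-dependent basepoint $t\epsilon\mu_0\in\Gamma^\sigma$ rather than fixing $\lambda^\sigma\in\partial\Gamma^\sigma\cap\Gamma_{\mathcal{G}}^{f}$ and invoking Lemma~\ref{lemma2-key} to compare $f(t(\mu+\epsilon\lambda^\sigma))$ against $f(t\mu+\lambda^\sigma)$ — two dressings of the same idea.
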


 \begin{remark}
 	Note that 
 	$\mathcal{C}^{\mathfrak{m}}_{\mathcal{S},f}	$ may not contained in $\bar{\Gamma}$. A concrete example is when 
 	$\mathcal{S}=\{t\vec{\bf1}:t>0\}$, in which case 
 	$\mathcal{C}^{\mathfrak{m}}_{\mathcal{S},f}	 =\bar{\Gamma}_1.$  Here $\vec{\bf1}=(1,\cdots,1)$ is as in \eqref{def1-vec} below.
 \end{remark}

 The theorem determines the maximal test cone in  generic cases, showing that the closure of $\Gamma_{\mathcal{G}}^{f}$ is exactly 
 the 
  \textit{maximal test cone} whenever $\mathcal{S}$ contains some level set. This includes, among others,  the most  important cases:
  \begin{itemize}
  	\item 
  	$\mathcal{S}=\Gamma$, the global version case;  
  	\item 	$\mathcal{S}=\partial\Gamma^\sigma$, the level set version case. 
  \end{itemize}   
This is new even if $f=\sigma_k^{1/k}$.  
  Note that in Theorem \ref{lemma-Y6}, we do not require the common condition 
\begin{equation}	
	\label{addistruc}	
	\begin{aligned}
		\lim_{t\rightarrow+\infty} f(t\lambda)>f(\mu), \, \,  \forall \lambda,\, \mu\in\Gamma. 
	\end{aligned}
\end{equation}   
 Our results apply to general symmetric functions that satisfy only 
 \eqref{concave} 
 and 
  \eqref{elliptic-weak}.
Hence, in addition to operators satisfying \eqref{addistruc}, they extend to functions such as
\begin{equation}	\label{Guan-Zhang-3}	
	\begin{aligned}
	f= \frac{\sigma_{k}}{\sigma_{k-1}}		-\sum_{j=1}^{k-1} \frac{\alpha_j}{\sigma_j} 	-\sum_{j=0}^{k-2} \frac{\beta_j \sigma_j}{\sigma_k},  \,\, \forall	\alpha_j, \, \beta_j\geq0,\,  	\sum_{j=1}^{k-1}\alpha_j+ \sum_{j=0}^{k-2}  \beta_j >0,
\end{aligned}
\end{equation} 
corresponding to  $\mathring{\Gamma}_{\mathcal{G}}^{f}=\Gamma_{k}$. 
Such a function is concave in $\Gamma_{k-1}$ according to  some result of    \cite{Guan2021Zhang,Guan-Dirichlet}. 
When 
$\alpha_j\equiv0$ for all 
$i$, this is closely linked to the equation \eqref{main-flow1-sigmak}.
 The inclusion $\Gamma_{\mathcal{G}}^{f}\subseteq \mathcal{C}^{\mathfrak{m}}_{\Gamma,f}$ was obtained by \cite{yuan1-closed} in the study of  
 blow-up phenomena for fully nonlinear  equations. This result was achieved without requiring the extra assumption \eqref{addistruc}, thereby extending Sz\'ekelyhidi's \cite{Gabor} $C^{2,\alpha}$-estimate to the optimal setting.
%
%
 As another application, 
Theorem \ref{lemma-Y6} can be employed to explore additional structural properties of 
nonlinear operators, as detailed in Theorems \ref{Y-k+1-4} and \ref{thm2-inequalities2}. This provides an effective framework for 
studying the structure of fully nonlinear elliptic and parabolic equations, including, but not limited to, the prescribed curvature equations \eqref{main-equ0-modifiedSchouten} and \eqref{main-equ0-Schouten} associated with conformal deformations of (modified) Schouten tensors. As a special case of Theorem \ref{Y-k+1-4}, we derive
\begin{theorem} 
	\label{Y-k+1-4-coro} 
	In the presence of 
	\eqref{concave}, \eqref{elliptic-weak-3} and \eqref{sumfi>0},
 the following statements are mutually equivalent.
	\begin{enumerate}
		\item    $(0,\cdots,0,1)\in \mathring{\Gamma}_{\mathcal{G}}^{f}$, 
		i.e.  $\mathring{\Gamma}_{\mathcal{G}}^{f}$ is 
		of type 2 in the sense of \cite{CNS3}. 
		
		\item     $f$ is of fully uniform ellipticity in $\Gamma$   (in the sense of Definition \ref{def-PUE} below).

	 \item   $f$ is of fully uniform ellipticity in $\partial\Gamma^\sigma$ for some $\sigma\in(\sup_{\partial\Gamma}f, \sup_\Gamma f)$.
	 
	 	
	\end{enumerate}
	
\end{theorem}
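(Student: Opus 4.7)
The plan is to reduce Theorem \ref{Y-k+1-4-coro} to Theorem \ref{lemma-Y6} by identifying \emph{fully uniform ellipticity of $f$ on $\mathcal{S}$} with the geometric condition that the vector $(0,\ldots,0,1)$ lies in the interior of the maximal test cone $\mathcal{C}^{\mathfrak{m}}_{\mathcal{S},f}$. Once this translation is in place, the theorem falls out at once by applying Theorem \ref{lemma-Y6} twice, with $\mathcal{S}=\Gamma$ and with $\mathcal{S}=\partial\Gamma^\sigma$: in both cases the maximal test cone equals $\bar{\Gamma}_{\mathcal{G}}^{f}$, whose interior is the open convex cone $\mathring{\Gamma}_{\mathcal{G}}^{f}$.

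First I would establish the characterization. By Definition \ref{def-PUE}, fully uniform ellipticity on $\mathcal{S}$ says that there exists $c>0$ with $f_i(\lambda)\geq c\sum_{j=1}^n f_j(\lambda)$ for every index $i$ and every $\lambda\in\mathcal{S}$, the positivity of the right-hand side being guaranteed by \eqref{sumfi>0}. Assuming this estimate, for any $\mu\in\mathbb{R}^n$ sufficiently close to $(0,\ldots,0,1)$ one has $\mu_n\geq 1/2$ and $|\mu_i|$ arbitrarily small for $i<n$, so that $\sum_{i=1}^n f_i(\lambda)\mu_i \geq f_n(\lambda)\mu_n - (\max_{i<n}|\mu_i|)\sum_{j=1}^n f_j(\lambda)\geq f_n(\lambda)\bigl(\mu_n - c^{-1}\max_{i<n}|\mu_i|\bigr)\geq 0$, which forces $(0,\ldots,0,1)\in\mathring{\mathcal{C}}^{\mathfrak{m}}_{\mathcal{S},f}$. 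Conversely, if $(0,\ldots,0,1)$ lies in the interior of $\mathcal{C}^{\mathfrak{m}}_{\mathcal{S},f}$, choose $r>0$ small enough that $(0,\ldots,0,1)-re_j\in \mathcal{C}^{\mathfrak{m}}_{\mathcal{S},f}$ for every $j<n$; testing the defining inequality of the maximal test cone gives $f_n(\lambda)\geq r f_j(\lambda)$. The permutation symmetry of $f$ promotes this to $f_i(\lambda)\geq r f_j(\lambda)$ for every pair $(i,j)$, hence $f_i(\lambda)\geq (r/n)\sum_j f_j(\lambda)$, which is fully uniform ellipticity.

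Next, since $\Gamma$ trivially contains $\partial\Gamma^\sigma$ for any admissible level, Theorem \ref{lemma-Y6} applied with $\mathcal{S}=\Gamma$ yields $\mathcal{C}^{\mathfrak{m}}_{\Gamma,f}=\bar{\Gamma}_{\mathcal{G}}^{f}$, and applied with $\mathcal{S}=\partial\Gamma^\sigma$ (for any $\sigma\in(\sup_{\partial\Gamma}f,\sup_\Gamma f)$) yields the identical conclusion $\mathcal{C}^{\mathfrak{m}}_{\partial\Gamma^\sigma,f}=\bar{\Gamma}_{\mathcal{G}}^{f}$. Because $\Gamma_{\mathcal{G}}^{f}$ is an open convex cone, the interior of its closure is $\mathring{\Gamma}_{\mathcal{G}}^{f}$ itself. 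Combined with the preceding paragraph, statements (2) and (3) each reduce to the assertion $(0,\ldots,0,1)\in\mathring{\Gamma}_{\mathcal{G}}^{f}$, which is precisely (1).

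The main obstacle I anticipate is the clean execution of the characterization step: one must rely on \eqref{sumfi>0} to avoid the degenerate situation where the ellipticity constant becomes ill-posed, and on the symmetry of $f$ to convert a single one-sided estimate in the direction $(0,\ldots,0,1)-re_j$ into the full family of inequalities $f_i\geq c\sum_j f_j$ for all $i$. After that, the conclusion is a mechanical consequence of Theorem \ref{lemma-Y6}.
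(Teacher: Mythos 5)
Your argument is correct and rests on the same key input, Theorem \ref{lemma-Y6}, as the paper's. The paper derives Theorem \ref{Y-k+1-4-coro} as the special case $\kappa_{\Gamma_{\mathcal{G}}^{f}}=n-1$ of Theorem \ref{Y-k+1-4}, which is in turn built from Proposition \ref{thm1-confi} (``test cone $\Rightarrow$ partial uniform ellipticity'') and Lemma \ref{prop-Y2} (the converse). You instead prove the special-case equivalence directly: fully uniform ellipticity on $\mathcal{S}$ holds iff $(0,\ldots,0,1)\in\mathring{\mathcal{C}}^{\mathfrak{m}}_{\mathcal{S},f}$, establishing one direction by a one-step perturbation of $(0,\ldots,0,1)$ and the other by testing against $(0,\ldots,0,1)-re_j$ together with permutation symmetry. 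This is a streamlined, self-contained specialization of the paper's two lemmas rather than a genuinely different route, but it is cleaner for the $n$-uniform case. One small slip: $\Gamma_{\mathcal{G}}^{f}$ need not be open (the paper only asserts openness of its interior $\mathring{\Gamma}_{\mathcal{G}}^{f}$); what you actually need, and what is true, is that for a convex set with nonempty interior the interior of its closure equals its interior, so the identification of the interior of $\bar{\Gamma}_{\mathcal{G}}^{f}$ with $\mathring{\Gamma}_{\mathcal{G}}^{f}$ stands.
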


As a consequence of Theorem \ref{Y-k+1-4-coro}, we may obtain the following result.  
Denote  
\begin{equation}	\label{varrho_1}	\begin{aligned} 	
		(1,\cdots,1,1-\varrho_{\Gamma_{\mathcal{G}}^{f}})\in\partial\Gamma_{\mathcal{G}}^{f}.  \end{aligned}\end{equation} 
 In particular, $\varrho_{\Gamma_k}=\frac{n}{k}.$
 Also write 	\begin{equation}
 	\label{def1-vec}
 	\vec{\bf 0}=(0,\cdots,0)\in\mathbb{R}^n, \,\, 	
 	\vec{\bf 1}=(1,\cdots,1)\in\mathbb{R}^n. 
 \end{equation} 

\begin{corollary}
	\label{coro1-laplace1}
	The function
 $\tilde{f}(\lambda)\equiv f(\sum_j \lambda_j\vec{\bf 1}-\varrho\lambda)$ is of fully uniform ellipticity in $\tilde{\Gamma}\equiv\{\lambda\in\mathbb{R}^n: \sum_j \lambda_j\vec{\bf 1}-\varrho\lambda\in\Gamma\}$, provided that \begin{equation}
 	\label{assumption-4}
 	\varrho<\varrho_{\Gamma_{\mathcal{G}}^{f}}, \,\, \varrho\neq0.
 \end{equation}
 Moreover, 
\eqref{assumption-4} is sharp,
in the sense that if $\varrho = \varrho_{\Gamma_{\mathcal{G}}^{f}}$  then $\tilde{f}$ is not uniformly elliptic  in $\tilde{\Gamma}.$
\end{corollary}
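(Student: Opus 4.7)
The plan is to reduce the corollary to the equivalence proved in Theorem \ref{Y-k+1-4-coro}, applied now to the pair $(\tilde f,\tilde\Gamma)$. Concretely, I would show that $(0,\ldots,0,1)\in \mathring{\Gamma}_{\mathcal{G}}^{\tilde f}$ whenever \eqref{assumption-4} holds, and that this membership fails --- with the point landing on $\partial \Gamma_{\mathcal{G}}^{\tilde f}$ --- in the critical case $\varrho=\varrho_{\Gamma_{\mathcal{G}}^{f}}$. By Theorem \ref{Y-k+1-4-coro} the former yields fully uniform ellipticity, and the latter rules it out.

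First I set up the linear substitution $L\lambda:=\sum_j\lambda_j\,\vec{\bf 1}-\varrho\lambda$, so that $\tilde f=f\circ L$ and $\tilde\Gamma=L^{-1}(\Gamma)$. Because $L$ is linear and commutes with permutations of coordinates, $\tilde f$ is smooth, symmetric and concave on the open symmetric cone $\tilde\Gamma$. The chain rule gives $\tilde f_i(\lambda)=\sum_k f_k(\mu)-\varrho f_i(\mu)$ with $\mu=L\lambda$, and hence $\sum_i\tilde f_i=(n-\varrho)\sum_k f_k(\mu)$. The test-cone description $\mathcal{C}^{\mathfrak{m}}_{\Gamma,f}=\bar{\Gamma}_{\mathcal{G}}^{f}$ from Theorem \ref{lemma-Y6} shows that the monotonicity $\tilde f_i\geq 0$ on $\tilde\Gamma$ is equivalent to the symmetric vector $(1,\ldots,1,1-\varrho,1,\ldots,1)$ lying in $\bar{\Gamma}_{\mathcal{G}}^{f}$, i.e.\ to $\varrho\leq \varrho_{\Gamma_{\mathcal{G}}^{f}}$; a byproduct is $\varrho_{\Gamma_{\mathcal{G}}^{f}}\leq n$, so under \eqref{assumption-4} one has $\varrho<n$, the map $L$ is invertible, and $\sum_i\tilde f_i>0$. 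The identity $\tilde f(t\lambda)=f(tL\lambda)$ then transfers asymptotic cones: $\Gamma_{\mathcal{G}}^{\tilde f}=L^{-1}(\Gamma_{\mathcal{G}}^{f})$, and likewise for the interiors. Thus $\tilde f$ satisfies all structural hypotheses of Theorem \ref{Y-k+1-4-coro}.

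The main geometric step is to locate $Le_n=\vec{\bf 1}-\varrho e_n=(1,\ldots,1,1-\varrho)$ inside $\mathring{\Gamma}_{\mathcal{G}}^{f}$. If $\varrho<0$ every coordinate is positive and the point is already in $\Gamma_n\subseteq \mathring{\Gamma}_{\mathcal{G}}^{f}$. If $0<\varrho<\varrho_{\Gamma_{\mathcal{G}}^{f}}$, set $t=\varrho/\varrho_{\Gamma_{\mathcal{G}}^{f}}\in(0,1)$; then
\[
Le_n=(1-t)\,\vec{\bf 1}+t\,(1,\ldots,1,1-\varrho_{\Gamma_{\mathcal{G}}^{f}}),
\]
a convex combination of an interior point (in $\Gamma_n$) and a boundary point (by \eqref{varrho_1}) of the convex cone $\Gamma_{\mathcal{G}}^{f}$; any such combination with $t\in(0,1)$ lies in the interior. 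Pulling back by $L$, this places $e_n$ in $\mathring{\Gamma}_{\mathcal{G}}^{\tilde f}$, and Theorem \ref{Y-k+1-4-coro} closes the first half. For sharpness, at $\varrho=\varrho_{\Gamma_{\mathcal{G}}^{f}}$ the same identity puts $Le_n$ on $\partial\Gamma_{\mathcal{G}}^{f}$, hence $e_n\in\partial\Gamma_{\mathcal{G}}^{\tilde f}$, which via Theorem \ref{Y-k+1-4-coro} precludes fully uniform ellipticity.

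The hard part I foresee is not the convex-geometry interpolation, which is elementary, but the preliminary bookkeeping needed to certify that $\tilde f$ genuinely inherits \eqref{elliptic-weak} and \eqref{sumfi>0} (and along the way that $\varrho_{\Gamma_{\mathcal{G}}^{f}}\leq n$), so that Theorem \ref{Y-k+1-4-coro} is legitimately available. This is exactly where the maximal-test-cone characterization $\mathcal{C}^{\mathfrak{m}}_{\Gamma,f}=\bar\Gamma_{\mathcal{G}}^{f}$ from Theorem \ref{lemma-Y6} does the real work.
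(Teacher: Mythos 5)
Your proposal is correct and follows exactly the route the paper intends: the paper presents Corollary \ref{coro1-laplace1} with no written proof beyond the remark that it is ``a consequence of Theorem \ref{Y-k+1-4-coro}'', and your reduction --- showing $(0,\ldots,0,1)\in\mathring\Gamma_{\mathcal{G}}^{\tilde f}$ via the identity $\Gamma_{\mathcal{G}}^{\tilde f}=L^{-1}(\Gamma_{\mathcal{G}}^{f})$ and the convex interpolation $\vec{\bf 1}-\varrho e_n=(1-t)\vec{\bf 1}+t(\vec{\bf 1}-\varrho_{\Gamma_{\mathcal{G}}^{f}}e_n)$ --- is the natural way to realize that consequence. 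The chain-rule computation $\tilde f_i=\sum_k f_k(\mu)-\varrho f_i(\mu)$ and the verification of \eqref{sumfi>0} via $\sum_i\tilde f_i=(n-\varrho)\sum_k f_k$ (using $\varrho_{\Gamma_{\mathcal{G}}^{f}}\leq n$, which follows from $\mathring\Gamma_{\mathcal{G}}^{f}\subseteq\Gamma_1$) are exactly the bookkeeping needed to make Theorem \ref{Y-k+1-4-coro} applicable to $(\tilde f,\tilde\Gamma)$, and your sharpness step correctly places $e_n$ on $\partial\Gamma_{\mathcal{G}}^{\tilde f}$.

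One point worth tightening for completeness: in the degenerate case $\varrho_{\Gamma_{\mathcal{G}}^{f}}=n$ (equivalently $\mathring\Gamma_{\mathcal{G}}^{f}=\Gamma_1$), the map $L$ is singular and $\tilde\Gamma$ is empty, so the sharpness half holds vacuously rather than by appeal to Theorem \ref{Y-k+1-4-coro}; and even for $\varrho_{\Gamma_{\mathcal{G}}^{f}}<n$ the hypotheses of Theorem \ref{Y-k+1-4-coro} for $(\tilde f,\tilde\Gamma)$ must be checked separately at $\varrho=\varrho_{\Gamma_{\mathcal{G}}^{f}}$ (your argument does establish $\tilde f_i\geq 0$ there, since $(1,\ldots,1,1-\varrho)$ still lies in the closure $\bar\Gamma_{\mathcal{G}}^{f}$). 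An alternative, slightly more self-contained proof of both halves is to bypass Theorem \ref{Y-k+1-4-coro} and use Lemma \ref{lemma1-varrho-gamma} directly: from $\tilde f_i=\sum_j f_j-\varrho f_i$ and $f_i\leq\varrho_{\Gamma_{\mathcal{G}}^{f}}^{-1}\sum_j f_j$ (with $\varrho_{\Gamma_{\mathcal{G}}^{f}}$ sharp), one reads off the uniform bound $\tilde f_i\geq(1-\varrho/\varrho_{\Gamma_{\mathcal{G}}^{f}})\sum_j f_j=\tfrac{1-\varrho/\varrho_{\Gamma_{\mathcal{G}}^{f}}}{n-\varrho}\sum_k\tilde f_k$ for $0<\varrho<\varrho_{\Gamma_{\mathcal{G}}^{f}}$ (the case $\varrho<0$ being trivial), and its failure at $\varrho=\varrho_{\Gamma_{\mathcal{G}}^{f}}$ because that constant is optimal. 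This avoids the pullback of asymptotic cones altogether, but both arguments rest on the same maximal-test-cone machinery.
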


\subsection{Prescribed problems on the conformal classes of complete metrics}

%
%
According to Theorem \ref{Y-k+1-4-coro},  $f$ is of fully uniform ellipticity in $\Gamma$ under the assumption 
\begin{equation}
	\label{assumption2-type2}
	\begin{aligned}
		(0,\cdots,0,1)\in \mathring{\Gamma}_{\mathcal{G}}^f.
	\end{aligned}
\end{equation}
Meanwhile, we may connect \eqref{main-equ0-modifiedSchouten} with \eqref{main-equ0-Schouten}. 
To do this, we take $\varrho=\frac{n-2}{\tau-1}$ and then check that $	\mathrm{tr}\left(g^{-1}(-A_g)\right)g
-\varrho \left(-A_g\right) 
=  \frac{n-2}{\alpha(\tau-1)}A_{g}^{\tau,\alpha},$
and so
\begin{equation}
	\label{check-2}
	\begin{aligned}
	\tilde{f}(\lambda[-g^{-1}A_g])
	=f\left(\frac{n-2}{\alpha(\tau-1)}\lambda[g^{-1}A_{g}^{\tau,\alpha}]\right). 
	\end{aligned}
\end{equation} 
Therefore, 
  under the assumption 
\begin{equation}
	\label{tau-alpha-sharp}
	\begin{cases}
		\tau<1 \,&\text{ if } \alpha=-1, \\ 
		\tau>1+ (n-2)\varrho_{\Gamma_{\mathcal{G}}^{f}}^{-1} \,&\mbox{ if } \alpha=1,
	\end{cases}
\end{equation}
 where $\varrho_{\Gamma_{\mathcal{G}}^{f}}$ 
 is as in \eqref{varrho_1}, 
  Corollary \ref{coro1-laplace1}  indicates that \eqref{main-equ0-modifiedSchouten} falls into the framework of \eqref{main-equ0-Schouten} with $(0,\cdots,0,1)\in \mathring{\Gamma}_{\mathcal{G}}^{f}$  and   vice versa.    
In particular, for 
 \eqref{equation1-sigmak-taualpha} and \eqref{main-flow1-sigmak}, 
\eqref{tau-alpha-sharp} read as 
\begin{equation}
	\label{tau-alpha-sharp-sigmak1}
	\begin{cases}
		\tau<1 \,&\mbox{ if } \alpha=-1, \\ 
		\tau>1+\frac{k(n-2)}{n} \,&\mbox{ if } \alpha=1. \nonumber
	\end{cases}
\end{equation} 

We remark that   \eqref{tau-alpha-sharp}    allows the equation for Einstein tensor when  $\mathring{\Gamma}_{\mathcal{G}}^f\neq \Gamma_n$ 
 \begin{equation}
 	\label{equ1-Einstein}
 	\begin{aligned}
 		f(\lambda[g_u^{-1}G_{g_u}])=\psi. \nonumber
 	\end{aligned}
 \end{equation}
Beyond its crucial role in gravitation theory, the Einstein tensor holds special importance in dimension three owing to its strong link to sectional curvature (as noted in \cite[Section 2]{Gursky-Streets-Warren2010}, see also  Lemma \ref{prop1-einstein-sectional}).
 
In this paper, we will demonstrate that \eqref{tau-alpha-sharp} and \eqref{assumption2-type2} enable the construction of smooth complete conformal metrics satisfying \eqref{main-equ0-modifiedSchouten} and \eqref{main-equ0-Schouten}, respectively. 
These assumptions are critical, as  the topological obstructions in Section \ref{sec1-topo} reveal their necessity.   
Prior to presenting our results, we 
review some additional 
notions.

\begin{definition}
	\label{def2-admissible}
	For the equation \eqref{main-equ0-modifiedSchouten} (respectively,  \eqref{main-equ0-Schouten}),  following \cite{CNS3}
	we say that $g$ is $\Gamma$-{\em admissible}  if 
	$	\lambda(g^{-1}A_{{g}}^{\tau,\alpha})\in\Gamma$
	(respectively, $\lambda(-g^{-1}A_{g})\in\Gamma$).
 Meanwhile,  such a Riemannian manifold $(M,g)$ is also called {\em admissible}.	
 
	For simplicity, we also call such $g$ and $(M,g)$ an admissible metric and admissible manifold, respectively. 
\end{definition}

 \begin{definition}
	\label{def1-quasi-pseudo-admissible}
	
	For the equation \eqref{main-equ0-modifiedSchouten}   
	we call $g$ a $\mathring{\Gamma}_{\mathcal{G}}^{f}$ \textit{pseudo-admissible} metric if 	$	\lambda(g^{-1}A_{{g}}^{\tau,\alpha})\in
	\bar{\Gamma}_{\mathcal{G}}^{f}$. 
	Meanwhile, 
	we say a metric $g$ is $\mathring{\Gamma}_{\mathcal{G}}^{f}$ \textit{quasi-admissible} if  
	\begin{equation}\label{assump1-metric}\begin{aligned}	\lambda(g^{-1}A_{g}^{\tau,\alpha}) \in \bar{\Gamma}_{\mathcal{G}}^{f} \mbox{ in }  M, 	 
		 	\mbox{ and }
					\lambda(g^{-1}A_{g}^{\tau,\alpha}) \in 
					\mathring{\Gamma}_{\mathcal{G}}^{f} \mbox{ at some } p_0\in  M. \nonumber
			\end{aligned}\end{equation}
			Moreover, we say ${g}_u$ is a \textit{maximal} admissible metric to the equation \eqref{main-equ0-modifiedSchouten} 
			if
			$u\geq w$ in $M$ for any admissible conformal metric $g_w$ satisfying the same equation.

			Accordingly,  we have notions of 
			$\mathring{\Gamma}_{\mathcal{G}}^{f}$ \textit{pseudo-admissible, quasi-admissible} and $\mathring{\Gamma}_{\mathcal{G}}^{f}$ \textit{maximal} solutions for 
			equation  
		 \eqref{main-equ0-Schouten}.
		\end{definition} 
	

In the majority of scenarios considered in this paper, the right-hand side of the equation  is a smooth    function  satisfying the following nondegeneracy condition 
\begin{equation} 
	\label{non-degenerate1}
	\begin{aligned}
		\sup_{\partial \Gamma} f< \psi<\sup_{\Gamma}f.   
	\end{aligned}
\end{equation}
However, in other instances  we substitute this 
 with a more 
 enhanced
 requirement 
\begin{equation} 
	\label{non-degenerate2}
	\begin{aligned}
		\sup_{\partial \Gamma_{\mathcal{G}}^f} f< \psi<\sup_{\Gamma}f, \textrm{ where } \sup_{\partial \Gamma_{\mathcal{G}}^f}  f=\sup_{\lambda^0\in\partial\Gamma_{\mathcal{G}}^f}\,\limsup_{\lambda\to\lambda^0, \, \lambda\in \mathring{\Gamma}_{\mathcal{G}}^f} f(\lambda).  
	\end{aligned}
\end{equation}
Moreover, in order to derive local gradient estimate
it requires to impose a technique assumption:
There holds for some  
$\sigma\in (\sup_{\partial\Gamma}f,\, \inf_M\psi]$
 and  $K_0 \in \mathbb{R}$ that 
\begin{equation}
	\label{condition-key100-1}
	\begin{aligned} 
		\sum_{i=1}^n f_i(\lambda)\lambda_i\geq -K_0\sum_{i=1}^n f_i(\lambda) \mbox{ in } \partial\Gamma^\sigma.
	\end{aligned}
\end{equation}   
By Theorem \ref{lemma-Y6}, this condition holds with  $\sigma=\inf_M\psi$ and $K_0=0$ provided $\psi$ obeys \eqref{non-degenerate2}. 
Furthermore,  if $f$ satisfies   
\eqref{addistruc}  then $\Gamma_{\mathcal{G}}^{f}=\Gamma$,  and   Lemma \ref{lemma3.4} ensures that \eqref{elliptic-weak} and \eqref{sumfi>0} hold simultaneously. 



 \subsubsection{Conformal bending of   closed manifolds}  
\label{PrePro0} 

Let $(M,g)$ be a smooth, connected and closed (compact without boundary) Riemannian manifold. 
Moreover, we assume that $g$ is a $\mathring{\Gamma}_{\mathcal{G}}^{f}$ {\em quasi-admissible} metric in the sense of Definition \ref{def1-quasi-pseudo-admissible}. 

By the well-known theorem of Gao-Yau \cite{Gao1986Yau}  and Lohkamp \cite{Lohkamp-1}, any  smooth closed manifold carries a Riemannian metric of negative Ricci curvature.
It would be 
interesting to determine which  conformal class admits
such a metric. 
 %
Below, we prove that every Riemannian metric with quasi-negative Ricci curvature on a closed manifold is conformal to a metric of negative Ricci curvature.
\begin{theorem}
	\label{thm1-Ricci}
	Let $(M,g)$ be a closed connected Riemannian manifold of dimension $n\geq  3$ with quasi-negative Ricci curvature. Then there is a unique smooth conformal metric  $g_u=e^{2u}g$  of negative Ricci curvature with
	$\det(-{ {g}_u^{-1}}Ric_{{g}_u})=1.$
\end{theorem}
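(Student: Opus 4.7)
The plan is to identify Theorem \ref{thm1-Ricci} as a specialization of the general closed-manifold existence theorem for equation \eqref{main-equ0-modifiedSchouten}. Taking $f=\sigma_n^{1/n}$, $\Gamma=\Gamma_n$, and $(\tau,\alpha)=(0,-1)$, we have $A_g^{0,-1}=-\tfrac{1}{n-2}\mathrm{Ric}_g$, so
\[
\det\big({-}g_u^{-1}\mathrm{Ric}_{g_u}\big)=1 \quad\Longleftrightarrow\quad \sigma_n^{1/n}\big(\lambda(g_u^{-1}A_{g_u}^{0,-1})\big)=\tfrac{1}{n-2}.
\]
The structural conditions \eqref{concave}--\eqref{sumfi>0} are classical for $\sigma_n^{1/n}$; one has $\Gamma_{\mathcal{G}}^{f}=\Gamma_n$; the nondegeneracy \eqref{non-degenerate1} is immediate since $\sup_{\partial\Gamma_n}\sigma_n^{1/n}=0$; the sharp parameter condition \eqref{tau-alpha-sharp} holds since $\alpha=-1$ and $\tau=0<1$; and the quasi-negative Ricci hypothesis is exactly the $\mathring{\Gamma}_{\mathcal{G}}^{f}$-quasi-admissibility of $g$ in the sense of Definition \ref{def1-quasi-pseudo-admissible}.

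Next I would pass to the Schouten-form equation \eqref{main-equ0-Schouten} via the identity \eqref{check-2} with $\varrho=(n-2)/(\tau-1)=-(n-2)$: setting $\tilde{f}(\lambda)=\sigma_n^{1/n}\big(\sum_{j}\lambda_j\vec{\bf 1}-\varrho\lambda\big)$ one has $\tilde{f}(\lambda(-g^{-1}A_g))=\sigma_n^{1/n}(\lambda(-g^{-1}\mathrm{Ric}_g))$. Since $\varrho=-(n-2)\neq 0$ and $\varrho<\varrho_{\Gamma_n}=1$, Corollary \ref{coro1-laplace1} gives that $\tilde{f}$ is of fully uniform ellipticity on its cone $\tilde{\Gamma}$; equivalently, by Theorem \ref{Y-k+1-4-coro}, the linearization of the transformed PDE for $u$ has all eigenvalues of comparable size at any admissible solution. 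This uniform ellipticity is the structural input that unlocks the standard concave--elliptic PDE machinery.

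For existence I would run a continuity method in $t\in[0,1]$ along $\psi_t=t\cdot\tfrac{1}{n-2}+(1-t)\,\sigma_n^{1/n}\big(\lambda(g^{-1}A_g^{0,-1})\big)$, with $t=0$ solved trivially by $u\equiv 0$. The a priori estimates split into: a $C^0$ bound from the maximum principle at extrema of $u$; the local gradient bound, which holds unconditionally since the right-hand side is a positive constant and \eqref{condition-key100-1} is trivially satisfied; and the $C^2$ estimate, which is the main analytic step and follows from the uniform ellipticity of Corollary \ref{coro1-laplace1} combined with concavity, after which Evans--Krylov and Schauder bootstrap to $C^\infty$. Openness at a strictly admissible solution comes from the implicit function theorem. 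The chief obstacle is that $g$ is only quasi-admissible, with $\mathrm{Ric}_g$ possibly vanishing on a large set; here I would invoke the Morse-theoretic construction announced in the abstract to produce, at $t=0^+$, a strictly admissible conformal representative from which the deformation can actually start, and to verify that strict admissibility persists along the path.

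Uniqueness follows from the strong maximum principle: for two admissible solutions $g_u,g_v$ with $-\mathrm{Ric}_{g_u},-\mathrm{Ric}_{g_v}>0$, subtracting their $\tilde{f}$-equations and linearizing along the convex combination (using the $1$-homogeneity of $\tilde f$ to extract an explicit $e^{2u}$ factor) yields a uniformly elliptic linear equation of the schematic form $\mathcal{L}(u-v)=c(x)(u-v)$ with $c>0$; evaluating at a maximum of $u-v$ and at a minimum then forces $u\equiv v$.
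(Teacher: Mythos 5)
Your identification of Theorem \ref{thm1-Ricci} as the specialization $f=\sigma_n^{1/n}$, $\Gamma=\Gamma_n$, $(\tau,\alpha)=(0,-1)$ of Theorem \ref{thm2-existence-closedmanifold}, together with the verification that the quasi-negative Ricci hypothesis is exactly $\mathring\Gamma_{\mathcal G}^f$-quasi-admissibility and that \eqref{tau-alpha-sharp}, \eqref{non-degenerate1}, \eqref{condition-key100-1}, and \eqref{addistruc} all hold, is precisely how the paper derives this result; your description of the underlying machinery (Morse-theoretic construction of a strictly admissible representative via Lemma \ref{lemma2-closed-construction}, continuity/degree argument with $C^0$, $C^1$, $C^2$ estimates, Evans--Krylov) also matches the paper's proof of the general theorem. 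The only cosmetic difference is in the uniqueness step: you linearize to get a zeroth-order term with the right sign, whereas the paper's Lemma \ref{lemma3-unique} runs the strong maximum principle directly via the strict monotonicity of $f(e^{-2u}\lambda)$ in $u$ (Lemma \ref{lemma3.4-3}), but these are the same fact.
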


In fact 
we   solve 
the equations  on closed 
manifolds with 	$\mathring{\Gamma}_{\mathcal{G}}^{f}$  quasi-admissible metric. 
	On these quasi-admissible manifolds,   as 
	outlined in
	previous drafts  \cite{yuan-PUE-conformal,yuan-PUE2-note} of this paper,
		we  introduce some technique from Morse theory to construct 	$\mathring{\Gamma}_{\mathcal{G}}^{f}$-admissible conformal metrics.

\begin{theorem}
	\label{thm2-existence-closedmanifold}
In addition to 
 	\eqref{concave}, \eqref{elliptic-weak-3}, \eqref{sumfi>0},   \eqref{non-degenerate1},
 we	assume
 \eqref{tau-alpha-sharp} and  \eqref{condition-key100-1} hold.
	 Then there is a smooth admissible  metric $g_u=e^{2u}g$ satisfying  \eqref{main-equ0-modifiedSchouten}. 	
	 In addition, the  resulting metric is unique provided that  $f$ further satisfies \eqref{addistruc}.
\end{theorem}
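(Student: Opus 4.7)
The plan is to reduce the prescribed curvature equation \eqref{main-equ0-modifiedSchouten} to a fully uniformly elliptic problem and then combine a Morse-theoretic construction of an admissible starting metric with the continuity method. As a first step, I would apply Corollary \ref{coro1-laplace1} with $\varrho=\frac{n-2}{\tau-1}$ and the identity \eqref{check-2}: under the sharp assumption \eqref{tau-alpha-sharp}, the equation \eqref{main-equ0-modifiedSchouten} is equivalent to a Schouten-type equation of the form $\tilde{f}(\lambda(-g_u^{-1}A_{g_u}))=\psi$ with $\tilde{f}$ satisfying \eqref{concave}, \eqref{elliptic-weak-3}, \eqref{sumfi>0} in a cone $\tilde{\Gamma}$ with $(0,\dots,0,1)\in \mathring{\Gamma}_{\mathcal{G}}^{\tilde{f}}$. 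By Theorem \ref{Y-k+1-4-coro}, $\tilde{f}$ is fully uniformly elliptic in $\tilde{\Gamma}$, and condition \eqref{condition-key100-1} transfers to the new operator. This puts the problem in a setting where we can expect the full scale of a priori estimates to work.

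The second and perhaps most delicate step is to produce a smooth $\mathring{\Gamma}_{\mathcal{G}}^{f}$-admissible conformal representative of $g$ to start the deformation. By assumption $g$ is only quasi-admissible: $\lambda(g^{-1}A_g^{\tau,\alpha})\in\bar{\Gamma}_{\mathcal{G}}^{f}$ everywhere, and lies in the interior $\mathring{\Gamma}_{\mathcal{G}}^{f}$ at some point $p_0$. I would use Morse theory on a smooth auxiliary function on $M$ (e.g.\ an excision/bump gluing of a Morse function with sufficiently many critical points of prescribed indices) and perform a conformal change $g_{u_0}=e^{2u_0}g$ designed so that the correction to $A_{g_{u_0}}^{\tau,\alpha}$ spreads the interior condition from $p_0$ to all of $M$. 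The fact that $\mathring{\Gamma}_{\mathcal{G}}^{f}$ is open and convex, together with the type-2 property from Theorem \ref{Y-k+1-4-coro}, is what permits the required local pushes into the interior of the cone. This construction also produces a smooth admissible $\psi_0:=f(\lambda(g_{u_0}^{-1}A_{g_{u_0}}^{\tau,\alpha}))$ satisfying \eqref{non-degenerate1}.

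Third, I would run the continuity method along a path $\psi_t=t\psi+(1-t)\psi_0$, $t\in[0,1]$, with $u_0$ providing the solution at $t=0$. Openness follows from the implicit function theorem once one shows the linearized operator is invertible; this is standard for fully uniformly elliptic concave equations on closed manifolds, using the $L_g$-spectrum and \eqref{sumfi>0} to kill the kernel (on connected closed manifolds one uses the strong maximum principle together with a suitable compactification or normalization on $u$). Closedness reduces to a priori $C^{2,\alpha}$ estimates. The $C^{0}$ bound comes from the maximum principle, exploiting monotonicity of $e^{2u}$ in the linearization and \eqref{non-degenerate1}. The $C^{1}$ bound follows from the local gradient estimate enabled by hypothesis \eqref{condition-key100-1} (with $\sigma=\inf_M\psi$, $K_0=0$ when applicable) together with Theorem \ref{lemma-Y6}, which identifies the maximal test cone. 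The $C^{2}$ estimate is the main technical obstacle: I would use the concavity \eqref{concave}, the fully uniform ellipticity of $\tilde{f}$ (consequence of Corollary \ref{coro1-laplace1}), and the asymptotical cone structure provided by Theorem \ref{lemma-Y6} to run a maximum-principle argument on the largest eigenvalue of $\nabla^2 u+du\otimes du-\tfrac{|\nabla u|^2}{2}g+\cdots$, controlling the bad terms by the good ellipticity. From $C^{2}$ one obtains uniform ellipticity along $\psi_t$, and Evans--Krylov plus Schauder yields $C^{\infty}$ regularity, closing the continuity method.

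Finally, uniqueness under \eqref{addistruc} follows by comparing two admissible solutions at the extrema of $u_1-u_2$: at a maximum point, the ellipticity forced by \eqref{addistruc} (which gives $\sum_i f_i(\lambda)\mu_i>0$ for all $\lambda,\mu\in\Gamma$) together with the sign of the lower-order terms produced by conformal change of $A^{\tau,\alpha}$ forces $u_1\le u_2$; symmetry then gives $u_1\equiv u_2$. I expect the Morse-theoretic construction of the initial admissible metric and the global $C^{2}$ bound to be the two bottlenecks; everything else is a reasonably standard fully nonlinear continuity argument, now legitimised for the broader $(f,\tau,\alpha)$ range by the structural Theorems \ref{lemma-Y6} and \ref{Y-k+1-4-coro}.
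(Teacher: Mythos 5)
Your reduction to the Schouten-type equation via Corollary~\ref{coro1-laplace1} and \eqref{check-2}, and your Morse-theoretic construction of the $\mathring{\Gamma}_{\mathcal{G}}^{f}$-admissible starting metric (Lemma~\ref{lemma2-closed-construction}), both match the paper, and the uniqueness argument under \eqref{addistruc} via comparison at extrema (Lemma~\ref{lemma3-unique}) is also correct. The gap is in your third step: you propose to close the existence argument by the continuity method, asserting that openness ``follows from the implicit function theorem once one shows the linearized operator is invertible.'' Under the hypothesis of this theorem the right-hand side only satisfies \eqref{non-degenerate1}, not the stronger \eqref{non-degenerate2}, so along your path $\psi_t$ there is no guarantee that the solution $u^t$ has $\lambda(-g^{-1}A_{g_{u^t}})\in\mathring{\Gamma}_{\mathcal{G}}^{f}$; it may sit on $\partial\Gamma_{\mathcal{G}}^{f}$. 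The linearization of $f(\lambda(g_{u}^{-1}W[u]))=\psi$ picks up a zeroth-order term proportional to $-2F^{ij}W_{ij}$, and one needs $F^{ij}W_{ij}=\sum_i f_i(\lambda)\lambda_i>0$ to kill the kernel by the maximum principle. Lemma~\ref{lemma3.4-3} gives this \emph{only} when $\lambda\in\mathring{\Gamma}_{\mathcal{G}}^{f}$; the hypothesis \eqref{condition-key100-1} alone only gives $\sum f_i\lambda_i\geq -K_0\sum f_i$ with $K_0$ possibly positive, so the zeroth-order coefficient can have the wrong sign. Thus the openness step is not justified under \eqref{non-degenerate1}, and the continuity method as you outline it does not close.

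The paper's proof of the corresponding Theorem~\ref{thm1-existence-closedmanifold-3} circumvents this by using a degree-theoretic argument (in the style of \cite{LiYY1989}) rather than continuity. One fixes $N$ with $f(N\vec{\bf 1})>\sup_{\partial\Gamma}f$, defines the homotopy $W^t[u]$ and $\psi^t=t\psi+(1-t)f(N\vec{\bf 1})$ as in \eqref{def2-Wt}--\eqref{equation-t-closed}, establishes uniform $C^{4,\alpha}$ a priori bounds via Proposition~\ref{lemma-c0general} and the interior estimates of Section~\ref{sec4-estimate-local}, and then computes the degree only at $t=0$, where the solution is $u_0\equiv 0$, its uniqueness follows from Lemma~\ref{lemma3-unique}, and the linearization $f_1(N\vec{\bf 1})(\Delta v-2nNv)$ is manifestly invertible. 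Homotopy invariance of the degree then gives a solution at $t=1$ without ever requiring invertibility of the linearization along the whole path. Your proposed continuity argument is exactly what the paper uses for the stronger Theorem~\ref{thm1-existence-closedmanifold-4} under \eqref{non-degenerate2}, where $\lambda$ is forced into $\mathring{\Gamma}_{\mathcal{G}}^{f}$ and the Fr\'echet derivative is genuinely an isomorphism; but for the statement at hand, which only assumes \eqref{non-degenerate1}, you should switch to the degree-theoretic route.
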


	It is important to highlight that in this theorem, the restriction on 
$g$ depends only on the assumption that it is quasi-admissible, rather than admissible. This stands in contrast to the assumptions made   in published literature.  

By the identity \eqref{check-2}, this theorem is equivalent to the following:

\begin{theorem} 
	\label{thm1-existence-closedmanifold}
Suppose \eqref{concave}, \eqref{elliptic-weak-3},  \eqref{sumfi>0}, 
	\eqref{non-degenerate1},    \eqref{condition-key100-1} and 
	\eqref{assumption2-type2} hold.
	Then there is a smooth admissible conformal  metric $g_u=e^{2u}g$ satisfying 	 \eqref{main-equ0-Schouten}. 
	Moreover, the   conformal metric is unique if  $f$ further satisfies \eqref{addistruc}.
\end{theorem}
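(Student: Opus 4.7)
The plan is to run a continuity method for
$f(\lambda(-g_u^{-1}A_{g_u})) = \psi$, after first passing, via a Morse-theoretic construction, from the merely $\mathring{\Gamma}_{\mathcal{G}}^{f}$-quasi-admissible background $g$ to a genuinely admissible conformal representative, at which stage \cref{Y-k+1-4-coro} supplies full uniform ellipticity.

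\textbf{Step 1 (admissible representative via Morse theory).} Since $g$ is $\mathring{\Gamma}_{\mathcal{G}}^{f}$-quasi-admissible, there is $p_0\in M$ with $\lambda(g^{-1}A_g^{\tau,\alpha})(p_0)\in\mathring{\Gamma}_{\mathcal{G}}^{f}$; equivalently, by \eqref{check-2} and \eqref{tau-alpha-sharp}, $\lambda(-g^{-1}A_g)(p_0)\in\mathring{\Gamma}$ for the equation \eqref{main-equ0-Schouten}. The idea is to find $\varphi\in C^\infty(M)$ so that $\hat g=e^{2\varphi}g$ satisfies $\lambda(-\hat g^{-1}A_{\hat g})\in\Gamma$ everywhere. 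One takes a smooth Morse function $h$ on $M$ whose critical points accumulate near $p_0$, and considers $\varphi=-t h$ for small $t>0$: the dominant contribution to $-A_{\hat g}$ from the conformal change is $t\nabla^2 h+\tfrac12 t^2|\nabla h|^2 g$, which on the subset $\{|\nabla h|\ge \delta\}$ is pushed into $\Gamma$ by the $t^2|\nabla h|^2 g\in \Gamma_n\subseteq\Gamma$ term as soon as $t$ is tuned appropriately, while near critical points the Hessian term dominates and one exploits that $\lambda(g^{-1}A_g^{\tau,\alpha})$ lies in $\bar{\Gamma}_{\mathcal{G}}^{f}$ together with $(0,\ldots,0,1)\in\mathring{\Gamma}_{\mathcal{G}}^{f}$ to rotate the eigenvalues into $\Gamma$. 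Replacing $g$ by $\hat g$, we may assume that the background is $\Gamma$-admissible everywhere; here \cref{Y-k+1-4-coro} applies and $f$ is fully uniformly elliptic.

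\textbf{Step 2 (continuity path).} Consider
\begin{equation*}
f(\lambda(-g_{u_t}^{-1}A_{g_{u_t}})) = \psi_t, \qquad t\in[0,1],
\end{equation*}
where $\psi_t$ interpolates between a constant $\sigma_0\in(\sup_{\partial\Gamma}f,\inf_M\psi]$ (for which $u_0\equiv\text{const}$ after rescaling the background furnishes a solution using admissibility of $\hat g$ and a sub/supersolution barrier argument) and $\psi_1=\psi$; one also homotopes via an auxiliary parameter to reach any fixed admissible data. Let $T=\{t:\text{a smooth admissible }u_t\text{ exists}\}$; it suffices to show $T$ is nonempty, open and closed.

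\textbf{Step 3 (a priori estimates).} Openness follows from the implicit function theorem once the linearization is invertible, which is guaranteed by full uniform ellipticity from \cref{Y-k+1-4-coro} together with \eqref{sumfi>0} and the maximum principle. Closedness requires uniform $C^{2,\alpha}$-estimates:
\begin{itemize}
\item[(i)] The $C^0$ estimate comes from the maximum principle applied at extrema of $u_t$, using \eqref{non-degenerate1} to compare with constant sub/supersolutions.
\item[(ii)] The $C^1$ estimate is the main analytic input; it is obtained by a Bernstein-type argument on $|\nabla u|^2$ where the technical condition \eqref{condition-key100-1} on $\partial\Gamma^{\sigma}$ is what prevents the bad negative term $\sum f_i\lambda_i$ from blowing up.
\item[(iii)] The $C^2$ estimate uses full uniform ellipticity from \cref{Y-k+1-4-coro}, reducing the problem to a standard concave fully nonlinear elliptic equation and allowing a maximum principle on the largest eigenvalue, after which Evans--Krylov and Schauder theory yield $C^{2,\alpha}$ bounds.
\end{itemize}

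\textbf{Step 4 (uniqueness under \eqref{addistruc}).} If $g_u$ and $g_v$ are two solutions and \eqref{addistruc} holds, then by \cref{lemma3.4} one has $\sum_i f_i>0$ strictly and $f$ is strictly increasing in each variable inside $\Gamma$. Subtracting the two equations and applying the maximum principle at an extremum of $u-v$ forces $u\equiv v$ on the connected manifold $M$.

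\textbf{Main obstacle.} The delicate step is Step 1: constructing a strictly admissible representative from only quasi-admissibility. One must ensure that the Morse perturbation simultaneously pushes the pointwise eigenvalues into $\Gamma$ both near and away from critical points of the Morse function, and this is where the type-2 condition \eqref{assumption2-type2} on $\mathring{\Gamma}_{\mathcal{G}}^{f}$ is indispensable, since it is exactly the direction along which the positive conformal term $\tfrac12|\nabla u|^2 g$ enters the cone. The remaining steps are technically demanding (especially the $C^1$ bound via \eqref{condition-key100-1}) but follow relatively standard templates once full uniform ellipticity is in hand.
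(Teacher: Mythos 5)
Your overall skeleton — construct a genuinely admissible conformal representative via Morse theory, then run a continuity/degree argument using the full uniform ellipticity supplied by \cref{Y-k+1-4-coro}, with uniqueness from the maximum principle and \cref{lemma3.4} — matches the paper's strategy (the paper uses Li's degree theory rather than a monotone continuity set $T$, and interpolates the tensor $-tA_g+(1-t)Ng$ along with $\psi$ so that $u_0\equiv0$ is the explicit $t=0$ solution, but those are cosmetic differences). However, your Step~1 has a genuine gap, and it is precisely in the step you flag as the crux.

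The ansatz $\varphi=-th$ with $t\to 0$ cannot work. By \eqref{conformal-formula2}, $-A_{g_\varphi}=-A_g-t\nabla^2h+\tfrac{t^2}{2}|\nabla h|^2g-t^2\,\mathrm{d}h\otimes\mathrm{d}h$. For small $t$ the order-$t$ term $-t\nabla^2h$ dominates the order-$t^2$ terms, so on $\{|\nabla h|\ge\delta\}$ the gradient terms do not push anything into $\Gamma$; and you have dropped the $-t^2\,\mathrm{d}h\otimes\mathrm{d}h$ term, which is the harmful one. Moreover, when the two $t^2$-terms are combined, $\lambda\big(g^{-1}(\tfrac12|\nabla u|^2g-\mathrm{d}u\otimes\mathrm{d}u)\big)=|\nabla u|^2(\tfrac12,\ldots,\tfrac12,-\tfrac12)$, i.e.\ the direction $(1,\ldots,1,-1)$ — this is not the type-2 direction $(0,\ldots,0,1)$, and it need not lie in $\bar\Gamma_{\mathcal G}^{f}$ at all (that is why Theorem \ref{thm3-completemetric} imposes the extra hypothesis $(ii)$, which Theorem \ref{thm1-existence-closedmanifold} does not assume). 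Your closing remark that the type-2 condition "is exactly the direction along which the positive conformal term $\tfrac12|\nabla u|^2g$ enters the cone" is therefore incorrect: the term $\tfrac12|\nabla u|^2g$ points along $\vec{\bf1}\in\Gamma_n$, and the type-2 direction is the rank-one direction of $\mathrm{d}u\otimes\mathrm{d}u$, which in your linear ansatz enters with the wrong sign.

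What actually makes the construction work in the paper (Lemma \ref{lemma2-closed-construction}) is the exponential composition $\underline u=e^{Nv}$ with $v\le-1$ and $N\gg1$, combined with the homogeneity lemma to move all critical points of $v$ into a small ball $B_{r_0/2}(x_0)$ where $g$ is already admissible. The chain rule gives $\nabla^2\underline u=Ne^{Nv}\nabla^2v+N^2e^{Nv}\,\mathrm{d}v\otimes\mathrm{d}v$, and after collecting terms one finds the dominant contribution away from critical points is $N^2e^{Nv}(1-e^{Nv})\,\mathrm{d}v\otimes\mathrm{d}v$, which is a large positive multiple of $(0,\ldots,0,1)$ and overwhelms the $O(e^{2Nv})$ remainder — this is where \eqref{assumption2-type2} enters. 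Near the critical points admissibility is inherited from $g$ at $x_0$, not from the perturbation; your proposal has this backwards ("critical points accumulate near $p_0$" is not what is needed, and there is no self-generated admissibility near them). Unless you replace the linear perturbation by the exponential one and move, rather than accumulate, the critical points into the admissible ball, Step 1 fails and the rest of the argument has no starting point.
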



\subsubsection{A fully nonlinear version of Loewner-Nirenberg problem} 
\label{result-manifoldwithboundary}
	Let $(\bar{M},g)$ be a compact Riemannian manifold with smooth boundary $\partial M$, 
	let $M$ denote  the interior of $\bar{M}$. 
%
Based on Theorem \ref{lemma-Y6} and Morse theory, 
we prove 
\begin{theorem}
	\label{thm3-completemetric} 
	In addition to  \eqref{concave}, \eqref{elliptic-weak-3},  \eqref{sumfi>0},   \eqref{non-degenerate1}, \eqref{condition-key100-1} and
  \eqref{tau-alpha-sharp},
	we assume  that
	 one of the following conditions holds.
	\begin{itemize}
		\item[$(i)$]  $\lambda(g^{-1}A_{g}^{\tau,\alpha}) \in \bar{\Gamma}_{\mathcal{G}}^{f}$ in $\bar{M}$. 
		
		\item[$(ii)$]  
		$(\frac{\tau-2}{\tau-1}, \cdots, \frac{\tau-2}{\tau-1}, \frac{\tau}{\tau-1})\in\bar{\Gamma}_{\mathcal{G}}^{f}$.
	\end{itemize}
	Then the interior $M$ admits a smooth complete admissible   metric $g_u=e^{2u}g$ subject to  	 \eqref{main-equ0-modifiedSchouten}. Moreover, $g_u$ is minimal in the sense that $u\leq w$ for any admissible $C^2$   metric $g_w=e^{2w}g$ obeying \eqref{main-equ0-modifiedSchouten}.
\end{theorem}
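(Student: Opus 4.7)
The plan is to construct $u$ as the monotone limit of solutions of Dirichlet problems on shrinking collars, use uniform interior estimates to pass to the limit, and build an explicit $-\log d$-type barrier at $\partial M$ to ensure completeness; minimality then follows from the comparison principle. Exhaust $M$ by $M_\epsilon=\{x\in\bar M:d_g(x,\partial M)>\epsilon\}$ for $\epsilon\downarrow 0$, and solve the Dirichlet problem
\[
f(\lambda(g_{u_\epsilon}^{-1}A_{g_{u_\epsilon}}^{\tau,\alpha}))=\psi\ \mbox{in } M_\epsilon,\qquad u_\epsilon|_{\partial M_\epsilon}=T_\epsilon,
\]
with $T_\epsilon\to+\infty$ at a rate comparable to $-\log\epsilon$. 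Solvability in the admissible class parallels Theorem \ref{thm2-existence-closedmanifold}: a continuity method, whose openness rests on the uniform ellipticity given by Theorem \ref{Y-k+1-4-coro} (under \eqref{tau-alpha-sharp} and \eqref{assumption2-type2}, equivalent via \eqref{check-2}) and whose closedness rests on the $C^0,C^1,C^2$ a priori estimates of the structural framework together with \eqref{condition-key100-1}, needs an initial admissible pair. Under (i), the background $g$ is itself pseudo-admissible; under (ii), I build an admissible subsolution near $\partial M$ from an $(-\log d)$-model whose eigenvalues asymptote to a positive multiple of $(\tfrac{\tau-2}{\tau-1},\ldots,\tfrac{\tau-2}{\tau-1},\tfrac{\tau}{\tau-1})\in\bar{\Gamma}_{\mathcal{G}}^{f}$, extended to $\bar M$ by cutoff. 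The Morse-theoretic construction of the paper then produces admissible metrics.

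The comparison principle for \eqref{main-equ0-modifiedSchouten}, valid because $f$ is concave with $\sum_i f_i>0$ along $\{f=\psi\}$ by Theorem \ref{lemma-Y6}, shows $u_\epsilon$ is monotone in $\epsilon$. On each compact $K\subset M$, uniform interior $C^0$, gradient (via \eqref{condition-key100-1}) and $C^2$ estimates from the paper's structural results are independent of $\epsilon$; Evans--Krylov and Schauder theory upgrade the monotone limit to smooth convergence, and the limit $u\in C^\infty(M)$ solves \eqref{main-equ0-modifiedSchouten}. For completeness of $g_u$, I build an explicit lower barrier $\underline{u}=-\log d+\phi$ with bounded $\phi$ in a collar $\{0<d<\delta\}$; the conformal transformation law for $A_g^{\tau,\alpha}$ gives its eigenvalues concentrating as $d\to 0$ on a positive multiple of $(\tfrac{\tau-2}{\tau-1},\ldots,\tfrac{\tau-2}{\tau-1},\tfrac{\tau}{\tau-1})$, which under hypothesis (ii) lies in $\bar{\Gamma}_{\mathcal{G}}^{f}$; under (i), the same conclusion follows from the pseudo-admissibility of $g$ via an inductive perturbation. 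Choosing $\phi$ so that $\underline{u}$ is an admissible subsolution in the collar, comparison yields $u_\epsilon\geq\underline{u}$ there, hence $u\geq\underline{u}\to+\infty$ at $\partial M$; the integral $\int e^{\underline{u}}\,ds\sim\int d^{-1}\,ds=\infty$ along paths to $\partial M$ then gives completeness.

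Minimality is obtained as follows. Given any admissible $C^2$ solution $g_w=e^{2w}g$ on $M$, the admissibility condition $\lambda(g_w^{-1}A_{g_w}^{\tau,\alpha})\in\Gamma$ forces $w$ to blow up at $\partial M$ at least at the L--N rate $-\log d+O(1)$: this rigidity follows from the boundary asymptotics of $A_{g_w}^{\tau,\alpha}$ combined with the structural characterization of $\Gamma_{\mathcal{G}}^{f}$ from Theorem \ref{lemma-Y6}. Therefore on $\{d=\epsilon\}$ one has $w\geq T_\epsilon=u_\epsilon$ for all sufficiently small $\epsilon$, and the comparison principle on $M_\epsilon$ yields $u_\epsilon\leq w$ in $M_\epsilon$; letting $\epsilon\to 0$ gives $u\leq w$ on $M$. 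The main obstacles I expect are verifying admissibility of the barrier $\underline{u}$ under (i) and (ii)---a delicate computation using the conformal transformation law for $A_g^{\tau,\alpha}$ and the structure of $\bar{\Gamma}_{\mathcal{G}}^{f}$---and establishing the L--N rigidity of any admissible $w$ precisely enough to deduce $w\geq T_\epsilon$ on $\{d=\epsilon\}$ in the comparison step.
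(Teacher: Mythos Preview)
Your overall architecture (Dirichlet approximation with boundary data $\to+\infty$, interior estimates, barrier for completeness, comparison for minimality) matches the paper's in spirit, but several key steps are either misattributed or genuinely broken.

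\textbf{Where conditions (i)/(ii) actually enter.} In the paper these hypotheses are used \emph{only} in Lemma~\ref{lemma1-construct0}/\ref{lemma1-construct1-modifiedSchouten}: a Morse-theoretic construction producing a smooth conformal metric with $\lambda(g^{-1}A_{g_{\underline u}}^{\tau,\alpha})\in\mathring{\Gamma}_{\mathcal G}^f$ on all of $\bar M$. That admissible metric is the input for the $C^0$-estimate (Proposition~\ref{lemma-c0general}) and for solvability of the finite Dirichlet problem (Theorem~\ref{thm1-finiteBVC}). They play no role in the boundary barrier. The paper then solves on the \emph{fixed} domain $\bar M$ with constant data $\log k$ and lets $k\to\infty$; no exhaustion by $M_\epsilon$ is needed.

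\textbf{The barrier computation is wrong.} For $\underline u=-\log d+\phi$ with $\phi$ bounded, the conformal transformation law gives, at order $d^{-2}$,
\[
A_{g_{\underline u}}^{\tau,\alpha}\sim \frac{\alpha}{d^2}\Bigl(\tfrac{\tau-1}{n-2}+\tfrac{\tau-2}{2}\Bigr)g,
\]
because the rank-one contributions from $-\alpha\nabla^2\underline u$ and $\alpha\,d\underline u\otimes d\underline u$ cancel. The eigenvalues therefore concentrate on a multiple of $(1,\dots,1)$, not on $(\tfrac{\tau-2}{\tau-1},\dots,\tfrac{\tau}{\tau-1})$. The vector you wrote arises only from the gradient terms $\tfrac{\alpha(\tau-2)}{2}|\nabla v|^2g+\alpha\,dv\otimes dv$ in the Morse construction (Lemma~\ref{lemma1-construct1-modifiedSchouten}), not from the $-\log d$ barrier. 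The upshot is that your barrier does work---indeed more easily than you think, since $(1,\dots,1)\in\Gamma_n\subset\Gamma$ unconditionally---but your justification misuses (ii), and your ``inductive perturbation'' under (i) is unnecessary. The paper's completeness barrier (Subsection~\ref{subsect1-Boundarygradient}, equation~\eqref{h-def-k}) needs neither (i) nor (ii).

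\textbf{The minimality argument has a real gap.} Your claim that ``admissibility forces $w$ to blow up at the L--N rate $-\log d+O(1)$'' is false: solutions of the Dirichlet problem with finite boundary data (Theorem~\ref{thm1-finiteBVC}) are admissible on $M$ and bounded near $\partial M$. So you cannot conclude $w\ge T_\epsilon$ on $\partial M_\epsilon$, and the comparison step collapses. The paper's route (Proposition~\ref{unique-prop}) is different: it uses that under \eqref{non-degenerate2} the approximants $u^{(k)}$ themselves satisfy $\lambda(-g^{-1}A_{g_{u^{(k)}}})\in\mathring{\Gamma}_{\mathcal G}^f$ (Theorem~\ref{thm2-finiteBVC}), which is precisely what the comparison principle (Lemma~\ref{lemma1-comparision}) needs to yield $u^{(k)}\le\tilde u_\infty$ for any solution $\tilde u_\infty$ blowing up at $\partial M$. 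The rigidity you invoke does not follow from Theorem~\ref{lemma-Y6}.
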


Both $(i)$ and $(ii)$  are key assumptions for constructing 
$\mathring{\Gamma}_{\mathcal{G}}^{f}$-admissible conformal metrics, using the specific Morse theory technique.
 Notably,   these assumptions can be removed in specific contexts; see 
 Appendix \ref{sec1-cones} 
 for further results and details.

\begin{corollary} 
	\label{existence2-compact-construction}
	In Theorem \ref{thm3-completemetric} both $(i)$ and $(ii)$ 
	can be dropped
	if  $\mathring{\Gamma}_{\mathcal{G}}^{f}=\Gamma_k$ with
	$2\leq  k\leq  \frac{n}{2}$.
\end{corollary}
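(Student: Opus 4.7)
The plan is to reduce Corollary~\ref{existence2-compact-construction} to Theorem~\ref{thm3-completemetric} by first producing, for any smooth compact $(\bar M, g)$ with boundary, a smooth background conformal representative $\tilde g = e^{2v}g$ on $\bar M$ for which hypothesis $(i)$ of Theorem~\ref{thm3-completemetric} is automatically satisfied, i.e.\ $\lambda(\tilde g^{-1}A_{\tilde g}^{\tau,\alpha}) \in \bar\Gamma_k$ pointwise on $\bar M$. Once such $\tilde g$ is in hand, Theorem~\ref{thm3-completemetric} applied to $(\bar M, \tilde g)$ yields a smooth complete admissible conformal metric $e^{2u}\tilde g = e^{2(u+v)}g$ on $M$ solving \eqref{main-equ0-modifiedSchouten}. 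Since $v$ is bounded on the compact set $\bar M$, both completeness and the minimality property descend directly to the conformal class of $g$, which is what we need.

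The core task is therefore to construct $\tilde g$. I would exploit the presence of $\partial M$ by fixing a smooth defining function $\rho \colon \bar M \to [0,\infty)$ for $\partial M$, with $d\rho \neq 0$ on $\partial M$, and seeking $v$ of the form $v = -A\log(\rho+\varepsilon) + h$ for parameters $A,\varepsilon>0$ and a smooth correction $h \in C^\infty(\bar M)$ supported away from $\partial M$. The conformal transformation law for $A_{\tilde g}^{\tau,\alpha}$, combined with the identity \eqref{check-2} and the range condition \eqref{tau-alpha-sharp}, shows that near $\partial M$ the $|\nabla v|^2 g$ term dominates and produces eigenvalues lying in $\Gamma_n \subseteq \Gamma_k$ on a collar $\{\rho \leq \rho_0\}$ for $\varepsilon$ small. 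On the complementary compact region, where the singular contribution is uniformly bounded, I would take $A$ large and adjust $h$ so that the Hessian contribution absorbs the background curvature $A_g^{\tau,\alpha}$ of $g$ and the resulting tensor still lies in $\Gamma_k$ pointwise. This is where the range $2 \leq k \leq \tfrac{n}{2}$ enters, via the cone-theoretic considerations that the author defers to Appendix~\ref{sec1-cones}: in this range $\Gamma_k$ is large enough that bounded curvature can be absorbed by a sufficiently scalar-dominated Hessian contribution, without needing pointwise sign control on individual eigenvalues.

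The hardest step is the global patching, namely ensuring $\lambda(\tilde g^{-1}A_{\tilde g}^{\tau,\alpha}) \in \bar\Gamma_k$ uniformly across the transition region between the collar and the interior. I would handle this by first establishing a quantitative collar estimate giving $\Gamma_n$-admissibility on $\{\rho\leq\rho_0\}$ independently of $A$, then choosing $A$ large enough that the interior estimate closes, invoking the specific cone structure of $\Gamma_k$ for $k \leq \tfrac{n}{2}$ to close the inequality pointwise. With $\tilde g$ thus obtained, conditions \eqref{concave}, \eqref{elliptic-weak-3}, \eqref{sumfi>0}, \eqref{non-degenerate1}, \eqref{condition-key100-1}, \eqref{tau-alpha-sharp} are inherited from the hypotheses of Corollary~\ref{existence2-compact-construction}, hypothesis $(i)$ holds by construction, and the remainder of the argument is a direct invocation of Theorem~\ref{thm3-completemetric}.
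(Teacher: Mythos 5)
Your approach misses the much shorter route the paper actually takes, and the construction you sketch has real gaps. The corollary is not proved by manufacturing a new conformal representative satisfying hypothesis $(i)$; it is proved by observing that under the corollary's own hypotheses, hypothesis $(ii)$ of Theorem~\ref{thm3-completemetric} is \emph{already satisfied}, so the pair of alternatives $(i)$ or $(ii)$ can be ``dropped'' in the sense that the user need not check either. Concretely, when $\mathring{\Gamma}_{\mathcal{G}}^{f}=\Gamma_k$ one has $\varrho_{\Gamma_k}=n/k$, so \eqref{tau-alpha-sharp} forces $\tau<1$ (if $\alpha=-1$) or $\tau>1+\tfrac{k(n-2)}{n}$ (if $\alpha=1$); in the first case the vector $(\tfrac{\tau-2}{\tau-1},\dots,\tfrac{\tau-2}{\tau-1},\tfrac{\tau}{\tau-1})$ is a positive multiple of $(2-\tau,\dots,2-\tau,-\tau)$, and a direct evaluation of $\sigma_j$ shows $\sigma_j\geq 0$ for all $j\leq k$ precisely when $\tau\leq 2-\tfrac{2k}{n}$, which holds since $\tau<1\leq 2-\tfrac{2k}{n}$ when $k\leq n/2$; in the second case $n\geq 4$ and $k\geq 2$ force $\tau>2$, so the vector lies in $\Gamma_n$. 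Either way $(ii)$ holds, and the result is immediate from Theorem~\ref{thm3-completemetric}. This is exactly the cone computation codified in Corollary~\ref{coro68} of the appendix.

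Your proposed construction of $\tilde g$, by contrast, is not justified. First, you assert that near $\partial M$ the ``$|\nabla v|^2 g$ term dominates and produces eigenvalues lying in $\Gamma_n$,'' but the conformal transformation of $A^{\tau,\alpha}_g$ produces \emph{both} an isotropic term $\propto|\nabla u|^2 g$ and a rank-one term $\propto \dd u\otimes\dd u$, whose combined eigenvalue profile $(c_1,\dots,c_1,c_1-c_2)$ has a negative last entry for the parameter range $\alpha=-1$, $0\leq\tau<1$ (there $c_1=\tfrac{2-\tau}{2}|\nabla u|^2 < |\nabla u|^2 = c_2$), so the collar contribution alone is not in $\Gamma_n$. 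Second, in the interior your function $v=-A\log(\rho+\varepsilon)+h$ can have $\nabla v=0$ at interior critical points of $\rho$, and there the gradient boost you rely on vanishes; the paper avoids this with a Morse function having \emph{no} critical points (Lemma~\ref{lemma-diff-topologuy}), which is a genuine topological input you have not invoked. Third, even granting such a $v$, the proof of Lemma~\ref{lemma1-construct0} shows that the dominant gradient term has eigenvalue profile interpolating between $(0,\dots,0,1)$ and $(1,\dots,1,-1)$ depending on whether $v\leq-1$ or $v\geq 1$, so one cannot escape needing either that the relevant cone is of type~2 or that $(1,\dots,1,-1)$ lies in it; your appeal to ``the cone structure of $\Gamma_k$ for $k\leq n/2$'' absorbing arbitrary bounded curvature is not tied to any precise statement and does not substitute for this. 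In short: what you gesture at in the interior is exactly the content of hypothesis $(ii)$, which the paper verifies directly in two lines rather than by building a new metric.
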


From   the identity \eqref{check-2}, Theorem \ref{thm3-completemetric} has the following equivalent form.

 \begin{theorem}
\label{thm2-completemetric} 

Suppose, in addition to \eqref{concave}, \eqref{elliptic-weak-3}, \eqref{sumfi>0},  
  \eqref{non-degenerate1} and   \eqref{assumption2-type2},   that \eqref{condition-key100-1} holds for some $\sup_{\partial\Gamma}f<\sigma\leq \inf_M\psi.$ 
Assume that one of the following  holds.
\begin{itemize}
	\item[$(i)'$]  $\lambda(-g^{-1}A_g)\in \bar{\Gamma}_{\mathcal{G}}^{f}$ in $\bar{M}$. 
	
	\item[$(ii)'$]  
	 $(1,\cdots,1,-1)\in\bar{\Gamma}_{\mathcal{G}}^{f}$. 
\end{itemize}
Then the interior $M$ admits a smooth complete admissible   metric $g_u=e^{2u}g$ satisfying 	 \eqref{main-equ0-Schouten}. Moreover, $g_u$ is minimal in the sense that $u\leq w$ for any admissible $C^2$   metric $g_w=e^{2w}g$ obeying	 \eqref{main-equ0-Schouten}.
 \end{theorem}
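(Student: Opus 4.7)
The plan is to derive Theorem~\ref{thm2-completemetric} from Theorem~\ref{thm3-completemetric} via the algebraic identity \eqref{check-2}, which underlies Corollary~\ref{coro1-laplace1}. By that corollary, the type-2 assumption \eqref{assumption2-type2} on $f$ is equivalent to the existence of a pair $(\tau,\alpha)$ obeying \eqref{tau-alpha-sharp}. I fix one such pair---for concreteness $\alpha=1$ together with $\tau>1+(n-2)\varrho_{\Gamma_{\mathcal{G}}^f}^{-1}$---and set $\varrho:=(n-2)/(\tau-1)\in(0,n)$, so that the symmetric linear map $L(\lambda):=(\sum_j\lambda_j)\vec{\bf 1}-\varrho\lambda$ is invertible on $\mathbb{R}^n$. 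Define the auxiliary function $\hat f$ on the cone $\hat\Gamma:=L(\Gamma)$ by
\begin{equation*}
\hat f(\eta):=f\Bigl(L^{-1}\Bigl(\tfrac{n-2}{\alpha(\tau-1)}\eta\Bigr)\Bigr).
\end{equation*}
Then \eqref{check-2} reads $\hat f(\lambda(g_u^{-1}A_{g_u}^{\tau,\alpha}))=f(\lambda(-g_u^{-1}A_{g_u}))$, so solving \eqref{main-equ0-Schouten} for $f$ is the same problem as solving \eqref{main-equ0-modifiedSchouten} for $\hat f$.

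Next I verify that Theorem~\ref{thm3-completemetric} applies to $(\hat f,\hat\Gamma,\tau,\alpha)$. Concavity, ellipticity \eqref{elliptic-weak-3}, and the positivity \eqref{sumfi>0} are inherited from $f$ because $L$ is a linear bijection with $L^{-1}\vec{\bf 1}=(n-\varrho)^{-1}\vec{\bf 1}$, so $\sum_i\hat f_i$ is a positive multiple of $(\sum_j f_j)\circ L^{-1}$. Nondegeneracy \eqref{non-degenerate1} is unchanged since $\hat f$ and $f$ share their ranges, and \eqref{condition-key100-1} transfers through the chain rule up to a harmless additive modification of the constant $K_0$. That $(\tau,\alpha)$ satisfies \eqref{tau-alpha-sharp} for $\hat f$, i.e.\ $\varrho<\varrho_{\Gamma_{\mathcal{G}}^{\hat f}}$, follows from Corollary~\ref{coro1-laplace1} applied in reverse: $f$ is essentially $\hat f\circ L$ and is of full uniform ellipticity by the type-2 hypothesis, which forces exactly this inequality.

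For the alternatives, condition $(i)'$ converts to $(i)$ by applying $L$ to $\lambda(-g^{-1}A_g)$ and invoking \eqref{check-2} together with the positive scaling $(n-2)/(\alpha(\tau-1))>0$. For $(ii)'$, the direct calculation
\begin{equation*}
L(1,\ldots,1,-1)=(n-2)\vec{\bf 1}-\varrho(1,\ldots,1,-1)=(n-2)\Bigl(\tfrac{\tau-2}{\tau-1},\ldots,\tfrac{\tau-2}{\tau-1},\tfrac{\tau}{\tau-1}\Bigr)
\end{equation*}
shows that $(1,\ldots,1,-1)\in\bar\Gamma_{\mathcal{G}}^f$ is equivalent to $(\tfrac{\tau-2}{\tau-1},\ldots,\tfrac{\tau-2}{\tau-1},\tfrac{\tau}{\tau-1})\in\bar\Gamma_{\mathcal{G}}^{\hat f}$, which is $(ii)$. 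Applying Theorem~\ref{thm3-completemetric} then produces a smooth complete minimal admissible conformal metric $g_u$ solving \eqref{main-equ0-modifiedSchouten} for $\hat f$, hence \eqref{main-equ0-Schouten} for $f$; minimality is preserved because $\Gamma$-admissibility for $-A_g$ corresponds under $L$ to $\hat\Gamma$-admissibility for $A_g^{\tau,\alpha}$. The main technical hurdle is the bookkeeping around the substitution: confirming that \eqref{condition-key100-1} survives with a finite constant and that $\varrho_{\Gamma_{\mathcal{G}}^{\hat f}}$ exists and strictly exceeds $\varrho$. Once these points are settled the reduction is immediate.
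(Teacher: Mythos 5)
Your plan reduces Theorem~\ref{thm2-completemetric} to Theorem~\ref{thm3-completemetric} via the change of variable $L(\lambda)=(\sum_j\lambda_j)\vec{\bf 1}-\varrho\lambda$ underlying~\eqref{check-2}. The algebraic bookkeeping is mostly correct: $L^{-1}(1,\dots,1,1-\varrho)=(0,\dots,0,1)$, so \eqref{tau-alpha-sharp} for $\hat f$ is indeed equivalent to \eqref{assumption2-type2} for $f$; the identity $L(1,\dots,1,-1)=(n-2)\bigl(\tfrac{\tau-2}{\tau-1},\dots,\tfrac{\tau-2}{\tau-1},\tfrac{\tau}{\tau-1}\bigr)$ is right; concavity, $\sum_i\hat f_i>0$, and the level sets transform correctly; and \eqref{condition-key100-1} transfers with a \emph{multiplicative} (not additive) rescaling of $K_0$, which is harmless. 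But the argument has a fundamental flaw: it is circular. The paper's own proof goes in exactly the opposite direction --- it proves Theorem~\ref{thm2-completemetric} \emph{directly} (Lemma~\ref{lemma1-construct0} supplies an $\mathring\Gamma_{\mathcal{G}}^f$-admissible conformal metric via Morse theory; Theorems~\ref{thm1-finiteBVC} and~\ref{existence1-compact-2} solve the Dirichlet problem~\eqref{Dirichlet3-main-equ2-Schouten} with finite and then infinite boundary data; the lower barrier~\eqref{low-barrier1-2} gives completeness and the comparison principle gives minimality), and only afterward deduces Theorem~\ref{thm3-completemetric} as an equivalent reformulation. So invoking Theorem~\ref{thm3-completemetric} here is invoking a corollary of the very theorem you are trying to prove.

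There is also a concrete gap in the translation itself. The paper's standing hypothesis requires the cone to contain $\Gamma_n$, so for your $\hat\Gamma = L(\Gamma)$ you need $L^{-1}(\Gamma_n)\subseteq\Gamma$. But $L^{-1}(e_i)$ is a positive multiple of a permutation of $(1,\dots,1,1-n+\varrho)$, which lies in $\bar\Gamma$ only when $\varrho+\varrho_\Gamma\geq n$. With your choice $\alpha=1$ (so $0<\varrho<\varrho_{\Gamma_{\mathcal{G}}^f}$), assumption~\eqref{assumption2-type2} alone does not guarantee this: by Corollary~\ref{corollary1-example-type2} there are type-2 cones with $\varrho_{\Gamma_{\mathcal{G}}^f}$ (hence $\varrho$ and, when $\Gamma=\Gamma_{\mathcal{G}}^f$, also $\varrho_\Gamma$) arbitrarily close to $1$, and then $\varrho+\varrho_\Gamma<n$ for any $n\geq 3$. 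The choice $\alpha=-1$ with $\tau$ close to $1$ (so $\varrho\to-\infty$) would have avoided this, since then $L^{-1}(e_i)$ tends to a positive multiple of $e_i\in\mathring\Gamma_{\mathcal{G}}^f$; but writing ``for concreteness $\alpha=1$'' is not a harmless convention. Even setting the circularity aside, the proposal as written does not produce a valid instance of Theorem~\ref{thm3-completemetric}.
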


 Below, we   
 present a topological 
 obstruction to clarify that the assumptions labeled as \eqref{tau-alpha-sharp} 
  and \eqref{assumption2-type2} 
 in Theorem 
 \ref{thm3-completemetric}  
  and \ref{thm2-completemetric}
  cannot be further dropped.
\begin{remark}
	 [Topological obstruction] 
 Let $\Omega\subset \mathbb{R}^3$ be a  smooth, bounded and simply connected domain with  nontrivial higher order homotopy group $\pi_i(\Omega)\neq 0$ for some $i\geq 2$,  
 let $g_E$ be the standard Euclidean metric.  
 Let $g=e^{2u}g_E$ be the solution  to the equation \eqref{main-equ0-Schouten} on      $\Omega$   
 with right-hand side $\psi$ 
satisfying  \eqref{non-degenerate2}. We know $\lambda(-g^{-1}A_g)\in \mathring{\Gamma}_{\mathcal{G}}^{f}$.
Suppose in addition that 
$(0,\cdots,0,1)\in\partial{\Gamma}_{\mathcal{G}}^{f}$.
 Then by 
Proposition \ref{corollary-2}
the Einstein tensor of $g$ must be positive. According to the relation between sectional curvature  and 
  the Einstein tensor in dimension three
  (Lemma \ref{prop1-einstein-sectional}),
  the sectional curvature of $g$ is negative. 
  This  
 shows that $\Omega$ is diffeomorphism to $\mathbb{R}^3$ by the Cartan-Hadamard theorem, which contradicts to
  $\pi_i(\Omega)\neq 0$.
\end{remark}

In some case, we may obtain the asymptotic behavior and uniqueness of the solution. 
Denote the distance from $x$ to $\partial M$ with respect to $g$ by   \begin{equation}	\label{distance-function}	\mathrm{\sigma}(x)=\mathrm{dist}_g(x,\partial M).\end{equation}


 \begin{theorem}
	\label{thm2-unique-0}
Suppose  	$(\frac{\tau-2}{\tau-1}, \cdots, \frac{\tau-2}{\tau-1}, \frac{\tau}{\tau-1})\in\bar{\Gamma}_{\mathcal{G}}^{f}$ and that \eqref{concave}, \eqref{elliptic-weak-3},  \eqref{sumfi>0}, \eqref{tau-alpha-sharp}, \eqref{non-degenerate1}  and \eqref{condition-key100-1}
hold.
Assume  $\psi\big|_{\partial M}\equiv 	 f(D_o\vec{\bf1})$. 
Then the interior
$M$ admits a  smooth complete admissible metric  $ {g}_u=e^{2 {u}}g$ satisfying  \eqref{main-equ0-modifiedSchouten}. Moreover,
	\begin{equation}
		\label{asymptotic-rate3}
		\begin{aligned}
			\lim_{x\rightarrow\partial M} ({u}(x)+\log \mathrm{\sigma}(x))=\frac{1}{2}\log\frac{\alpha (n\tau+2-2n)}{2(n-2)D_o}. \nonumber
		\end{aligned}
	\end{equation} 
	%
	In addition, the   conformal metric is unique whenever  $f$ further satisfies \eqref{addistruc}.
\end{theorem}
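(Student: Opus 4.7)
The plan is to deduce Theorem \ref{thm2-unique-0} by combining the existence result already available from Theorem \ref{thm3-completemetric}, a refined boundary analysis via barrier constructions, and finally a comparison argument for uniqueness. Since the hypothesis
$\bigl(\tfrac{\tau-2}{\tau-1},\dots,\tfrac{\tau-2}{\tau-1},\tfrac{\tau}{\tau-1}\bigr)\in\bar{\Gamma}_{\mathcal{G}}^{f}$ is precisely case $(ii)$ of Theorem \ref{thm3-completemetric}, existence of a smooth minimal complete admissible conformal metric $g_u=e^{2u}g$ solving \eqref{main-equ0-modifiedSchouten} is immediate. The real work is in identifying the exact boundary constant and then promoting it to uniqueness.

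For the asymptotic behavior, I would probe $u$ with functions of the form $\varphi = -\log\sigma + \phi$ where $\phi$ is smooth up to $\partial M$. Using the conformal transformation law for the modified Schouten tensor,
\[
A_{g_\varphi}^{\tau,\alpha} = A_g^{\tau,\alpha} + \alpha\Bigl[-\nabla^2\varphi + d\varphi\otimes d\varphi + \tfrac{\tau-1}{n-2}\Delta\varphi\,g + \tfrac{\tau-2}{2}|\nabla\varphi|^2 g\Bigr],
\]
one checks that the singular $\sigma^{-2}$ contributions from $-\nabla^2\varphi + d\varphi\otimes d\varphi$ cancel, and what survives after multiplication by $g_\varphi^{-1}=\sigma^2 e^{-2\phi}g^{-1}$ gives
\[
\lambda\bigl(g_\varphi^{-1}A_{g_\varphi}^{\tau,\alpha}\bigr) = e^{-2\phi}\,\tfrac{\alpha(n\tau+2-2n)}{2(n-2)}\,\vec{\bf 1} + O(\sigma),
\]
with $|\nabla\sigma|=1$ on $\partial M$. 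Equating this leading vector with $D_o\vec{\bf 1}$ pins down $\phi\big|_{\partial M}=\tfrac12\log\tfrac{\alpha(n\tau+2-2n)}{2(n-2)D_o}$. The next-order $\sigma^{-1}$ correction involves $\nabla^2\sigma$, whose eigenvalue structure is essentially $(1,\dots,1,0)$ in a Fermi frame (tangential vs.\ normal), and this explains the special role of the vector $(\tfrac{\tau-2}{\tau-1},\dots,\tfrac{\tau-2}{\tau-1},\tfrac{\tau}{\tau-1})$: it is precisely this asymptotic direction that must lie in $\bar\Gamma_{\mathcal{G}}^f$ for the candidate barrier to remain admissible up to $\partial M$. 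Choosing $\varphi_\pm = -\log\sigma + c_0 \pm \varepsilon\sigma$ with $c_0 = \tfrac12\log\tfrac{\alpha(n\tau+2-2n)}{2(n-2)D_o}$ and $\varepsilon>0$ small, one verifies via the expansion above that $f(\lambda(g_{\varphi_-}^{-1}A_{g_{\varphi_-}}^{\tau,\alpha}))>\psi$ and $f(\lambda(g_{\varphi_+}^{-1}A_{g_{\varphi_+}}^{\tau,\alpha}))<\psi$ in a sufficiently thin tubular neighborhood of $\partial M$. The comparison principle, together with the minimality of $u$ guaranteed by Theorem \ref{thm3-completemetric}, then forces $\varphi_- \le u \le \varphi_+$ near $\partial M$, which on letting $\varepsilon\to 0$ yields $u+\log\sigma \to c_0$ at $\partial M$.

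For uniqueness under the extra hypothesis \eqref{addistruc}, I would argue as follows. Suppose $g_w=e^{2w}g$ is another smooth complete admissible solution. By applying the asymptotic analysis above (the argument is sufficient-condition free at that stage, requiring only admissibility) one shows $w+\log\sigma\to c_0$ at $\partial M$ as well. Hence $u-w$ extends continuously to $0$ on $\partial M$. Under \eqref{addistruc} the operator $(f,\Gamma)$ has $\Gamma_{\mathcal{G}}^f=\Gamma$ and satisfies the full strict monotonicity needed for the Alexandrov--Bakelman--Pucci / strong comparison principle applied on the difference $u-w$; since this difference tends to zero on $\partial M$ and $M$ is compact after boundary closure, the comparison principle forces $u\equiv w$.

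The main obstacle I anticipate is the second step: extracting the correct next-order expansion of the eigenvalues $\lambda(g_\varphi^{-1}A_{g_\varphi}^{\tau,\alpha})$ and verifying that the perturbed barriers $\varphi_\pm$ remain in the admissible cone $\mathring{\Gamma}_{\mathcal{G}}^f$. This is where the specific vector $(\tfrac{\tau-2}{\tau-1},\dots,\tfrac{\tau}{\tau-1})$ enters crucially, because the $O(\sigma^{-1})$ correction pulls the eigenvalue vector in precisely this asymptotic direction, and the hypothesis $(\tfrac{\tau-2}{\tau-1},\dots,\tfrac{\tau-2}{\tau-1},\tfrac{\tau}{\tau-1})\in\bar\Gamma_{\mathcal{G}}^f$ is exactly what keeps the computation inside the cone where $f$ is defined and monotone. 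Once this admissibility is secured, the maximum principle and the concavity \eqref{concave} do the rest, with \eqref{condition-key100-1} and \eqref{non-degenerate1} providing the quantitative control needed for the global estimates.
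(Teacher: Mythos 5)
Your reduction of existence to Theorem \ref{thm3-completemetric}~(ii) is exactly what the paper does (via the identity \eqref{check-2}, the statement is equivalent to Theorem \ref{thm1-unique-0} in the Schouten-tensor formulation). Your leading-order computation of $\lambda\bigl(g_\varphi^{-1}A_{g_\varphi}^{\tau,\alpha}\bigr)$ for $\varphi=-\log\sigma+\phi$ is correct and pins down the constant $\tfrac12\log\tfrac{\alpha(n\tau+2-2n)}{2(n-2)D_o}$; the identification of the role of $(\tfrac{\tau-2}{\tau-1},\dots,\tfrac{\tau}{\tau-1})$ in keeping the barrier inside $\bar\Gamma_{\mathcal{G}}^f$ is also consistent with the paper's Lemma~\ref{lemma2-asymptotic} (in the Schouten variables, the $O(\sigma^{-1})$ eigenvalue direction is $|\nabla\sigma|^2(1,\dots,1,-1)$, which must lie in $\bar\Gamma_{\mathcal{G}}^f$).

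Where the proposal has a real gap is in the \emph{upper} bound on $u+\log\sigma$, and more seriously in the upper bound on $w+\log\sigma$ for an \emph{arbitrary} complete admissible solution $w$, which your uniqueness argument needs. The symmetric supersolution $\varphi_+=-\log\sigma+c_0+\varepsilon\sigma$, defined only in a collar $\Omega_\delta$, can be compared with $w$ on $\Omega_\delta$ only if one already controls $w$ on the inner boundary $\{\sigma=\delta\}$; but for an arbitrary complete solution no such a priori bound is available, and the argument becomes circular. The paper sidesteps this entirely: Proposition~\ref{super123} derives the upper bound by observing that any admissible solution satisfies the semilinear inequality $\mathrm{tr}(-g^{-1}A_{g_w})\ge n\tilde C_\psi\,e^{2w}$ (by concavity, \eqref{concave-1}), so that the Loewner--Nirenberg metric of constant scalar curvature $-1$ on a nested collar serves as a barrier whose own boundary asymptotics are already known from \eqref{asymptotic-rate1-002}. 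This trace-comparison is the essential ingredient you are missing. The lower bound is then obtained asymmetrically with an explicit barrier of the form \eqref{1115-0}, which has an extra $\tfrac{1}{\sigma+\delta}-\tfrac{1}{\delta}$ term chosen to make the admissibility check modular (items (1)--(5) in the proof of Lemma~\ref{lemma2-asymptotic}), rather than your simple $\pm\varepsilon\sigma$ shift. Once both one-sided bounds are established for the minimal $u_\infty$ and for an arbitrary complete solution $\widetilde u_\infty$, the uniqueness step in Proposition~\ref{unique-prop} (the sandwich inequality \eqref{inequality1-intermed} plus the maximum principle, with $\psi$ constant on $\partial M$) proceeds as you outline. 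Also note that the role of \eqref{addistruc} is exactly that it forces $\Gamma_{\mathcal{G}}^f=\Gamma$, upgrading \eqref{non-degenerate1} to \eqref{non-degenerate2} so that Theorem~\ref{thm1-unique} (with its Theorem~\ref{thm2-finiteBVC} input, guaranteeing $\lambda\in\mathring\Gamma_{\mathcal{G}}^f$ on the approximating Dirichlet problems) applies; you noticed this, which is correct.
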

As a result, we may conclude that 
\begin{corollary}
	\label{thm1-infinitevolume}
	Suppose that $(M,g)$ is a compact connected Riemannian manifold of dimension $n\geq  3$ with smooth boundary. Then there  is   a unique complete conformal metric  $g_u$ 
	with  $Ric_{ {g}_u}<-1$ and
	$\frac{\sigma_n}{\sigma_{n-1}}(\lambda(-{g}_u^{-1}Ric_{ {g}_u}))=1$. 
\end{corollary}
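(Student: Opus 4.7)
The plan is to obtain Corollary \ref{thm1-infinitevolume} as a direct specialization of Theorem \ref{thm2-unique-0}. First I would choose the operator $f(\lambda)=\sigma_n(\lambda)/\sigma_{n-1}(\lambda)$ on $\Gamma=\Gamma_n$, together with the parameters $\alpha=-1$, $\tau=0$, so that the modified Schouten tensor becomes $A_g^{0,-1}=-\tfrac{1}{n-2}\mathrm{Ric}_g$ and the equation \eqref{main-equ0-modifiedSchouten} translates, by $1$-homogeneity of $f$, into $\tfrac{\sigma_n}{\sigma_{n-1}}(\lambda(-g_u^{-1}\mathrm{Ric}_{g_u}))=(n-2)\psi$. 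Selecting the constant right-hand side $\psi\equiv\tfrac{1}{n-2}$ then reproduces exactly the equation stated in the corollary.

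Next I would verify each hypothesis of Theorem \ref{thm2-unique-0} for this choice. The function $f$ is smooth, symmetric, concave and $1$-homogeneous on $\Gamma_n$ with $f_i>0$, so \eqref{concave}, \eqref{elliptic-weak-3}, \eqref{sumfi>0} and the homogeneity-type assumption \eqref{addistruc} all hold; in particular $\Gamma_{\mathcal{G}}^f=\Gamma_n$. With $\tau=0<1$ and $\alpha=-1$, \eqref{tau-alpha-sharp} is immediate, and the anchor vector $\bigl(\tfrac{\tau-2}{\tau-1},\ldots,\tfrac{\tau-2}{\tau-1},\tfrac{\tau}{\tau-1}\bigr)=(2,\ldots,2,0)$ lies in $\partial\Gamma_n\subset\bar\Gamma_{\mathcal{G}}^f$. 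Since $\sup_{\partial\Gamma_n}f=0$ and $\sup_{\Gamma_n}f=+\infty$, the choice $\psi\equiv\tfrac{1}{n-2}$ satisfies \eqref{non-degenerate1}. Euler's identity $\sum_i f_i(\lambda)\lambda_i=f(\lambda)$ yields \eqref{condition-key100-1} on $\partial\Gamma^{1/(n-2)}$ with $K_0=0$. Finally, since $f(\vec{\bf 1})=1/n$, the constant $D_o=\tfrac{n}{n-2}$ satisfies $f(D_o\vec{\bf 1})=\tfrac{1}{n-2}=\psi|_{\partial M}$.

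With the hypotheses in hand, Theorem \ref{thm2-unique-0} produces a unique smooth complete admissible conformal metric $g_u=e^{2u}g$ in $M$ solving $f(\lambda(g_u^{-1}A_{g_u}^{0,-1}))=\tfrac{1}{n-2}$. Setting $\nu=\lambda(-g_u^{-1}\mathrm{Ric}_{g_u})$, the $1$-homogeneity calculation above turns this into $\tfrac{\sigma_n}{\sigma_{n-1}}(\nu)=1$, i.e.\
\[
\sum_{i=1}^n \frac{1}{\nu_i}=1,\qquad \nu_i>0.
\]
A one-line elementary observation---if any $\nu_j\le 1$ then $\sum_i 1/\nu_i>1$---forces $\nu_i>1$ for every $i$, which is precisely $\mathrm{Ric}_{g_u}<-g_u$, i.e.\ the condition $\mathrm{Ric}_{g_u}<-1$ in the corollary's statement. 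Uniqueness of $g_u$ is inherited from the uniqueness clause of Theorem \ref{thm2-unique-0}, which applies because \eqref{addistruc} holds for $\sigma_n/\sigma_{n-1}$.

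There is no genuine analytic obstacle: the corollary is a clean specialization, and the only place where care is needed is the bookkeeping that translates $Ric<-1$ into an eigenvalue inequality, together with the harmonic-mean trick $\sum 1/\nu_i=1\Rightarrow \nu_i>1$. If anything counts as a subtle point, it is checking that $\psi\equiv\tfrac{1}{n-2}$ is matched with the boundary value $f(D_o\vec{\bf 1})$ required by Theorem \ref{thm2-unique-0} so as to obtain the complete metric and the asymptotic profile near $\partial M$ simultaneously.
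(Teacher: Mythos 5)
Your proof is correct and follows the route the paper implicitly intends: specialize Theorem~\ref{thm2-unique-0} to $f=\sigma_n/\sigma_{n-1}$ on $\Gamma_n$ with $\tau=0$, $\alpha=-1$, $\psi\equiv\tfrac{1}{n-2}$ and $D_o=\tfrac{n}{n-2}$, verifying the hypotheses via $1$-homogeneity (Euler's identity gives~\eqref{condition-key100-1} with $K_0=0$) and the anchor vector $(2,\ldots,2,0)\in\partial\Gamma_n$. The upgrade from admissibility ($\mathrm{Ric}_{g_u}<0$) to the stronger $\mathrm{Ric}_{g_u}<-g_u$ via the harmonic-mean identity $\sum_i\nu_i^{-1}=1\Rightarrow\nu_i>1$ is exactly the right observation, and uniqueness follows since $\sigma_n/\sigma_{n-1}$ satisfies~\eqref{addistruc}.
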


In the above corollary, a complete metric can be obtained in each conformal class, contrasting with \cite[Theorems A and C]{Lohkamp-1}. Our existence result may also provide a PDE proof of some results in \cite{Lohkamp-2}.

Again,
Theorem  \ref{thm2-unique-0} is equivalent  to the following theorem.
\begin{theorem}
	\label{thm1-unique-0}
	In addition to  \eqref{concave}, \eqref{elliptic-weak-3}, \eqref{sumfi>0} and \eqref{condition-key100-1},
	we assume $(0,\cdots,0,1)\in\mathring{\Gamma}_{\mathcal{G}}^{f}$ and $(1,\cdots,1,-1)\in\bar{\Gamma}_{\mathcal{G}}^{f}$. Suppose that
	$\psi$ satisfies \eqref{non-degenerate1} and
	$\psi\big|_{\partial M}\equiv   f(D_o\vec{\bf1})$. Then  the interior $M$ admits a smooth admissible complete metric $g_u=e^{2u}g$ satisfying \eqref{main-equ0-Schouten}. Moreover, the solution obeys the following asymptotic behavior
	\begin{equation}
		\begin{aligned}
			\lim_{x\rightarrow\partial M} ({u}(x)+\log \mathrm{\sigma}(x)) =  \frac{1}{2}\log\frac{1}{2D_o}. \nonumber
		\end{aligned}
	\end{equation}
	%
In addition, when $f$  satisfies \eqref{addistruc}, the resulting conformal metric is unique.
\end{theorem}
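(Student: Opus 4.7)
Existence of a smooth admissible complete metric $g_u = e^{2u}g$ on $M$ satisfying \eqref{main-equ0-Schouten} follows at once from Theorem~\ref{thm2-completemetric}, since all its hypotheses are in force and condition $(ii)'$ is exactly $(1,\ldots,1,-1)\in\bar{\Gamma}_{\mathcal{G}}^{f}$. The remaining content is the precise asymptotic expansion at $\partial M$ and, under \eqref{addistruc}, uniqueness. My strategy is to construct explicit radial sub- and supersolutions in a boundary collar $\{\sigma<\delta\}$ of $\partial M$, squeeze $u$ between them to extract the asymptotic, and then combine the asymptotic with a comparison principle to pin down uniqueness.

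For the barriers, set $\varphi_c(x) := -\log\sigma(x)+c$. Using $|\nabla\sigma|\equiv 1$ near $\partial M$, the identity $\nabla^2(-\log\sigma) = -\sigma^{-1}\nabla^2\sigma + \sigma^{-2}\,d\sigma\otimes d\sigma$, and the conformal transformation law for the Schouten tensor, the cross-term cancellations give
\begin{equation}
-g_{\varphi_c}^{-1}A_{g_{\varphi_c}} \,=\, e^{-2c}\bigl(\tfrac{1}{2}g - \sigma\,\nabla^2\sigma + \sigma^2(-A_g)\bigr) \,=\, \tfrac{e^{-2c}}{2}g + O(\sigma),\nonumber
\end{equation}
so the eigenvalues tend to $\tfrac{e^{-2c}}{2}\vec{\bf 1}$ as $\sigma\to 0$. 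Matching the boundary value $\psi|_{\partial M}= f(D_o\vec{\bf 1})$ forces $c_0 := \tfrac{1}{2}\log\tfrac{1}{2D_o}$. I then perturb to $\underline\varphi := -\log\sigma + c_0 - C\sigma^\beta$ and $\bar\varphi := -\log\sigma + c_0 + C\sigma^\beta$ with $\beta\in(0,1)$. Admissibility of the leading configuration follows from $\bar{\Gamma}_n\subset\Gamma$, while $(1,\ldots,1,-1)\in\bar{\Gamma}_{\mathcal{G}}^{f}$ together with \eqref{concave} ensures that the $\sigma^{\beta-1}$-order eigenvalue perturbation from the $C\sigma^\beta$-term remains in $\Gamma$ for small $\sigma$. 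Using \eqref{elliptic-weak}--\eqref{sumfi>0} and the continuity of $\psi$ at $\partial M$, one verifies that for suitably large $C$ and small $\delta$, $\underline\varphi$ is an admissible subsolution and $\bar\varphi$ an admissible supersolution in $\{\sigma<\delta\}$.

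Comparing $u$ with these barriers on $\{\sigma<\delta\}$, after adjusting the additive constant on the inner boundary $\{\sigma=\delta\}$ where $u$ is controlled by interior regularity coming from the full uniform ellipticity of Theorem~\ref{Y-k+1-4-coro}, yields
\begin{equation}
-\log\sigma + c_0 - C\sigma^\beta \,\leq\, u(x) \,\leq\, -\log\sigma + c_0 + C\sigma^\beta \quad \textrm{in } \{\sigma<\delta\},\nonumber
\end{equation}
which gives $\lim_{x\to\partial M}(u(x)+\log\sigma(x)) = c_0 = \tfrac{1}{2}\log\tfrac{1}{2D_o}$. For uniqueness under \eqref{addistruc}, any other admissible complete $C^2$ solution $g_w = e^{2w}g$ admits the same asymptotic by the same barrier argument applied to $w$. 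Thus for every $\epsilon>0$ we have $|u-w|<\epsilon$ in some collar of $\partial M$. On the complementary compact subset of $M$, assumption \eqref{addistruc} guarantees via Lemma~\ref{lemma3.4} that the linearization of $F(u) := f(\lambda(-g_u^{-1}A_{g_u}))$ has strictly positive spectral sum, so $F$ is uniformly elliptic and strictly monotone in the conformal factor; the standard comparison principle then forces $w \leq u + \epsilon$ globally, and symmetrizing in $(u,w)$ and sending $\epsilon\to 0$ gives $u=w$.

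The chief technical hurdle is the barrier verification: certifying that $\underline\varphi$ and $\bar\varphi$ are sub- and supersolutions uniformly in small $\sigma$ requires a first-order Taylor expansion of $f$ around $D_o\vec{\bf 1}$ and a choice of $(C,\beta)$ consistent with the cone $\Gamma$. This is where $(1,\ldots,1,-1)\in\bar{\Gamma}_{\mathcal{G}}^{f}$ and \eqref{concave} play the decisive role; once the barriers are secured, the remaining steps — squeezing, the asymptotic, and the comparison underpinning uniqueness — are relatively routine.
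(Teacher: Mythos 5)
The existence claim (via Theorem~\ref{thm2-completemetric}) is correctly cited, and your identification of the boundary constant $c_0 = \tfrac{1}{2}\log\tfrac{1}{2D_o}$ is right, as is the observation that the $\sigma^\beta$-order correction produces an eigenvalue perturbation proportional to $(1,\ldots,1,-\beta)$, which indeed lies in $\bar{\Gamma}_{\mathcal G}^f$ by convexity whenever $(1,\ldots,1,-1)\in\bar{\Gamma}_{\mathcal G}^f$. However, there is a genuine gap where you say ``Comparing $u$ with these barriers on $\{\sigma<\delta\}$.'' Your super- and subsolutions $\bar\varphi = -\log\sigma + c_0 + C\sigma^\beta$ and $\underline\varphi = -\log\sigma + c_0 - C\sigma^\beta$ both blow up at $\partial M$, exactly as the solution $u$ (or any other complete competitor $w$) does. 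A maximum-principle comparison on the collar $\{0<\sigma<\delta\}$ therefore has no boundary data on the outer component $\partial M$: the supremum of $u-\bar\varphi$ (respectively the infimum of $w-\underline\varphi$) need not be attained at an interior point or on $\{\sigma=\delta\}$, and there is nothing in the proposal that rules out the extremum escaping to $\partial M$ with a sign that destroys the inequality. This is precisely why the paper does not attempt a direct two-barrier squeeze.

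Instead, the paper obtains the upper asymptotic bound (Proposition~\ref{super123}) by comparing an arbitrary complete solution with the Loewner--Nirenberg/Aviles--McOwen constant-scalar-curvature solution on an \emph{annulus} $\Omega_{\delta,\delta'}$, which blows up on \emph{both} components of the annulus boundary, so that the competitor is finite on the closed compact annulus and the max is necessarily attained interiorly. The lower asymptotic bound (Lemma~\ref{lemma2-asymptotic} and Proposition~\ref{proposition-asymptotic}) uses the Dirichlet solutions $u_{k,\epsilon}$ with \emph{finite} boundary data and an explicit subsolution $h_{k,\epsilon,\delta}$ engineered to match that finite boundary value, then passes to the limit $k\to\infty$; an arbitrary complete solution is then compared to the bounded $u_{k,\epsilon}$ from above, which is legitimate. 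Your argument replicates neither device. Finally, for uniqueness under \eqref{addistruc} the decisive observation in the paper is that \eqref{addistruc} forces $\Gamma_{\mathcal G}^f = \Gamma$ (Lemma~\ref{lemma3.4}), so that \eqref{non-degenerate1} and \eqref{non-degenerate2} coincide, and the stronger Theorem~\ref{thm1-unique} (and its two-sided sandwich in Proposition~\ref{unique-prop}, which collapses to equality when $\psi|_{\partial M}$ is constant) applies; this reduction is absent from your sketch, whose uniqueness step rests on the unjustified claim that your barrier argument gives the asymptotic for \emph{every} complete solution $w$.
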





\subsubsection{A complete noncompact version of fully nonlinear Yamabe problem} 
\label{result-complete-noncompact}

Let $(M,g)$ be a complete noncompact connected smooth Riemannian manifold.
Definitely not too surprisingly, things become  more subtle in the case when the background space is complete and  noncompact.   
Unlike the Yamabe problem on closed manifolds, its complete noncompact version is not always solvable, as shown by Jin \cite{Jin1988}. Thus, solving the prescribed curvature problem in the conformal class of complete noncompact metrics requires additional assumptions.

Prior to Jin's work, Ni \cite{Ni-82Invent,Ni-82Indiana} established existence and nonexistence results for the prescribed scalar curvature equation on Euclidean spaces under suitable conditions. Under strong restrictions on the asymptotic behavior of the prescribed functions and the curvature of the background manifold, Aviles-McOwen \cite{Aviles1985McOwen,Aviles1988McOwen2} and Jin \cite{Jin1993} studied the prescribed scalar curvature equation on negatively curved complete noncompact manifolds. Recently, Fu-Sheng-Yuan \cite{FuShengYuan} extended these results to the prescribed 
$\sigma_k$-curvature equation \eqref{equation1-sigmak-taualpha} with $\tau<1$ and 
$\alpha=-1$. However, the imposed restrictions are quite strong and not optimal, even for the conformal prescribed scalar curvature equation. Determining sufficient conditions for the existence of solutions in the class of complete metrics remains an open problem.

Below, 
building on  Theorem \ref{thm3-completemetric} and Morse theory, we solve a fully nonlinear version of the Yamabe problem on complete noncompact Riemannian manifolds under certain asymptotic assumptions at infinity.

\begin{theorem}
	\label{thm2-noncompact} 
	Suppose, in addition to	\eqref{concave}, 
	\eqref{addistruc}, \eqref{non-degenerate1} and  \eqref{tau-alpha-sharp},   that 
	$(M,g)$ carries a  $C^2$ complete pseudo-admissible 
	metric  $g_{\underline{u}}=e^{2\underline{u}}g$  satisfying
	\begin{equation}
		\label{key-assum2}
		\begin{aligned}
			{f(\lambda(g_{\underline{u}}^{-1}A_{g_{\underline{u}}}^{\tau,\alpha}))}   \geq  {\psi} \mbox{ holds uniformly in $M\setminus K_0$.}
		\end{aligned}
	\end{equation}
	Here   $K_0$ is a compact subset of $M$.
	Then there  
	exists 
	a unique smooth complete maximal conformal  admissible metric 
	\eqref{main-equ0-modifiedSchouten}.  
\end{theorem}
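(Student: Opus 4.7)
The strategy is an exhaustion argument built on Theorem \ref{thm3-completemetric}, with the complete pseudo-admissible metric $g_{\underline{u}}$ playing the dual role of distinguished background representative of the conformal class (so that the hypotheses of Theorem \ref{thm3-completemetric} are verified) and of global barrier at infinity. First I would choose an increasing sequence of smooth, relatively compact subdomains $\Omega_1\subset\Omega_2\subset\cdots$ exhausting $M$, with $\overline{\Omega_j}\subset\Omega_{j+1}$ and $K_0\subset\Omega_1$. Since $g_{\underline{u}}$ is $\mathring{\Gamma}_{\mathcal{G}}^f$-pseudo-admissible in the sense of Definition \ref{def1-quasi-pseudo-admissible}, hypothesis $(i)$ of Theorem \ref{thm3-completemetric} is satisfied on each $\overline{\Omega_j}$ upon choosing $g_{\underline{u}}$ as the background metric; that theorem then produces a smooth admissible conformal metric $g_{u_j}=e^{2u_j}g$ on $\Omega_j$, complete on $\Omega_j$ (hence $u_j\to+\infty$ at $\partial\Omega_j$) and solving \eqref{main-equ0-modifiedSchouten}.

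The next step is to pass to the limit. For \emph{monotonicity}, since $u_{j+1}$ is smooth and bounded near $\partial\Omega_j\subset\Omega_{j+1}$ while $u_j\to+\infty$ there, the comparison principle for \eqref{main-equ0-modifiedSchouten}---valid because \eqref{addistruc} combined with \eqref{concave} and \eqref{elliptic-weak} yields strict ellipticity of the linearised operator in every admissible direction---gives $u_j\geq u_{j+1}$ on $\Omega_j$. Thus $\{u_j\}$ is pointwise decreasing on every compact $K\subset M$ once $\Omega_j\supset K$. For the \emph{lower bound}, I would use $g_{\underline{u}}$ as a subsolution barrier: on $\Omega_j\setminus K_0$ the inequality \eqref{key-assum2} is genuine, $u_j\geq\underline{u}$ near $\partial\Omega_j$ is trivial from $u_j\to+\infty$, and $u_j\geq\underline{u}-C_0$ on $\partial K_0$ follows from a local interior estimate controlling $u_j$ on the fixed compact $\overline{K_0}$; the maximum principle on $\Omega_j\setminus K_0$ then delivers $u_j\geq\underline{u}-C_0$ there, while a separate local bound on $K_0$ extends the inequality to all of $\Omega_j$, producing a uniform lower bound independent of $j$.

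With monotonicity and a uniform lower bound, the pointwise limit $u:=\lim_{j\to\infty}u_j$ is well defined on $M$. The interior $C^1$ and $C^2$ estimates from \cite{Guan12a,Gabor}, combined with Theorem \ref{Y-k+1-4-coro}---which via \eqref{tau-alpha-sharp} and Corollary \ref{coro1-laplace1} certifies the full uniform ellipticity of \eqref{main-equ0-modifiedSchouten} in the admissible cone---deliver uniform $C^{2,\alpha}_{\mathrm{loc}}$ bounds for $\{u_j\}$ by Evans--Krylov; Schauder bootstrapping then yields $C^\infty_{\mathrm{loc}}$ convergence, so $u\in C^\infty(M)$ is admissible and solves \eqref{main-equ0-modifiedSchouten}. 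Completeness of $g_u=e^{2u}g$ is immediate from $u\geq\underline{u}-C_0$ and completeness of $g_{\underline{u}}$, since $g_u\geq e^{-2C_0}g_{\underline{u}}$ as metric tensors. Maximality follows from the same comparison argument: for any admissible $C^2$ solution $w$ on $M$ and every $j$, the function $w$ is bounded on $\overline{\Omega_j}$ while $u_j\to+\infty$ at $\partial\Omega_j$, hence $w\leq u_j$ on $\Omega_j$ by comparison; letting $j\to\infty$ gives $w\leq u$ on $M$. Uniqueness then follows from the strong maximum principle under \eqref{addistruc}.

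The principal obstacle is the uniform lower bound on $u_j$: since \eqref{key-assum2} is only a subsolution inequality outside the compact $K_0$, the maximum principle cannot be applied with $\underline{u}$ as a global barrier across $\Omega_j$. The way forward is to split the domain into $\Omega_j\setminus K_0$, where \eqref{key-assum2} is genuine and the maximum principle applies thanks to the trivial boundary comparison at $\partial\Omega_j$ and the local bound at $\partial K_0$, and the fixed compact $K_0$, where a purely local a priori $C^0$ estimate for $u_j$ (coming from uniform ellipticity together with boundedness of $\psi$ and $\underline{u}$) closes the argument. A secondary subtlety is that the comparison principle invoked throughout must apply in the full admissible cone, which relies crucially on \eqref{addistruc}: by upgrading \eqref{elliptic-weak} to strict positivity $f_i>0$ in $\Gamma$, it guarantees that the linearised operator at any admissible solution is strictly elliptic in every direction and so supports the standard Hopf-type arguments.
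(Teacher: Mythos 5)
Your overall architecture — exhaust $M$ by compact domains, solve the infinity-boundary-value Dirichlet problem on each, obtain monotonicity from the comparison principle, take the decreasing limit, recover regularity from interior estimates, and read off maximality — is the same as the paper's. The gap is in your treatment of the uniform lower bound, which you correctly flag as the principal obstacle but do not actually close.

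You propose to apply the maximum principle on $\Omega_j\setminus K_0$ with $\underline{u}$ as a barrier and to supply the missing boundary condition $u_j\geq\underline{u}-C_0$ on $\partial K_0$ from ``a local interior estimate controlling $u_j$ on the fixed compact $\overline{K_0}$.'' No such estimate exists for the lower bound: the local gradient and Hessian estimates of Section~\ref{sec4-estimate-local} and the $C^0$ estimate of Proposition~\ref{lemma-c0general} all presuppose a strictly admissible comparison function ($\lambda(g^{-1}W[\underline{u}])\in\mathring{\Gamma}_{\mathcal{G}}^{f}$), and on $K_0$ the given $g_{\underline{u}}$ is only pseudo-admissible ($\lambda\in\bar{\Gamma}_{\mathcal{G}}^{f}$) and is not assumed to be a subsolution there. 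The argument you sketch is therefore circular: the lower bound at $\partial K_0$ is exactly what would need a barrier, but the barrier's validity is what requires the lower bound. The paper resolves this with Proposition~\ref{thm4-construction}, which modifies $\underline{u}$ on a compact neighborhood of $K_0$ — via the Morse-theoretic Lemma~\ref{lemma-diff-topologuy}, a cutoff, and the exponential-type conformal factor — to produce a genuinely $C^2$ conformal factor $\hat{u}$ with $\lambda(-g^{-1}A_{g_{\hat u}})\in\Gamma$ \emph{everywhere} in $M$, $f(\lambda(g_{\hat u}^{-1}A^{\tau,\alpha}_{g_{\hat u}}))\geq\psi$ everywhere, and $\hat{u}\geq\underline{u}-C_0$. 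Only then does Lemma~\ref{lemma1-comparision} apply across the whole of each $\Omega_j$ (Proposition~\ref{thm-c0-lower-2}), and completeness of $g_{u_\infty}$ follows from $u_\infty\geq\hat u\geq\underline{u}-C_0$. This construction is the crucial missing ingredient in your write-up. A secondary, more minor issue is your invocation of hypothesis $(i)$ of Theorem~\ref{thm3-completemetric} with the merely $C^2$ metric $g_{\underline{u}}$ as background; the paper instead invokes Theorem~\ref{existence1-compact-2} with the modified smooth background from the same construction.
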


  This is equivalent to the following theorem.

  \begin{theorem}
  	\label{thm1-noncompact} 
  	In addition to	\eqref{concave}, 
  	\eqref{addistruc}, \eqref{non-degenerate1} and  \eqref{assumption2-type2}, we assume that 
  	$(M,g)$ carries a  $C^2$ complete pseudo-admissible 
  	metric  $g_{\underline{u}}=e^{2\underline{u}}g$  subject to
  	\begin{equation}
  		\label{key-assum1}
  		\begin{aligned}
  			{f(\lambda(-g_{\underline{u}}^{-1}A_{g_{\underline{u}}}))}   \geq  {\psi} \mbox{ holds uniformly in $M\setminus K_0$.}
  		\end{aligned}
  	\end{equation}
  	Here   $K_0$ is a compact subset of $M$.
  	Then there  
  	exists 
  	a unique smooth complete maximal conformal  admissible metric 
  	\eqref{main-equ0-Schouten}.  
  \end{theorem}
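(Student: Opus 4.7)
Since Theorem \ref{thm2-noncompact} and Theorem \ref{thm1-noncompact} are equivalent via the identity \eqref{check-2}, I focus on Theorem \ref{thm1-noncompact}. The plan is an exhaustion-and-limit procedure built on Theorem \ref{thm2-completemetric}: on each member of a compact exhaustion of $M$ I produce a complete admissible metric with boundary blow-up, pass to a monotone decreasing limit, and exploit $\underline{u}$ as a lower sub-barrier to secure completeness of the limit metric.

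Fix a smoothly bounded exhaustion $K_0\Subset\Omega_1\Subset\Omega_2\Subset\cdots$ with $\bigcup_k\Omega_k=M$. Using $g_{\underline u}$ as the background metric, so that the pseudo-admissibility $\lambda(-g_{\underline u}^{-1}A_{g_{\underline u}})\in\bar{\Gamma}_{\mathcal{G}}^{f}$ supplies condition $(i)'$ of Theorem \ref{thm2-completemetric} on each $\bar\Omega_k$, I obtain for each $k$ a smooth complete admissible metric $g_{u_k}=e^{2u_k}g$ on $\mathrm{int}(\Omega_k)$ with $f(\lambda(-g_{u_k}^{-1}A_{g_{u_k}}))=\psi$, blowing up as $u_k\to+\infty$ on $\partial\Omega_k$, and minimal within the complete admissible class. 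Writing $F[u]:=f(\lambda(-g_u^{-1}A_{g_u}))$, the homogeneity \eqref{addistruc} together with the invariance of $A_g$ under constant shifts of $u$ gives the scaling $F[u+c]=e^{-2c}F[u]$, which supplies the sign needed for the comparison principle: at an interior positive maximum of $v-w$ between two admissible solutions of $F[\cdot]=\psi$ one has $\nabla^2 v\le \nabla^2 w$ and $v>w$, so the eigenvalues of $-g_v^{-1}A_{g_v}$ are dominated entrywise by those of $-g_w^{-1}A_{g_w}$ at that point, and the monotonicity \eqref{elliptic-weak} combined with the factor $e^{-2v}<e^{-2w}$ forces $F[v]<F[w]$, contradicting equality.

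Applied to $u_{k+1}-u_k$ on $\Omega_k$ (with $u_{k+1}<u_k=+\infty$ on $\partial\Omega_k$), this yields the monotonicity $u_{k+1}\le u_k$. Applied to $\underline u-u_k$ on $\Omega_k\setminus K_0$, where $F[\underline u]\ge \psi=F[u_k]$ and $\underline u<u_k=+\infty$ on $\partial\Omega_k$, it yields the lower bound $u_k\ge\underline u$ on $\Omega_k\setminus K_0$, once $u_k$ is controlled from below on the inner boundary $\partial K_0$ (see below). I note that the hypothesis $F[\underline u]\ge\psi>\sup_{\partial\Gamma}f$ from \eqref{non-degenerate1} forces $\underline u$ to be genuinely admissible outside $K_0$, so there is no degeneracy in the comparison. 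Setting $u:=\lim_k u_k$, the full uniform ellipticity guaranteed by \eqref{assumption2-type2} and Theorem \ref{Y-k+1-4-coro} delivers interior $C^{2,\alpha}$ a priori estimates, and Evans--Krylov together with Schauder then yield $u\in C^\infty(M)$, admissibility of $g_u$, and $F[u]=\psi$. Since $u\ge\underline u$ on $M\setminus K_0$, we have $g_u\ge g_{\underline u}$ there, so every $g_u$-divergent curve eventually exits $K_0$ with $g_u$-length no less than its $g_{\underline u}$-length, which itself diverges; hence $(M,g_u)$ is complete. Maximality is the same comparison applied once more: any admissible $w\in C^2(M)$ satisfies $w<+\infty=u_k$ on $\partial\Omega_k$, so $w\le u_k$ on $\Omega_k$ and $w\le u$ in the limit; uniqueness of the maximal solution is then immediate.

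The main obstacle is the lower bound on $\partial K_0$, where the hypothesis on $\underline u$ is silent. I would handle this by downward shift: for every $N>0$, the scaling $F[\underline u-N]=e^{2N}F[\underline u]\ge e^{2N}\psi\ge\psi$ shows $\underline u-N$ is again a sub-barrier on $M\setminus K_0$. Choosing $N$ large enough that $\underline u-N\le u_k|_{\partial K_0}$ uniformly in $k$ — feasible because $u_k|_{\partial K_0}\le u_1|_{\partial K_0}<\infty$ by monotonicity, and a Harnack-type interior lower bound for admissible solutions of the fully uniformly elliptic equation rules out the pathology $u_k|_{\partial K_0}\to-\infty$ — gives $u_k\ge\underline u-N$ on $\Omega_k\setminus K_0$, which is already sufficient for completeness since $e^{-2N}g_{\underline u}$ is still complete. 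Upgrading from $u\ge\underline u-N$ to $u\ge\underline u$ afterwards can be done either by iterating with a slightly smaller $K_0$ now that $u$ is known finite, or by a direct strong-maximum-principle argument using \eqref{sumfi>0}. A secondary subtlety is promoting the eigenvalues of the limit from $\bar\Gamma$ to $\Gamma$, i.e.\ genuine admissibility, which is again handled by the strong maximum principle.
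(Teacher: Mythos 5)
The overall skeleton you adopt — solve the Loewner--Nirenberg problem on each member of a compact exhaustion, use comparison to get monotonicity, take a decreasing limit, and secure completeness by a sub-barrier — is precisely the paper's strategy (Section \ref{sec-complete-noncompact-existence}). The substantive divergence, and the place where your proposal has a genuine gap, is the uniform lower bound on the approximating sequence $u_k$ near $K_0$, where \eqref{key-assum1} gives you no information about $\underline{u}$.

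You notice the problem yourself and propose to fix it by invoking ``a Harnack-type interior lower bound for admissible solutions of the fully uniformly elliptic equation'' to rule out $u_k|_{\partial K_0}\to-\infty$. No such estimate is established in the paper, and it is not a routine application of Krylov--Safonov: the equation involves the conformal factor $e^{-2u}$ multiplying $W[u]$, so as $u\to-\infty$ the zeroth-order nonlinearity blows up and the interior estimates (which in Section \ref{sec4-estimate-local} all presuppose a two-sided $C^0$ bound) degenerate. Without a two-sided $C^0$ bound on a fixed neighborhood of $K_0$, the decreasing sequence $u_k$ could a priori drop to $-\infty$ inside $K_0$, and the comparison you run on $\Omega_k\setminus K_0$ is powerless against this because the shift constant $N$ you would need is itself $k$-dependent. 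The paper resolves this by Proposition \ref{thm4-construction}: before running the exhaustion, it upgrades the \emph{partial} sub-barrier $\underline{u}$ (a sub-barrier only outside $K_0$) to a \emph{global} one $\hat{u}$, admissible on all of $M$ and satisfying $f(\lambda(-g_{\hat u}^{-1}A_{g_{\hat u}}))\geq\psi$ everywhere, with $\hat{u}\geq\underline{u}-C_0$. This is nontrivial: it glues $\underline{u}$ near infinity to a Morse-theoretically constructed admissible metric over a compact submanifold containing $K_0$ (via Lemma \ref{lemma-diff-topologuy} applied to a compact $M_2\supset\supset K_0$), then shifts by a constant using Lemma \ref{lemma3.4-2}. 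With $\hat{u}$ in hand, Lemma \ref{lemma1-comparision} applies on the whole of $\Omega_k$ (not just $\Omega_k\setminus K_0$) and gives $u_k\geq\hat{u}$ directly — Propositions \ref{thm-c0-lower-2} and \ref{thm-c0-upper-2} — which you never obtain. Your remaining steps (monotone decrease, interior estimates from full uniform ellipticity, Evans--Krylov and Schauder, maximality by comparison) match the paper once the lower bound is available, and your observation that \eqref{addistruc} plus \eqref{non-degenerate1} forces $\underline{u}$ to be genuinely admissible (not merely pseudo-admissible) on $M\setminus K_0$ is correct; but without constructing something like $\hat{u}$, the lower bound near $K_0$ is missing.
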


  In the case 
   $(f,\Gamma_1)=(\sigma_1,\Gamma_1)$, we obtain an analogue for the conformal prescribed scalar curvature equation. This extends \cite[Theorem A]{Aviles1988McOwen2} by Aviles-McOwen. Recall that, by a result of \cite{Bland89Kalka}, any noncompact manifold of dimension 
 $n\geq 3$ admits a complete metric with constant negative scalar curvature. Thus, on a noncompact manifold, every negative smooth function bounded from below can be realized as the scalar curvature of some complete metric.

\begin{remark}
	Theorems \ref{thm2-noncompact} and \ref{thm1-noncompact} establish that all geometric and analytic obstructions to the solvability of \eqref{main-equ0-modifiedSchouten} and \eqref{main-equ0-Schouten} on complete noncompact Riemannian manifolds are encoded in the asymptotic conditions \eqref{key-assum2} and \eqref{key-assum1} near infinity. These conditions are both sufficient and necessary:
	
	\begin{itemize}
		\item The equations \eqref{main-equ0-modifiedSchouten} and \eqref{main-equ0-Schouten} are not always solvable in the class of complete noncompact metrics.

		\item Any solution necessarily satisfies the asymptotic condition at infinity, making it a necessary condition for existence.
	\end{itemize} 
	Notably, aside from the asymptotic assumption at infinity, no additional restrictions on the prescribed functions or the curvature of the background manifold are required. This represents a significant improvement over previous related works.
\end{remark}

The paper is organized as follows. In Section \ref{sec1-topo}, we present topological obstructions to show that the uniform ellipticity assumption cannot be removed. In Section \ref{sec-construction-Morse}, we use Morse theory techniques to construct admissible metrics. In Section \ref{sec-description-MC}, we determine the maximal test cone via the asymptotic cone 
$\Gamma_{\mathcal{G}}^f$ in the generic case. As applications, we investigate the structure of  nonlinear concave operators in Section \ref{sec-structure}. Specifically, in Subsection \ref{sec2-PUE}, we introduce the notion of a test cone to measure partial uniform ellipticity. Further applications are provided in Subsection \ref{subsec63-application}. 
In Sections \ref{sec-C0-estimate} and \ref{sec4-estimate-local}, we derive global 
$C^0$-estimates and interior   estimates for first and second derivatives, respectively. In Section \ref{sec-existence-closed}, we establish existence results on closed manifolds. In Section \ref{sec-DirichletProblem}, we derive boundary estimates for the Dirichlet problem. In Section \ref{sec-existence-Dirichlet}, we solve the Dirichlet problem with either finite or infinite boundary data. In Section \ref{sec-Asymp-Uniquess}, we establish asymptotic behavior and uniqueness results for the fully nonlinear Loewner-Nirenberg problem. In Section \ref{sec-complete-noncompact-existence}, we prove existence results on complete noncompact manifolds under certain asymptotic conditions near infinity. Finally, 
in Appendix \ref{sec1-cones}, we further discuss open symmetric convex cones.

 



\section{Geometric optimal condition via topological obstruction}
\label{sec1-topo}

Unless otherwise indicated, 
$\Delta u$, $\nabla^2 u$ and $\nabla u$ refer to the Laplacian, Hessian, and gradient of 
$u$ under the Levi-Civita connection of 
$g$, respectively.

For the 
equations \eqref{main-equ0-modifiedSchouten} and \eqref{main-equ0-Schouten}, as we discussed in introduction: 
\begin{itemize}
	
	\item The equation	\eqref{main-equ0-modifiedSchouten} can be further reduced to the equation 
	of the form   \eqref{main-equ0-Schouten}, i.e.,
	\begin{equation}
		\begin{aligned}
			f(\lambda(- {g}_u^{-1} A_{ {g}_u})) =\psi, \nonumber
		\end{aligned}
	\end{equation}
	and verse vice. See also the identity \eqref{check-2}.

	\item Even for $f=\sigma_1$,  in general one could not expect the solvability of 
	$${f}(\lambda({g}_u^{-1} A_{ {g}_u})) =\psi$$ 
	in the conformal class of complete admissible metrics, due to the obstruction to the existence of complete metrics with positive scalar curvature.

\end{itemize}  
In addition,  we prove that
\begin{itemize}
	\item According to Theorem \ref{Y-k+1-4-coro},  $(0,\cdots,0,1)\in \mathring{\Gamma}_{\mathcal{G}}^{f} \Leftrightarrow $ 
	$f$ is of fully uniform ellipticity in $\Gamma$, i.e. 
	there exists a uniform positive constant $\theta$ such that
	\begin{equation}
		\label{fully-uniform2}
		\begin{aligned}
			f_{i}(\lambda)\geq  \theta\sum_{j=1}^n f_j(\lambda)>0 	\mbox{ in } \Gamma,	\,\, \forall 1\leq i\leq n. 
		\end{aligned}
	\end{equation}
	\item {\em Analytic optimal condition:} 
	\eqref{tau-alpha-sharp} is the optimal condition, under which the equation 
	\eqref{main-equ0-modifiedSchouten} is of uniform ellipticity according to Theorem \ref{Y-k+1-4-coro} and Corollary \ref{coro1-laplace1}. 
	Consequently,   \eqref{tau-alpha-sharp} is  the sharp condition ensuring that the equation \eqref{main-equ0-modifiedSchouten} can be reduced to \eqref{main-equ0-Schouten} being of uniform ellipticity.

	\item {\em Uniform ellipticity $\Rightarrow$ Solvability}: Theorems  \ref{thm2-completemetric}  
 	and \ref{thm3-completemetric} 
	reveal respectively that the uniform ellipticity implies the solvability of  \eqref{main-equ0-Schouten} 
	and \eqref{main-equ0-modifiedSchouten} in the conformal class of smooth  complete admissible metrics.

\end{itemize}

A natural question to raise  is as follows:
\begin{center}
	Can we drop the uniform ellipticity assumption 
	in Theorems \ref{thm2-completemetric} and  \ref{thm3-completemetric}?
 \end{center}

 The rest of this section is devoted to answering this problem via presenting topological obstructions.
To this end, below we prove  the following 
key ingredients.

\begin{lemma}\label{lemma1}
	Let $\Gamma$ be an open symmetric convex cone  $\Gamma\subsetneq\mathbb{R}^n$,  with vertex at origin and  nonempty 
	boundary 
	$\partial \Gamma\neq\emptyset$,  containing  the positive cone.
	For any $\lambda\in\Gamma$,
		\begin{equation}		\begin{aligned}		\lambda_{i_1}+\cdots +\lambda_{i_{1+\kappa_\Gamma}}>0, \mbox{   } \forall 1\leq i_1<\cdots<i_{1+\kappa_\Gamma}\leq n, \nonumber		\end{aligned}	\end{equation}
	where $\kappa_\Gamma$ is as in \eqref{kappa_1} below. \end{lemma}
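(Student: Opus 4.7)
The plan is to argue by contradiction, exploiting that $\Gamma$ is an open symmetric convex cone to symmetrize $\lambda$ into a vector with only two distinct coordinate values, at which point the defining maximality of $\kappa_\Gamma$ in \eqref{kappa_1} can be applied directly. Set $m=1+\kappa_\Gamma$. By the $S_n$-invariance of both hypothesis and conclusion, it suffices to treat the indices $\{1,\dots,m\}$. Suppose, contrary to the claim, that $S:=\lambda_1+\cdots+\lambda_m\le 0$.

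For the reduction step, I would average $\lambda$ over the Young subgroup $S_{\{1,\dots,m\}}\times S_{\{m+1,\dots,n\}}\subseteq S_n$ acting by permutation of coordinates. Each permuted copy of $\lambda$ lies in $\Gamma$ by symmetry, so convexity places the average $\nu$ in $\Gamma$ as well; by construction $\nu=(a,\dots,a,b,\dots,b)$ with $m$ copies of $a=S/m\le 0$ and $n-m$ copies of $b=\tfrac{1}{n-m}\sum_{j>m}\lambda_j$. Now I would split into two cases according to the sign of $b$. If $b>0$, then after perturbing by $-\epsilon(e_1+\cdots+e_m)$ inside the open cone $\Gamma$ (needed only in the boundary subcase $a=0$) and rescaling by a positive constant, the cone structure of $\Gamma$ produces a vector of the form $(-1,\dots,-1,M,\dots,M)\in\Gamma$ with $m=1+\kappa_\Gamma$ copies of $-1$ and $M>0$; this directly contradicts the maximality in the definition of $\kappa_\Gamma$. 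If instead $b\le 0$, then all entries of $\nu$ are non-positive while $\nu\neq 0$ (since $0\notin\Gamma$); averaging $\nu$ once more over the full symmetric group $S_n$ produces $c\vec{\bf 1}\in\Gamma$ with $c=(ma+(n-m)b)/n\le 0$, which is impossible because an open convex proper cone of $\mathbb{R}^n$ containing $\Gamma_n$ cannot meet the ray $\{-t\vec{\bf 1}:t\ge 0\}$.

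The main obstacle, though essentially routine, is the careful handling of the two boundary scenarios $a=0$ and $b=0$, where one must first perturb strictly inside the open cone $\Gamma$ before being entitled to rescale. Apart from that, the whole argument is a convex-analytic unpacking of the definition of $\kappa_\Gamma$ in \eqref{kappa_1}, and the result should follow directly from the two-level symmetrization outlined above, with the contradiction in the $b>0$ case harvested from the very maximality that defines $\kappa_\Gamma$.
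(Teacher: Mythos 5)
Your argument is correct and unpacks exactly what the paper's one-line proof (``based on the convexity, symmetry and openness of $\Gamma$'') is pointing at: average over a Young subgroup to reduce to a two-valued vector in $\Gamma$, then contradict the maximality of $\kappa_\Gamma$. The only steps left implicit are routine — from $(-1,\dots,-1,M,\dots,M)\in\Gamma$ one still has to add $\vec{\bf 1}\in\Gamma_n\subseteq\Gamma$ and rescale to produce $(0,\dots,0,1,\dots,1)\in\Gamma$ with $1+\kappa_\Gamma$ zeros, which is what actually violates the definition of $\kappa_\Gamma$; the $a=0$ subcase needs no perturbation since $(0,\dots,0,b,\dots,b)$ already rescales to the contradiction; and the degenerate case $m=n$ is absorbed by your $b\le 0$ branch.
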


\begin{proof}  
	The proof is based on the  convexity, symmetry and openness of $\Gamma$. 
\end{proof}

Specifically, we obtain
\begin{lemma}
	\label{lemma1-type1-yuan}
	Suppose $\Gamma$ is of type 1 (i.e. $(0,\cdots,0,1)\in\partial\Gamma$). Then 
	for any	$\lambda\in\Gamma$, 
	$$\sum_{j\neq i}\lambda_j>0, \,\, \forall 1\leq i\leq n. $$
	
\end{lemma}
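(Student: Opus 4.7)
My plan is to argue by contradiction, exploiting symmetry, convexity, the type~1 hypothesis $(0,\ldots,0,1)\in\partial\Gamma$, and the basic fact that $\vec{\bf 0}\notin\Gamma$ (since $\Gamma$ is an open cone with vertex at the origin and $\Gamma\neq\mathbb{R}^n$). By the symmetry of $\Gamma$ it suffices to treat $i=n$. Writing $L(\lambda)=\sum_{j=1}^{n-1}\lambda_j$, suppose for contradiction that $L(\lambda^0)\le 0$ for some $\lambda^0\in\Gamma$.

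First I would reduce to a point on the hyperplane $\{L=0\}$. Since $\vec{\bf 1}\in\Gamma_n\subseteq\Gamma$ with $L(\vec{\bf 1})=n-1>0$, convexity keeps the whole segment from $\lambda^0$ to $\vec{\bf 1}$ inside $\Gamma$, and the intermediate value theorem produces some $\mu\in\Gamma$ with $L(\mu)=0$. Next I would symmetrize $\mu$ by averaging over the $(n-1)!$ permutations fixing the last coordinate: every such permutation of $\mu$ lies in $\Gamma$ by symmetry, and convexity places their average $\bar\mu=(0,\ldots,0,\mu_n)$ in $\Gamma$. Thus $\mu_n\cdot(0,\ldots,0,1)\in\Gamma$.

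I would then split on the sign of $\mu_n$ to close the argument. If $\mu_n>0$, the cone property yields $(0,\ldots,0,1)\in\Gamma$, contradicting the type~1 assumption. If $\mu_n=0$, then $\vec{\bf 0}\in\Gamma$, contradicting $\Gamma\neq\mathbb{R}^n$. If $\mu_n<0$, the cone property gives $(0,\ldots,0,-1)\in\Gamma$; symmetry of $\Gamma$ forces every coordinate permutation of this vector into $\Gamma$, and averaging produces a negative multiple of $\vec{\bf 1}$, hence $-\vec{\bf 1}\in\Gamma$. A convex combination with $\vec{\bf 1}\in\Gamma$ then gives $\vec{\bf 0}\in\Gamma$, the same contradiction.

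The hard part will be the $\mu_n<0$ case, where I must exclude a ``reversed'' coordinate ray in $\Gamma$; this is handled by iterating symmetry and convexity to force $-\vec{\bf 1}\in\Gamma$ and invoking $\vec{\bf 1}\in\Gamma$ together with $\vec{\bf 0}\notin\Gamma$. This argument is essentially the borderline instance $1+\kappa_\Gamma=n-1$ of Lemma~\ref{lemma1}, so equivalently the proof amounts to verifying $\kappa_\Gamma\le n-2$ whenever $\Gamma$ is of type~1.
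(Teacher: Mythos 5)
Your proof is correct, and it takes a genuinely different route from the paper's. The paper proves Lemma \ref{lemma1-type1-yuan} as a quick corollary of Lemma \ref{lemma1}: sorting $\lambda_1\le\cdots\le\lambda_n$, it invokes Lemma \ref{lemma1} to get $\lambda_1+\cdots+\lambda_{1+\kappa_\Gamma}>0$, deduces $\lambda_{1+\kappa_\Gamma}>0$ (the largest of those terms must be positive), and then adds the remaining nonnegative $\lambda_{2+\kappa_\Gamma},\dots,\lambda_{n-1}$ to conclude $\sum_{j=1}^{n-1}\lambda_j>0$. Your argument is instead self-contained: reduce to the hyperplane $\{L=0\}$ by convexity and the intermediate value theorem, symmetrize over $S_{n-1}$ to land on the ray $\mathbb{R}(0,\dots,0,1)$, and then rule out all three signs of $\mu_n$ using the cone structure, the type-1 hypothesis, and $\vec{\bf 0}\notin\Gamma$. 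Your approach essentially re-derives the needed borderline case of Lemma \ref{lemma1} from scratch (it is the same symmetrize-and-average mechanism that proves that lemma), whereas the paper factors through the general Lemma \ref{lemma1} and then needs the extra sorting/monotonicity step since Lemma \ref{lemma1} directly controls only $(1+\kappa_\Gamma)$-term sums, not $(n-1)$-term sums. What your version buys is transparency and independence from Lemma \ref{lemma1}; what the paper's version buys is brevity by reusing an already-stated lemma. One small caveat on your closing remark: knowing $\kappa_\Gamma\le n-2$ alone does not ``equivalently'' give the result via Lemma \ref{lemma1}, because that lemma only yields positivity of $(1+\kappa_\Gamma)$-term partial sums; the monotonicity step the paper adds is genuinely needed to pass to $(n-1)$-term sums.
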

\begin{proof}
Note  $(0,\cdots,0,1)\in\partial\Gamma$  if and only if $0\leq \kappa_\Gamma\leq n-2$.
	Assume $\lambda_1\leq \lambda_2\leq \cdots \leq \lambda_n$.
	By the definition of $\kappa_\Gamma$ in \eqref{kappa_1} and  the openness of $\Gamma$,
	we see $\lambda_{1+\kappa_\Gamma}>0.$ Combining with Lemma \ref{lemma1}, we know $\sum_{j=1}^{n-1}\lambda_j>0$.
\end{proof}

Applying Lemma \ref{lemma1-type1-yuan} to the cone $\mathring{\Gamma}_{\mathcal{G}}^f$,  we derive
\begin{proposition}
	\label{corollary-2}
	Suppose 
	$(0,\cdots,0,1)\in 
	\partial{\Gamma}_{\mathcal{G}}^f$.
	If $\lambda(-g^{-1}A_g)\in \mathring{\Gamma}_{\mathcal{G}}^f$ then 
	$G_g
	>0$.  
\end{proposition}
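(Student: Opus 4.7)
The plan is to reduce the proposition to a simple pointwise linear-algebra identity expressing the eigenvalues of $g^{-1}G_g$ in terms of those of $-g^{-1}A_g$, and then to invoke Lemma \ref{lemma1-type1-yuan} applied to the cone $\mathring{\Gamma}_{\mathcal{G}}^f$. Since $G_g$ and $A_g$ are built from $Ric_g$ and $R_g$ via the same pointwise algebraic recipe, the whole proof can be carried out fiberwise at an arbitrary point, after simultaneously diagonalizing $g^{-1}Ric_g$ with eigenvalues $r_1,\ldots,r_n$.

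With that reduction in place, the first step is to write down explicitly the two linear maps $r\mapsto \mu$ and $r\mapsto\nu$, where $\mu_i$ are the eigenvalues of $-g^{-1}A_g$ and $\nu_i$ are those of $g^{-1}G_g$. Namely, $\mu_i=\tfrac{1}{n-2}\bigl(\tfrac{R_g}{2(n-1)}-r_i\bigr)$ and $\nu_i=r_i-\tfrac{R_g}{2}$, with $R_g=\sum_j r_j$. I then invert the first formula: summing over $i$ produces $R_g=-2(n-1)\sum_j\mu_j$, which substituted back gives $r_i=-(n-2)\mu_i-\sum_j\mu_j$. Plugging into $\nu_i$ collapses to the identity
\begin{equation}
\nu_i \;=\; (n-2)\sum_{j\neq i}\mu_j, \qquad 1\leq i\leq n. \nonumber
\end{equation}
This is essentially the only computation in the argument and is routine.

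The final step applies Lemma \ref{lemma1-type1-yuan} to the open symmetric convex cone $\mathring{\Gamma}_{\mathcal{G}}^f$. By hypothesis $(0,\ldots,0,1)\in\partial\Gamma_{\mathcal{G}}^f$, and since $\Gamma_{\mathcal{G}}^f$ is convex with nonempty interior we have $\partial\Gamma_{\mathcal{G}}^f=\partial\mathring{\Gamma}_{\mathcal{G}}^f$, so $\mathring{\Gamma}_{\mathcal{G}}^f$ is of type $1$. Since $\mu=\lambda(-g^{-1}A_g)\in\mathring{\Gamma}_{\mathcal{G}}^f$, the lemma yields $\sum_{j\neq i}\mu_j>0$ for every $i$, and combined with the identity above this gives $\nu_i>0$ for all $i$, i.e., $G_g>0$.

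There is no real obstacle here: the algebraic identity $\nu_i=(n-2)\sum_{j\neq i}\mu_j$ does the work, and everything else is bookkeeping. The only point I would verify carefully is the equality $\partial\mathring{\Gamma}_{\mathcal{G}}^f=\partial\Gamma_{\mathcal{G}}^f$, needed to transfer the type-$1$ hypothesis from $\Gamma_{\mathcal{G}}^f$ to its interior before applying Lemma \ref{lemma1-type1-yuan}; this is a standard consequence of convexity and having nonempty interior.
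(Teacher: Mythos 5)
Your proof is correct and follows the paper's route: the paper also derives the proposition by applying Lemma \ref{lemma1-type1-yuan} to the cone $\mathring{\Gamma}_{\mathcal{G}}^f$, leaving the linear-algebra identity $\nu_i = (n-2)\sum_{j\neq i}\mu_j$ (which you verify explicitly) implicit. Your verification of $\partial\mathring{\Gamma}_{\mathcal{G}}^f = \partial\Gamma_{\mathcal{G}}^f$ is a reasonable detail to flag, though in the paper it is taken for granted.
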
  

Moreover, we review a relation between sectional curvature and the Einstein tensor in dimension three. (The author acquired this formula from \cite{Gursky-Streets-Warren2010}).

\begin{lemma}
	\label{prop1-einstein-sectional}
	Let $K_g$ be the sectional curvature of $g$.
	Fix $x\in M^3$,  let $\Sigma\subset T_xM$ be a tangent $2$-plane,   $\vec{\bf n}\in T_xM$ the unit normal vector to $\Sigma$, then  $	G_g(\vec{\bf n},\vec{\bf n})=-K_g(\Sigma).$
\end{lemma}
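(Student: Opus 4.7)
The plan is to compute both sides in an adapted orthonormal frame and match them using the dimension-three identities relating sectional, Ricci, and scalar curvatures.

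First I would fix $x\in M^3$ and choose an orthonormal basis $\{e_1,e_2,\vec{\bf n}\}$ of $T_xM$ such that $\{e_1,e_2\}$ spans $\Sigma$. Writing $K_{ij}:=K_g(e_i\wedge e_j)$ for the sectional curvatures of the three coordinate 2-planes, I would expand the Ricci tensor evaluated on $\vec{\bf n}$ as
\begin{equation}
\mathrm{Ric}_g(\vec{\bf n},\vec{\bf n}) = K_g(e_1\wedge\vec{\bf n}) + K_g(e_2\wedge\vec{\bf n}), \nonumber
\end{equation}
and analogously expand $\mathrm{Ric}_g(e_i,e_i)$ for $i=1,2$. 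Summing these three identities gives
\begin{equation}
R_g = 2\bigl[K_g(e_1\wedge e_2) + K_g(e_1\wedge\vec{\bf n}) + K_g(e_2\wedge\vec{\bf n})\bigr]. \nonumber
\end{equation}

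Next I would substitute into the definition $G_g=\mathrm{Ric}_g - \tfrac12 R_g\,g$, using $g(\vec{\bf n},\vec{\bf n})=1$. The two terms $K_g(e_1\wedge\vec{\bf n})$ and $K_g(e_2\wedge\vec{\bf n})$ cancel, leaving exactly $-K_g(e_1\wedge e_2)=-K_g(\Sigma)$. This yields the claimed identity.

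There is no real obstacle here: the argument is a direct algebraic manipulation that works because in dimension three the Ricci tensor determines the Riemann tensor, so every sectional curvature in the formulas above is well-defined from $\mathrm{Ric}_g$. The only thing to be careful about is the sign/convention used to define $K_g(\Sigma)$ from the Riemann tensor, which must be chosen consistently with the convention $\mathrm{Ric}_g(X,X)=\sum_i R(e_i,X,X,e_i)$ adopted throughout the paper.
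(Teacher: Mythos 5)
The paper states this lemma without proof, deferring to the reference \cite{Gursky-Streets-Warren2010}, so there is no ``paper proof'' to match. Your argument is the standard direct computation and it is correct: with an orthonormal frame $\{e_1,e_2,\vec{\bf n}\}$ adapted to $\Sigma$, the identities $\mathrm{Ric}_g(\vec{\bf n},\vec{\bf n})=K(e_1\wedge\vec{\bf n})+K(e_2\wedge\vec{\bf n})$ and $R_g=2[K(e_1\wedge e_2)+K(e_1\wedge\vec{\bf n})+K(e_2\wedge\vec{\bf n})]$ plug into $G_g=\mathrm{Ric}_g-\tfrac12 R_g\,g$ and the two normal sectional curvatures cancel, leaving $-K_g(\Sigma)$.

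One small remark: the aside that ``in dimension three the Ricci tensor determines the Riemann tensor'' is true (the Weyl tensor vanishes), but it plays no role in your argument. Your computation uses only the elementary frame decompositions of $\mathrm{Ric}_g$ and $R_g$ into sectional curvatures, which hold in any dimension; the dimension-three magic is simply that a $2$-plane has a one-dimensional orthogonal complement, so a unit normal $\vec{\bf n}$ is well-defined and there are exactly three coordinate $2$-planes, allowing the cancellation to leave a single term. It would be cleaner to drop that remark or state the reason correctly.
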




With this at hand,
we may give some obstruction 
to show that, 
when $(0,\cdots,0,1)\in \partial{\Gamma}_{\mathcal{G}}^f$  
the equation
\eqref{main-equ0-Schouten} 
is in general  unsolvable in the conformal class of smooth complete  admissible metrics.

\vspace{2mm}
 \noindent
 {\bf Obstruction}.
	{\em Let $\Omega\subset \mathbb{R}^3$ be a  smooth  simply connected bounded domain with  
	 $\pi_i(\Omega)\neq 0$ for some $i\geq 2$  (e.g. $\Omega=B_2(0)\setminus \overline{B_1(0)}$).
Let $g=g_E$ be the standard Euclidean metric.
	Fix	$ \sup_{\partial \Gamma_{\mathcal{G}}^f} f< \psi<\sup_{\Gamma}f$. 
	Let  ${f}(\lambda(-{g}_u^{-1}A_{{g}_u}))= \psi$   be an equation	on $\Omega$ 
	with  
	$(0,\cdots,0,1)\in \partial{\Gamma}_{\mathcal{G}}^f$.
Suppose by contradiction that 
	Theorem \ref{thm2-completemetric} holds 
	for  such an equation. Then, 	 
	according to Proposition  \ref{corollary-2} and Lemma \ref{prop1-einstein-sectional},
 the resulting metric
	 $g_u$
 is  a complete 
	metric  
	with negative sectional curvature, 
which contradicts to the Cartan-Hadamard theorem. 
This
	reveals that 
	$(0,\cdots,0,1)\in \mathring{\Gamma}_{\mathcal{G}}^f$ 
 imposed in  
	Theorem \ref{thm2-completemetric}   cannot be further
	 dropped.}
	 \vspace{2mm}
	

	In summary,  
we  show that the uniform ellipticity assumption 
(i.e. $(0,\cdots,0,1)\in \mathring{\Gamma}_{\mathcal{G}}^{f}$) imposed in  
Theorem \ref{thm2-completemetric}   cannot be further dropped in general.
Also, this shows that  assumption \eqref{tau-alpha-sharp} 
in Theorem \ref{thm3-completemetric}  is sharp.
	
	\begin{remark}
		Denote $B_{r}(a)=\left\{x\in \mathbb{R}^n: |x-a|^2<r^2\right\}.$ Suppose 
		$\overline{B_{r_1}(a_1)}, \cdots, \overline{B_{r_m}(a_m)}$ are pairwise disjoint.
	Let   $\cup_{i=1}^m B_{r_i}(a_i)\subset B_r(0)$, and 
	 denote
		$\Omega=B_{r+1}(0)\setminus (\cup_{i=1}^m \overline{B_{r_i}(a_i)}).$
		In general one could not expect to solve
		the following problem 
		\begin{equation}  f(e^{-u}\lambda[D^2 u])= \psi    		\mbox{ in }\Omega, \,\, \lim_{x\to\partial\Omega}u(x)=+\infty 
			\end{equation} 
		with some type 1 cone $\mathring{\Gamma}_{\mathcal{G}}^f$.
		Suppose that  it 
		has a  solution $u\in C^\infty(\Omega)$
		 with $\lambda(D^2u)\in\mathring{\Gamma}_{\mathcal{G}}^f$. According to Lemma \ref{lemma1},
		$u$ is a strictly  $(\kappa_{ {\Gamma}_{\mathcal{G}}^f}+1)$-plurisubharmonic function. 
		This is a contradiction by   some results of Harvey-Lawson \cite{Harvey2012Lawson-Adv,Harvey2013Lawson-IUMJ}. 
	\end{remark}

\begin{remark}
	By the formula (see \cite{Gursky2003Viaclovsky})
	\begin{equation}
		\begin{aligned}
			A_{{g}_u}^{\tau,\alpha}
			= A_{g}^{\tau,\alpha}
			+\frac{\alpha(\tau-1)}{n-2}\Delta u g-\alpha  \nabla^2 u
			+\frac{\alpha(\tau-2)}{2}|\nabla u|^2 g
			+\alpha  \dd{u}\otimes\dd{u},  \nonumber
		\end{aligned}
	\end{equation}
	we know that the equation \eqref{main-equ0-modifiedSchouten} 
	has the form
	\begin{equation}
		\label{equation1-modi-Shouten}
		\begin{aligned}
			f(\lambda[g_u^{-1}(\Delta u \cdot g- \varrho\nabla^2u+\chi(x,\nabla u))])=\psi. \nonumber
		\end{aligned}
	\end{equation}
	By Corollary \ref{coro1-laplace1} we may deduce that the equation
	is of uniform ellipticity at solutions with $\lambda[g^{-1}(\Delta u\cdot g- \varrho\nabla^2u+\chi(x,\nabla u))]\in\Gamma$, provided that $\varrho$ satisfies \eqref{assumption-4}.
	
On the other hand, 
as a consequence of Corollary \ref{coro1-laplace1}  
	we may 
	answer a question 
	posed in \cite[Page 5]{GGQ2022}.
 We shall mention that an analogous result 
 was also obtained by previous draft \cite{yuan-PUE2-note}  	when $f$ satisfies \eqref{addistruc}. 
\end{remark}

\section{Construction of admissible conformal metrics via Morse theory}
\label{sec-construction-Morse}

To complete the proof of the results for the fully nonlinear Loewner-Nirenberg and Yamabe problems, the first step is to construct admissible conformal metrics on the manifolds. In this section, we introduce Morse theory techniques for this purpose, following the author's earlier drafts  \cite{yuan-PUE-conformal,yuan-PUE2-note} of this paper.

\subsection{Construction of metrics on closed manifolds}

\begin{lemma}
	\label{lemma2-closed-construction} 
	Let $\Gamma$ be an open symmetric convex cone  $\Gamma\subsetneq\mathbb{R}^n$,  with vertex at origin and  nonempty 
	boundary 
	$\partial \Gamma\neq\emptyset$,  containing  the positive cone.
	Assume $(0,\cdots,0,1) \in\Gamma.$
	Let $(M,g)$ be a closed connected Riemannian manifold with
	\begin{equation} \label{def2-quasi-admissible}
		\begin{aligned}	\lambda(-g^{-1}A_{g}) \in \bar \Gamma  \mbox{ in }  M, 	 
			\mbox{ and } 
			\lambda(-g^{-1}A_{g}) \in 
			{\Gamma} \mbox{ at some } x_0\in  M.  
	\end{aligned}\end{equation}
	Then $M$ admits a smooth conformal metric $g_{\underline{u}}$ with 
	\begin{equation}
		\lambda(-g^{-1}A_{g_{\underline{u}}}) \in \Gamma \mbox{ in } M.
	\end{equation}
\end{lemma}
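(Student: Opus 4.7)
The plan is to construct $\underline{u}$ from a Morse function on $M$, exploiting the type~$2$ hypothesis via the conformal transformation.

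Starting from the identity
\[
-g^{-1}A_{g_u}=-g^{-1}A_g+g^{-1}T(u), \qquad T(u):=\nabla^2u-du\otimes du+\tfrac{1}{2}|\nabla u|^2 g,
\]
I would first extract the cone-theoretic consequence of $(0,\dots,0,1)\in\Gamma$: by the symmetry, convexity, and openness of $\Gamma$, every coordinate vector lies in $\Gamma$, hence the strong monotonicity $\bar\Gamma+\Gamma_n\subseteq\Gamma$ holds. Combined with Weyl's eigenvalue inequality, this reduces the lemma to constructing a smooth $\underline{u}$ such that $T(\underline{u})$ dominates a strictly positive multiple of $g$ (as quadratic forms) on the closed proper subset
\[
V:=\{x\in M:\lambda(-g^{-1}A_g(x))\in\partial\Gamma\}\subsetneq M,
\]
while staying small enough elsewhere to preserve the eigenvalue condition in $\Gamma$ by continuity.

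The Morse-theoretic input is to choose a Morse function $\phi\in C^{\infty}(M)$ whose critical set lies entirely inside the nonempty open set $M\setminus V$; such $\phi$ exists because Morse functions are $C^{\infty}$-dense and their critical points can be freely displaced into any prescribed nonempty open subset (which $M\setminus V$ is, since it contains $x_0$) by pre-composition with a diffeomorphism of $M$. Setting $\underline{u}=F(\phi)$ for a smooth $F:\mathbb{R}\to\mathbb{R}$, a direct computation using $\nabla u=F'(\phi)\nabla\phi$ and $\nabla^2 u=F'(\phi)\nabla^2\phi+F''(\phi)d\phi\otimes d\phi$ gives
\[
T(F(\phi))=F'(\phi)\nabla^2\phi+\bigl(F''(\phi)-F'(\phi)^2\bigr)d\phi\otimes d\phi+\tfrac{1}{2}F'(\phi)^2|\nabla\phi|^2\,g.
\]
For the specific choice $F(s)=-\log(C-s)$ with $C>\max_M\phi$ the middle term vanishes identically (since $F''-(F')^2\equiv 0$), and one is left with the clean expression
\[
T(\underline{u})=\frac{\nabla^2\phi}{C-\phi}+\frac{|\nabla\phi|^2}{2(C-\phi)^2}\,g.
\]

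The main obstacle is quantitative: one must ensure $T(\underline{u})\geq\delta g>0$ on all of $V$, while keeping $T(\underline{u})$ small near the critical points of $\phi$ where the $g$-coefficient vanishes. On $V$, critical points of $\phi$ are absent, so $|\nabla\phi|$ is bounded below by some $\eta>0$, and after a preliminary rescaling $\phi\to\mu\phi$ with $\mu$ and $C-\max_M\phi$ appropriately tuned, the strictly positive term $|\nabla\phi|^2 g/(2(C-\phi)^2)$ dominates the bounded Hessian contribution $\nabla^2\phi/(C-\phi)$ throughout $V$. On $M\setminus V$, $\lambda(-g^{-1}A_g)$ already lies strictly inside $\Gamma$, so applying a smooth cutoff to $F$ near the critical values of $\phi$ (where $T(\underline{u})$ reduces to the bounded term $F'(\phi)\nabla^2\phi$) keeps $T(\underline{u})$ controllably small and the eigenvalues inside $\Gamma$ by openness. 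The monotonicity $\bar\Gamma+\Gamma_n\subseteq\Gamma$ then delivers $\lambda(-g^{-1}A_{g_{\underline{u}}})\in\Gamma$ on all of $M$, completing the construction.
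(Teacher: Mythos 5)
Your broad strategy is the same as the paper's: choose a Morse function whose critical set is displaced into the good region, compose it with a one-dimensional profile $F$, and show the resulting conformal factor pushes $\lambda(-g^{-1}A_{g})$ into $\Gamma$. But your choice of $F(s)=-\log(C-s)$ is exactly the wrong one, and it leaves a genuine gap.

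The special feature of $(0,\dots,0,1)\in\Gamma$ (type~$2$) is that the rank-one tensor $d\phi\otimes d\phi$, whose eigenvalues are $(0,\dots,0,|\nabla\phi|^{2})$, lies in $\bar{\Gamma}$ with a large slack when $|\nabla\phi|\neq 0$. In the decomposition
\[
T(F(\phi))=F'(\phi)\,\nabla^{2}\phi+\bigl(F''(\phi)-F'(\phi)^{2}\bigr)\,d\phi\otimes d\phi+\tfrac12 F'(\phi)^{2}|\nabla\phi|^{2}\,g,
\]
the middle term is the one that uses the hypothesis, and with the paper's choice $F(s)=e^{Ns}$, $s\leq -1$, its coefficient $N^{2}e^{Ns}(1-e^{Ns})$ is positive and dominates the others as $N\to\infty$. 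By engineering $F''-(F')^{2}\equiv 0$, you throw that leverage away. What remains is
\[
T(\underline{u})=\frac{\nabla^{2}\phi}{C-\phi}+\frac{|\nabla\phi|^{2}}{2(C-\phi)^{2}}\,g,
\]
and you need this to be positive semi-definite where $\lambda(-g^{-1}A_{g})$ touches $\partial\Gamma$. Multiplying by $C-\phi>0$, that is $\nabla^{2}\phi+\frac{|\nabla\phi|^{2}}{2(C-\phi)}\,g\geq 0$. This inequality is scale-invariant under $\phi\mapsto\mu\phi$, $C\mapsto\mu C$ (both sides scale by $\mu$), so the rescaling you invoke cannot improve the ratio $|\nabla\phi|^{2}/\bigl[(C-\phi)\|\nabla^{2}\phi\|\bigr]$ uniformly over $V$; at points where $\phi$ is far from $\max_{M}\phi$, the coefficient $\frac{|\nabla\phi|^{2}}{2(C-\phi)}$ stays bounded while $\nabla^{2}\phi$ can have negative eigenvalues of comparable size. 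The claim that ``the $g$-term dominates throughout $V$'' therefore does not hold.

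The cutoff step has the same problem from the other side. Smoothly flattening $F$ near the critical values of $\phi$ necessarily makes $F''-(F')^{2}$ nonzero (indeed negative somewhere) in the transition region, re-introducing a $d\phi\otimes d\phi$ contribution with a sign you cannot control, precisely in a zone where $\lambda(-g^{-1}A_{g})$ may already be arbitrarily close to $\partial\Gamma$. The paper avoids all of this by keeping $F(s)=e^{Ns}$ with $v\leq -1$, so the dominant object for $N\gg1$ outside $\overline{B_{r_{0}}(x_{0})}$ is $\nabla^{2}v+N(1-e^{Nv})\,dv\otimes dv$, whose eigenvalues converge to the ray through $(0,\dots,0,1)\in\Gamma$, and then it simply absorbs the smaller terms by openness. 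If you want to keep the $-\log(C-\phi)$ profile you would need an additional mechanism to produce a positive $(0,\dots,0,\cdot)$-direction contribution, but as written the argument does not close.
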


\begin{proof}  
	Given a $C^2$-smooth  function $w$ on $M$,  
	denote the critical set by 
	$\mathcal{C}(w)=\{x\in  M: dw(x)=0\}.$
	By 
	the openness of $\Gamma$ and the assumption
	\eqref{def2-quasi-admissible} 
	of $\Gamma$-{\em quasi-admissible} metric, there exists a uniform positive constant $r_0$
	such that  
	\begin{equation}
		\label{key1-metric}
		\begin{aligned}
			\lambda(-g^{-1}A_{g})\in\Gamma \mbox{ in } \overline{B_{r_0}(x_0)}.
		\end{aligned}
	\end{equation}
	
	Take a smooth Morse function $w$ with
	the critical set  
	$$\mathcal{C}(w)=\{p_1,\cdots, p_m, p_{m+1}\cdots p_{m+k}\}$$
	among which $p_1,\cdots, p_m$ are all the critical points  being in $M\setminus \overline{B_{r_0/2}(x_0)}$. 
	Pick $q_1, \cdots, q_m\in {B_{r_0/2}(x_0)}$ but not the critical point of $w$. By the homogeneity lemma 
	(see e.g. \cite{Milnor-1997}), 
	one can find a diffeomorphism
	$h: M\to M$, which is smoothly isotopic to the identity, such that 
	\begin{itemize}
		\item $h(p_i)=q_i$, $1\leq i\leq m$.
		\item $h(p_i)=p_i$, $m+1\leq i\leq m+k$.
	\end{itemize}
	Then we obtain a  smooth Morse function 
	\begin{equation}
		\label{Morse1-construction}
		\begin{aligned} 
			v=w\circ h^{-1}.  \nonumber
		\end{aligned}
	\end{equation} 
	We can check that 
	\begin{equation}
		\label{key2}
		\begin{aligned}
			\mathcal{C}(v)=\{q_1,\cdots, q_m, p_{m+1}\cdots p_{m+k}\}\subset \overline{B_{r_0/2}(x_0)}.
		\end{aligned}
	\end{equation}
	
	Next we complete the proof. We can assume $w\leq -1$. Then $v=w\circ h^{-1}\leq 1$.   	Take $\underline{u}=\mathrm{e}^{Nv},$ $g_{\underline{u}}=\mathrm{e}^{2\underline{u}}g,$
	then  
	\begin{equation} 		\label{key3-2}
		\begin{aligned}
			-A_{g_{\underline{u}}}
			=\,&   -A_{g} +N\mathrm{e}^{Nv} \nabla^2 v 	+N^2 \mathrm{e}^{Nv} \left[ \dd{v}\otimes \dd{v} + \mathrm{e}^{Nv} \left(\frac{1}{2}|\nabla v|_{g}^2 g -\dd{v}\otimes \dd{v} \right) \right] \\ 
			=\,&  
			-A_{g} +  N \mathrm{e}^{Nv} \left[  \nabla^2 v + N(1-\mathrm{e}^{Nv})\dd{v}\otimes \dd{v} \right] + \frac{1}{2}N^2 \mathrm{e}^{2Nv}
			|\nabla v|_{g}^2g.
	\end{aligned}\end{equation}  
	Notice that  
	\begin{equation}
		\label{key3}
		\begin{aligned}
			\lambda(g^{-1} \dd{v}\otimes \dd{v} )
			=  |\nabla v|_{g}^2  (0,\cdots,0,1).
		\end{aligned}
	\end{equation}

	\noindent{\bf Case 1}: 
	By \eqref{key1-metric} and the openness of $\Gamma$,  
	\begin{equation}
		\begin{aligned}
			\lambda(g^{-1}( -A_{g} +N\mathrm{e}^{Nv} \nabla^2 v )) \in \Gamma \mbox{ in } \overline{B_{r_0}(x_0)}.
		\end{aligned}
	\end{equation} 
Notice $v\leq -1$, we know $e^{Nv}\ll1$ if $N\gg1.$	  
	Combining with \eqref{key3}  we know
	\begin{equation}
		\begin{aligned}
			\lambda(-g^{-1}A_{g_{\underline{u}}})
			\in \Gamma \mbox{ in } \overline{B_{r_0}(x_0)}.  \nonumber
		\end{aligned}
	\end{equation}
	
	\noindent{\bf Case 2}:  
	By \eqref{key2} there is a uniform positive constant $m_0$ such that $|\nabla v|^2\geq m_0$ in $M\setminus\overline{B_{r_0}(x_0)}$.
	Hence by \eqref{key3}, $(0,\cdots,0,1)\in\Gamma$ and the facts that $v\leq -1$
	and  $\Gamma$ is open, for $N\gg1$
	\begin{equation}
		\lambda(g^{-1}( \nabla^2 v + N(1-\mathrm{e}^{Nv}) \dd{v}\otimes \dd{v}))
		\in \Gamma \mbox{ in } M\setminus\overline{B_{r_0}(x_0)}.
	\end{equation} 
	Together with 
	 \eqref{def2-quasi-admissible},
$ \lambda(-g^{-1}A_{g_{\underline{u}}})
			\in \Gamma \textrm{ in } M\setminus\overline{B_{r_0}(x_0)}.  $
\end{proof}

\subsection{Construction of metrics on compact manifolds with boundary}
\label{subsec1-construction-admissiblemetric}

\begin{lemma} 
	\label{lemma1-construct0}
	
	Let $(\bar{M},g)$ be a smooth, compact,  connected Riemannian manifold with smooth boundary $\partial M$.  
	Let $\Gamma$ be an open symmetric convex cone  $\Gamma\subsetneq\mathbb{R}^n$,  with vertex at origin and  nonempty 
	boundary 
	$\partial \Gamma\neq\emptyset$,  containing  the positive cone.
	In addition to $(0,\cdots,0,1)\in\Gamma$, we assume one of the following conditions  holds
	\begin{enumerate}
		\item[$(1)$] $\lambda(-g^{-1}A_g)\in \bar{\Gamma}$ in $\bar{M}$.
		\item[$(2)$] $(1,\cdots,1,-1)\in\bar{\Gamma}$.
	\end{enumerate}
	Then we have a smooth conformal metric $g_{\underline{u}}=e^{2\underline{u}}g$ such that $
			\lambda(-g^{-1}A_{g_{\underline{u}}})\in  {\Gamma}  \mbox{ in } \bar{M}.
	$
\end{lemma}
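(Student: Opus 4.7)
The plan is to extend the Morse-theoretic construction of Lemma \ref{lemma2-closed-construction} to the compact-with-boundary setting, treating the two hypotheses separately.

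For case $(1)$, the only gap from Lemma \ref{lemma2-closed-construction} is that its proof uses a point where $-A_g$ is strictly in $\Gamma$, which we do not assume here. I would supply it by a preliminary small conformal change. Pick an interior point $x_0$ and a smooth bump function $\eta$ with a strict local minimum at $x_0$, so that $\nabla\eta(x_0)=0$ and $\nabla^2\eta(x_0)$ is positive definite. For $\epsilon>0$ small, set $\tilde g=e^{2\epsilon\eta}g$; the conformal transformation of the Schouten tensor gives at $x_0$
\begin{equation*}
-A_{\tilde g}(x_0) = -A_g(x_0) + \epsilon\,\nabla^2\eta(x_0)\in\bar\Gamma+\Gamma_n\subset\Gamma,
\end{equation*}
and by continuity $-A_{\tilde g}\in\Gamma$ strictly on a small ball $B_{r_0}(x_0)$. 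A careful choice of $\epsilon$ and of the support of $\eta$ ensures the indefinite higher-order terms $-\epsilon^2\dd\eta\otimes\dd\eta+\tfrac{\epsilon^2}{2}|\nabla\eta|^2 g$ do not push $\tilde g$ out of $\bar\Gamma$-admissibility elsewhere. Once $\tilde g$ is strictly admissible on $B_{r_0}(x_0)$, the proof of Lemma \ref{lemma2-closed-construction} applies verbatim to $(\bar M,\tilde g)$: take a Morse function $w\leq -1$ on $\bar M$, use the Morse homogeneity lemma to isotope its critical points into $B_{r_0}(x_0)$, and set $\underline u=e^{Nv}$ for $N\gg 1$.

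For case $(2)$, the condition $(1,\ldots,1,-1)\in\bar\Gamma$ suggests the linear ansatz $\underline u=Nv$ for a Morse function $v$. A direct computation yields
\begin{equation*}
-A_{g_{\underline u}} = -A_g + N\nabla^2 v + \tfrac{N^2}{2}\bigl(|\nabla v|^2 g - 2\,\dd v\otimes\dd v\bigr),
\end{equation*}
and the dominant $O(N^2)$ term has eigenvalues $\tfrac{N^2}{2}|\nabla v|^2(1,\ldots,1,-1)\in\bar\Gamma$ at non-critical points of $v$. To push strictly into $\Gamma$, I would add a small exponential correction $\delta e^{Mv}$ whose leading contribution is a rank-one term in the strict direction $(0,\ldots,0,1)\in\Gamma$; combining via $\bar\Gamma+\Gamma\subset\Gamma$ yields strict admissibility. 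Isolated critical points of $v$ are concentrated into a single region via the Morse homogeneity lemma, and handled either by a local auxiliary bump as in case $(1)$ or by the local positivity of $\nabla^2 v$ near a Morse minimum.

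The main obstacle throughout is producing a strictly-interior eigenvalue vector when the leading term lies on $\partial\Gamma$. This is overcome by the convex-cone absorption $\bar\Gamma+\Gamma\subset\Gamma$ (which follows from $\Gamma_n\subset\Gamma$) combined with the strict direction $(0,\ldots,0,1)\in\Gamma$, supplied by either a positive-definite Hessian at a local minimum (case $(1)$) or by the gradient term $\dd u\otimes\dd u$ of a conformal factor with $|\nabla u|>0$ (case $(2)$). The Morse-homogeneity device, which concentrates the critical behavior of the auxiliary function into a single ball, is the other main ingredient carried over from Lemma \ref{lemma2-closed-construction}.
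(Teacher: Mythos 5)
The paper's proof of this lemma rests on a differential-topological fact you do not invoke: on a compact manifold with nonempty smooth boundary, there exists a smooth function $v$ with \emph{no critical points at all} (Lemma~\ref{lemma-diff-topologuy}, proved by doubling $M$ and applying the homogeneity lemma). Once such a $v$ is in hand, one takes $\underline u=e^{Nv}$ and notes that $|\nabla v|$ is bounded below everywhere, so the rank-one term $N^2e^{Nv}\dd v\otimes\dd v$ with eigenvector direction $|\nabla v|^2(0,\ldots,0,1)\in\Gamma$ dominates uniformly in $N$. In case $(1)$, normalizing $v\leq-1$ makes $N e^{Nv}\ll1$ so the residual $\bar\Gamma$-boundary term is a small perturbation; in case $(2)$, normalizing $v\geq1$ makes the $e^{2Nv}$-term (which lies in $\bar\Gamma$) dominant in magnitude while the $e^{Nv}\dd v\otimes\dd v$ term still supplies strict interiority via $\bar\Gamma+\Gamma\subseteq\Gamma$. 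The manifold-with-boundary case is thus \emph{easier} than the closed case, not harder.

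Your case $(1)$ has a genuine gap. You preprocess $g\mapsto\tilde g=e^{2\epsilon\eta}g$ to produce a point of strict $\Gamma$-admissibility, but any smooth $\eta$ on compact $\bar M$ with a nondegenerate interior minimum at $x_0$ must, somewhere else (either at an interior maximum, or near where $\eta$ decays), have $\nabla^2\eta$ with negative eigenvalues; the $O(\epsilon)$-leading perturbation $\epsilon\nabla^2\eta$ to $-A_g$ then has negative eigenvalue content, and if $\lambda(-g^{-1}A_g)$ is already on $\partial\bar\Gamma$ there, the deformed metric leaves $\bar\Gamma$ no matter how small $\epsilon$ is. You acknowledge this (``careful choice of $\epsilon$ and of the support of $\eta$'') but supply no mechanism to enforce it; so the reduction to Lemma~\ref{lemma2-closed-construction} is not justified as written. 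Your case $(2)$ has the right ingredients (the $\bar\Gamma$-direction $(1,\ldots,1,-1)$ in the leading $O(N^2)$ term, absorption via $\bar\Gamma+\Gamma\subseteq\Gamma$, a strict contribution in the $(0,\ldots,0,1)$-direction), but the linear-plus-exponential ansatz and the critical-point patching are left at the sketch level; using a critical-point-free $v$ and a single exponential ansatz $\underline u=e^{Nv}$ with $v\geq1$ eliminates that bookkeeping entirely.

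In short: replacing the Morse-homogeneity-plus-local-patching strategy with Lemma~\ref{lemma-diff-topologuy} is the key structural simplification you are missing, and it also removes the need for the problematic preliminary conformal change in case $(1)$.
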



\begin{remark}
When $\Gamma$ is of type 1  (i.e. $(0,\cdots,0,1)\in\partial{\Gamma}$), the author     \cite{yuan-PUE3-construction} further proved that the construction   still holds if $(1,\cdots,1,-1)\in {\Gamma}$.
\end{remark}

This is equivalent to the following lemma.
\begin{lemma}
	 \label{lemma1-construct1-modifiedSchouten}
	 Let $(\bar{M},g)$ be a smooth, compact,  connected Riemannian manifold with smooth boundary $\partial M$.  
	 Let $\Gamma$ be an open symmetric convex cone  $\Gamma\subsetneq\mathbb{R}^n$,  with vertex at origin and  nonempty 
	 boundary 
	 $\partial \Gamma\neq\emptyset$,  containing  the positive cone.
	 In addition to \eqref{tau-alpha-sharp}, we assume one of the following conditions holds
	 \begin{itemize}
	 	\item[$(1)'$]  $\lambda(g^{-1}A_{g}^{\tau,\alpha}) \in \bar \Gamma $ in $\bar{M}$. 
	 	
	 	\item[$(2)'$]  
	 	$(\frac{\tau-2}{\tau-1}, \cdots, \frac{\tau-2}{\tau-1}, \frac{\tau}{\tau-1})\in\bar{\Gamma}$.
	 \end{itemize}
 Then we have a smooth conformal metric $g_{\underline{u}}=e^{2\underline{u}}g$ such that
$
 	\lambda( g^{-1}A_{g_{\underline{u}}}^{\tau,\alpha})\in  {\Gamma}  \mbox{ in } \bar{M}.
 $
\end{lemma}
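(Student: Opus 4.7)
The plan is to reduce the claim directly to Lemma~\ref{lemma1-construct0} by applying it to the auxiliary cone corresponding to $\Gamma$ under the linear change of variables underlying \eqref{check-2}. Set $\varrho=\frac{n-2}{\tau-1}$ and introduce
\[
\tilde{\Gamma}=\bigl\{\mu\in\mathbb{R}^n : (\mu_1+\cdots+\mu_n)\vec{\bf 1}-\varrho\mu\in\Gamma\bigr\}.
\]
Since the map $\mu\mapsto(\mu_1+\cdots+\mu_n)\vec{\bf 1}-\varrho\mu$ is linear, permutation-invariant, and invertible (its eigenvalues are $n-\varrho$ and $-\varrho$, both nonzero under \eqref{tau-alpha-sharp}), $\tilde\Gamma$ is automatically an open, symmetric, convex cone with vertex at the origin. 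My task is then to verify the hypotheses of Lemma~\ref{lemma1-construct0} for $\tilde\Gamma$, namely $\Gamma_n\subseteq\tilde\Gamma$, $(0,\ldots,0,1)\in\tilde\Gamma$, and the relevant analogue of $(1)$ or $(2)$.

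The containments $\Gamma_n\subseteq\tilde\Gamma$ and $(0,\ldots,0,1)\in\tilde\Gamma$ both reduce, after a short computation exploiting that $\Gamma$ is a cone, to showing $(1,\ldots,1,1-\varrho)\in\Gamma$. When $\alpha=-1$ this is immediate since $\varrho<0$ forces the vector into $\Gamma_n\subseteq\Gamma$. When $\alpha=1$, the condition \eqref{tau-alpha-sharp} rearranges to $0<\varrho<\varrho_\Gamma$, and the identity
\[
(1,\ldots,1,1-\varrho)=(1-\varrho/\varrho_\Gamma)\,\vec{\bf 1}+(\varrho/\varrho_\Gamma)(1,\ldots,1,1-\varrho_\Gamma)
\]
exhibits the vector as a convex combination of $\vec{\bf 1}\in\Gamma$ and $(1,\ldots,1,1-\varrho_\Gamma)\in\partial\Gamma$ with strictly positive weight on the interior point; hence it lies in $\Gamma$ by the standard fact that an open convex cone absorbs such combinations.

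To match $(1)'$ and $(2)'$ with the hypotheses of Lemma~\ref{lemma1-construct0}, I would invoke the pointwise eigenvalue identity
\[
(\mu_1+\cdots+\mu_n)\vec{\bf 1}-\varrho\mu=\tfrac{n-2}{\alpha(\tau-1)}\lambda(g^{-1}A_g^{\tau,\alpha}),\quad\mu:=\lambda(-g^{-1}A_g),
\]
whose prefactor is positive under \eqref{tau-alpha-sharp}; this immediately gives $(1)'\Leftrightarrow(1)$ for $\tilde\Gamma$. A direct substitution also yields
\[
(n-2)\vec{\bf 1}-\varrho(1,\ldots,1,-1)=(n-2)\bigl(\tfrac{\tau-2}{\tau-1},\ldots,\tfrac{\tau-2}{\tau-1},\tfrac{\tau}{\tau-1}\bigr),
\]
which identifies $(2)'$ with $(1,\ldots,1,-1)\in\bar{\tilde\Gamma}$, namely $(2)$ for $\tilde\Gamma$. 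Lemma~\ref{lemma1-construct0} then produces $\underline u\in C^\infty(\bar M)$ with $\lambda(-g^{-1}A_{g_{\underline u}})\in\tilde\Gamma$ on $\bar M$; combining this with the above identity applied to $g_{\underline u}$ and the positive homogeneity of $\Gamma$ (which makes cone membership insensitive to the conformal factor relating $g^{-1}$ and $g_{\underline u}^{-1}$) translates it into $\lambda(g^{-1}A_{g_{\underline u}}^{\tau,\alpha})\in\Gamma$, as required. The only substantive step is the absorption argument in the second paragraph, which is precisely where the borderline case $\varrho=\varrho_\Gamma$ would be lost; everything else is algebraic bookkeeping driven by \eqref{check-2}.
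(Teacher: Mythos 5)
Your argument is correct and is exactly the change-of-variables reduction the paper has in mind: the paper simply asserts (via \eqref{check-2} and \eqref{map1}) that Lemma \ref{lemma1-construct1-modifiedSchouten} is equivalent to Lemma \ref{lemma1-construct0} and leaves the verification implicit, whereas you spell out the three checks (that $\tilde\Gamma$ inherits the cone axioms, that $\varrho<\varrho_\Gamma$ guarantees $(1,\ldots,1,1-\varrho)\in\Gamma$ and hence $(0,\ldots,0,1)\in\tilde\Gamma$, and that $(1)'$ and $(2)'$ transform into $(1)$ and $(2)$ for $\tilde\Gamma$). All of these are sound, including the final passage using $g_{\underline u}^{-1}=e^{-2\underline u}g^{-1}$ to discard the conformal factor.
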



%
  Before presenting the proof, we state the following lemma, which asserts that any compact manifold with boundary admits some Morse function without any critical points.
  (This lemma is believed to hold, but the author is unable to find an explicit reference in the literature. Below, we provide a detailed proof of the result).

\begin{lemma}
	\label{lemma-diff-topologuy}
	Let 
	$\bar{M}$
	be a compact connected 
	manifold of real dimension $n\geq 2$ with smooth boundary. Then there is a smooth function $v$ without any critical points. 
\end{lemma}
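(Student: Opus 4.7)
The plan is to produce the desired $v$ by first constructing a smooth nowhere-vanishing vector field $X$ on $\bar{M}$ and then defining $v$ via the flow of $X$. The nonemptiness of $\partial M$ is essential: on a closed manifold of dimension $\geq 1$, any smooth function attains its maximum and minimum, producing critical points, so the construction must genuinely exploit the boundary.

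First, I would build $X$ starting from an arbitrary Morse function $h:\bar{M}\to\mathbb{R}$ (chosen so that $h|_{\partial M}$ is a regular value and all critical points $p_1,\ldots,p_m$ of $h$ lie in the interior) with $X_0 := \nabla h$ for an auxiliary Riemannian metric. Since $\bar{M}$ is connected with $\partial M\neq\emptyset$, I can select pairwise disjoint smooth embedded arcs $\gamma_i:[0,1]\to\bar{M}$ with $\gamma_i(0)=p_i$, $\gamma_i(1)\in\partial M$, and $\gamma_i((0,1))\subset\mathrm{int}(\bar{M})\setminus\{p_1,\ldots,p_m\}$. Using the homogeneity lemma (as in the proof of Lemma \ref{lemma2-closed-construction}), I build a diffeomorphism $\Phi$ of $\bar{M}$ smoothly isotopic to the identity that carries each $p_i$ to a boundary point $q_i=\gamma_i(1)$. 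The pushed-forward field $\Phi_\ast X_0$ then has its zero set contained in $\partial M$. In a tubular collar neighborhood of each $q_i$, I remove the remaining boundary zero by adding a smooth vector field transverse to $\partial M$, cut off by a bump function supported in a small neighborhood of $q_i$. Choosing these added components to be outward-pointing and arranging the sliding tubes carefully so that the end result is gradient-like for an auxiliary function defined piece-by-piece on $\bar{M}$, I obtain a smooth nowhere-vanishing vector field $X$ on $\bar{M}$, transverse to $\partial M$, and with the property that every forward integral curve reaches $\partial M$ in finite time.

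Second, using this $X$, I define $T^+(x):=\inf\{t\geq 0:\phi_X^t(x)\in\partial M\}$, the forward exit time under the flow. By smooth dependence of flows on initial conditions together with the transversality of $X$ to $\partial M$ at the exit points, $T^+$ is smooth on $\bar{M}$. Setting $v:=-T^+$ gives $Xv\equiv 1$, hence $dv\neq 0$ everywhere, so $v$ has no critical points. The main obstacle in this plan lies in the first step: ensuring that the modified vector field $X$ has no closed orbits and that every forward orbit actually reaches the boundary. One clean way to guarantee this is to perform the entire construction relative to a handle decomposition of $\bar{M}$ that avoids top-index ($n$-)handles, which is possible precisely because $\partial M\neq\emptyset$; then the resulting $X$ is gradient-like for a function whose only critical points sit on the (pushed-off) boundary portion, automatically excluding recurrence.
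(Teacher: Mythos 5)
Your plan breaks down at the central step: you want the homogeneity lemma to produce a diffeomorphism $\Phi\colon\bar{M}\to\bar{M}$, smoothly isotopic to the identity, that carries each interior critical point $p_i$ to a boundary point $q_i=\gamma_i(1)\in\partial M$. No such diffeomorphism exists. Any diffeomorphism of a manifold with boundary preserves the boundary and hence the interior; so $\Phi(p_i)$ must lie in $\mathrm{int}(\bar{M})$ whenever $p_i$ does. The homogeneity lemma (in any of its standard forms) moves interior points to interior points only. Since the elimination of critical points in your first step rests entirely on this impossible $\Phi$, the rest of the construction (pushing forward the gradient field, removing boundary zeros by a transverse bump, defining $v$ as negative exit time) never gets off the ground, quite apart from the further issues you flag about closed orbits and finite exit times.

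The fix is exactly what the paper does: enlarge $\bar{M}$ \emph{before} applying the homogeneity lemma, so that the target points $q_i$ are interior points of the larger manifold. Concretely, the paper takes $X$ to be the double of $M$ (a closed manifold containing $\bar{M}$), picks a Morse function $w$ on $X$, lets $p_1,\dots,p_m$ be those critical points of $w$ that lie in $\bar{M}$, and chooses $q_1,\dots,q_m\in X\setminus\bar{M}$. Now $p_i$ and $q_i$ are both interior points of the \emph{closed} manifold $X$, so the homogeneity lemma applies and yields a diffeomorphism $h\colon X\to X$ isotopic to the identity with $h(p_i)=q_i$ and fixing the other critical points. The function $v=w\circ h^{-1}\big|_{\bar{M}}$ then has no critical points in $\bar{M}$, and you are done, with no need for vector fields, flows, exit times, or handle decompositions. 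Your vector-field approach could in principle be salvaged (e.g.\ via handle decompositions of $\bar{M}$ without index-$n$ handles), but it is a considerably longer road than the restriction-from-the-double argument, and as written it is blocked by the impossible diffeomorphism.
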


\begin{proof} 
	
	Let $X$ be the double of $M$. Let $w$ be a smooth Morse function on $X$ with the critical set $\{p_i\}_{i=1}^{m+k}$, among which $p_1,\cdots, p_m$ are all the critical points  being in $\bar{M}$. 
	Pick $q_1, \cdots, q_m\in X\setminus \bar{M}$ but not the critical point of $w$. By homogeneity lemma 
	(see \cite{Milnor-1997}), 
	one can find a diffeomorphism
	$h: X\to X$, which is smoothly isotopic to the identity, such that  
	$h(p_i)=q_i$ for $1\leq  i\leq m$, and moreover 	$h(p_i)=p_i$ for $m+1\leq  i\leq  m+k$.
	Then $v=w\circ h^{-1}\big|_{\bar{M}}$ is the desired 
	function.
	
\end{proof}

\begin{proof}
	[Proof of Lemma \ref{lemma1-construct0}]
	Let 
	$v$ be a smooth function as  in Lemma \ref{lemma-diff-topologuy}, then $\nabla v\neq 0$ in $\bar{M}$. 
	 For such $v$, we take 
 	$$\underline{u}=e^{Nv}, \,\, {g}_{\underline{u}}=e^{2\underline{u}}g.$$ 
	The straightforward calculation gives that
	\begin{equation}
		\begin{aligned}
			-A_{g_{\underline{u}}}
			=  -A_g +Ne^{Nv} \nabla^2 v 
			+N^2 e^{Nv} \dd{v}\otimes \dd{v} 
			+ 
			N^2 e^{2Nv} \left(\frac{1}{2}|\nabla v|^2 g -\dd{v}\otimes \dd{v} \right).	\nonumber
		\end{aligned}
	\end{equation}

\noindent
{\bf Case 1}: $\lambda(-g^{-1}A_g)\in \bar{\Gamma}$ in $\bar{M}$. In this case  
we assume
\begin{equation}
	\begin{aligned}
		v\leq -1 \mbox{ in } \bar{M}. \nonumber
	\end{aligned}
\end{equation}
The straightforward computation shows that
\begin{align*}
\,&	\lambda
	\left(g^{-1}\left[	Ne^{Nv} \nabla^2 v 
	+N^2 e^{Nv} \dd{v}\otimes \dd{v}
	+ 
	N^2 e^{2Nv} \left(\frac{1}{2}|\nabla v|^2 g -\dd{v}\otimes \dd{v} \right) \right]\right)
	\\
	=\,&
	Ne^{Nv} \lambda \left(g^{-1}\left[	  \nabla^2 v 
	+N \dd{v}\otimes \dd{v} 
	+ 
	N  e^{ Nv} \left(\frac{1}{2}|\nabla v|^2 g -\dd{v}\otimes \dd{v} \right) \right]\right).
\end{align*} 
By $(0,\cdots,0,1)\in\Gamma$, 
$\lambda(g^{-1}[ \nabla^2 v 
+N\dd{v}\otimes \dd{v}]) \in \Gamma$ if $N\gg1.$
Since 	$v\leq -1 $,   when $N\gg1 $ we can check that  $Ne^{Nv}\ll1$, and so
$$\lambda \left(g^{-1}\left[	  \nabla^2 v 
+N \dd{v}\otimes\dd{v} 
+ 
N  e^{ Nv} \left(\frac{1}{2}|\nabla v|^2 g -\dd{v}\otimes\dd{v} \right) \right]\right)\in\Gamma \mbox{ in } \bar{M}. $$ 
Together with $\lambda(-g^{-1}A_g)\in \bar{\Gamma}$,
we see $\lambda(-g^{-1}A_{g_{\underline{u}}})\in\Gamma$ whenever $N\gg1.$

\vspace{2mm}

\noindent
{\bf Case 2}:
$(1,\cdots,1,-1)\in\bar{\Gamma}$.  It is easy to check that 
\[\lambda\left(g^{-1}(\frac{1}{2}|\nabla v|^2 g -\dd{v}\otimes \dd{v}) \right)=|\nabla v|^2\left(\frac{1}{2},\cdots,\frac{1}{2},-\frac{1}{2} \right).\]
Hence
\[\lambda\left(g^{-1}(\frac{1}{2}|\nabla v|^2 g -\dd{v}\otimes \dd{v}) \right)\in\bar{\Gamma} \mbox{ in } \bar{M}.\] 
In addition, we assume
\begin{equation}
	\begin{aligned}
		v\geq 1 \mbox{ in } \bar{M}. \nonumber
	\end{aligned}
\end{equation} 
Since  $\Gamma$ is of type 2, i.e.  $(0,\cdots,0,1)\in\Gamma$, we know
$$\lambda(g^{-1}(-A_g +Ne^{Nv} \nabla^2 v +N^2 e^{Nv} \dd{v}\otimes\dd{v}))\in\Gamma$$ provided $N\gg1.$ Therefore, when $N\gg1$ we have $\lambda(-g^{-1}A_{g_{\underline{u}}})\in\Gamma \mbox{ in } \bar{M}.$  

\end{proof}

 \subsection{Construction of metrics on  complete noncompat manifolds}



We construct a complete noncompact  metric satisfying an asymptotic property.
\begin{proposition}\label{thm4-construction} 
	In the setting of Theorem \ref{thm1-noncompact}, there is a  complete noncompact   conformal metric ${g}_{\hat{u}}=e^{2\hat{u}}g$ with  
	\begin{equation}
		\label{key-assum1-2}
		\begin{aligned}
			{f(\lambda(-{g}_{\hat{u}}^{-1}A_{{g}_{\hat{u}}}))}   \geq  {\psi}, \,\, \lambda(-{g}^{-1}A_{{g}_{\hat{u}}})\in
			\Gamma \mbox{ in } M 
		\end{aligned}
	\end{equation}
	Moreover, $\hat{u}\geq \underline{u}-C_0$ for some constant $C_0>0$, where  $\underline{u}$ is as in \eqref{key-assum1}.
\end{proposition}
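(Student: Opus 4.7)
The plan is to build $\hat u$ in two stages: first modify $\underline u$ on a compact neighborhood of $K_0$ so that the eigenvalues of $-g^{-1}A$ lie in the open cone $\mathring{\Gamma}_{\mathcal G}^f$ throughout $M$, and then subtract a suitable constant to recover the differential inequality globally. The key ingredients are the Morse-theoretic construction of Lemma \ref{lemma1-construct0}, the invariance of the Schouten tensor under constant conformal rescalings, and the ray-monotonicity of $f$ on $\Gamma$ that follows from \eqref{concave} together with \eqref{addistruc}.

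Since \eqref{non-degenerate1} ensures $\psi > \sup_{\partial\Gamma} f$, the hypothesis \eqref{key-assum1} combined with pseudo-admissibility of $g_{\underline u}$ gives $\lambda(-g^{-1}A_{g_{\underline u}}) \in \mathring{\Gamma}_{\mathcal G}^f$ on all of $M\setminus K_0$, while on $K_0$ only $\lambda(-g^{-1}A_{g_{\underline u}}) \in \bar{\Gamma}_{\mathcal G}^f$ is available. Pick a relatively compact, smoothly bounded open set $\Omega^{\ast}\subset M$ with $K_0\Subset \Omega^{\ast}$, and view $(\bar\Omega^{\ast}, g_{\underline u})$ as a pseudo-admissible compact Riemannian manifold with boundary. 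Since $\mathring{\Gamma}_{\mathcal G}^f$ is of type $2$ by \eqref{assumption2-type2}, Lemma \ref{lemma1-construct0} (condition $(1)$ applied with the cone $\mathring{\Gamma}_{\mathcal G}^f$) supplies a smooth function $w_{\ast}=e^{Nv}$ on $\bar\Omega^{\ast}$, with $v$ smooth and critical-point-free and $v\leq -1$, such that $\lambda(-g^{-1}A_{e^{2w_{\ast}}g_{\underline u}})\in \mathring{\Gamma}_{\mathcal G}^f$ throughout $\bar\Omega^{\ast}$. I then extend $w_{\ast}$ to a smooth nonnegative $w$ on $M$ that is compactly supported in a slightly larger set $\Omega^{\ast\ast}\Supset\bar\Omega^{\ast}$ by cutting it smoothly to zero on the annulus $\Omega^{\ast\ast}\setminus\bar\Omega^{\ast}$; because this annulus lies in $M\setminus K_0$, where the unperturbed eigenvalues already sit strictly inside the open cone $\mathring{\Gamma}_{\mathcal G}^f$, choosing the cutoff gentle enough keeps the resulting $C^2$-perturbation small enough for admissibility to persist by openness. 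Setting $\hat u_1:=\underline u+w$ then yields $\lambda(-g^{-1}A_{g_{\hat u_1}})\in \mathring{\Gamma}_{\mathcal G}^f\subseteq\Gamma$ on all of $M$.

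Finally, define $\hat u:=\hat u_1-c_0$ with a constant $c_0>0$ to be fixed. The Schouten tensor is invariant under constant conformal rescalings, so $\lambda(-g_{\hat u}^{-1}A_{g_{\hat u}})=e^{2c_0}\lambda(-g_{\hat u_1}^{-1}A_{g_{\hat u_1}})$ remains in $\mathring{\Gamma}_{\mathcal G}^f$, settling the cone condition. By \eqref{addistruc} and concavity, $t\mapsto f(t\lambda)$ is nondecreasing on $(0,\infty)$ with limit $\sup_\Gamma f>\psi$ for every $\lambda\in\Gamma$; outside $\bar\Omega^{\ast\ast}$, where $w\equiv 0$, this monotonicity forces $f(e^{2c_0}\lambda(-g_{\underline u}^{-1}A_{g_{\underline u}}))\geq f(\lambda(-g_{\underline u}^{-1}A_{g_{\underline u}}))\geq \psi$ for every $c_0\geq 0$, while on the compact set $\bar\Omega^{\ast\ast}$ the eigenvalues of $-g_{\hat u_1}^{-1}A_{g_{\hat u_1}}$ take values in a compact subset of $\mathring{\Gamma}_{\mathcal G}^f$, and Dini's theorem upgrades the monotone pointwise convergence $f(e^{2c}\lambda)\to \sup_\Gamma f$ to uniform convergence, so a single large $c_0$ secures $f(e^{2c_0}\lambda)\geq \psi$ there as well. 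Since $w\geq 0$, $\hat u=\underline u+w-c_0\geq \underline u-c_0$ holds with $C_0:=c_0$, and completeness of $g_{\hat u}\geq e^{-2c_0}g_{\underline u}$ is inherited from that of $g_{\underline u}$.

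The principal obstacle is the cutoff in the middle step: Lemma \ref{lemma1-construct0} does not automatically produce a compactly supported perturbation, so admissibility on the transition annulus $\Omega^{\ast\ast}\setminus\bar\Omega^{\ast}$ must be verified by hand, leveraging the openness of $\mathring{\Gamma}_{\mathcal G}^f$ and the strict admissibility of $g_{\underline u}$ off $K_0$. An alternative would be an exhaustion of $M$ by compact smoothly-bounded domains of the type handled in Lemma \ref{lemma1-construct0} combined with a diagonal argument to obtain $w$ directly as a compactly supported nonnegative function.
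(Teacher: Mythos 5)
Your plan is structurally the same as the paper's: construct an admissible conformal metric on a compact neighborhood of $K_0$ via Lemma \ref{lemma1-construct0}, transition to the region off $K_0$ (where strict admissibility already holds) with a cutoff, and finally subtract a constant via ray monotonicity (Lemmas \ref{lemma3.4-2}--\ref{lemma3.4-3}). The one substantive divergence is where the cutoff lives, and that is precisely where your argument has a gap.

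The assertion that ``choosing the cutoff gentle enough keeps the resulting $C^2$-perturbation small'' is not correct as stated. Writing $w=\xi\,w_{\ast}$ with $w_{\ast}=e^{Nv}$, the second derivative $\nabla^2 w$ contains the term $\xi\,\nabla^2 w_{\ast}$, and $\|\nabla^2 w_{\ast}\|\sim N^2 e^{Nv}$ is a fixed bounded quantity once $N$ is fixed; making $\xi$ gentler does not shrink it. What actually makes the perturbation small on the annulus is that $v\leq -1$ forces $\|e^{Nv}\|_{C^2}\lesssim N^2e^{-N}\to 0$ as $N\to\infty$, and Lemma \ref{lemma1-construct0} continues to yield admissibility on $\bar\Omega^{\ast}$ for \emph{every} sufficiently large $N$; so one should fix the cutoff, then send $N\to\infty$. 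The paper sidesteps the bookkeeping entirely by placing the cutoff inside the exponential: with $v\leq 0$ and $dv\neq 0$ on $\bar M_2$, it sets $h:=\zeta v-1$ on $M_2$ and $h:=-1$ elsewhere (so $h\leq -1$ everywhere, with $\zeta\in C_0^\infty(M_2)$, $\zeta|_{M_1}\equiv 1$), then takes $\tilde u=e^{Nh}$. This keeps the whole conformal factor in the form analyzed in Lemma \ref{lemma1-construct0}, leaves $\tilde u$ equal to the harmless constant $e^{-N}$ outside $M_2$, and makes the large-$N$ smallness on the transition annulus manifest --- exactly the mechanism your ``gentle cutoff'' phrasing obscures. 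Once you replace ``cutoff gentle enough'' with ``$N$ large enough,'' your remaining steps (constant subtraction via $\lambda\mapsto e^{2c_0}\lambda$, Dini on $\bar\Omega^{\ast\ast}$, monotonicity off $\bar\Omega^{\ast\ast}$, completeness of $g_{\hat u}\geq e^{-2c_0}g_{\underline u}$) agree with the paper's use of Lemma \ref{lemma3.4-2}.
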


\begin{proof}
Let $\underline{u}$ be as in \eqref{key-assum1}. 
Without loss of generality,  it suffices to consider the case $\underline{u}=0$, i.e.,  $g$ satisfies 	\eqref{key-assum1}.
As in \eqref{key-assum1} let $K_0$ be  the compact subset.  
From \eqref{key-assum1} and  $\sup_{\partial \Gamma} f< \psi<\sup_{\Gamma}f$, we know 
$\lambda(-g^{-1}A_g)\in  {\Gamma}$ in     $M\setminus K_0$.  

Pick  two $n$-dimensional compact 
submanifolds $M_1$, $M_2$   with smooth boundary  and with $K_0\subset\subset M_1\subset\subset M_2$. 
Let $v$ be a smooth 
function  with $dv\neq0$ and 
$v\leq 0$ 
on $\bar{M}_2$ (by Lemma \ref{lemma-diff-topologuy}). By straightforward computation, we know that 
$g_{\underline{w}}=e^{2\underline{w}}g$ satisfies
$$\lambda(-g^{-1}A_{g_{\underline{w}}})\in \Gamma \mbox{ in }\bar{M}_2 \mbox{ if }\underline{w}=e^{t(v-1)}, \, t\gg1.$$ 
Choose a cutoff function satisfying
\begin{equation}
	\begin{aligned}
		\zeta\in C^{\infty}_0(M_2), \, 0\leq\zeta\leq 1 \mbox{ and } \zeta\Big|_{M_1}=1. \nonumber
	\end{aligned}
\end{equation}   
Take $\tilde{g}=e^{2\tilde{u}}g$ where $\tilde{u}=e^{Nh}$,
$
	h=
	\begin{cases}
		\zeta  v-1\,& \mbox{ if } x\in M_2,\\
		-1 \,& \mbox{ otherwise.}\nonumber
	\end{cases}
$
Notice that $\lambda(-g^{-1}A_g)\in \Gamma$ in $M\setminus K_0$, and $\lambda(-g^{-1}A_{g_{\tilde{u}}})\in \Gamma$   when restricted to $M_1\cup (M\setminus M_2)$.
By straightforward  computation, we can check for $N\gg1$ that $\lambda(-g^{-1}A_{g_{\tilde{u}}})\in \Gamma $ in $M$. Note that $\tilde{u}=e^{-N}<1$ in $M\setminus M_2$.  Let $\hat{u}=\tilde{u}-\Lambda_2$,
under this scaling,  by Lemma \ref{lemma3.4-2} we know that ${g}_{\hat{u}}$ satisfies \eqref{key-assum1-2} for some    constant $\Lambda_2$.

\end{proof}


\section{Description of the  test cone}
\label{sec-description-MC}


\subsection{Test cone} 

Assume as in Definition \ref{def-testcone} that $\mathcal{S}$  is  a 
symmetric  subset  of  
$\{\lambda\in\Gamma: f(\lambda)<\sup_\Gamma f \}.$
From 
 Definition \ref{def-testcone} 
\begin{equation} 
	\label{key-inequ0}
	\begin{aligned}
		\sum_{i=1}^n f_i(\lambda)\mu_{i}\geq 0, \,\, \forall\lambda\in \mathcal{S}, \, \forall\mu\in\mathcal{C}_{\mathcal{S},f}. 
	\end{aligned}
\end{equation}
Moreover, the 
\textit{maximal test cone} shall obey  
that

\begin{lemma}
	\label{lemma-key3-observ}
	If $\mathcal{S}_1\subseteq \mathcal{S}_2$  then
	$  \mathcal{C}^{\mathfrak{m}}_{\mathcal{S}_2,f} \subseteq \mathcal{C}^{\mathfrak{m}}_{\mathcal{S}_1,f}. $ Moreover, 	if $\mathcal{S}=\mathcal{S}_1\bigcup \mathcal{S}_2$ then 	
	$\mathcal{C}^{\mathfrak{m}}_{\mathcal{S},f} = \mathcal{C}^{\mathfrak{m}}_{\mathcal{S}_1,f}\bigcap \mathcal{C}^{\mathfrak{m}}_{\mathcal{S}_2,f}.$
\end{lemma}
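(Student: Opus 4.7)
The statement is essentially a monotonicity/distributivity property of the operation $\mathcal{S} \mapsto \mathcal{C}^{\mathfrak{m}}_{\mathcal{S},f}$ with respect to set inclusion and union of the testing set $\mathcal{S}$. My plan is to unwind the defining condition \eqref{def1-MTC} directly and reduce both assertions to elementary set-theoretic manipulations; no analytic input (concavity, ellipticity, properties of $\Gamma_{\mathcal{G}}^{f}$) should be needed, which is an indication that the lemma is really a sanity check on the definition before the substantive Theorem \ref{lemma-Y6} is applied.

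For the first assertion, I would fix $\mu \in \mathcal{C}^{\mathfrak{m}}_{\mathcal{S}_2,f}$, which by definition means $\sum_{i=1}^n f_i(\lambda)\mu_i \geq 0$ for every $\lambda \in \mathcal{S}_2$. Since $\mathcal{S}_1 \subseteq \mathcal{S}_2$, the same inequality a fortiori holds for every $\lambda \in \mathcal{S}_1$, so $\mu \in \mathcal{C}^{\mathfrak{m}}_{\mathcal{S}_1,f}$. This gives the containment $\mathcal{C}^{\mathfrak{m}}_{\mathcal{S}_2,f} \subseteq \mathcal{C}^{\mathfrak{m}}_{\mathcal{S}_1,f}$.

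For the second assertion with $\mathcal{S} = \mathcal{S}_1 \cup \mathcal{S}_2$, the inclusion $\mathcal{C}^{\mathfrak{m}}_{\mathcal{S},f} \subseteq \mathcal{C}^{\mathfrak{m}}_{\mathcal{S}_1,f} \cap \mathcal{C}^{\mathfrak{m}}_{\mathcal{S}_2,f}$ follows immediately by applying the first assertion to each of the inclusions $\mathcal{S}_1 \subseteq \mathcal{S}$ and $\mathcal{S}_2 \subseteq \mathcal{S}$. For the reverse inclusion, I take $\mu \in \mathcal{C}^{\mathfrak{m}}_{\mathcal{S}_1,f} \cap \mathcal{C}^{\mathfrak{m}}_{\mathcal{S}_2,f}$; by definition $\sum_{i=1}^n f_i(\lambda)\mu_i \geq 0$ holds separately for all $\lambda \in \mathcal{S}_1$ and for all $\lambda \in \mathcal{S}_2$, hence for every $\lambda \in \mathcal{S}_1 \cup \mathcal{S}_2 = \mathcal{S}$, giving $\mu \in \mathcal{C}^{\mathfrak{m}}_{\mathcal{S},f}$.

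There is no real obstacle here; the proof is just a direct unraveling of the quantifier in \eqref{def1-MTC}, and the only thing to be careful about is being explicit that the defining inequality is required to hold \emph{pointwise} in $\lambda$, so that universal quantification over a union splits as a conjunction. I expect the write-up to be at most a few lines.
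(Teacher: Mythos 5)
Your proof is correct and is precisely the direct unraveling of Definition \ref{def-testcone} that the lemma calls for; the paper itself states Lemma \ref{lemma-key3-observ} without proof, evidently regarding it as immediate, and your write-up supplies exactly the elementary set-theoretic argument that justifies that.
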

Next, we   prove that 
the interior of any test cone is contained in $\Gamma_1.$


\begin{lemma}
	\label{lemma-22}
	%
	In the presence of \eqref{concave} and \eqref{elliptic-weak-3}, 
	we have
	\begin{enumerate} 
		
		\item[(1)] 
		For any $\mu\in \mathcal{C}_{\mathcal{S},f}$, 
		we have $\sum_{i=1}^n \mu_i\geq 0.$ 
		
		\item[(2)] $0\leq\kappa_{\mathcal{C}_{\mathcal{S},f}}\leq n-1.$
		
		\item[(3)]   
		If, in addition, there exists $\mu\in \mathcal{C}_{\mathcal{S},f}\setminus\{\vec{\bf 0}\}$ with
		$\sum_{i=1}^n \mu_i=0$, then 
		\begin{equation}
			\label{inequ0-permu}
			f_1(\lambda)=f_2(\lambda)=\cdots =f_n(\lambda),\,\, \forall \lambda\in\mathcal{S}.
		\end{equation}

	\end{enumerate}
\end{lemma}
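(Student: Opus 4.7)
\medskip
\noindent\textbf{Proof proposal.} The plan is to derive all three parts from one \emph{symmetrization} of the defining inequality \eqref{key-inequ0}, combined with the concavity hypothesis \eqref{concave} and the nondegeneracy $\mathcal{S}\subseteq\{f<\sup_\Gamma f\}$.

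\medskip
For part (1), I will fix $\mu\in\mathcal{C}_{\mathcal{S},f}$ and $\lambda\in\mathcal{S}$ and exploit the symmetry of the test cone: every permutation $\mu^{\sigma}=(\mu_{\sigma(1)},\ldots,\mu_{\sigma(n)})$ with $\sigma\in S_n$ still belongs to $\mathcal{C}_{\mathcal{S},f}$, so \eqref{key-inequ0} gives $\sum_i f_i(\lambda)\mu_{\sigma(i)}\ge 0$. Summing over all $\sigma$ and counting the multiplicity with which each $\mu_j$ appears in a given slot yields the key identity
\[
(n-1)!\,\Bigl(\sum_{i=1}^n f_i(\lambda)\Bigr)\Bigl(\sum_{j=1}^n \mu_j\Bigr)\ge 0.
\]
The next step will be to upgrade $\sum_i f_i(\lambda)\ge 0$ (which is supplied by \eqref{elliptic-weak}) to strict positivity on $\mathcal{S}$. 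If $\sum_i f_i(\lambda^*)=0$ at some $\lambda^*\in\mathcal{S}$, then each $f_i(\lambda^*)=0$, and the supporting-hyperplane inequality for the concave function $f$ forces $f(\nu)\le f(\lambda^*)$ for every $\nu\in\Gamma$, so $f(\lambda^*)=\sup_\Gamma f$, contradicting $\lambda^*\in\mathcal{S}\subseteq\{f<\sup_\Gamma f\}$. Hence $\sum_i f_i(\lambda)>0$ throughout $\mathcal{S}$ and $\sum_j\mu_j\ge 0$ follows.

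\medskip
Part (2) should be an immediate corollary of (1): summing all $n$ coordinates of any $\mu\in\mathcal{C}_{\mathcal{S},f}$ already gives the positivity required by the definition \eqref{kappa_1} at $k=n-1$, so $\kappa_{\mathcal{C}_{\mathcal{S},f}}\le n-1$; the lower bound $\kappa_{\mathcal{C}_{\mathcal{S},f}}\ge 0$ is automatic from the nonnegative convention of $\kappa$.

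\medskip
For part (3), under the additional hypothesis $\sum_i\mu_i=0$, the key identity above collapses to zero, so every nonnegative summand $\sum_i f_i(\lambda)\mu_{\sigma(i)}$ must itself vanish for each $\sigma\in S_n$ and each $\lambda\in\mathcal{S}$. Comparing the case $\sigma=\mathrm{id}$ against a single transposition $\sigma=(i\,j)$ and subtracting produces the pointwise relation
\[
\bigl(f_i(\lambda)-f_j(\lambda)\bigr)\bigl(\mu_i-\mu_j\bigr)=0,\qquad \forall\, i,j,\ \forall\lambda\in\mathcal{S}.
\]
Because $\mu\neq\vec{\bf 0}$ and $\sum\mu_i=0$, the coordinates of $\mu$ cannot all coincide; so for every pair of indices $(i,j)$ either $\mu_i\neq\mu_j$ directly (giving $f_i(\lambda)=f_j(\lambda)$), or one can select an index $k$ with $\mu_k\neq\mu_i=\mu_j$, and a one-step chain propagates $f_i(\lambda)=f_k(\lambda)=f_j(\lambda)$. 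This yields \eqref{inequ0-permu}.

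\medskip
The one genuinely nontrivial step is the concavity argument in part (1) upgrading $\sum_i f_i\ge 0$ to strict positivity on $\mathcal{S}$; everything else amounts to bookkeeping around the $S_n$-action and a short chain of transpositions.
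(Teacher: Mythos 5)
Your proposal is correct and follows the paper's own proof almost step by step: the same $S_n$-averaging of \eqref{key-inequ0} to get $(n-1)!\,\bigl(\sum_i f_i(\lambda)\bigr)\bigl(\sum_j\mu_j\bigr)\ge 0$, the same appeal to concavity to rule out $\sum_i f_i(\lambda)=0$ on $\mathcal{S}$ (the paper packages this as Lemma \ref{coro-4} and cites it, whereas you inline the supporting-hyperplane argument), and the same observation for part (3) that each permuted sum must vanish individually. The only added value is that you make explicit, via transposition differences $(f_i(\lambda)-f_j(\lambda))(\mu_i-\mu_j)=0$ and a one-step chain through an index $k$ with $\mu_k\neq\mu_i$, the final deduction that the paper leaves terse (``Note that $\mu\neq\vec{\bf0}$, we can deduce \eqref{inequ0-permu}'').
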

\begin{proof}


	Prove (1) and (2):
	By Lemma \ref{lemma-key3-observ} it is trivial  if $c\vec{\bf 1}\in \mathcal{S}$ for some $c>0$. 
	The remaining case is not  obvious.
	Let $\mu\in {\mathcal{C}_{\mathcal{S},f}}$. 
	By 	\eqref{key-inequ0} and   symmetry of ${\mathcal{C}_{\mathcal{S},f}}$, 	
	for any permutation $\tau=(\tau_1, \cdots, \tau_n)$ of $(1, \cdots, n)$ we have
	\begin{equation}
		\label{inequ1-permu}
		\sum_{i=1}^n	f_i(\lambda)\mu_{\tau_i} \geq 0,   \,\, \forall \lambda\in\mathcal{S}.
	\end{equation} 
	Taking sums over permutations and applying \eqref{inequ1-permu}, we obtain
	\begin{equation}	\label{inequ2-permu}
		\sum_{i=1}^n f_i(\lambda) \sum_{j=1}^n \mu_j 
		= \frac{1}{(n-1)!}
		\sum_{(\tau_1,\cdots,\tau_n)}\sum_{i=1}^n f_i(\lambda)\mu_{\tau_i}
		\geq 0.  
	\end{equation}
 By 
  Lemma \ref{coro-4} below,  $\sum f_i(\lambda)>0$. So 
	$\sum \mu_i\geq 0$.
	As a result, $0\leq\kappa_{\mathcal{C}_{\mathcal{S},f}}\leq n-1.$
	
	Prove the third statement:
	It follows from \eqref{inequ1-permu} and \eqref{inequ2-permu} that if there is $\mu
	\in {\mathcal{C}_{\mathcal{S},f}}$ with $ \sum_{j=1}^n \mu_j=0$, then  
$
	\sum_{i=1}^n f_i(\lambda)\mu_{\tau_i} = 0$ 	for any permutation  $(\tau_1, \cdots, \tau_n)$ of $(1, \cdots, n)$.
	Note that $\mu\neq {\vec{\bf 0}},$  we can deduce \eqref{inequ0-permu}.

\end{proof}


To complete the proof of Lemma \ref{lemma-22},  we need to prove the monotonicity 
of $f$, assuming 
\eqref{concave} and \eqref{elliptic-weak-3}. 
First of all, from \eqref{concave}  we deduce the standard inequality
\begin{equation}\label{concave-1}
	\begin{aligned}
		\sum_{i=1}^n f_i(\lambda)(\mu_i-\lambda_i)\geq f(\mu)-f(\lambda), \,\, \forall \lambda, \, \mu\in\Gamma. 
\end{aligned}\end{equation} 

\begin{lemma}
	\label{coro-4}
	For any $(f,\Gamma)$ satisfying \eqref{concave} and \eqref{elliptic-weak-3}, we have \eqref{elliptic-weak}. 
	Moreover,	
	\begin{equation}
		\label{lemma1-GQY}
		\left\{\lambda\in\Gamma:f(\lambda)<\sup_\Gamma f\right\}=\left\{\lambda\in\Gamma: \sum_{i=1}^n f_i(\lambda)>0\right\}. 
	\end{equation} 
\end{lemma}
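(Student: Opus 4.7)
The plan is to prove the two assertions in order, using the global concavity inequality \eqref{concave-1} as the main tool.

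For the monotonicity claim \eqref{elliptic-weak}, I would fix $\lambda \in \Gamma$ and apply \eqref{concave-1} with $\mu = s\nu$, where $\nu \in \Gamma_n$ and $s > 0$ is large. Since $s\nu \in \Gamma_n \subseteq \Gamma$, the inequality yields
\begin{equation}
s\sum_{i=1}^n f_i(\lambda)\nu_i \;\geq\; f(s\nu) - f(\lambda) + \sum_{i=1}^n f_i(\lambda)\lambda_i. \nonumber
\end{equation}
Dividing by $s$ and letting $s \to +\infty$, hypothesis \eqref{elliptic-weak-3} ensures that the right-hand side is bounded from below, so its quotient by $s$ has non-negative limit inferior. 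This forces $\sum_i f_i(\lambda)\nu_i \geq 0$ for every $\nu \in \Gamma_n$, and by continuity this extends to the closure $\bar{\Gamma}_n$. Testing against $\nu = e_i + \epsilon \vec{\bf 1} \in \Gamma_n$ and sending $\epsilon \to 0^+$ gives $f_i(\lambda) \geq 0$ for each $i$.

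For \eqref{lemma1-GQY}, the inclusion ``$\supseteq$'' is the easy direction: if $\sum_i f_i(\lambda) > 0$ at some $\lambda \in \Gamma$, then the concave function $t \mapsto f(\lambda + t\vec{\bf 1})$ has positive derivative at $t = 0$, so $f(\lambda + t\vec{\bf 1}) > f(\lambda)$ for small $t > 0$, whence $f(\lambda) < \sup_{\Gamma} f$. For the opposite inclusion, I would argue by contrapositive: if $\sum_i f_i(\lambda) = 0$, then the monotonicity \eqref{elliptic-weak} already established forces $f_i(\lambda) = 0$ for every $i$, and \eqref{concave-1} then yields $f(\mu) \leq f(\lambda)$ for all $\mu \in \Gamma$, i.e.\ $f(\lambda) = \sup_{\Gamma} f$.

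The only subtle step is the passage to the limit $s \to +\infty$ in the first paragraph. The key observation is that \eqref{elliptic-weak-3} applied to $\nu \in \Gamma_n$ gives a finite lower bound for $f(s\nu)$ independent of $s$, so that after division by $s$ the right-hand side vanishes in the limit. All other steps are routine consequences of the concavity inequality and the openness of $\Gamma$; no cone-geometric argument beyond $\Gamma_n \subseteq \Gamma$ is required.
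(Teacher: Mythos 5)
Your proof is correct and is essentially the argument the paper uses, just unpacked in place. The paper routes the first assertion through Lemma \ref{lemma2-key} (which establishes $\sum_i f_i(\lambda)\mu_i \geq \limsup_{t\to\infty} f(t\mu)/t$ for all $\lambda,\mu\in\Gamma$ and then specializes to $\mu\in\Gamma_{\mathcal{G}}^{f}\supseteq\Gamma_n$), whereas you apply \eqref{concave-1} directly to $\mu = s\nu$, $\nu\in\Gamma_n$, and pass to the limit $s\to\infty$ using \eqref{elliptic-weak-3}; these are the same computation. For \eqref{lemma1-GQY}, both directions match the paper's argument. One small point in your favor: the paper's contradiction step (``assume $\sum_i f_i(\lambda)=0$, then $f_i(\lambda)=0$ for all $i$'') implicitly uses the sign condition $f_i\geq 0$ already established, and you spell this out, which makes the logical dependence on the first part of the lemma visible.
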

\begin{proof}
	\eqref{elliptic-weak-3} implies 	  $\Gamma_n\subseteq	\Gamma_{\mathcal{G}}^{f}$.
	Combining with Lemma \ref{lemma2-key} below, we obtain  \eqref{elliptic-weak}, i.e., $f_i(\lambda)\geq0$ in $\Gamma$, $\forall  1\leq i\leq n$.  
	On the other hand, from $\sum_{i=1}^n f_i(\lambda)=\frac{\dd}{\dd t}f(\lambda+t\vec{\bf1})\big|_{t=0}$
	we see  
	$$ 
	\left\{\lambda\in\Gamma: \sum_{i=1}^n f_i(\lambda)>0\right\}\subseteq
	\left\{\lambda\in\Gamma:f(\lambda)<\sup_\Gamma f\right\}.
	$$  
	
	Below we verify that
	$ \sum_{i=1}^n f_i >0$ when
	$f(\lambda)<\sup_\Gamma f$, 
	$\lambda\in\Gamma$. 
	The rest of proof  is similar to that of \cite[Lemma 2.1]{GQY2018}.
	Assume by contradiction
	that there is some  $\lambda\in\Gamma$ with
	$\frac{\partial  {f}}{\partial \lambda_i}(\lambda)=0$, $\forall  i.$ 
	Then by \eqref{concave-1},
	$ {f}(\mu) \leq {f}(\lambda),$ $\forall \mu\in\Gamma.$
	A contradiction.
	
	
	

	
\end{proof}

Consequently, from Lemmas \ref{lemma-22} and \ref{coro-4} we know
\begin{lemma}
	\label{lemma4.3}
	Suppose $f$	satisfies \eqref{concave} and \eqref{elliptic-weak-3}. Then  $\Gamma_n\subseteq  
	\mathring{\mathcal{C}}^{\mathfrak{m}}_{\mathcal{S},f}\subseteq \Gamma_1$, and
	$\mathring{\mathcal{C}}^{\mathfrak{m}}_{\mathcal{S},f}$  is an open  symmetric convex cone with vertex at origin.  
	Moreover, \eqref{inequ0-permu} holds if and only if $\mathring{\mathcal{C}}^{\mathfrak{m}}_{\mathcal{S},f}=\Gamma_1$.
\end{lemma}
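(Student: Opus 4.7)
The statement splits into four assertions: the two inclusions $\Gamma_n\subseteq \mathring{\mathcal{C}}^{\mathfrak{m}}_{\mathcal{S},f}\subseteq \Gamma_1$, the convex-cone structure, and the equivalence with \eqref{inequ0-permu}. Each reduces to a short argument that combines the definition \eqref{def1-MTC} with the already-proved Lemmas \ref{lemma-22} and \ref{coro-4}, so the task is really one of organizing the bookkeeping.

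First I would handle the containments. Since Lemma \ref{coro-4} supplies $f_i(\lambda)\geq 0$ on $\Gamma$, for any $\mu\in\bar{\Gamma}_n$ and any $\lambda\in\mathcal{S}$ one has $\sum_i f_i(\lambda)\mu_i\geq 0$, so $\bar{\Gamma}_n\subseteq \mathcal{C}^{\mathfrak{m}}_{\mathcal{S},f}$ directly from \eqref{def1-MTC}; because $\Gamma_n$ is open in $\mathbb{R}^n$, it lies in the interior. For the right-hand inclusion, Lemma \ref{lemma-22}(1) gives $\sum_i\mu_i\geq 0$ for every $\mu\in\mathcal{C}^{\mathfrak{m}}_{\mathcal{S},f}$, so $\mathcal{C}^{\mathfrak{m}}_{\mathcal{S},f}\subseteq \bar{\Gamma}_1$, which passes to interiors as $\mathring{\mathcal{C}}^{\mathfrak{m}}_{\mathcal{S},f}\subseteq \mathrm{int}(\bar{\Gamma}_1)=\Gamma_1$.

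Next I would verify the convex-cone structure. Closedness, the cone property, and convexity of $\mathcal{C}^{\mathfrak{m}}_{\mathcal{S},f}$ are immediate since \eqref{def1-MTC} presents it as an intersection of closed linear half-spaces through the origin. Symmetry requires a brief index manipulation: symmetry of $f$ yields $f_{\sigma(i)}(P_\sigma \lambda)=f_i(\lambda)$ where $P_\sigma\lambda=(\lambda_{\sigma(1)},\ldots,\lambda_{\sigma(n)})$, hence for $\mu\in\mathcal{C}^{\mathfrak{m}}_{\mathcal{S},f}$,
\[
\sum_{i=1}^n f_i(\lambda)(P_\sigma\mu)_i=\sum_{j=1}^n f_j(P_{\sigma^{-1}}\lambda)\mu_j\geq 0,
\]
using $P_{\sigma^{-1}}\lambda\in\mathcal{S}$ by symmetry of $\mathcal{S}$. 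Taking interiors preserves openness, the cone property, convexity, and symmetry.

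Finally I would address the equivalence. For the forward direction, assume $\mathring{\mathcal{C}}^{\mathfrak{m}}_{\mathcal{S},f}=\Gamma_1$; closedness of $\mathcal{C}^{\mathfrak{m}}_{\mathcal{S},f}$ forces $\bar{\Gamma}_1\subseteq \mathcal{C}^{\mathfrak{m}}_{\mathcal{S},f}$, in particular there exists a nonzero $\mu$ on the hyperplane $\sum_i\mu_i=0$, so Lemma \ref{lemma-22}(3) delivers \eqref{inequ0-permu}. Conversely, if $f_1(\lambda)=\cdots=f_n(\lambda)$ on $\mathcal{S}$, then $\sum_i f_i(\lambda)\mu_i=f_1(\lambda)\sum_i\mu_i$ with $f_1(\lambda)>0$ on $\mathcal{S}$ by Lemma \ref{coro-4} combined with $\mathcal{S}\subseteq\{f<\sup_\Gamma f\}$, so the defining inequality reduces to $\sum_i\mu_i\geq 0$, giving $\mathcal{C}^{\mathfrak{m}}_{\mathcal{S},f}=\bar{\Gamma}_1$ and $\mathring{\mathcal{C}}^{\mathfrak{m}}_{\mathcal{S},f}=\Gamma_1$. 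No significant obstacle is anticipated; the only care needed is the index bookkeeping in the symmetry identity and the use of closedness to pass from the interior back to $\bar{\Gamma}_1$ in the forward equivalence.
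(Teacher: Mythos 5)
Your proposal is correct, and it follows the same route the paper intends: the paper gives no separate proof for this lemma, stating it simply as a consequence of Lemma \ref{lemma-22} and Lemma \ref{coro-4} (with the cone-structure facts treated as routine verifications from the definition \eqref{def1-MTC}). Your write-up supplies exactly the bookkeeping the paper leaves implicit — $f_i\geq 0$ from Lemma \ref{coro-4} for the left inclusion, Lemma \ref{lemma-22}(1) for the right inclusion, the half-space description for the closed convex cone structure and the permutation identity for symmetry, and Lemma \ref{lemma-22}(3) together with $\sum f_i>0$ for the equivalence.
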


\subsection{Determine the maximal test cone} 

Let $\Gamma_{\mathcal{G}}^{f}$ be as in 
\eqref{component1}.
First of all, we shall review a lemma observed by \cite{yuan1-closed}. 
\begin{lemma}
	[\cite{yuan1-closed}]
	\label{lemma2-key}  
	Suppose $f$ satisfies  
	\eqref{concave} in $\Gamma$. Then for any $\lambda\in\Gamma$ and $\mu\in  \Gamma_{\mathcal{G}}^{f}$,
	\begin{equation}
		\label{lemma315} 
		\begin{aligned}
			f(\lambda+\mu)\geq f(\lambda) \mbox{ and }
			\sum_{i=1}^n f_i(\lambda)\mu_i \geq 0. 
		\end{aligned}
	\end{equation}
	
\end{lemma}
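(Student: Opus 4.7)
The plan is to reduce both conclusions to the elementary fact that a concave function on $[0,\infty)$ that is bounded below must be non-decreasing. Define $\phi(t)=f(\lambda+t\mu)$ for $t\geq 0$. First I would check $\phi$ is well-defined on $[0,\infty)$: since $\mu\in\Gamma_{\mathcal{G}}^{f}\subseteq\Gamma$ and $\Gamma$ is a convex cone with vertex at the origin, the identity
\[
\lambda+t\mu=(1+t)\left(\tfrac{1}{1+t}\lambda+\tfrac{t}{1+t}\mu\right)
\]
places $\lambda+t\mu$ in $\Gamma$ for every $t\geq 0$. By \eqref{concave}, $\phi$ is then smooth and concave on $[0,\infty)$.

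Next I would show $\phi$ is bounded below. Applying the midpoint form of concavity to $\lambda+t\mu=\tfrac{1}{2}(2\lambda)+\tfrac{1}{2}(2t\mu)$ gives
\[
\phi(t)\geq \tfrac{1}{2}f(2\lambda)+\tfrac{1}{2}f(2t\mu).
\]
Since $\mu\in\Gamma_{\mathcal{G}}^{f}$, by definition \eqref{component1} the limit $\lim_{t\to+\infty}f(2t\mu)>-\infty$, so $f(2t\mu)$ is bounded below uniformly in $t\geq 0$ (it is monotone along the ray through $\mu$ by a one-variable version of the argument, or directly by concavity of $s\mapsto f(s\mu)$ combined with its finite limit). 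Hence $\phi$ is bounded below on $[0,\infty)$.

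The key elementary step, which I would record as a short sublemma, is that a concave function on $[0,\infty)$ bounded below is non-decreasing: if $\phi(t_1)>\phi(t_2)$ for some $0\leq t_1<t_2$, then concavity forces the secant slopes beyond $t_2$ to be $\leq \frac{\phi(t_2)-\phi(t_1)}{t_2-t_1}<0$, so $\phi(t)\to-\infty$, contradicting boundedness below. Applying this to our $\phi$ yields $\phi(1)\geq \phi(0)$, i.e., $f(\lambda+\mu)\geq f(\lambda)$. Moreover the right derivative $\phi'(0^+)\geq 0$ exists by concavity, and since $f$ is smooth on $\Gamma$ it equals $\sum_{i=1}^n f_i(\lambda)\mu_i$, completing the proof.

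I do not expect any real obstacle: the two conclusions are the value-at-one and derivative-at-zero avatars of the same monotonicity fact for $\phi$. The only care needed is verifying that $\lambda+t\mu$ stays in $\Gamma$ (handled by the cone structure) and the one-variable boundedness lemma stated above.
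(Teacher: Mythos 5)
Your proof is correct, and while it rests on the same underlying mechanism as the paper's — concavity together with finiteness of $\lim_{t\to\infty}f(t\mu)$ — the organization is genuinely different. The paper proves the derivative inequality first and directly: from the tangent-line inequality $\sum f_i(\lambda)(t\mu_i-\lambda_i)\geq f(t\mu)-f(\lambda)$, dividing by $t$ and letting $t\to\infty$ gives $\sum f_i(\lambda)\mu_i\geq\limsup_{t\to\infty}f(t\mu)/t\geq 0$; the value inequality then follows by applying the tangent-line inequality again, now at the point $\lambda+\mu$. You instead package everything into the one-variable restriction $\phi(t)=f(\lambda+t\mu)$ and appeal to the clean fact that a concave function on $[0,\infty)$ that is bounded below (or, for the ray through $\mu$, has a finite limit at $+\infty$) must be non-decreasing. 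This makes the monotonicity picture explicit and yields both conclusions at once, at the small cost of a two-stage boundedness argument: you first need the ray $s\mapsto f(s\mu)$ to be non-decreasing (your parenthetical) to conclude $f(2t\mu)$ is bounded below for $t\geq t_0>0$, and then combine with continuity near $t=0$ — as written, "bounded below uniformly in $t\geq 0$" is slightly imprecise since $f(2t\mu)$ is undefined at $t=0$ and could a priori blow up as $t\to 0^+$, though this is a trivial gap. Overall the paper's route is a touch more direct; yours is more conceptual and isolates a reusable one-variable lemma.
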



\begin{proof}
	The concavity 	assumption \eqref{concave} yields that
	\begin{equation}
		\label{010} 
		\begin{aligned}
			\sum_{i=1}^n f_i(\lambda)\mu_i \geq \limsup_{t\rightarrow+\infty} f(t\mu)/t, \,\, \forall \lambda, \, \mu\in\Gamma.    
	\end{aligned}\end{equation}
So for any 
	$\lambda\in\Gamma$ and $\mu\in 	\Gamma_{\mathcal{G}}^{f}$, 
	$\sum_{i=1}^n f_i(\lambda)\mu_i \geq 0$, thereby 
	$f(\lambda+\mu)\geq f(\lambda) $ using \eqref{concave-1}.
	
\end{proof}

When $f$ satisfies \eqref{addistruc}, 
 we have a   stronger conclusion.  
	\begin{lemma}
		\label{lemma3.4}
		If $f$ satisfies \eqref{concave} and \eqref{addistruc}, then 
			\begin{equation}
	 	\label{addistruc-4}
			\begin{aligned}
				\sum_{i=1}^n f_i(\lambda)\mu_i>0, \mbox{   } \forall \lambda, \, \mu\in \Gamma.  
			\end{aligned}
		\end{equation}  
	In particular, $f_i(\lambda)\geq 0$
	and $\sum_{i=1}^n f_i(\lambda)>0$ in $\Gamma$.
\end{lemma}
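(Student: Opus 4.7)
The plan is to bootstrap from Lemma~\ref{lemma2-key}: first deduce the nonstrict inequality $\sum_i f_i(\lambda)\mu_i\geq 0$, then upgrade it to strict positivity by a monotonicity-versus-concavity sandwich argument powered by \eqref{addistruc}.

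First, observe that \eqref{addistruc} immediately forces $\Gamma_{\mathcal{G}}^{f}=\Gamma$: for any $\lambda\in\Gamma$ and any fixed $\mu\in\Gamma$, the quantity $\lim_{t\to+\infty}f(t\lambda)$ strictly exceeds the finite number $f(\mu)$, hence is $>-\infty$. Applying Lemma~\ref{lemma2-key} with this identification, I would conclude that for every $\lambda,\mu\in\Gamma$,
\begin{equation}\label{eq:plan1}
\sum_{i=1}^n f_i(\lambda)\mu_i\geq 0, \qquad f(\lambda+\mu)\geq f(\lambda),
\end{equation}
the latter of which gives monotonicity of $f$ along the cone $\Gamma$.

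The heart of the argument is to rule out equality in the first inequality of \eqref{eq:plan1}. Suppose for contradiction that there exist $\lambda,\mu\in\Gamma$ with $\sum_i f_i(\lambda)\mu_i=0$. Consider the slice $g(t):=f(\lambda+t\mu)$ for $t\geq 0$. Since $\Gamma$ is a convex cone containing $\lambda$ and $\mu$, the ray $\lambda+t\mu$ lies in $\Gamma$, and the restriction of a concave function to a line is concave, so $g$ is concave with $g'(0)=\sum_i f_i(\lambda)\mu_i=0$. The standard concavity bound at the origin then yields
\begin{equation}\label{eq:plan2}
f(\lambda+t\mu)=g(t)\leq g(0)+tg'(0)=f(\lambda), \qquad \forall t\geq 0.
\end{equation}
On the other hand, applying Lemma~\ref{lemma2-key} with the roles of $\lambda$ and $\mu$ swapped (i.e.\ using $\lambda\in\Gamma=\Gamma_{\mathcal{G}}^{f}$ as the ``additive'' direction to $t\mu\in\Gamma$), I obtain $f(t\mu+\lambda)\geq f(t\mu)$ for every $t>0$. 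Combining this with \eqref{eq:plan2} and passing to the limit $t\to+\infty$,
\begin{equation*}
f(\lambda)\;\geq\;\limsup_{t\to+\infty}f(\lambda+t\mu)\;\geq\;\lim_{t\to+\infty}f(t\mu),
\end{equation*}
which directly contradicts \eqref{addistruc} applied to the pair $(\mu,\lambda)$, namely $\lim_{t\to+\infty}f(t\mu)>f(\lambda)$. This contradiction establishes \eqref{addistruc-4}.

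For the ``in particular'' clause, nonnegativity $f_i(\lambda)\geq 0$ will follow from Lemma~\ref{coro-4}, whose hypotheses \eqref{concave} and \eqref{elliptic-weak-3} are both in force (since \eqref{addistruc} implies $\Gamma_n\subset\Gamma=\Gamma_{\mathcal{G}}^f$). For $\sum_i f_i(\lambda)>0$, I would simply specialize \eqref{addistruc-4} to $\mu=\vec{\bf 1}\in\Gamma_n\subset\Gamma$. The main obstacle is verifying that the concavity bound \eqref{eq:plan2} and the monotonicity bound $f(\lambda+t\mu)\geq f(t\mu)$ can be combined cleanly in the limit; this is the pivotal step where the seemingly innocent strict inequality in \eqref{addistruc} is converted into the strict positivity in \eqref{addistruc-4}, and everything else is routine bookkeeping.
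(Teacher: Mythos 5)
Your proof is correct, but it takes a genuinely different route from the paper. The paper's own argument is direct and avoids both contradiction and limits: for fixed $\lambda,\mu\in\Gamma$, use \eqref{addistruc} to pick a single $t_0$ with $f(t_0\mu)>f(\lambda)$, feed $t_0\mu$ into the concavity inequality \eqref{concave-1} to get $t_0\sum_i f_i(\lambda)\mu_i > \sum_i f_i(\lambda)\lambda_i$, then close the loop by specializing to $\mu=\lambda$ (which yields $\sum_i f_i(\lambda)\lambda_i>0$ since $t_0>1$ there). You instead first deduce the nonstrict inequality via Lemma~\ref{lemma2-key} and the identification $\Gamma_{\mathcal{G}}^f=\Gamma$, then upgrade to strictness by contradiction: if $\sum_i f_i(\lambda)\mu_i=0$, the concave slice $g(t)=f(\lambda+t\mu)$ has $g'(0)=0$ and hence $g\le g(0)$, while the monotonicity $g(t)\ge f(t\mu)$ sandwiches $f(t\mu)\le f(\lambda)$ for all $t>0$, contradicting \eqref{addistruc}. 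Both are sound; the paper's proof is shorter and requires only a single finite $t_0$, whereas your proof makes the ``$\geq$ to $>$'' upgrade mechanism explicit and clarifies exactly how \eqref{addistruc} is consumed (through the asymptotics of the slice), which is conceptually useful. One small remark: your closing sentence flags combining the two bounds in the limit as the ``main obstacle,'' but since $g$ is concave with $g'(0)=0$ the bound $g(t)\le g(0)$ is already uniform in $t$, so no delicate passage to the limit is actually needed — one may simply observe $f(t\mu)\le f(\lambda)$ for every $t>0$ and conclude.
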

\begin{proof}
	Fix $\lambda,$ $\mu\in \Gamma$.
	Pick $t_0>0$, such that $f(t_0\mu)>f(\lambda)$. 
	 By \eqref{concave-1}, 
	 $$\sum f_i(\lambda)(t_0\mu_i-\lambda_i)\geq f(t_0\mu)-f(\lambda)>0.$$
Take $\mu=\lambda$, we see $\sum f_i(\lambda)\lambda_i> 0$. Thus $\sum  f_i(\lambda)\mu_i>0$, thereby yielding \eqref{elliptic-weak}-\eqref{sumfi>0}.
\end{proof}

\begin{proof}
 	[Proof of Theorem \ref{lemma-Y6}]
	The proof is based on Lemma \ref{lemma2-key} and  Proposition \ref{lemma-Y4} below.
	 From  
	 Lemma \ref{lemma2-key}, we may derive that
 	\begin{equation}
 		\label{Y-asymptotical -charac2}
 		\begin{aligned}	\Gamma_{\mathcal{G}}^{f} 	=\left\{\mu\in\Gamma:	\sum_{i=1}^nf_i(\lambda)\mu_i\geq 0, \mbox{ } \forall \lambda\in\Gamma\right\},	 
 		\end{aligned}	\end{equation}
	and so $\bar{\Gamma}_{\mathcal{G}}^{f}\subseteq \mathcal{C}^{\mathfrak{m}}_{\Gamma,f}.
	$ 
	On the other hand, 
	by Proposition \ref{lemma-Y4} 
	we see 
	$\mathcal{C}^{\mathfrak{m}}_{\partial\Gamma^\sigma,f}=\bar{\Gamma}_{\mathcal{G}}^{f}.$  	
	Since $\partial\Gamma^\sigma\subseteq\mathcal{S}\subseteq\Gamma$, by Lemma \ref{lemma-key3-observ} we know  
	$\mathcal{C}^{\mathfrak{m}}_{\Gamma,f}\subseteq\mathcal{C}^{\mathfrak{m}}_{\mathcal{S},f}
	\subseteq \mathcal{C}^{\mathfrak{m}}_{\partial\Gamma^\sigma,f}
	$. 
Combining with these results, we may conclude that	$\mathcal{C}^{\mathfrak{m}}_{\mathcal{S},f}=\bar{\Gamma}_{\mathcal{G}}^{f}.$

\end{proof}






To complete the proof of Theorem \ref{lemma-Y6},  
it suffices to prove that $	\bar \Gamma_{\mathcal{G}}^{f}$ is the  maximal test cone  
for the level set  case. 
That is

\begin{proposition} 
	\label{lemma-Y4}
	
	Suppose that  $f$ satisfies   \eqref{concave} and  \eqref{elliptic-weak-3}. Then  
	\begin{align*}  \mathcal{C}^{\mathfrak{m}}_{\partial\Gamma^\sigma,f}=
		\bar \Gamma_{\mathcal{G}}^{f}, \,  \forall \sigma\in (\sup_{\partial\Gamma}f, \sup_\Gamma f).
	\end{align*}
\end{proposition}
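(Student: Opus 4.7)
The plan is to establish the two inclusions separately, with the nontrivial direction going through an intermediate recession-cone characterization of $\mathcal{C}^{\mathfrak{m}}_{\partial\Gamma^\sigma,f}$.

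\smallskip

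The inclusion $\bar\Gamma_{\mathcal{G}}^{f}\subseteq \mathcal{C}^{\mathfrak{m}}_{\partial\Gamma^\sigma,f}$ is the easy direction: by Lemma \ref{lemma2-key}, every $\mu\in\Gamma_{\mathcal{G}}^{f}$ satisfies $\sum_i f_i(\lambda)\mu_i\geq 0$ for all $\lambda\in\Gamma\supseteq\partial\Gamma^\sigma$, so $\Gamma_{\mathcal{G}}^{f}\subseteq\mathcal{C}^{\mathfrak{m}}_{\partial\Gamma^\sigma,f}$; since $\mathcal{C}^{\mathfrak{m}}_{\partial\Gamma^\sigma,f}$ is defined by a family of closed linear inequalities and is therefore closed, the conclusion follows by taking closure.

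\smallskip

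For the reverse inclusion, the first step is to show that $\mathcal{C}^{\mathfrak{m}}_{\partial\Gamma^\sigma,f}$ is contained in the recession cone of the closed convex set $\overline{\Gamma^\sigma}=\{\lambda\in\Gamma:f(\lambda)\geq\sigma\}$ (this set is closed in $\mathbb{R}^n$ because $\sup_{\partial\Gamma}f<\sigma$ keeps it away from $\partial\Gamma$). I fix $\mu\in \mathcal{C}^{\mathfrak{m}}_{\partial\Gamma^\sigma,f}$ and $\lambda\in\Gamma^\sigma$, and set $\phi(t)=f(\lambda+t\mu)$ on the maximal interval $[0,T)$ where $\lambda+t\mu\in\Gamma$. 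The function $\phi$ is smooth and concave, with $\phi(0)>\sigma$. Suppose by contradiction that $\phi(t_1)=\sigma$ for some first $t_1\in(0,T)$; then $\phi'(t_1^-)\leq 0$, but the defining property of $\mathcal{C}^{\mathfrak{m}}_{\partial\Gamma^\sigma,f}$ forces $\phi'(t_1)\geq 0$, giving $\phi'(t_1)=0$. Concavity then yields $\phi'(t)\geq 0$ on $[0,t_1]$, so $\phi$ is nondecreasing there, contradicting $\phi(0)>\sigma=\phi(t_1)$. Hence $\phi(t)>\sigma$ throughout $[0,T)$, which combined with $\sup_{\partial\Gamma}f<\sigma$ also rules out $T<\infty$; therefore $\lambda+t\mu\in\Gamma^\sigma$ for every $t\geq 0$, i.e., $\mu$ lies in the recession cone of $\overline{\Gamma^\sigma}$. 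In particular this forces $\mu\in\bar\Gamma$.

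\smallskip

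The second step is to upgrade membership in the recession cone to membership in $\bar\Gamma_{\mathcal{G}}^{f}$. I choose an auxiliary direction $\nu\in\Gamma_n\cap\Gamma^\sigma$; such $\nu$ exists since for any $\lambda_\ast\in\Gamma^\sigma$ and large $s>0$ the point $\lambda_\ast+s\vec{\bf 1}$ lies in $\Gamma_n$ and has $f$-value $\geq f(\lambda_\ast)>\sigma$ by Lemma \ref{lemma2-key}. For $\epsilon>0$, set $\mu_\epsilon=\mu+\epsilon\nu\in\Gamma$. For $t\geq 1/\epsilon$, I decompose
\begin{equation*}
t\mu_\epsilon \;=\; (t\mu+\nu) + (t\epsilon-1)\nu.
\end{equation*}
Since $\nu\in\Gamma^\sigma$ and $\mu$ is in the recession cone, $t\mu+\nu\in\overline{\Gamma^\sigma}$, so $f(t\mu+\nu)\geq\sigma$; since $(t\epsilon-1)\nu\in\Gamma_{\mathcal{G}}^{f}$, a second application of Lemma \ref{lemma2-key} yields $f(t\mu_\epsilon)\geq f(t\mu+\nu)\geq\sigma$. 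Hence $\mu_\epsilon\in\Gamma_{\mathcal{G}}^{f}$ for every $\epsilon>0$, and letting $\epsilon\to 0$ gives $\mu\in\bar\Gamma_{\mathcal{G}}^{f}$. This completes the reverse inclusion and hence the proposition.

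\smallskip

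I expect the main obstacle to be the degenerate possibility in which $\phi(t)=f(\lambda+t\mu)$ becomes tangent to the level $\{f=\sigma\}$ from above and then strictly decreases—if this occurred, the recession-cone property would fail. The first-crossing argument above is designed precisely to rule this out, but it relies crucially on evaluating $\phi'$ exactly on $\partial\Gamma^\sigma$ and exploiting the monotonicity of $\phi'$. A related delicate point is that $\mathcal{C}^{\mathfrak{m}}_{\partial\Gamma^\sigma,f}$ may in principle contain directions outside $\bar\Gamma$ (cf.\ the remark after Theorem \ref{lemma-Y6}); the recession-cone step automatically forces such directions to lie in $\bar\Gamma$, which is what makes the subsequent perturbation $\mu+\epsilon\nu$ legitimate.
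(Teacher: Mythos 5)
Your proof is correct and runs along the same backbone as the paper's argument (Proposition \ref{lemma-Y4} together with its supporting Lemmas \ref{lemma-key1-levelset} and \ref{lemma-Y3}): first show that translation of $\Gamma^\sigma$ by $t\mu$, $t\geq 0$, stays in $\Gamma^\sigma$ (a ray-invariance or recession-cone statement), then perturb $\mu$ by $\epsilon$ times a direction lying in both $\Gamma^\sigma$ and $\Gamma_{\mathcal{G}}^{f}$ and apply Lemma \ref{lemma2-key} to conclude $\mu+\epsilon(\cdot)\in\Gamma_{\mathcal{G}}^{f}$. That said, a few of your choices streamline the argument in ways worth flagging. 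The paper carries out the ray-invariance step only for $\mu$ in the interior $\mathring{\mathcal{C}}^{\mathfrak{m}}_{\partial\Gamma^\sigma,f}$, where the strict inequality $\sum_i f_i(\lambda)\mu_i>0$ on $\partial\Gamma^\sigma$ immediately kills any first crossing (Lemma \ref{lemma-key1-levelset}(1)--(2)), and then implicitly takes a closure at the end. You instead work with the full closed cone, where you only get $\phi'(t_1)\geq 0$, and recover the contradiction by an extra appeal to concavity: $\phi'(t_1)=0$ forces $\phi'\geq 0$ on $[0,t_1]$, incompatible with $\phi(0)>\phi(t_1)$. This eliminates the final closure step, and it also subsumes Lemma \ref{lemma-Y3} ($\mathcal{C}^{\mathfrak{m}}\subseteq\bar\Gamma$) as a byproduct of the recession-cone property rather than a separate lemma. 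Your perturbation direction $\nu\in\Gamma_n\cap\Gamma^\sigma$ is a slightly more concrete choice than the paper's $\lambda^\sigma\in\partial\Gamma^\sigma\cap\Gamma_{\mathcal{G}}^{f}$, but the decomposition $t\mu_\epsilon=(t\mu+\nu)+(t\epsilon-1)\nu$ plays exactly the same role as the paper's $f(t(\mu+\epsilon\lambda^\sigma))\geq f(t\mu+\lambda^\sigma)$. In short: same core idea, executed a bit more economically.
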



 The rest of this section is to prove Proposition  \ref{lemma-Y4}.
A key and difficult step is to prove $\mathcal{C}^{\mathfrak{m}}_{\partial\Gamma^\sigma,f}\subseteq\bar{\Gamma}.$
The proof is a combination of the following lemmas.

\begin{lemma}
	\label{lemma-key1-levelset}
	Let  $\mathrm{L}_\mu=\{t\mu: t>0\}$ denote
	the ray generated by $\mu\in\mathbb{R}^n$. 
Assume that \eqref{concave} and \eqref{elliptic-weak-3} hold. Then for $\sigma\in (\sup_{\partial\Gamma}f, \sup_\Gamma f)$ we have
	\begin{enumerate}

		
		\item 	$\mathring{\mathcal{C}}^{\mathfrak{m}}_{\partial\Gamma^\sigma,f} \subseteq \left\{\mu\in\mathbb{R}^n: \sum_{i=1}^n f_i(\lambda)\mu_i>0, \forall \lambda\in\partial\Gamma^\sigma\right\};$
		
		\item $	\left\{\mu\in\mathbb{R}^n: \sum_{i=1}^n f_i(\lambda)\mu_i>0, \forall \lambda\in\partial\Gamma^\sigma\right\}\subseteq
		\left\{\mu\in \mathbb{R}^n: \partial\Gamma^\sigma+\mathrm{L}_\mu\subseteq \Gamma^\sigma\right\};$

				\item $\left\{\mu\in \mathbb{R}^n: \partial\Gamma^\sigma+\mathrm{L}_\mu\subseteq \Gamma^\sigma\right\}\subseteq   	\mathcal{C}^{\mathfrak{m}}_{\partial\Gamma^\sigma,f}.$
	\end{enumerate} 
In particular, 	\begin{enumerate}
	\item[(4)] $	\mathcal{C}^{\mathfrak{m}}_{\partial\Gamma^\sigma,f}		 
	=\overline{\left\{\mu\in\mathbb{R}^n: \sum_{i=1}^n f_i(\lambda)\mu_i>0, \forall \lambda\in\partial\Gamma^\sigma\right\}};$
	\item[(5)] 
	$\mathcal{C}^{\mathfrak{m}}_{\partial\Gamma^\sigma,f} = \overline{	\left\{\mu\in \mathbb{R}^n: \partial\Gamma^\sigma+\mathrm{L}_\mu\subseteq \Gamma^\sigma\right\}}.$
	
\end{enumerate}
\end{lemma}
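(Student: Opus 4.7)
\medskip

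\noindent\emph{Proof idea.} The plan is to establish the three one-directional inclusions (1)--(3) in order, and then deduce (4) and (5) by taking closures. The closing step requires that $\mathcal{C}^{\mathfrak{m}}_{\partial\Gamma^\sigma,f}$ has nonempty interior; this is automatic, since Lemma \ref{lemma2-key} together with $\Gamma_n\subseteq\Gamma_{\mathcal{G}}^{f}$ (guaranteed by \eqref{elliptic-weak-3}) yields $\Gamma_n\subseteq\mathcal{C}^{\mathfrak{m}}_{\partial\Gamma^\sigma,f}$, and $\Gamma_n$ is open.

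For (1), I would argue by contradiction: if $\mu\in\mathring{\mathcal{C}}^{\mathfrak{m}}_{\partial\Gamma^\sigma,f}$ but $\sum_{i=1}^{n}f_i(\lambda_0)\mu_i=0$ at some $\lambda_0\in\partial\Gamma^\sigma$, then both $\mu+tv$ and $\mu-tv$ lie in the test cone for every direction $v\in\mathbb{R}^{n}$ and small $t>0$, which forces $\sum_{i=1}^{n}f_i(\lambda_0)v_i=0$ for every $v$ and hence $f_i(\lambda_0)=0$ for each $i$. Since $f(\lambda_0)=\sigma<\sup_{\Gamma}f$, Lemma \ref{coro-4} then yields $\sum_{i=1}^{n}f_i(\lambda_0)>0$, a contradiction. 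Inclusion (3) is the shortest: if $\partial\Gamma^\sigma+\mathrm{L}_\mu\subseteq\Gamma^\sigma$, then for each $\lambda\in\partial\Gamma^\sigma$ the function $t\mapsto f(\lambda+t\mu)$ equals $\sigma$ at $t=0$ and strictly exceeds $\sigma$ for $t>0$, so its right-derivative $\sum_{i=1}^{n}f_i(\lambda)\mu_i$ at $t=0$ is nonnegative.

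The main obstacle is (2), where a pointwise differential inequality on the level set must be propagated along an entire ray, and where concavity of $f$ is decisive. Fix $\mu$ with $\sum_{i=1}^{n}f_i(\lambda)\mu_i>0$ for all $\lambda\in\partial\Gamma^\sigma$, fix $\lambda\in\partial\Gamma^\sigma$, and let $h(t):=f(\lambda+t\mu)$ on the relatively open interval $I\subseteq[0,\infty)$ where $\lambda+t\mu\in\Gamma$. Then $h$ is concave on $I$, $h(0)=\sigma$, and $h'(0^+)=\sum_{i=1}^{n}f_i(\lambda)\mu_i>0$. Arguing by contradiction, suppose $h(t_\ast)\leq\sigma$ for some $t_\ast\in I\cap(0,\infty)$; since $h>\sigma$ on a right-neighborhood of $0$, there is a first time $t_1\in(0,t_\ast]$ with $h(t_1)=\sigma$. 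The strict inequality $\sigma>\sup_{\partial\Gamma}f$ ensures $\lambda+t_1\mu\in\Gamma$, hence $\lambda+t_1\mu\in\partial\Gamma^\sigma$. Concavity of $h$ combined with $h(0)=h(t_1)=\sigma$ and $h>\sigma$ on $(0,t_1)$ forces $h'(t_1)\leq 0$, contradicting the hypothesis applied at $\lambda+t_1\mu$. The only remaining scenario is that $I=[0,t^\circ)$ with $\lambda+t^\circ\mu\in\partial\Gamma$ and $h>\sigma$ on all of $(0,t^\circ)$; this too is ruled out by the same strict inequality, since at the exit point $\limsup h\leq\sup_{\partial\Gamma}f<\sigma$.

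Chaining (1)--(3) yields
\[
\mathring{\mathcal{C}}^{\mathfrak{m}}_{\partial\Gamma^\sigma,f}\;\subseteq\;\Bigl\{\mu:\textstyle\sum_{i=1}^{n}f_i(\lambda)\mu_i>0,\ \forall\lambda\in\partial\Gamma^\sigma\Bigr\}\;\subseteq\;\bigl\{\mu:\partial\Gamma^\sigma+\mathrm{L}_\mu\subseteq\Gamma^\sigma\bigr\}\;\subseteq\;\mathcal{C}^{\mathfrak{m}}_{\partial\Gamma^\sigma,f}.
\]
All four sets then have the same closure, because $\mathcal{C}^{\mathfrak{m}}_{\partial\Gamma^\sigma,f}$ is a closed convex cone (its defining inequalities are weak and continuous in $\mu$) with nonempty interior, so $\overline{\mathring{\mathcal{C}}^{\mathfrak{m}}_{\partial\Gamma^\sigma,f}}=\mathcal{C}^{\mathfrak{m}}_{\partial\Gamma^\sigma,f}$. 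This delivers assertions (4) and (5).
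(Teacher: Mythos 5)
Your proposal is correct and follows essentially the same route as the paper: (3) is the one-line derivative observation, (2) is a continuation-along-the-ray argument driven by concavity, (1) is deduced from interiority, and (4)--(5) follow by closing the chain of inclusions using that $\mathcal{C}^{\mathfrak{m}}_{\partial\Gamma^\sigma,f}$ is a closed convex set with nonempty interior. The minor variations are cosmetic: for (1) you perturb $\mu$ by $\pm tv$ in all directions to force $f_i(\lambda_0)=0$, whereas the paper simply writes $\mu-\epsilon\vec{\bf1}\in\mathcal{C}^{\mathfrak{m}}_{\partial\Gamma^\sigma,f}$ and divides; and in (2) you explicitly dispose of the scenario where the ray exits $\Gamma$ before $f$ drops back to $\sigma$, which the paper leaves implicit (it is covered by the fact that $\sigma>\sup_{\partial\Gamma}f$ forces $\overline{\Gamma^\sigma}\subseteq\Gamma$, so the sup of $I_\lambda$ is always an interior point of $\Gamma$). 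Your extra care in (2) is a small genuine improvement in rigor, but the core mechanism is identical.
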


\begin{proof}
First,	$\mathrm{(3)}$ follows from
	\begin{equation} 
		\label{key-oberv1}
		\frac{\dd}{\dd t} f(\lambda+t\mu)\big|_{t=0}=\sum_{i=1}^n f_i(\lambda)\mu_i.
	\end{equation}
	
	Next, we   prove $(2)$.
	Fix $\mu \in \mathbb{R}^n$ with 
	\begin{equation}
		\label{def-mu-1}
		\begin{aligned}
			\sum_{i=1}^n f_i(\lambda)\mu_i>0,\,\, \forall \lambda\in\partial\Gamma^\sigma.
		\end{aligned}
	\end{equation}
Denote $I=\left\{t>0: \partial\Gamma^\sigma+\{t\mu\}\subseteq \Gamma^\sigma\right\},$
and for $ \lambda\in\partial\Gamma^\sigma$
 $$ I_\lambda=\left\{t>0: \lambda+t\mu\in \Gamma^\sigma\right\}.
 $$ 
	
	It suffices to prove that for all $\lambda\in\partial\Gamma^\sigma$, 
	\begin{equation}
		\begin{aligned}
		I_\lambda=\mathbb{R}^+ :=\{t:t>0\}.
		\end{aligned}
	\end{equation}
	 Fix $\lambda\in\partial\Gamma^\sigma$. By \eqref{key-oberv1} and \eqref{def-mu-1} there is  some $\epsilon>0$ such that  
	$(0,\epsilon) \subseteq I_\lambda$.  
	By the convexity of $\Gamma^\sigma$,  
	$I_\lambda$ is a 
	connected  subset  
	of $\mathbb{R}^+$, i.e., 	$I_\lambda$ is an interval. 
	Suppose by contradiction that 
	$ \sup_{t\in I_\lambda}  t<+\infty$ and denote $t_0=\sup_{t\in I_\lambda}t$. 
	Then    
	$\lambda+t_0\mu\in\partial\Gamma^\sigma$ and $\lambda+t\mu\notin {\Gamma}^\sigma$ for any $t>t_0$. 	That is, 
$f(\lambda+t_0\mu)=\sigma \mbox{ and } f(\lambda+t\mu)\leq\sigma,  \forall t>t_0$,
which implies by \eqref{key-oberv1} that 
	$
	\sum_{i=1}^n	f_i(\lambda+t_0\mu)\mu_i\leq 0.
	$
	It is a contradiction to \eqref{def-mu-1}. So $I_\lambda=\mathbb{R}^+$, 
	and 
	$I =\mathbb{R}^+$.
	

	

\vspace{1mm}
	Finally we  prove $\mathrm{(1)}$. Note that 	$\mathcal{C}^{\mathfrak{m}}_{\partial\Gamma^\sigma,f}$ is a 
	 symmetric convex cone with vertex at origin.  
	For an  interior point $\mu$  of $\mathcal{C}^{\mathfrak{m}}_{\partial\Gamma^\sigma,f}$,
	there is $0<\epsilon\ll1$ (depending on $\mu$) such that
	$\mu-\epsilon\vec{\bf1}\in  \mathcal{C}^{\mathfrak{m}}_{\partial\Gamma^\sigma,f}$.
	This means that
	$\sum_{i=1}^n f_i(\lambda)\mu_i\geq \epsilon \sum_{i=1}^n f_i(\lambda)> 0$, $\forall\lambda\in\partial\Gamma^\sigma.$ 
\end{proof}

Consequently, we obtain  
the following key lemma. 
\begin{lemma}
	\label{lemma-Y3}
	In the presence of \eqref{concave} and \eqref{elliptic-weak-3}, 
 we have $$\mathcal{C}^{\mathfrak{m}}_{\partial\Gamma^\sigma,f}	\subseteq\bar{\Gamma}, \,\, \forall\sigma\in (\sup_{\partial\Gamma}f, \sup_\Gamma f).$$

\end{lemma}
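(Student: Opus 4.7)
The plan is to leverage the characterization of the maximal test cone in Lemma \ref{lemma-key1-levelset}(5), which identifies $\mathcal{C}^{\mathfrak{m}}_{\partial\Gamma^\sigma,f}$ with the closure of the set of directions $\mu$ along which the superlevel set $\Gamma^\sigma$ is strictly forward-invariant from its boundary. Once armed with this reformulation, containment in $\bar\Gamma$ becomes a direct consequence of the fact that $\Gamma$ is an open convex cone.

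More concretely, I would first observe that the hypothesis $\sigma \in (\sup_{\partial\Gamma} f, \sup_{\Gamma} f)$ guarantees $\partial\Gamma^\sigma \neq \emptyset$: picking $\lambda^* \in \Gamma$ with $f(\lambda^*) > \sigma$ and moving along the ray $t\lambda^*$ (or $\lambda^* + t \vec{\bf 1}$) one can apply continuity of $f$ and the hypothesis $\sup_{\partial\Gamma} f < \sigma$ together with \eqref{elliptic-weak-3} to produce a point in $\partial\Gamma^\sigma$. Then I would pick any $\mu$ in the set
\[
\mathcal{A}_\sigma := \{\mu \in \mathbb{R}^n : \partial\Gamma^\sigma + \mathrm{L}_\mu \subseteq \Gamma^\sigma\}
\]
and a reference point $\lambda_0 \in \partial\Gamma^\sigma$. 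By definition, $\lambda_0 + t\mu \in \Gamma^\sigma \subseteq \Gamma$ for every $t>0$. Since $\Gamma$ is a cone,
\[
\frac{\lambda_0}{t} + \mu = \frac{1}{t}(\lambda_0 + t\mu) \in \Gamma, \qquad \forall\, t>0,
\]
and letting $t \to +\infty$ gives $\mu \in \bar\Gamma$. Hence $\mathcal{A}_\sigma \subseteq \bar\Gamma$.

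Finally, since $\bar\Gamma$ is closed, taking closures yields $\overline{\mathcal{A}_\sigma} \subseteq \bar\Gamma$, and invoking Lemma \ref{lemma-key1-levelset}(5) (which identifies the left-hand side with $\mathcal{C}^{\mathfrak{m}}_{\partial\Gamma^\sigma,f}$) gives the desired inclusion. I do not foresee a genuine obstacle: the entire argument rests on Lemma \ref{lemma-key1-levelset}(5), whose proof has already been carried out, together with the trivial but crucial fact that a cone shrinks to the origin under the rescaling $\lambda_0/t$. The only minor care is in arguing $\partial\Gamma^\sigma \neq \emptyset$, which follows directly from the range restriction on $\sigma$ combined with \eqref{elliptic-weak-3} and the continuity of $f$.
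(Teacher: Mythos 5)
Your proof is correct and uses essentially the same mechanism as the paper's: both exploit the fact (from Lemma \ref{lemma-key1-levelset}) that for $\mu$ in (the interior of) the maximal test cone and $\lambda_0\in\partial\Gamma^\sigma$ the entire ray $\lambda_0+\mathrm{L}_\mu$ stays in $\Gamma$, and then use the cone structure of $\Gamma$ to force $\mu\in\bar\Gamma$. The paper argues by contradiction using items $(1)$ and $(2)$ (if $\mu\notin\bar\Gamma$ then $\lambda_0+t\mu$ must exit $\bar\Gamma$ for $t\gg1$, contradicting forward invariance), whereas you run the argument directly via the rescaling $(\lambda_0+t\mu)/t\to\mu$ and invoke item $(5)$ rather than $(1)+(2)$; since $(5)$ is itself a corollary of $(1)$–$(3)$, this is the same content in a slightly cleaner packaging. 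Your explicit check that $\partial\Gamma^\sigma\neq\emptyset$ (left implicit in the paper) is a welcome addition.
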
 

\begin{proof}
	
	It suffices to prove $\mathring{\mathcal{C}}^{\mathfrak{m}}_{\partial\Gamma^\sigma,f} \subseteq\bar{\Gamma}.$ 
	 %
	Fix $\lambda\in\partial\Gamma^\sigma$. 
Assume by contradiction that   there is $\mu\in \mathring{\mathcal{C}}^{\mathfrak{m}}_{\partial\Gamma^\sigma,f} \setminus\bar{\Gamma}$. Then  $\mu\neq\vec{\bf0}$ and $\lambda+t\mu\notin\bar{\Gamma}$ for $t\gg1$. This contradicts to
$\mathrm{(1)}$ and $\mathrm{(2)}$ of
	Lemma \ref{lemma-key1-levelset}. 
\end{proof}

\begin{proof}
	[Proof of Proposition \ref{lemma-Y4}]
First by Lemma \ref{lemma2-key},
	$
	\bar \Gamma_{\mathcal{G}}^{f}\subseteq \mathcal{C}^{\mathfrak{m}}_{\partial\Gamma^\sigma,f}.$
	The rest is to prove
 $\mathring{\mathcal{C}}^{\mathfrak{m}}_{\partial\Gamma^\sigma,f}\subseteq	\bar \Gamma_{\mathcal{G}}^{f}.$
	Fix 
 $\mu\in \mathring{\mathcal{C}}^{\mathfrak{m}}_{\partial\Gamma^\sigma,f}$
	 and $\lambda^\sigma \in \partial\Gamma^\sigma\cap \Gamma_{\mathcal{G}}^{f}$.
	It requires  to prove that 
	$$\mu+\epsilon\lambda^\sigma\in  \Gamma_{\mathcal{G}}^{f}, \,\, \forall \epsilon>0.$$
By Lemma \ref{lemma-Y3},  $\mu\in \bar{\Gamma}$. So $\mu+\epsilon\lambda^\sigma\in\Gamma$ and $t\mu+\lambda^\sigma\in \Gamma.$ Then by  
	Lemma \ref{lemma2-key}, 
	\begin{align*}
		f(t(\mu+\epsilon\lambda^\sigma))\geq f(t\mu+\lambda^\sigma), \, \forall t\geq  {\epsilon}^{-1}.
	\end{align*}
From Lemma \ref{lemma-key1-levelset},
$
		f(\lambda^\sigma+t\mu)\geq  \sigma, \, \forall t>0.
$
	Thus $\mu+\epsilon\lambda^\sigma\in  \Gamma_{\mathcal{G}}^{f}$. So $\mu\in \bar \Gamma_{\mathcal{G}}^{f}$. 

\end{proof}


\section{On the structure of nonlinear operators}
\label{sec-structure}


 \subsection{Determination of partial uniform ellipticity via test cones}
\label{sec2-PUE}

To understand the structure of fully nonlinear equations, it is natural to introduce the following:
\begin{definition}  
	[Partial uniform ellipticity] 
	\label{def-PUE}  
	Let $\mathcal{S}$ be 
	a symmetric  subset  of $\{\lambda\in\Gamma: f(\lambda)<\sup_\Gamma f \}$. 
	We say that $f$ is of 
	\textit{$\mathrm{m}$-uniform ellipticity} in $\mathcal{S}$,
	if \eqref{elliptic-weak} holds 
	and  there exists a uniform 
	constant $\vartheta$ such that 
	for any $\lambda\in \mathcal{S}$ with 
	$f_1(\lambda) \geq \cdots \geq f_n(\lambda)$,
	\begin{equation}
		\label{PUE1}
		\begin{aligned}
			f_{{i}}(\lambda) \geq   \vartheta \sum_{j=1}^{n}f_j(\lambda)>0, \mbox{ } \forall 1\leq i\leq \mathrm{m}.
			\nonumber
		\end{aligned}
	\end{equation}
	In particular, \textit{$n$-uniform ellipticity} is also called fully uniform ellipticity.  
\end{definition}  

  In 1990s, Lin-Trudinger  \cite{Lin1994Trudinger}
explicitly  verified that $\sigma_k^{1/k}$ is of $(n-k+1)$-unifrom ellipticity in $\Gamma_k$. 
 However, their method does not generalize to broader settings. This raises the following question:
\begin{problem}
	\label{Problem1}
	Given $f$ and $\mathcal{S}$, how to determine the integer $\mathrm{m}$ in	Definition \ref{def-PUE}?
\end{problem} 

In the case  $\mathcal{S}=\partial\Gamma^\sigma$, Guan-Nie \cite{Guan21Nie} reformulated a related problem by defining the rank of the tangent cone at infinity to 
$\Gamma^\sigma$.
 The global case  
($\mathcal{S}=\Gamma$) was solved by the author in a previous draft \cite{yuan2020conformal} under the condition that 
$f$ satisfies \eqref{addistruc}.
Unfortunately, Problem \ref{Problem1} is still unresolved in the general case. This subsection focuses on tackling this problem.

Let $\mathcal{C}_{\mathcal{S},f}$ be a test cone as in Definition \ref{def-testcone}.
 %
%
Denote $\mathring{C}_{\mathcal{S},f}$ the interior  of $\mathcal{C}_{\mathcal{S},f}$. 
For the test cone $\mathcal{C}_{\mathcal{S},f}$, 
  define
\begin{equation}	\label{kappa_1}	\begin{aligned}	 \kappa_{\mathcal{C}_{\mathcal{S},f}}:=\max \left\{k: ({\overbrace{0,\cdots,0}^{k-entries}},{\overbrace{1,\cdots, 1}^{(n-k)-entries}})\in \mathring{C}_{\mathcal{S},f} \right\}. 
\end{aligned}\end{equation}


Below we bridge $\kappa_{\mathcal{C}_{\mathcal{S},f}}$ and
the partial uniform ellipticity of $f$ in $\mathcal{S}$.
\begin{proposition}
	\label{thm1-confi}
	Suppose $f$ satisfies \eqref{concave} and \eqref{elliptic-weak-3} in $\Gamma$.  
	Denote 
	\begin{equation} 
		\vartheta_{\mathcal{C}_{\mathcal{S},f}}=
		\begin{cases}
			1/n,   \,& 
			\mbox{ if } \kappa_{\Gamma_{\mathcal{C}_{\mathcal{S},f}}}
			=0, \\
			\underset{(-\alpha_1,\cdots,-\alpha_{\kappa_{\mathcal{C}_{\mathcal{S},f}}}, \alpha_{1+\kappa_{\Gamma_{\mathcal{C}_{\mathcal{S},f}}}},\cdots, \alpha_n)\in \mathcal{C}_{\mathcal{S},f}, 	 
				\, \alpha_i\geq0}{\sup} 
			\,\, \frac{\alpha_1/n}{\sum_{i=1+\kappa_{\mathcal{C}_{\mathcal{S},f}}}^n		\alpha_i-\sum_{i=2}^{\kappa_{\mathcal{C}_{\mathcal{S},f}}}\alpha_i}, 
			\,&\mbox{ if }\kappa_{\Gamma_{\mathcal{C}_{\mathcal{S},f}}}\geq 1. 
		\end{cases}  \nonumber
	\end{equation}
	Then  \eqref{elliptic-weak} holds, and  
	for any $\lambda\in \mathcal{S}$ with
	$\lambda_1 \leq \cdots \leq\lambda_n$  we have
	\begin{equation} 
		\begin{aligned}
			f_{{i}}(\lambda) 
			\geq \vartheta_{\mathcal{C}_{\mathcal{S},f}} \sum_{j=1}^{n}f_j(\lambda),  \mbox{ }
			\forall  1\leq i\leq 1+ 
			\kappa_{\mathcal{C}_{\mathcal{S},f}}. \nonumber
		\end{aligned}
	\end{equation}
\end{proposition}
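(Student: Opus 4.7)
The strategy is to exploit the test cone property $\sum_{i=1}^n f_i(\lambda) \mu_i \geq 0$ (for all $\mu\in\mathcal{C}_{\mathcal{S},f}$ and $\lambda\in\mathcal{S}$), together with the symmetry and convexity of $\mathcal{C}_{\mathcal{S},f}$ and the concavity of $f$, to distill from the family of permuted cone inequalities the desired lower bound on the small derivatives.

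First I would set up two structural preliminaries. The monotonicity $f_i(\lambda)\geq0$ on $\Gamma$, i.e.~\eqref{elliptic-weak}, is already supplied by Lemma~\ref{coro-4}. The second ingredient is the ordering: applying the concavity of $f$ along the segment $t\mapsto f(\lambda+t(e_i-e_j))$ (which is symmetric under $t\mapsto 2t_0-t$ about the midpoint $t_0=(\lambda_i-\lambda_j)/2$ thanks to the permutation symmetry of $f$) one shows that if $\lambda_1\leq\cdots\leq\lambda_n$ then $f_1(\lambda)\geq f_2(\lambda)\geq\cdots\geq f_n(\lambda)\geq0$. In particular $f_i(\lambda)\geq f_{1+\kappa_{\mathcal{C}_{\mathcal{S},f}}}(\lambda)$ for all $i\leq 1+\kappa_{\mathcal{C}_{\mathcal{S},f}}$, so it suffices to prove
\[
f_{1+\kappa_{\mathcal{C}_{\mathcal{S},f}}}(\lambda)\geq \vartheta_{\mathcal{C}_{\mathcal{S},f}}\sum_{j=1}^n f_j(\lambda).
\]

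When $\kappa_{\mathcal{C}_{\mathcal{S},f}}=0$ this is immediate from $f_1(\lambda)=\max_j f_j(\lambda)\geq\frac{1}{n}\sum_j f_j(\lambda)$, matching $\vartheta=1/n$. For $\kappa_{\mathcal{C}_{\mathcal{S},f}}\geq 1$, I fix an admissible vector $\mu=(-\alpha_1,\ldots,-\alpha_{\kappa_{\mathcal{C}_{\mathcal{S},f}}}, \alpha_{1+\kappa_{\mathcal{C}_{\mathcal{S},f}}},\ldots,\alpha_n)\in\mathcal{C}_{\mathcal{S},f}$ with $\alpha_i\geq 0$. By symmetry every permutation of $\mu$ lies in $\mathcal{C}_{\mathcal{S},f}$, and by convexity so does every convex combination. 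The aim is to build, from a weighted average of such permutations, a vector $\tilde\mu\in\mathcal{C}_{\mathcal{S},f}$ whose cone inequality $\sum_j f_j(\lambda)\tilde\mu_j\geq 0$ can be reorganized, using the ordering $f_1\geq\cdots\geq f_n$ to telescope the cross terms, into
\[
n\bigl(\textstyle\sum_{i=1+\kappa_{\mathcal{C}_{\mathcal{S},f}}}^n\alpha_i-\sum_{i=2}^{\kappa_{\mathcal{C}_{\mathcal{S},f}}}\alpha_i\bigr) f_{1+\kappa_{\mathcal{C}_{\mathcal{S},f}}}(\lambda)\geq \alpha_1\sum_{j=1}^n f_j(\lambda).
\]
Dividing and then taking the supremum over admissible $\mu$ produces the constant $\vartheta_{\mathcal{C}_{\mathcal{S},f}}$.

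The main obstacle is the combinatorial construction of the correct convex combination. A first attempt, averaging $\mu$ over all permutations that fix a single position $k$, yields a symmetric rank-one perturbation of $\vec{\bf 1}$ whose cone inequality produces only upper bounds of the form $f_k(\lambda)\leq C(\mu)\sum_j f_j(\lambda)$; these are sharp for the large derivatives but do not by themselves lower-bound $f_{1+\kappa_{\mathcal{C}_{\mathcal{S},f}}}$. The lower bound has to be extracted by combining several such inequalities, using simultaneously the monotone control $f_j\geq f_{1+\kappa_{\mathcal{C}_{\mathcal{S},f}}}$ for $j\leq 1+\kappa_{\mathcal{C}_{\mathcal{S},f}}$ and $f_j\leq f_{1+\kappa_{\mathcal{C}_{\mathcal{S},f}}}$ for $j\geq 1+\kappa_{\mathcal{C}_{\mathcal{S},f}}$, so that the surplus absorbed by the small indices is transferred to the large ones. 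The supremum structure of $\vartheta_{\mathcal{C}_{\mathcal{S},f}}$ reflects that the extremal ratio is saturated precisely on the boundary stratum $\sum_i\mu_i=0$, which is the degenerate situation of Lemma~\ref{lemma-22}(3) where all $f_i(\lambda)$ agree and the estimate becomes trivially tight.
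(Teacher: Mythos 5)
Your plan correctly handles the setup (the monotonicity \eqref{elliptic-weak} from Lemma~\ref{coro-4}, the ordering $f_1\geq\cdots\geq f_n$ from concavity and symmetry, the $\kappa_{\mathcal{C}_{\mathcal{S},f}}=0$ case, and the reduction to bounding $f_{1+\kappa_{\mathcal{C}_{\mathcal{S},f}}}$ from below), but the central step is left as a wish: you say you want a ``weighted average of permutations'' $\tilde\mu$ whose cone inequality rearranges to the desired bound, report that the obvious averaging only gives upper bounds on the large $f_k$, and then assert the lower bound ``has to be extracted by combining several such inequalities'' without producing the combination. That is a genuine gap, and in fact the averaging route is a detour: no convex combination over permutations is needed at all.

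The missing idea is simply alignment of a \emph{single} vector with the ordering. Since $\mathcal{C}_{\mathcal{S},f}$ is symmetric, you may permute $\mu=(-\alpha_1,\dots,-\alpha_\kappa,\alpha_{1+\kappa},\dots,\alpha_n)$ so that the $\kappa$ negative entries sit exactly at the indices $1,\dots,\kappa$ where $f_i(\lambda)$ is largest (here $\kappa=\kappa_{\mathcal{C}_{\mathcal{S},f}}$ and $\lambda_1\leq\cdots\leq\lambda_n$, so $f_1\geq\cdots\geq f_n$). Writing the cone inequality \eqref{key-inequ0} for that one aligned $\mu$ gives
\begin{equation}
\sum_{i=1+\kappa}^n\alpha_i f_i(\lambda)\ \geq\ \sum_{i=1}^{\kappa}\alpha_i f_i(\lambda). \nonumber
\end{equation}
Now apply the ordering in \emph{both} directions on opposite sides: on the left $f_i\leq f_{1+\kappa}$ for $i\geq 1+\kappa$ gives $f_{1+\kappa}\sum_{i\geq1+\kappa}\alpha_i$ as an upper bound, and on the right $f_i\geq f_{1+\kappa}$ for $2\leq i\leq\kappa$ gives $\alpha_1 f_1+f_{1+\kappa}\sum_{i=2}^\kappa\alpha_i$ as a lower bound. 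Rearranging and using $f_1\geq\frac1n\sum_j f_j$ produces exactly
\begin{equation}
f_{1+\kappa}(\lambda)\ \geq\ \frac{\alpha_1/n}{\sum_{i=1+\kappa}^n\alpha_i-\sum_{i=2}^{\kappa}\alpha_i}\,\sum_{j=1}^n f_j(\lambda), \nonumber
\end{equation}
and taking the supremum over admissible $\mu$ yields $\vartheta_{\mathcal{C}_{\mathcal{S},f}}$. So there is no telescoping and no choice of weights to make; the two-sided use of the monotone ordering on a single aligned inequality already transfers all the surplus exactly where you wanted it.

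The permutation-averaging you attempt is the technique used elsewhere in the paper (in Lemma~\ref{lemma-22}, to prove $\sum\mu_i\geq0$ for $\mu\in\mathcal{C}_{\mathcal{S},f}$), and your intuition that the sup is saturated on the stratum $\sum\mu_i=0$ is in the right spirit, but for this proposition symmetrizing destroys the distinction between the large and small components of $\mu$ and $f$, which is exactly what the estimate has to exploit.
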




\begin{proof}

Take $\lambda\in\mathcal{S}$. 
From Lemma \ref{coro-4},   
$f_i(\lambda)\geq0$, $\forall1\leq i\leq n.$ Assume in addition  that  $\lambda_1\leq\cdots\leq\lambda_n$. The concavity and symmetry of $f$ imply that
\begin{equation}
	\label{asymptotical -concave1}
	f_1(\lambda)\geq\cdots\geq f_n(\lambda), \mbox{ and so } f_1(\lambda)\geq \frac{1}{n}\sum_{i=1}^n f_i(\lambda).
\end{equation}
When $\kappa_{\mathcal{C}_{\mathcal{S},f}}=0$, we  obtain Proposition \ref{thm1-confi} immediately.

From now on we assume $\kappa_{\mathcal{C}_{\mathcal{S},f}}\geq1$. Denote 
$\kappa=\kappa_{\mathcal{C}_{\mathcal{S},f}}$ for simplicity. 
Since $\mathring{\mathcal{C}}_{\mathcal{S},f}$ is open, we may pick  $n$ nonnegative constants $\alpha_1, \cdots, \alpha_n$  with $\alpha_1>0$  such that  
\begin{equation}
	(-\alpha_1,\cdots,	-\alpha_{\kappa}, \alpha_{1+\kappa},\cdots, \alpha_n)\in {\mathcal{C}_{\mathcal{S},f}}.  \nonumber
\end{equation}
According to \eqref{key-inequ0},  
$\sum_{i=1+\kappa}^n \alpha_i f_i(\lambda) \geq \sum_{i=1}^{\kappa} \alpha_i f_i(\lambda).$
Thus 
$f_{1+\kappa}(\lambda)	\geq\frac{\alpha_1}{\sum_{i=1+\kappa}^n \alpha_i}f_1(\lambda).$
In addition,     by  using 
iteration we can derive
\begin{equation}
	\label{theta2}
	\begin{aligned}
		f_{1+\kappa}(\lambda)\geq\frac{\alpha_1}{\sum_{i=1+\kappa}^n
			\alpha_i-\sum_{i=2}^{\kappa}\alpha_i}f_1(\lambda). \nonumber
	\end{aligned}
\end{equation}
\end{proof}

Proposition \ref{thm1-confi} describes  partial uniform ellipticity via a test cone. Moreover, the maximal test cone 
effectively captures the information of partial uniform ellipticity. That is

\begin{lemma}
\label{prop-Y2}
In addition to 
\eqref{concave}, 
we assume that
$f$ is of $(k+1)$-uniform ellipticity in 
$\mathcal{S}$ for some $0\leq k\leq n-1$.  Then there exists $R>0$ such that $$
({\overbrace{-1,\cdots,-1}^{k-entries}},{\overbrace{R,\cdots, R}^{(n-k)-entries}})\in \mathcal{C}^{\mathfrak{m}}_{\mathcal{S},f}.$$
In particular, 
$\kappa_{\mathcal{C}^{\mathfrak{m}}_{\mathcal{S},f}}\geq  k$.
\end{lemma}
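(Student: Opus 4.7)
The plan is to directly produce the claimed vector $\mu = (-1,\ldots,-1,R,\ldots,R)$ (with $k$ entries equal to $-1$) as an element of $\mathcal{C}^{\mathfrak{m}}_{\mathcal{S},f}$ for a suitable $R > 0$, and then deduce the interior statement $\kappa_{\mathcal{C}^{\mathfrak{m}}_{\mathcal{S},f}} \geq k$ by a short algebraic manipulation.

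First, I would verify the defining inequality $\sum_i f_i(\lambda)\mu_i \geq 0$ for every $\lambda \in \mathcal{S}$. By the rearrangement inequality, combined with the symmetry of $\mathcal{S}$ and $f$ (which lets me replace any $\lambda \in \mathcal{S}$ by a permutation still lying in $\mathcal{S}$), the quantity $\sum_i f_i(\lambda)\mu_i$ is minimized in the configuration $f_1(\lambda)\geq\cdots\geq f_n(\lambda)$. So it suffices to check the bound in this ordered case. Rewriting
\[
\sum_{i=1}^n f_i(\lambda)\mu_i = -\sum_{j=1}^n f_j(\lambda) + (R+1)\sum_{i=k+1}^n f_i(\lambda),
\]
and invoking $\sum_{i=k+1}^n f_i(\lambda) \geq f_{k+1}(\lambda) \geq \vartheta\sum_j f_j(\lambda)$ (the first inequality from \eqref{elliptic-weak}, the second exactly $(k+1)$-uniform ellipticity), I obtain the lower bound $\bigl((R+1)\vartheta - 1\bigr)\sum_j f_j(\lambda)$. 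Since $\sum_j f_j(\lambda) > 0$ on $\mathcal{S}$, choosing $R = 1/\vartheta > 0$ makes this nonnegative (in fact strictly positive), so $\mu \in \mathcal{C}^{\mathfrak{m}}_{\mathcal{S},f}$.

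For the second claim, I would observe the algebraic identity
\[
(0,\ldots,0,1,\ldots,1) = \frac{1}{R+1}(-1,\ldots,-1,R,\ldots,R) + \frac{1}{R+1}\vec{\bf 1}.
\]
The first summand lies in $\mathcal{C}^{\mathfrak{m}}_{\mathcal{S},f}$ by the previous step (rescaled), while $\vec{\bf 1}$ lies in the interior of $\bar{\Gamma}_n \subseteq \mathcal{C}^{\mathfrak{m}}_{\mathcal{S},f}$. Since perturbing the second summand inside this interior keeps the sum in the convex cone, an open neighborhood of $(0,\ldots,0,1,\ldots,1)$ is contained in $\mathcal{C}^{\mathfrak{m}}_{\mathcal{S},f}$, so this vector belongs to $\mathring{\mathcal{C}}^{\mathfrak{m}}_{\mathcal{S},f}$, yielding $\kappa_{\mathcal{C}^{\mathfrak{m}}_{\mathcal{S},f}} \geq k$ by \eqref{kappa_1}.

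The only delicate point is the reduction to the ordered configuration, which relies on the symmetry of both $\mathcal{S}$ and $f$; once it is in place, the rest is a direct numerical estimate dictated by the uniform ellipticity constant $\vartheta$, and there is no essential technical obstacle.
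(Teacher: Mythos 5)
Your proof is correct and follows essentially the same route as the paper's: both verify $\sum_i f_i(\lambda)\mu_i \ge 0$ for $\mu = (-1,\ldots,-1,R,\ldots,R)$ by using $f_{k+1}(\lambda)\ge\vartheta\sum_j f_j(\lambda)$ from the $(k+1)$-uniform ellipticity, and both conclude by adding $\vec{\bf 1}$ to land in the interior and rescaling. The only cosmetic differences are that the paper takes $R>1/\vartheta$ and writes $\sum f_i\mu_i \ge R\vartheta\sum f_i - \sum_{i=1}^k f_i$ rather than your slightly tighter $-\sum_j f_j + (R+1)\sum_{i\ge k+1} f_i$, and the paper states $\mu+\vec{\bf 1}\in\mathring{\mathcal{C}}^{\mathfrak m}_{\mathcal{S},f}$ directly (leaving the final rescaling implicit), whereas you spell out the convex combination; your explicit rearrangement reduction to the ordered case $f_1\ge\cdots\ge f_n$ is the correct way to handle the tacit ordering built into Definition \ref{def-PUE}.
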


\begin{proof}
The case $k=0$ is obvious. Assume now that $k\geq1.$
Let $\vartheta$ be as in Definition \ref{def-PUE}. 
Fix $R>\frac{1}{\vartheta}$. 
Set  $\mu:=({\overbrace{-1,\cdots,-1}^{k-entries}},{\overbrace{R,\cdots, R}^{(n-k)-entries}}).$
By the $(k+1)$-uniform ellipticity,
\begin{equation}
	\begin{aligned}
		\sum_{i=1}^n f_i(\lambda)\mu_i
		\geq  R\vartheta \sum_{i=1}^n f_i(\lambda)-\sum_{i=1}^k f_i(\lambda)>0, 
		\, \forall \lambda\in\mathcal{S}. \nonumber
	\end{aligned}
\end{equation}
This implies $\mu\in \mathcal{C}^{\mathfrak{m}}_{\mathcal{S},f}$. And so $\mu+\vec{\bf 1}\in \mathring{\mathcal{C}}^{\mathfrak{m}}_{\mathcal{S},f}$, as required.

\end{proof}

As a result, we obtain
\begin{proposition} 
\label{coro37}
When the test cone imposed in Proposition \ref{thm1-confi} is the maximum one,
the statement of $(1+ \kappa_{\mathcal{C}^{\mathfrak{m}}_{\mathcal{S},f}})$-uniform ellipticity   is sharp and cannot be  improved.

\end{proposition}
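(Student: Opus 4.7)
My plan is to derive sharpness directly from Lemma~\ref{prop-Y2}, which is already formulated as the converse direction to Proposition~\ref{thm1-confi}. Abbreviate $\kappa := \kappa_{\mathcal{C}^{\mathfrak{m}}_{\mathcal{S},f}}$. Proposition~\ref{thm1-confi} instantiated with the maximal test cone yields $(1+\kappa)$-uniform ellipticity of $f$ in $\mathcal{S}$, so sharpness reduces to showing that $f$ cannot be $(2+\kappa)$-uniformly elliptic in $\mathcal{S}$.

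Suppose for contradiction that $f$ is of $(2+\kappa)$-uniform ellipticity in $\mathcal{S}$, i.e.\ of $(k+1)$-uniform ellipticity with $k = \kappa+1$. Applying Lemma~\ref{prop-Y2} then produces some $R>0$ with $(\overbrace{-1,\cdots,-1}^{(\kappa+1)\text{-entries}},\overbrace{R,\cdots,R}^{(n-\kappa-1)\text{-entries}}) \in \mathcal{C}^{\mathfrak{m}}_{\mathcal{S},f}$, and the lemma further records the conclusion $\kappa_{\mathcal{C}^{\mathfrak{m}}_{\mathcal{S},f}} \geq \kappa+1$. This directly contradicts the definition $\kappa = \kappa_{\mathcal{C}^{\mathfrak{m}}_{\mathcal{S},f}}$, so no such improvement is possible, and sharpness follows.

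There is essentially no real obstacle in this argument: the substantive content sits inside Lemma~\ref{prop-Y2}, whose proof constructs an explicit block vector from the uniform ellipticity constant $\vartheta$ and then passes to the interior by adding a small positive multiple of $\vec{\bf 1}$. The only point warranting a moment of care is the index bookkeeping: $\kappa_{\mathcal{C}_{\mathcal{S},f}}$ in \eqref{kappa_1} counts the number of zero entries in block vectors lying in the interior of $\mathcal{C}_{\mathcal{S},f}$, whereas Definition~\ref{def-PUE} counts the largest $\mathrm{m}$ partial derivatives; matching these two conventions is exactly what aligns $(1+\kappa)$-uniform ellipticity on the positive side in Proposition~\ref{thm1-confi} with the non-$(2+\kappa)$-uniform ellipticity obstruction derived from Lemma~\ref{prop-Y2}.
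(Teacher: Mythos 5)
Your proposal is correct and follows the same route the paper intends: Proposition~\ref{coro37} is stated immediately after Lemma~\ref{prop-Y2} with the lead-in ``As a result,'' signaling that the sharpness claim is precisely the contrapositive of that lemma, which is exactly how you argue. The only (minor) point you leave implicit is the boundary case $\kappa_{\mathcal{C}^{\mathfrak{m}}_{\mathcal{S},f}} = n-1$, where $k=\kappa+1=n$ falls outside the range $0\le k\le n-1$ demanded by Lemma~\ref{prop-Y2}; there the claim is vacuous because $(n+1)$-uniform ellipticity has no meaning for $n$ eigenvalues, so nothing is lost.
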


When $\mathcal{S}$ contains a level set,
	$\mathcal{C}^{\mathfrak{m}}_{\mathcal{S},f}=\bar{\Gamma}_{\mathcal{G}}^{f}$  according to Theorem \ref{lemma-Y6}.
As a result, we can address Problem \ref{Problem1} in generic case.  

\begin{theorem}  
	\label{Y-k+1-4} 
	In addition to 
	\eqref{concave} and \eqref{elliptic-weak-3}, we assume that the symmetric set $\mathcal{S}$ contains a level set $\partial\Gamma^\sigma$ with some
	$\sup_{\partial\Gamma}f <\sigma<\sup_\Gamma f$, i.e., $\partial\Gamma^\sigma\subseteq\mathcal{S}$.
	Then  \eqref{elliptic-weak} holds and there exists a uniform positive 
	constant $\vartheta_{\Gamma_{\mathcal{G}}^{f}}$ depending only 
	on $\Gamma_{\mathcal{G}}^{f}$ such that 
	for any $\lambda\in \mathcal{S}$ with
	$\lambda_1 \leq \cdots \leq\lambda_n$,
	\begin{equation}
		\label{inequ2-k+1}
		\begin{aligned}
			f_{{i}}(\lambda) 
			\geq \vartheta_{\Gamma_{\mathcal{G}}^{f}} \sum_{j=1}^{n}f_j(\lambda)>0,  \mbox{ }
			\forall  1\leq i\leq 1+ 
			\kappa_{\Gamma_{\mathcal{G}}^{f}}. 
		\end{aligned}
	\end{equation} 
	Moreover,   the statement of $(1+\kappa_{\Gamma_{\mathcal{G}}^{f}})$-uniform ellipticity 
	cannot be further improved.
	

\end{theorem}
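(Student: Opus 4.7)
The plan is to assemble the theorem as a straightforward consequence of the two pillars already developed in this section: Theorem \ref{lemma-Y6}, which pins down the maximal test cone, and Proposition \ref{thm1-confi}, which quantifies partial uniform ellipticity in terms of any test cone. First I would invoke Lemma \ref{coro-4} to get \eqref{elliptic-weak} for free, and to upgrade $\sum_j f_j(\lambda)\geq 0$ to the strict inequality $\sum_j f_j(\lambda)>0$ for every $\lambda\in\mathcal{S}$, using the defining property of $\mathcal{S}$ that $f(\lambda)<\sup_\Gamma f$.

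Next, by the hypothesis $\partial\Gamma^\sigma\subseteq\mathcal{S}$ combined with Theorem \ref{lemma-Y6}, the maximal test cone is identified as
\begin{equation*}
\mathcal{C}^{\mathfrak{m}}_{\mathcal{S},f}=\bar{\Gamma}_{\mathcal{G}}^{f}.
\end{equation*}
In particular, by the very definition \eqref{kappa_1}, $\kappa_{\mathcal{C}^{\mathfrak{m}}_{\mathcal{S},f}}=\kappa_{\Gamma_{\mathcal{G}}^{f}}$, and both coincide with an intrinsic invariant of $\Gamma_{\mathcal{G}}^{f}$. I would then apply Proposition \ref{thm1-confi} to the choice of test cone $\mathcal{C}_{\mathcal{S},f}=\mathcal{C}^{\mathfrak{m}}_{\mathcal{S},f}=\bar{\Gamma}_{\mathcal{G}}^{f}$, which outputs a uniform constant $\vartheta_{\Gamma_{\mathcal{G}}^{f}}:=\vartheta_{\bar{\Gamma}_{\mathcal{G}}^{f}}>0$, depending only on the geometry of $\Gamma_{\mathcal{G}}^{f}$, and yields exactly the sought inequality \eqref{inequ2-k+1} for the $1+\kappa_{\Gamma_{\mathcal{G}}^{f}}$ smallest entries of $\lambda$.

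For the sharpness statement, I would argue by contradiction: suppose $f$ were of $(k+1)$-uniform ellipticity in $\mathcal{S}$ for some $k\geq 1+\kappa_{\Gamma_{\mathcal{G}}^{f}}$. Lemma \ref{prop-Y2} would then produce a vector
\begin{equation*}
(\underbrace{-1,\cdots,-1}_{k\text{-entries}},\underbrace{R,\cdots,R}_{(n-k)\text{-entries}})\in \mathcal{C}^{\mathfrak{m}}_{\mathcal{S},f}
\end{equation*}
for some $R>0$, whence $\kappa_{\mathcal{C}^{\mathfrak{m}}_{\mathcal{S},f}}\geq k>\kappa_{\Gamma_{\mathcal{G}}^{f}}$. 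But this contradicts the identification $\mathcal{C}^{\mathfrak{m}}_{\mathcal{S},f}=\bar{\Gamma}_{\mathcal{G}}^{f}$ already obtained. Equivalently, this is precisely the content of Proposition \ref{coro37} applied in our setting.

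The only nonroutine moment, and the one I would flag as the main conceptual obstacle, is making sure the constant in Proposition \ref{thm1-confi} really depends only on $\Gamma_{\mathcal{G}}^{f}$ and not on $\mathcal{S}$ or $\sigma$; this is automatic here because, once Theorem \ref{lemma-Y6} has fixed the test cone as $\bar{\Gamma}_{\mathcal{G}}^{f}$, the supremum defining $\vartheta_{\bar{\Gamma}_{\mathcal{G}}^{f}}$ in Proposition \ref{thm1-confi} is taken over a set determined purely by $\Gamma_{\mathcal{G}}^{f}$. Everything else is a transcription.
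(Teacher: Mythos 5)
Your proposal is correct and follows essentially the same route as the paper: identify $\mathcal{C}^{\mathfrak{m}}_{\mathcal{S},f}=\bar{\Gamma}_{\mathcal{G}}^{f}$ via Theorem \ref{lemma-Y6}, feed that test cone into Proposition \ref{thm1-confi} to get \eqref{inequ2-k+1}, and invoke Proposition \ref{coro37} (which you correctly unfold as an application of Lemma \ref{prop-Y2}) for the sharpness. You are slightly more careful than the paper's own one-line proof in applying Theorem \ref{lemma-Y6} to the actual set $\mathcal{S}$ rather than $\Gamma$, but the argument is the same.
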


\begin{proof}

	According to Theorem \ref{lemma-Y6},
	$\mathcal{C}^{\mathfrak{m}}_{\Gamma,f}= 
	\bar{\Gamma}_{\mathcal{G}}^{f}.$
Together with Proposition \ref{thm1-confi} we get \eqref{inequ2-k+1}. By Proposition \ref{coro37}, the statement of $(1+\kappa_{\Gamma_{\mathcal{G}}^{f}})$-uniform ellipticity is sharp.
\end{proof}

\begin{remark}
	When considering global version ($\mathcal{S}=\Gamma$) of partial uniform ellipticity,  by Lemma \ref{coro-4}  
	we shall impose the natural condition \eqref{sumfi>0}
	to  ensure $\{\lambda\in\Gamma: f(\lambda)<\sup_{\Gamma}f\}=\Gamma$ and exclude  constant functions.     
\end{remark}




\subsection{Further  structures of operators}
\label{subsec63-application}
We may confirm some inequalities.
The first is as follows, which is also   important   for deriving local gradient estimate.
\begin{theorem}
	\label{thm2-inequalities2}
	In the presence of \eqref{concave}, \eqref{elliptic-weak-3} and \eqref{sumfi>0}, we have 
	\begin{equation} 
		\label{inequ1-laplace}	\begin{aligned} 	
			f_{i}(\lambda) \leq	\frac{1}{\varrho_{\Gamma_{\mathcal{G}}^{f}}}\sum_{j=1}^n f_j(\lambda) 
			\mbox{ in }\Gamma, \,\,  \forall 1\leq i\leq n, 
	\end{aligned}\end{equation} 
	where $\varrho_{\Gamma_{\mathcal{G}}^{f}}$ is as in \eqref{varrho_1}.
	Suppose in addition  that 
	\begin{equation}
		\label{condition-key100-1-2}
		\begin{aligned} 
			\sum_{i=1}^n f_i(\lambda)\lambda_i\geq -K_0\sum_{i=1}^n f_i(\lambda) \mbox{ in } \partial\Gamma^\sigma 
		\end{aligned}
	\end{equation} 
	holds for some $\sigma\in (\sup_{\partial\Gamma}f,\sup_\Gamma f)$ and $K_0\in \mathbb{R}$.
	Then 
	\begin{equation} 	\label{condition-key100-3}
		\begin{aligned}		\sum_{i=1}^n f_i(\lambda)\lambda_i\geq -K_0\sum_{i=1}^n f_i(\lambda) \mbox{ in }  \bar{\Gamma}^\sigma.  
	\end{aligned}	\end{equation}
\end{theorem}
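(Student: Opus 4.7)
For \eqref{inequ1-laplace}, the plan is to invoke Theorem \ref{lemma-Y6} directly. By \eqref{varrho_1} and the symmetry of $\Gamma_{\mathcal{G}}^{f}$, the permutation $\mu^{(i)}$ of $(1,\ldots,1,1-\varrho_{\Gamma_{\mathcal{G}}^{f}})$ that places $1-\varrho_{\Gamma_{\mathcal{G}}^{f}}$ in position $i$ belongs to $\partial\Gamma_{\mathcal{G}}^{f}\subseteq\bar{\Gamma}_{\mathcal{G}}^{f}$. Theorem \ref{lemma-Y6} identifies $\bar{\Gamma}_{\mathcal{G}}^{f}$ with the maximal test cone $\mathcal{C}^{\mathfrak{m}}_{\Gamma,f}$, so \eqref{def1-MTC} immediately gives $\sum_{j}f_j(\lambda)\mu^{(i)}_j\geq 0$ on $\Gamma$; this rearranges to \eqref{inequ1-laplace}. (In the degenerate case $f(\lambda)=\sup_\Gamma f$ the claim is trivial, since then $f_j(\lambda)=0$ for all $j$ by Lemma \ref{coro-4}.)

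For \eqref{condition-key100-3}, fix $\lambda\in\Gamma^\sigma$, set $v:=\lambda+K_0\vec{\bf 1}$, $\gamma(s):=\lambda+sv$, and $T:=\sup\{s\geq 0:\gamma(s)\in\Gamma\}\in(0,+\infty]$, and define $h(s):=f(\gamma(s))$ on $[0,T)$. The crucial identity $\gamma(s)+K_0\vec{\bf 1}=(1+s)v$ yields the transfer formula
\[
\sum_i f_i(\gamma(s))\bigl(\gamma(s)_i+K_0\bigr)=(1+s)\,h'(s),
\]
which, together with \eqref{condition-key100-1-2}, will let me convert information at a boundary point on $\partial\Gamma^\sigma$ into information at the interior point $\lambda$.

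I argue by contradiction. Assume $p(\lambda):=\sum_i f_i(\lambda)(\lambda_i+K_0)<0$, so $h'(0)<0$. Concavity of $f$ together with affineness of $\gamma$ makes $h$ concave, hence $h'$ is non-increasing on $[0,T)$ and $h$ is strictly decreasing. I next verify that $h$ must attain the value $\sigma$ at some $s^*\in(0,T)$. If $T=\infty$, concavity gives $h(s)\leq h(0)+h'(0)s\to-\infty$; if $T<\infty$, then $\gamma(T)\in\partial\Gamma$ forces $\limsup_{s\to T^-}h(s)\leq\sup_{\partial\Gamma}f<\sigma$. Since $h(0)=f(\lambda)>\sigma$, the intermediate value theorem yields the desired $s^*$. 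At $\mu:=\gamma(s^*)\in\partial\Gamma^\sigma$, hypothesis \eqref{condition-key100-1-2} combined with the transfer formula gives $(1+s^*)\,h'(s^*)\geq 0$, hence $h'(s^*)\geq 0$; monotonicity of $h'$ then forces $h'(0)\geq h'(s^*)\geq 0$, contradicting $h'(0)<0$. This proves $p(\lambda)\geq 0$ on $\Gamma^\sigma$, and together with the hypothesis on $\partial\Gamma^\sigma$ yields \eqref{condition-key100-3}.

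The main technical obstacle is guaranteeing that the ray $\gamma$ crosses $\partial\Gamma^\sigma$ inside $\Gamma$ (rather than degenerating as the curve approaches $\partial\Gamma$); this is precisely where the gap $\sigma>\sup_{\partial\Gamma}f$ assumed in \eqref{condition-key100-1-2} is indispensable, since it prevents the $f$-values from remaining above $\sigma$ all the way to $\partial\Gamma$ at $s=T$.
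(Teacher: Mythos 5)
Your proof is correct. The first inequality \eqref{inequ1-laplace} is argued exactly as the paper's Lemma \ref{lemma1-varrho-gamma}: use Theorem \ref{lemma-Y6} to identify $\mathcal{C}^{\mathfrak{m}}_{\Gamma,f}=\bar{\Gamma}_{\mathcal{G}}^{f}$, pick the permutations of $(1,\ldots,1,1-\varrho_{\Gamma_{\mathcal{G}}^{f}})$, and read off the inequality from membership in the maximal test cone. (Your parenthetical about the degenerate case is harmless but vacuous: under hypothesis \eqref{sumfi>0} and Lemma \ref{coro-4}, $f(\lambda)<\sup_\Gamma f$ for every $\lambda\in\Gamma$, so the case never arises.)

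For \eqref{condition-key100-3} you take a genuinely different route from the paper's Lemma \ref{lemma-Y7}. The paper shows, for each fixed $\tilde\lambda\in\bar\Gamma^\sigma$, that $\mu:=\tilde\lambda+K_0\vec{\bf1}$ lies in $\mathcal{C}^{\mathfrak{m}}_{\partial\Gamma^\sigma,f}=\bar\Gamma_{\mathcal{G}}^{f}$ (via the concavity inequality \eqref{concave-1} on the level set, then Theorem \ref{lemma-Y6}), which by Lemma \ref{lemma2-key} yields the stronger two-variable estimate \eqref{condition-key100-2}: $\sum f_i(\lambda)\tilde\lambda_i\geq -K_0\sum f_i(\lambda)$ for every $\lambda\in\Gamma$ and $\tilde\lambda\in\bar\Gamma^\sigma$; the diagonal \eqref{condition-key100-3} is a special case. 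You instead fix $\lambda\in\Gamma^\sigma$, follow the ray $\gamma(s)=\lambda+s(\lambda+K_0\vec{\bf1})$, use concavity of $h(s)=f(\gamma(s))$ to make $h'$ monotone, and exploit the transfer identity $\sum f_i(\gamma(s))(\gamma(s)_i+K_0)=(1+s)h'(s)$ together with the hypothesis at the crossing point $s^*\in\partial\Gamma^\sigma$ to derive a contradiction. Your argument is more elementary and self-contained — after the first part you no longer need the test-cone machinery — but it only produces the diagonal statement \eqref{condition-key100-3}; the paper's approach buys the two-variable version \eqref{condition-key100-2} at essentially no extra cost. Since the theorem as stated asks only for \eqref{condition-key100-3}, your proof suffices. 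The small details all check: $\{s\geq0:\gamma(s)\in\Gamma\}$ is an interval $[0,T)$ with $T>0$ by openness and convexity of $\Gamma$; if $T<\infty$ then $\gamma(T)\in\partial\Gamma$, and the strict gap $\sigma>\sup_{\partial\Gamma}f$ guarantees the intermediate value theorem applies, which is precisely the point you flag at the end.
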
 

\begin{proof}
	It is a consequence of Lemmas   \ref{lemma-Y7} and \ref{lemma1-varrho-gamma} below. 
\end{proof}

\begin{lemma}
	\label{lemma-Y7} 
	In addition to \eqref{concave} and \eqref{elliptic-weak-3}, suppose  that \eqref{condition-key100-1-2} holds for some $\sigma\in (\sup_{\partial\Gamma}f,\sup_\Gamma f)$ and $K_0 \in \mathbb{R}$.  Then for such $\sigma$ and $K_0$, 
	there holds
	\begin{equation}
		\label{condition-key100-2}
		\begin{aligned} 
			\sum_{i=1}^n f_i(\lambda) \widetilde{\lambda}_i \geq -K_0\sum_{i=1}^n f_i(\lambda), \,\, \forall \lambda\in \Gamma, \, \forall \widetilde{\lambda}\in \bar{\Gamma}^\sigma.  
		\end{aligned}
	\end{equation}  
	In particular,  we have  \eqref{condition-key100-3}. 
	
\end{lemma}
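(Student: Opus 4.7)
The plan is to show that for every $\widetilde\lambda\in\bar\Gamma^\sigma$, the shifted vector $\widetilde\lambda+K_0\vec{\bf 1}$ lies in the maximal test cone $\mathcal{C}^{\mathfrak{m}}_{\partial\Gamma^\sigma,f}$, which by Theorem \ref{lemma-Y6} coincides with $\bar\Gamma_{\mathcal{G}}^f$, and hence (by the same theorem applied to $\mathcal{S}=\Gamma$) with $\mathcal{C}^{\mathfrak{m}}_{\Gamma,f}$. This upgrades the assumed inequality on the single level set $\partial\Gamma^\sigma$ to an inequality valid for every pair $(\lambda,\widetilde\lambda)\in\Gamma\times\bar\Gamma^\sigma$, which is precisely \eqref{condition-key100-2}. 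The specialization $\lambda=\widetilde\lambda\in\bar\Gamma^\sigma\subset\Gamma$ (legitimate since $\sup_{\partial\Gamma}f<\sigma$ forces $\bar\Gamma^\sigma$ to lie inside $\Gamma$) then yields \eqref{condition-key100-3}.

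For the first (and only genuine) step, fix $\widetilde\lambda\in\bar\Gamma^\sigma$ and an arbitrary $\lambda'\in\partial\Gamma^\sigma$. Since $f(\widetilde\lambda)\geq\sigma=f(\lambda')$, the standard concavity inequality \eqref{concave-1} gives
\[
\sum_{i=1}^n f_i(\lambda')(\widetilde\lambda_i-\lambda'_i)\;\geq\; f(\widetilde\lambda)-\sigma\;\geq\;0.
\]
Adding the hypothesis \eqref{condition-key100-1-2} applied at $\lambda'$, I obtain
\[
\sum_{i=1}^n f_i(\lambda')\widetilde\lambda_i\;\geq\;\sum_{i=1}^n f_i(\lambda')\lambda'_i\;\geq\;-K_0\sum_{i=1}^n f_i(\lambda'),
\]
i.e.\ $\sum_i f_i(\lambda')(\widetilde\lambda_i+K_0)\geq 0$. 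Since $\lambda'\in\partial\Gamma^\sigma$ was arbitrary, Definition \ref{def-testcone} gives $\widetilde\lambda+K_0\vec{\bf 1}\in \mathcal{C}^{\mathfrak{m}}_{\partial\Gamma^\sigma,f}$.

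For the second step, I invoke Theorem \ref{lemma-Y6} twice. Since $\partial\Gamma^\sigma$ itself is a level set contained in $\Gamma$, applying the theorem with $\mathcal{S}=\partial\Gamma^\sigma$ and with $\mathcal{S}=\Gamma$ gives
\[
\mathcal{C}^{\mathfrak{m}}_{\partial\Gamma^\sigma,f}\;=\;\bar\Gamma_{\mathcal{G}}^f\;=\;\mathcal{C}^{\mathfrak{m}}_{\Gamma,f}.
\]
Consequently $\widetilde\lambda+K_0\vec{\bf 1}\in\mathcal{C}^{\mathfrak{m}}_{\Gamma,f}$, which by definition of the maximal test cone means $\sum_i f_i(\lambda)(\widetilde\lambda_i+K_0)\geq 0$ for every $\lambda\in\Gamma$. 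This is \eqref{condition-key100-2}, and taking $\lambda=\widetilde\lambda$ yields \eqref{condition-key100-3}.

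There is no real obstacle: the whole argument is a one-line consequence of concavity, the definition of the maximal test cone, and the identification $\mathcal{C}^{\mathfrak{m}}_{\partial\Gamma^\sigma,f}=\mathcal{C}^{\mathfrak{m}}_{\Gamma,f}=\bar\Gamma_{\mathcal{G}}^f$ already established in Theorem \ref{lemma-Y6}. The only mild point worth checking is that $\bar\Gamma^\sigma\subset\Gamma$, which follows from the convention $\bar\Gamma^\sigma=\Gamma^\sigma\cup\partial\Gamma^\sigma$ together with $\sup_{\partial\Gamma}f<\sigma$, so that the specialization used to deduce \eqref{condition-key100-3} is valid.
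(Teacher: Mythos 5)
Your proof is correct and follows essentially the same route as the paper: you show $\widetilde\lambda+K_0\vec{\bf 1}\in\mathcal{C}^{\mathfrak{m}}_{\partial\Gamma^\sigma,f}$ by combining the concavity inequality \eqref{concave-1} on the level set with hypothesis \eqref{condition-key100-1-2}, and then invoke Theorem \ref{lemma-Y6} to identify that test cone with $\bar\Gamma_{\mathcal{G}}^{f}$ and conclude. The only cosmetic difference is that the paper finishes via Lemma \ref{lemma2-key} (or \eqref{Y-asymptotical -charac2}) while you reach the same inequality through the identification $\bar\Gamma_{\mathcal{G}}^{f}=\mathcal{C}^{\mathfrak{m}}_{\Gamma,f}$ and the definition of the maximal test cone.
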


\begin{proof}
	Denote $\mu = \widetilde{\lambda}+K_0\vec{\bf1} $ for  
	$ \widetilde{\lambda}\in\bar{\Gamma}^\sigma$. Let $\lambda\in\partial\Gamma^\sigma.$
	Using \eqref{concave-1}  we can verify that  $\sum_{i=1}^n f_i(\lambda)  (\widetilde{\lambda}_i -\lambda_i)\geq 0$. Combining with \eqref{condition-key100-1-2} we get  
	\begin{align*}
		\sum_{i=1}^nf_i(\lambda)\mu_i
		\geq 0, \, \forall \lambda\in\partial\Gamma^\sigma.
	\end{align*} 
	This implies that 
	$\mu  \in  \mathcal{C}^{\mathfrak{m}}_{\partial\Gamma^\sigma,f}.$
Combining with the result $\mathcal{C}^{\mathfrak{m}}_{\partial\Gamma^\sigma,f}= \bar{\Gamma}_{\mathcal{G}}^{f}$ 
asserted 
in Theorem \ref{lemma-Y6}, we have
$\mu  \in  \bar{\Gamma}_{\mathcal{G}}^{f}.$   By  
Lemma \ref{lemma2-key} 
or \eqref{Y-asymptotical -charac2},
we obtain \eqref{condition-key100-2}.
\end{proof}

\begin{remark}
Lemma \ref{lemma-Y7} claims that when $\sum  f_i(\lambda)\lambda_i\geq -K_0\sum  f_i(\lambda)$ holds on a level set, then it remains valid in the corresponding superlevel set, with the same $K_0$. Such an inequality, particularly for $K_0=0$, appears naturally as a vital assumption in
literature on fully nonlinear equations; see e.g. \cite{CNS3,GGQ2022,Guan12a,Guan-Dirichlet,Gabor}.
\end{remark}

\begin{lemma}
\label{lemma1-varrho-gamma}
Suppose \eqref{concave}, \eqref{elliptic-weak-3} and  \eqref{sumfi>0} hold. Then for any $\lambda\in\Gamma,$ we have \eqref{inequ1-laplace}.
Moreover,  $\varrho_{\Gamma_{\mathcal{G}}^f}$ is the best constant such that the inequality \eqref{inequ1-laplace} holds.

\end{lemma}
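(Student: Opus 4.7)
The plan is to reduce the inequality \eqref{inequ1-laplace} to a statement about membership in the asymptotic cone $\bar{\Gamma}_{\mathcal{G}}^{f}$ and then invoke Lemma \ref{lemma2-key} (equivalently, the characterization \eqref{Y-asymptotical -charac2}) to conclude; sharpness will follow from Theorem \ref{lemma-Y6} by a short contradiction argument.

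First, by the symmetry of $f$, it suffices to establish \eqref{inequ1-laplace} for $i=n$, i.e.\ to show that $\sum_{j=1}^{n}f_j(\lambda)-\varrho_{\Gamma_{\mathcal{G}}^{f}}f_n(\lambda)\geq 0$ for every $\lambda\in\Gamma$. Introducing $\mu:=(1,\ldots,1,1-\varrho_{\Gamma_{\mathcal{G}}^{f}})$, the desired inequality becomes $\sum_{j=1}^{n}f_j(\lambda)\mu_j\geq 0$. By the defining relation \eqref{varrho_1}, $\mu\in\partial\Gamma_{\mathcal{G}}^{f}\subseteq\bar{\Gamma}_{\mathcal{G}}^{f}$.

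Since Lemma \ref{lemma2-key} is stated for points in the open set $\Gamma_{\mathcal{G}}^{f}$, I would approximate $\mu$ from the interior. Using $\vec{\bf 1}\in\Gamma_n\subseteq\Gamma_{\mathcal{G}}^{f}$ together with the convexity of $\Gamma_{\mathcal{G}}^{f}$, the translates $\mu+t\vec{\bf 1}$ lie in $\mathring{\Gamma}_{\mathcal{G}}^{f}$ for every $t>0$. Applying Lemma \ref{lemma2-key} to each such translate yields
\[
\sum_{j=1}^{n}f_j(\lambda)(\mu_j+t)\geq 0,\qquad \forall\,\lambda\in\Gamma,\ \forall\,t>0.
\]
Letting $t\to 0^{+}$ gives $\sum_{j=1}^{n}f_j(\lambda)\mu_j\geq 0$, which is precisely \eqref{inequ1-laplace}.

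For optimality, suppose for contradiction that $f_i(\lambda)\leq(\varrho')^{-1}\sum_{j}f_j(\lambda)$ for all $\lambda\in\Gamma$ and some $\varrho'>\varrho_{\Gamma_{\mathcal{G}}^{f}}$. Setting $\nu:=\vec{\bf 1}-\varrho' e_n$, this rewrites as $\sum_{j}f_j(\lambda)\nu_j\geq 0$ on all of $\Gamma$; by choosing $\mathcal{S}=\Gamma$ in Definition \ref{def-testcone} (recall that \eqref{sumfi>0} forces $\{f<\sup_\Gamma f\}=\Gamma$ by Lemma \ref{coro-4}), we deduce $\nu\in\mathcal{C}^{\mathfrak{m}}_{\Gamma,f}$. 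Theorem \ref{lemma-Y6} identifies $\mathcal{C}^{\mathfrak{m}}_{\Gamma,f}=\bar{\Gamma}_{\mathcal{G}}^{f}$, hence $\nu\in\bar{\Gamma}_{\mathcal{G}}^{f}$. But because $\bar{\Gamma}_{\mathcal{G}}^{f}$ is a convex cone and $(1,\ldots,1,1-\varrho_{\Gamma_{\mathcal{G}}^{f}})$ already lies on its boundary, any further decrease of the last coordinate moves $\nu$ out of $\bar{\Gamma}_{\mathcal{G}}^{f}$, a contradiction. The only real technical step is the interior approximation of $\mu$; the rest is a direct appeal to the structural results already proved.
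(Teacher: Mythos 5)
Your proposal is correct and follows essentially the same route as the paper's own proof: both identify the inequality with the assertion $\mu=(1,\ldots,1,1-\varrho_{\Gamma_{\mathcal{G}}^{f}})\in\mathcal{C}^{\mathfrak{m}}_{\Gamma,f}$, use Theorem~\ref{lemma-Y6}'s identification $\mathcal{C}^{\mathfrak{m}}_{\Gamma,f}=\bar{\Gamma}_{\mathcal{G}}^{f}$, and obtain sharpness by the same contradiction against the boundary characterization of $\varrho_{\Gamma_{\mathcal{G}}^{f}}$. The only difference is that for the first inequality you re-derive the needed inclusion from Lemma~\ref{lemma2-key} via interior approximation by $\mu+t\vec{\bf 1}$, whereas the paper simply cites $\mu\in\bar{\Gamma}_{\mathcal{G}}^{f}=\mathcal{C}^{\mathfrak{m}}_{\Gamma,f}$ directly; your detour is correct but unnecessary once Theorem~\ref{lemma-Y6} is in hand.
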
  

\begin{proof}
Let $\lambda_1\geq \cdots\geq \lambda_n$, then 
$f_1(\lambda)\leq \cdots \leq f_n(\lambda)$.
Pick $\mu=	 (1,\cdots,1,1-\varrho_{\Gamma_{\mathcal{G}}^{f}}).$
From Theorem \ref{lemma-Y6}, $\mathcal{C}^{\mathfrak{m}}_{\Gamma,f}= \bar \Gamma_{\mathcal{G}}^{f}.$  
Thus
$\sum f_i(\lambda)\mu_i\geq0$. This gives \eqref{inequ1-laplace}.

If there exists  $c_0>\varrho_{\Gamma_{\mathcal{G}}^f}$ so that 
$\sum f_i(\lambda)-c_0f_n(\lambda)
\geq 0$  for all $\lambda\in\Gamma$.
This means that $(1,\cdots,1,1-c_0)\in\mathcal{C}^{\mathfrak{m}}_{\Gamma,f}=\bar{\Gamma}_{\mathcal{G}}^f,$ which implies that $c_0\leq \varrho_{\Gamma_{\mathcal{G}}^f}.$
A contradiction.
\end{proof}

Moreover, we   present additional insights into the structural aspects of operators. 



\begin{lemma}	\label{thm1-PUE-function}
	Assume, in addition to \eqref{concave} and \eqref{elliptic-weak-3}, that   $\kappa_{\Gamma}\leq 1+\kappa_{\Gamma_{\mathcal{G}}^{f}}.$  
	Then 
	\begin{equation}
		\label{key4-Y-5}
		\begin{aligned}
			f_{{i}}(\lambda) \geq   \vartheta_{\Gamma_{\mathcal{G}}^{f}} \sum_{j=1}^{n}f_j(\lambda) 
			,  
			\mbox{ if  } 
			\lambda_i\leq 0.   
		\end{aligned}
	\end{equation}
	
\end{lemma}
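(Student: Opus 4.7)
The plan is to combine the sharp partial uniform ellipticity of Theorem \ref{Y-k+1-4} with a pigeonhole argument that pins every nonpositive coordinate into the top $1+\kappa_{\Gamma_{\mathcal{G}}^{f}}$ slots after sorting. The hypothesis $\kappa_{\Gamma}\le 1+\kappa_{\Gamma_{\mathcal{G}}^{f}}$ is precisely what makes this slotting possible.

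First I would sort the coordinates. Fix $\lambda\in\Gamma$ and choose a permutation $\sigma$ with $\lambda_{\sigma(1)}\le\lambda_{\sigma(2)}\le\cdots\le\lambda_{\sigma(n)}$. Applying Lemma \ref{lemma1} to the $1+\kappa_{\Gamma}$ smallest entries gives $\lambda_{\sigma(1)}+\cdots+\lambda_{\sigma(1+\kappa_{\Gamma})}>0$, which forces $\lambda_{\sigma(1+\kappa_{\Gamma})}>0$. Consequently, any index $i$ with $\lambda_{i}\le 0$ must satisfy $\sigma^{-1}(i)\le \kappa_{\Gamma}\le 1+\kappa_{\Gamma_{\mathcal{G}}^{f}}$; in other words, $\lambda_{i}$ sits among the first $1+\kappa_{\Gamma_{\mathcal{G}}^{f}}$ entries in increasing order.

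Next I would invoke the classical reversal of partials: by the concavity and symmetry of $f$, ordering $\lambda_{\sigma(1)}\le\cdots\le\lambda_{\sigma(n)}$ forces $f_{\sigma(1)}(\lambda)\ge f_{\sigma(2)}(\lambda)\ge\cdots\ge f_{\sigma(n)}(\lambda)$, as already recorded in \eqref{asymptotical -concave1}. Combined with the previous step, this places $f_{i}(\lambda)$ among the $1+\kappa_{\Gamma_{\mathcal{G}}^{f}}$ largest partial derivatives whenever $\lambda_{i}\le 0$. Now I would apply Theorem \ref{Y-k+1-4} to the symmetric set $\mathcal{S}=\{\mu\in\Gamma:f(\mu)<\sup_{\Gamma}f\}$, which contains every level set $\partial\Gamma^{\sigma}$: it yields $f_{\sigma(k)}(\lambda)\ge\vartheta_{\Gamma_{\mathcal{G}}^{f}}\sum_{j}f_{j}(\lambda)$ for every $k\le 1+\kappa_{\Gamma_{\mathcal{G}}^{f}}$, and in particular for $k=\sigma^{-1}(i)$, which is \eqref{key4-Y-5}.

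The only remaining case is the degenerate one $f(\lambda)=\sup_{\Gamma}f$, which Lemma \ref{coro-4} handles: it gives $\sum_{j}f_{j}(\lambda)=0$, and combined with $f_{j}(\lambda)\ge 0$ from \eqref{elliptic-weak} this forces each $f_{j}(\lambda)=0$, so \eqref{key4-Y-5} reduces to $0\ge 0$. The only substantive step is the counting argument in Step~1 based on Lemma \ref{lemma1}; after that the result is a direct consequence of Theorem \ref{Y-k+1-4}. The hypothesis $\kappa_{\Gamma}\le 1+\kappa_{\Gamma_{\mathcal{G}}^{f}}$ is exactly what allows the pigeonhole to land inside the range covered by the sharp partial uniform ellipticity estimate, and is the mechanism driving the whole argument.
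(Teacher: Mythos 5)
Your proposal is correct and takes essentially the same route as the paper: order the coordinates, observe that any index with $\lambda_i\le 0$ must fall within the first $\kappa_\Gamma$ positions, then apply the hypothesis $\kappa_\Gamma\le 1+\kappa_{\Gamma_{\mathcal{G}}^{f}}$ together with the sharp $(1+\kappa_{\Gamma_{\mathcal{G}}^{f}})$-uniform ellipticity of Theorem \ref{Y-k+1-4}, handling the degenerate level $f(\lambda)=\sup_\Gamma f$ separately. The only difference is presentational: the paper's proof simply asserts the positional bound ``$\lambda_i\le 0\Rightarrow i\le\kappa_\Gamma$'' without justification, whereas you supply the needed (and entirely standard) argument via Lemma \ref{lemma1} — this is a welcome addition, not a deviation.
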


\begin{proof} 
	Suppose	$\lambda_1\leq \cdots\leq 
	\lambda_n$. 
	Without loss of generality, assume $\sum f_i(\lambda)>0$ (otherwise $  f_i(\lambda)=0$ $\forall i$).  
	If $\lambda_i\leq 0$  then $1\leq i\leq \kappa_\Gamma$.
	Thus we complete the proof by  Theorem \ref{Y-k+1-4}.
\end{proof}

When $\lambda\in\Gamma_{\mathcal{G}}^f$,
the assumption $\kappa_{\Gamma}\leq 1+\kappa_{\Gamma_{\mathcal{G}}^{f}}$ can be dropped. 
\begin{lemma}	\label{thm2-PUE-function}
	Suppose \eqref{concave}  
	and \eqref{elliptic-weak-3} hold.
	Then  \eqref{key4-Y-5} holds for any $\lambda\in\Gamma_{\mathcal{G}}^f.$
	
\end{lemma}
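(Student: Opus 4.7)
The plan is to derive the lemma from Proposition \ref{thm1-confi} together with the structural fact that, inside $\Gamma_{\mathcal{G}}^{f}$, at most $1+\kappa_{\Gamma_{\mathcal{G}}^{f}}$ coordinates of $\lambda$ can be nonpositive. After reordering, I assume $\lambda_{1}\leq\cdots\leq\lambda_{n}$; then symmetry and concavity of $f$, combined with Lemma \ref{coro-4}, give $f_{1}(\lambda)\geq\cdots\geq f_{n}(\lambda)\geq 0$. If $\sum_{j} f_{j}(\lambda)=0$, every $f_{j}(\lambda)$ vanishes and \eqref{key4-Y-5} is automatic; otherwise $\sum_{j}f_{j}(\lambda)>0$, which by Lemma \ref{coro-4} forces $f(\lambda)<\sup_{\Gamma}f$.

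Take $\mathcal{S}$ to be the (finite) symmetric orbit of $\lambda$, so that $\mathcal{S}$ is a symmetric subset of $\{f<\sup_{\Gamma}f\}\subseteq\Gamma$. By Lemma \ref{lemma2-key}, $\bar{\Gamma}_{\mathcal{G}}^{f}\subseteq\mathcal{C}^{\mathfrak{m}}_{\Gamma,f}\subseteq\mathcal{C}^{\mathfrak{m}}_{\mathcal{S},f}$, and since $\bar{\Gamma}_{n}\subseteq\bar{\Gamma}_{\mathcal{G}}^{f}$, the closed asymptotical cone qualifies as a test cone for $(f,\mathcal{S})$. Proposition \ref{thm1-confi} applied to this test cone then yields
\[
f_{j}(\lambda)\geq\vartheta_{\Gamma_{\mathcal{G}}^{f}}\sum_{k=1}^{n}f_{k}(\lambda),\qquad 1\leq j\leq 1+\kappa_{\Gamma_{\mathcal{G}}^{f}}.
\]
Crucially, this argument does not invoke the level-set restriction $\sigma>\sup_{\partial\Gamma}f$ required in Theorem \ref{Y-k+1-4}, which is precisely how the hypothesis $\kappa_{\Gamma}\leq 1+\kappa_{\Gamma_{\mathcal{G}}^{f}}$ of Lemma \ref{thm1-PUE-function} is removed.

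It remains to locate the index $i$ with $\lambda_{i}\leq 0$. The key claim is that $\lambda_{1+\kappa_{\Gamma_{\mathcal{G}}^{f}}}\geq 0$. For $\epsilon>0$, $\lambda+\epsilon\vec{\bf 1}\in\mathring{\Gamma}_{\mathcal{G}}^{f}$, because $\vec{\bf 1}\in\Gamma_{n}\subseteq\mathring{\Gamma}_{\mathcal{G}}^{f}$ and the open convex cone $\mathring{\Gamma}_{\mathcal{G}}^{f}$ absorbs additions from $\Gamma_{\mathcal{G}}^{f}$; hence Lemma \ref{lemma1} applied to $\mathring{\Gamma}_{\mathcal{G}}^{f}$ gives $\sum_{j=1}^{1+\kappa_{\Gamma_{\mathcal{G}}^{f}}}(\lambda_{j}+\epsilon)>0$, and sending $\epsilon\to 0^{+}$ proves the claim. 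If $i\leq 1+\kappa_{\Gamma_{\mathcal{G}}^{f}}$, the displayed inequality immediately gives \eqref{key4-Y-5}. The only genuinely delicate case—and the main obstacle to a purely routine argument—is the degenerate possibility $i>1+\kappa_{\Gamma_{\mathcal{G}}^{f}}$: then the chain $0\geq\lambda_{i}\geq\lambda_{1+\kappa_{\Gamma_{\mathcal{G}}^{f}}}\geq 0$ forces $\lambda_{1}=\cdots=\lambda_{i}=0$, so the symmetry of $f$ yields $f_{i}(\lambda)=f_{1+\kappa_{\Gamma_{\mathcal{G}}^{f}}}(\lambda)$, and \eqref{key4-Y-5} follows from the estimate already established. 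This degeneracy can only arise at $\lambda\in\partial\Gamma_{\mathcal{G}}^{f}$; for $\lambda\in\mathring{\Gamma}_{\mathcal{G}}^{f}$, Lemma \ref{lemma1} gives the strict inequality $\lambda_{1+\kappa_{\Gamma_{\mathcal{G}}^{f}}}>0$, ruling out this case automatically.
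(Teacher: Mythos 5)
Your proof is correct and, in substance, the same as the paper's: both hinge on the observation that $\lambda_{1+\kappa_{\Gamma_{\mathcal{G}}^f}}\geq 0$ for ordered $\lambda\in\Gamma_{\mathcal{G}}^f$, and on partial uniform ellipticity up to index $1+\kappa_{\Gamma_{\mathcal{G}}^f}$. The paper simply cites Theorem~\ref{Y-k+1-4}, whereas you apply the underlying Proposition~\ref{thm1-confi} directly with $\mathcal{S}$ the orbit of $\lambda$ and verify via Lemma~\ref{lemma2-key} that $\bar{\Gamma}_{\mathcal{G}}^f$ is a test cone. This is a mildly more self-contained route: it needs only the easy inclusion $\bar{\Gamma}_{\mathcal{G}}^f\subseteq\mathcal{C}^{\mathfrak{m}}_{\mathcal{S},f}$ rather than the full maximal-test-cone identification of Theorem~\ref{lemma-Y6}, and you also spell out the limiting argument for $\lambda_{1+\kappa_{\Gamma_{\mathcal{G}}^f}}\geq 0$, which the paper leaves as a terse ``by the definition of $\kappa_{\Gamma_{\mathcal{G}}^f}$.''

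Two small slips, neither affecting validity. In the degenerate case $i>1+\kappa_{\Gamma_{\mathcal{G}}^f}$, the sandwich $0\geq\lambda_i\geq\lambda_{1+\kappa_{\Gamma_{\mathcal{G}}^f}}\geq 0$ forces $\lambda_{1+\kappa_{\Gamma_{\mathcal{G}}^f}}=\cdots=\lambda_i=0$, not $\lambda_1=\cdots=\lambda_i=0$; the first $\kappa_{\Gamma_{\mathcal{G}}^f}$ entries may still be negative. What you actually need is $\lambda_i=\lambda_{1+\kappa_{\Gamma_{\mathcal{G}}^f}}$, which does hold and by symmetry of $f$ gives $f_i(\lambda)=f_{1+\kappa_{\Gamma_{\mathcal{G}}^f}}(\lambda)$. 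Second, the remark that avoiding the level-set restriction of Theorem~\ref{Y-k+1-4} is ``precisely how'' the hypothesis $\kappa_\Gamma\leq 1+\kappa_{\Gamma_{\mathcal{G}}^f}$ of Lemma~\ref{thm1-PUE-function} is removed misattributes the mechanism. That hypothesis compensates for the fact that for general $\lambda\in\Gamma$ one only knows $\lambda_i\leq 0\Rightarrow i\leq\kappa_\Gamma$; here it is the constraint $\lambda\in\Gamma_{\mathcal{G}}^f$, via $\lambda_{1+\kappa_{\Gamma_{\mathcal{G}}^f}}\geq 0$, that confines the negative entries to indices $\leq 1+\kappa_{\Gamma_{\mathcal{G}}^f}$ (up to the equality case), and that is exactly the argument both you and the paper use.
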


\begin{proof}
	Let $\lambda\in\Gamma_{\mathcal{G}}^f$, assume	$\lambda_1\leq \cdots\leq 
	\lambda_n$ and $\sum f_i(\lambda)>0$. By the definition of $\kappa_{\Gamma_{\mathcal{G}}^f}$,
	$\lambda_{1+\kappa_{\Gamma_{\mathcal{G}}^f}}\geq0.$   
	If $\lambda_i\leq 0$  then $f_i \geq f_{1+\kappa_{\Gamma_{\mathcal{G}}^f}}$.
	By Theorem \ref{Y-k+1-4}, we complete the proof.
\end{proof}

 The inequality of the form \eqref{key4-Y-5} was commonly imposed as a key assumption to derive  local and global gradient estimate for real fully nonlinear equations;  see e.g.  \cite{SChen2007,Guan2007AJM,Guan1991Spruck,LiYY1991,ShengUrbasWang-Duke,Trudinger90,Urbas2002}. 	

\begin{remark}
	The assumption $\kappa_\Gamma \leq 1+\kappa_{\Gamma_{\mathcal{G}}^f}$ holds in the following cases:
	\begin{enumerate}
		\item $f$  satisfies \eqref{addistruc} (i.e., ${\Gamma}_{\mathcal{G}}^{f}=\Gamma$). 

		\item $f$ is the function as in \eqref{Guan-Zhang-3}  ($\mathring{\Gamma}_{\mathcal{G}}^{f}=\Gamma_{k+1}$ and so $\kappa_{\Gamma}= 1+\kappa_{\Gamma_{\mathcal{G}}^{f}}$).

		\item $\kappa_\Gamma\leq 1$, i.e. $(0,0,1,\cdots,1)\notin\Gamma$.
		
	\end{enumerate}
\end{remark}

\section{Setup and global $C^0$-estimate}
\label{sec-C0-estimate}

 Let $\chi$ be a smooth symmetric $(0,2)$-tensor.
 We consider the equation
 \begin{equation}
 	\label{main-equ2-Schouten}
 	\begin{aligned}
 		F(\mathfrak{g}[u])	\equiv f(\lambda(g^{-1}\mathfrak{g}[u])) =\psi,
 	\end{aligned} 
 \end{equation}
 where $\mathfrak{g}\equiv \mathfrak{g}[u]=e^{-2u}W$ and
 \begin{equation}
	\label{def1-W-tensor}
	\begin{aligned}
 	W\equiv  W[u]=   \nabla^2 u	+\frac{1}{2}|\nabla u|^2 g-\dd{u} \otimes \dd{u}+\chi.
 	\end{aligned} 
\end{equation}
 Clearly $f(\lambda(g^{-1}\mathfrak{g}[u]))=f(\lambda(g_u^{-1}W[u])).$ 
 This coincides with \eqref{main-equ0-Schouten} when $\chi=-A_g$, since  the formula (\cite{Viaclovsky2000})  
 \begin{equation}
 	\label{conformal-formula2}
 	\begin{aligned}
 		-A_{{g_u}} =  \nabla^2 u	+\frac{1}{2}|\nabla u|^2 g
 		- \dd{u}\otimes\dd{u}-A_{g}.  
 \end{aligned}\end{equation}

 In addition to \eqref{concave}, \eqref{elliptic-weak-3} and \eqref{sumfi>0}, we further assume $(0,\cdots,0,1)\in \mathring{\Gamma}_{\mathcal{G}}^f$ and  \eqref{condition-key100-1}  
 holds for some
 $\sup_{\partial\Gamma}f<\sigma\leq \inf_M\psi$ and $K_0 \in \mathbb{R}$. 
 
 \begin{definition}
 	
 	For the equation \eqref{main-equ2-Schouten}, we say $u$ is {\em admissible} if $\lambda(g^{-1}W[u])\in\Gamma.$ 
 \end{definition}

 We will give global $C^0$-estimate.
 Firstly we need to prove the following   lemmas.

 \begin{lemma}
 	\label{lemma3.4-3}
 	Suppose \eqref{concave}, \eqref{elliptic-weak-3} and \eqref{sumfi>0} 
 	hold. Then
 	\begin{equation}
 		\label{inequ2-3.4}
 		\sum_{i=1}^n f_i(\lambda)\mu_i >0,\,\, \forall \lambda\in\Gamma, \,\, \forall \mu\in \mathring{\Gamma}_{\mathcal{G}}^{f}.
 	\end{equation}
 	In particular  for any   $\mu\in \mathring{\Gamma}_{\mathcal{G}}^{f}$, $\sum f_i(\mu)\mu_i >0$ and $f(t\mu)$ is a monotone strictly increasing function of $t\in\mathbb{R}^+$.
 	
 \end{lemma}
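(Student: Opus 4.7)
The plan is to reduce everything to the monotonicity lemma of \cite{yuan1-closed} recorded as Lemma \ref{lemma2-key}, using only the openness of $\mathring{\Gamma}_{\mathcal{G}}^{f}$ and the positivity of $\sum_i f_i$ afforded by \eqref{sumfi>0}. The first observation is that $\vec{\bf 1}\in\Gamma_n\subseteq\Gamma_{\mathcal{G}}^{f}$, so $\vec{\bf 1}$ lies in the asymptotic cone. Given $\mu\in\mathring{\Gamma}_{\mathcal{G}}^{f}$, the openness of $\mathring{\Gamma}_{\mathcal{G}}^{f}$ yields $\epsilon>0$ (depending on $\mu$) such that $\mu-\epsilon\vec{\bf 1}\in\Gamma_{\mathcal{G}}^{f}$. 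This splitting is the only real trick.

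Next I would apply Lemma \ref{lemma2-key} to the pair $(\lambda,\mu-\epsilon\vec{\bf 1})$ for arbitrary $\lambda\in\Gamma$: this gives $\sum_i f_i(\lambda)(\mu_i-\epsilon)\geq 0$. Rearranging,
\[
\sum_{i=1}^n f_i(\lambda)\mu_i \;\geq\; \epsilon\sum_{i=1}^n f_i(\lambda) \;>\; 0,
\]
where the strict inequality uses hypothesis \eqref{sumfi>0}. This proves the main inequality \eqref{inequ2-3.4}.

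The two consequences are then immediate. For the first, taking $\lambda=\mu\in\mathring{\Gamma}_{\mathcal{G}}^{f}\subseteq\Gamma$ in the inequality just established yields $\sum_i f_i(\mu)\mu_i>0$. For the monotonicity of $t\mapsto f(t\mu)$, note that since $\Gamma$ is a cone with vertex at the origin and $\mu\in\Gamma$, the ray $\{t\mu:t>0\}$ lies in $\Gamma$; therefore applying the main inequality with $\lambda=t\mu$ gives
\[
\frac{\dd}{\dd t}f(t\mu) \;=\; \sum_{i=1}^n f_i(t\mu)\,\mu_i \;>\; 0 \quad\text{for every } t>0,
\]
so $f(t\mu)$ is strictly increasing in $t\in\mathbb{R}^+$.

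There is no real obstacle; the only delicate point is to exploit the \emph{interiorness} of $\mu$ in $\Gamma_{\mathcal{G}}^{f}$ by subtracting a small multiple of $\vec{\bf 1}$ and then invoking \eqref{sumfi>0} to convert the nonstrict inequality of Lemma \ref{lemma2-key} into a strict one. All three conclusions follow from this single decomposition.
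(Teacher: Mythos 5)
Your proof is correct and follows essentially the same route as the paper: you pick $\epsilon>0$ with $\mu-\epsilon\vec{\bf 1}\in\Gamma_{\mathcal{G}}^{f}$ by openness, apply Lemma \ref{lemma2-key} to get $\sum_i f_i(\lambda)\mu_i\geq\epsilon\sum_i f_i(\lambda)$, and then invoke \eqref{sumfi>0} to make the inequality strict. The paper leaves the two "in particular" consequences implicit; you have spelled them out correctly by specializing $\lambda=\mu$ and $\lambda=t\mu$ respectively, so there is nothing to add.
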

 \begin{proof}
  Pick $\epsilon>0$ with $\mu-\epsilon\vec{\bf1}\in \mathring{\Gamma}_{\mathcal{G}}^f$. Thus
we have
 $\sum f_i(\lambda)\mu_i \geq \epsilon \sum f_i(\lambda)$ by Lemma \ref{lemma2-key}.
 \end{proof}
 
 \begin{lemma}
 	\label{lemma3.4-2}
 	Suppose \eqref{concave} and \eqref{elliptic-weak-3} 
 	hold.
 	Then for any $ \lambda\in \mathring{\Gamma}_{\mathcal{G}}^{f}$, we have
 	\begin{align*}
 		\lim_{t\to +\infty}f(t\lambda) =\sup_\Gamma f.
 	\end{align*}

 \end{lemma}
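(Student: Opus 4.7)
The plan is to prove the identity by establishing both inequalities, with the upper bound being trivial ($\lim_{t\to+\infty}f(t\lambda)\le\sup_\Gamma f$ by definition), and concentrating effort on the lower bound.

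For the lower bound, I would fix an arbitrary $\mu\in\Gamma$ and aim to show that $f(t\lambda)\ge f(\mu)$ for all sufficiently large $t$. The key observation is that since $\lambda\in\mathring{\Gamma}_{\mathcal{G}}^{f}$ and $\mathring{\Gamma}_{\mathcal{G}}^{f}$ is an open cone containing $\lambda$, for every large $t$ the vector $\lambda-\mu/t$ lies in $\mathring{\Gamma}_{\mathcal{G}}^{f}$ (by continuity, as $\lambda-\mu/t\to\lambda$), and hence $t\lambda-\mu=t(\lambda-\mu/t)\in\mathring{\Gamma}_{\mathcal{G}}^{f}\subseteq\Gamma_{\mathcal{G}}^{f}$ by the cone property. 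With $\mu\in\Gamma$ and $t\lambda-\mu\in\Gamma_{\mathcal{G}}^{f}$ in hand, Lemma \ref{lemma2-key} applies directly to give
\[
f(t\lambda)=f(\mu+(t\lambda-\mu))\ge f(\mu)
\]
for all $t$ large enough. Letting $t\to+\infty$ yields $\lim_{t\to+\infty}f(t\lambda)\ge f(\mu)$, and since $\mu\in\Gamma$ was arbitrary, the supremum bound follows.

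There is no genuine obstacle: the entire proof reduces to combining the openness of $\mathring{\Gamma}_{\mathcal{G}}^{f}$ (which lets one absorb the ``correction'' $-\mu$ into the ray $t\lambda$ for large $t$) with the monotonicity-type inequality already furnished by Lemma \ref{lemma2-key}. The only mildly delicate point to verify in writing is that $\lambda\in\mathring{\Gamma}_{\mathcal{G}}^{f}$ is used essentially, not merely $\lambda\in\Gamma_{\mathcal{G}}^{f}$; indeed, without the openness, one cannot guarantee $t\lambda-\mu\in\Gamma_{\mathcal{G}}^{f}$ for arbitrary $\mu\in\Gamma$.
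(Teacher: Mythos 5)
Your argument is correct and is essentially the paper's own proof: both rest on using the openness of $\mathring{\Gamma}_{\mathcal{G}}^{f}$ together with the cone property to place $t\lambda$ minus a fixed reference vector inside $\Gamma_{\mathcal{G}}^{f}$ for $t$ large, and then invoking the monotonicity in Lemma \ref{lemma2-key}. The paper takes the reference vector to be $R_0\vec{\bf 1}$ (which leaves one small, implicit step that $f(R_0\vec{\bf 1})\to\sup_\Gamma f$ as $R_0\to\infty$), whereas you take an arbitrary $\mu\in\Gamma$ directly, which makes the passage to $\sup_\Gamma f$ immediate; this is a cosmetic difference, not a different route.
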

 
 \begin{proof}
 	Fix $R_0>0$ and $\lambda\in \mathring{\Gamma}_{\mathcal{G}}^{f}$. We have 
 	$t_0>0$ such that for any $t\geq t_0$,
 	 $t\lambda-R_0 \vec{\bf 1} \in \mathring{\Gamma}_{\mathcal{G}}^{f}. $ 
 	By Lemma \ref{lemma2-key}, we have
 	$	f(t\lambda)\geq f(R_0 \vec{\bf 1})$
 	for $t\geq t_0.$
 \end{proof}

 \begin{remark}
 	
 	Under the assumptions $	\sup_{\partial \Gamma} f< \psi<\sup_{\Gamma}f$
 	and 
 	\begin{equation}
 		\label{admissible2-strictly}
 		\begin{aligned}
 			\lambda(g^{-1} W[\underline{u}])\in \mathring{\Gamma}_{\mathcal{G}}^{f},
 		\end{aligned}
 	\end{equation}
 	according to Lemma  \ref{lemma3.4-2}, 
 	we know that there exist constants $B_1$ and $B_2$ such that
 	\begin{equation}
 		\label{def1-b1b2}
 		\left\{
 		\begin{aligned}
 			f(e^{-2B_1}\lambda( g_{\underline{u}}^{-1} W[\underline{u}]))  \geq \psi
 			\mbox{ in } M, \\
 			f(e^{-2B_2}\lambda(  g_{\underline{u}}^{-1} W[\underline{u}])) \leq \psi
 			\mbox{ in } M.
 		\end{aligned}
 		\right.
 	\end{equation}
 	
 \end{remark}

 We can prove the following $C^0$-estimate.
 \begin{proposition}
 	\label{lemma-c0general}
 	Suppose $\sup_{\partial \Gamma} f< \psi<\sup_{\Gamma}f$ and 
 	$\lambda(g^{-1} W[\underline{u}])\in \mathring{\Gamma}_{\mathcal{G}}^{f}$. 
 	Then 
 	\begin{equation}
 		\begin{aligned}
 			\min\left\{B_1, \inf_{\partial  M}(u-\underline{u})\right\}\leq (u-\underline{u})(x) \leq \max\left\{ B_2, \sup_{\partial M}(u-\underline{u})\right\}, 
 			\,\, \forall x\in\bar{M}.
 		\end{aligned}
 	\end{equation}
 	Here the constants $B_1$ and $B_2$ satisfy \eqref{def1-b1b2}.
 	In particular, when $M$ is closed 
 	\begin{equation}
 		\begin{aligned}
 			B_1 \leq u-\underline{u} \leq B_2 
 			\mbox{ in }  M.
 		\end{aligned}
 	\end{equation}
 \end{proposition}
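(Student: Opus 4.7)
The plan is to apply the maximum principle to $v := u-\underline{u}$ and convert the resulting pointwise matrix inequality into a scalar comparison via the strict monotonicity of $f$ along rays in $\mathring{\Gamma}_{\mathcal{G}}^{f}$ supplied by Lemma~\ref{lemma3.4-3}. Before starting, the constants $B_1,B_2$ in \eqref{def1-b1b2} exist because Lemmas~\ref{lemma3.4-2} and~\ref{lemma3.4-3} together show that $t\mapsto f(t\,\lambda(g_{\underline u}^{-1}W[\underline u]))$ is strictly increasing on $\mathbb R^+$ and sweeps out $(\sup_{\partial\Gamma}f,\sup_\Gamma f)$, so the prescribed value $\psi$ is attained at a unique finite scale.

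For the upper bound, suppose $v$ attains its maximum at an interior point $x_0$; if the maximum lies on $\partial M$, the estimate $v\le \sup_{\partial M}v$ is automatic. At $x_0$ one has $\nabla u=\nabla\underline u$ and $\nabla^2 u\le \nabla^2\underline u$, so the matching first-order terms in $W[\cdot]=\nabla^2(\cdot)+\tfrac12|\nabla(\cdot)|^2 g-d(\cdot)\otimes d(\cdot)+\chi$ cancel and I obtain $W[u](x_0)\le W[\underline u](x_0)$ in the Loewner order. Since both $\lambda(g^{-1}W[u](x_0))$ and $\lambda(g^{-1}W[\underline u](x_0))$ lie in the convex set $\Gamma$ and the former is componentwise dominated by the latter (with eigenvalues sorted consistently, by Weyl monotonicity), integrating $\frac{d}{dt}f$ along the straight segment joining them and using \eqref{elliptic-weak} yields $f(\lambda(g^{-1}W[u](x_0)))\le f(\lambda(g^{-1}W[\underline u](x_0)))$.

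Multiplication of the arguments of $f$ by the common positive factor $e^{-2u(x_0)}$ preserves this inequality, so after factoring $e^{-2u(x_0)}=e^{-2v(x_0)}e^{-2\underline u(x_0)}$ I arrive at
\begin{equation*}
\psi(x_0)=f\!\bigl(e^{-2u(x_0)}\lambda(g^{-1}W[u](x_0))\bigr)\le f\!\bigl(e^{-2v(x_0)}\mu\bigr),
\end{equation*}
where $\mu:=\lambda(g_{\underline u}^{-1}W[\underline u](x_0))\in\mathring{\Gamma}_{\mathcal{G}}^{f}$. Comparing with the second inequality of \eqref{def1-b1b2} evaluated at $x_0$ yields $f(e^{-2v(x_0)}\mu)\ge f(e^{-2B_2}\mu)$, and the strict monotonicity of $t\mapsto f(t\mu)$ from Lemma~\ref{lemma3.4-3} forces $v(x_0)\le B_2$.

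The lower bound follows by the mirror argument at an interior minimum $x_1$: there $W[u](x_1)\ge W[\underline u](x_1)$, the chain of inequalities reverses, and comparison against the first inequality of \eqref{def1-b1b2} yields $v(x_1)\ge B_1$. On a closed manifold there is no boundary term, so the bulk estimate is directly $B_1\le u-\underline u\le B_2$. The only delicate point is the passage from the matrix inequality $W[u]\le W[\underline u]$ at the extremum to the scalar inequality between $f$-values, which requires the straight segment between the two eigenvalue vectors to remain in $\Gamma$ (immediate by convexity of $\Gamma$) and each $f_i$ to be nonnegative (which is \eqref{elliptic-weak}); all remaining steps are standard maximum-principle manipulations adapted to the conformal scaling $\mathfrak g[u]=e^{-2u}W[u]$.
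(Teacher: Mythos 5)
Your proof is correct and takes essentially the same route as the paper: localize at an interior extremum of $u-\underline u$, pass from $\nabla u=\nabla\underline u$, $\nabla^2 u\le\nabla^2\underline u$ to $W[u]\le W[\underline u]$, use monotonicity of $f$ (i.e.\ \eqref{elliptic-weak}) to compare $f$-values, factor out the conformal weight, and conclude by the strict monotonicity of $t\mapsto f(t\mu)$ from Lemma~\ref{lemma3.4-3}. Your detour through Weyl's eigenvalue monotonicity and integration along a segment in $\Gamma$ is a slightly more elaborate rendering of the single step the paper takes implicitly (that $A\mapsto f(\lambda(g^{-1}A))$ is nondecreasing in the Loewner order because $F^{ij}\ge 0$), but it is mathematically equivalent and adds no gap.
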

 
 \begin{proof}
 	Suppose $(u-\underline{u})(x_0)=\sup_M (u-\underline{u}).$
 	If $x_0\in M$ then at $x_0$ we have	\begin{align*} 
 		\nabla u=\nabla \underline{u}, \,\, \nabla^2 u\leq \nabla^2\underline{u}.
 	\end{align*}
 	Then $W[u]\leq  W[\underline{u}] $ and so
 	\begin{align*} 
 		\psi=f(\lambda( g_u^{-1}W[u])) \leq f(\lambda( g_u^{-1}W[\underline{u}]))
 		=f(e^{-2(u-\underline{u})}\lambda(  g_{\underline{u}}^{-1} W[\underline{u}])).
 	\end{align*}
 	Combining Lemma  \ref{lemma3.4-3} we have
 	$	u-\underline{u} \leq B_2.$
 	
 	Suppose $(u-\underline{u})(x_0')=\inf_M (u-\underline{u}).$
 	If $x_0'\in M$, then at $x_0'$ we get	\begin{align*} 
 		\nabla u=\nabla \underline{u}, \,\, \nabla^2 u\geq \nabla^2\underline{u}.
 	\end{align*}
 	Then  $W[u]\geq  W[\underline{u}] $ and so
 	\begin{align*} 
 		\psi=f(\lambda( g_u^{-1}W[u])) 
 		\geq f(\lambda( g_u^{-1}W[\underline{u}])) =f(e^{-2(u-\underline{u})}
 		\lambda( g_{\underline{u}}^{-1} W[\underline{u}])).
 	\end{align*}
 	Again by   Lemma  \ref{lemma3.4-3} we have
 	$	u-\underline{u} \geq B_1.$
 	
 \end{proof}


\section{Interior gradient and Hessian estimates}
\label{sec4-estimate-local}


According to Theorem \ref{Y-k+1-4-coro}, under the assumption  
$(0,\cdots,0,1)\in \mathring{\Gamma}_{\mathcal{G}}^f$,
$f$ is of fully uniform ellipticity in $\Gamma$, i.e. 
there exists a uniform   constant $\theta$ such that
\begin{equation}
	\begin{aligned}
		f_{i}(\lambda)\geq  \theta\sum_{j=1}^n f_j(\lambda)>0 	\mbox{ in } \Gamma,	\,\, \forall 1\leq i\leq n. \nonumber
	\end{aligned}
\end{equation}
Theorem \ref{thm2-inequalities2} asserts that if  \eqref{condition-key100-1} holds for some $\sup_{\partial\Gamma}f<\sigma\leq\inf_M\psi$ and $K_0\in\mathbb{R}$, then
\begin{equation} 	 \label{condition-key100-4}
	\begin{aligned}		\sum_{i=1}^n f_i(\lambda)\lambda_i\geq -K_0\sum_{i=1}^n f_i(\lambda) \mbox{ in }  \{\lambda\in\Gamma:  f(\lambda)\geq \inf_M\psi\}.  
\end{aligned}	\end{equation}

 Let $e_1,...,e_n$ be a local  frame on $M$.    
 Denote
 $$  \langle X,Y\rangle=g(X,Y),\,\  g_{ij}= \langle e_i,e_j\rangle,\,\   \{g^{ij} \} =  \{g_{ij} \}^{-1}.$$
 Under Levi-Civita connection  of $(M,g)$, $\nabla_{e_i}e_j=\Gamma_{ij}^k e_k$, and $\Gamma_{ij}^k$ denote the Christoffel symbols.  
 For simplicity we write 
 $$\nabla_i=\nabla_{e_i}, \nabla_{ij}=\nabla_i\nabla_j-\Gamma_{ij}^k\nabla_k, 
 \nabla_{ijk}=\nabla_i\nabla_{jk}-\Gamma_{ij}^l\nabla_{lk}-\Gamma^l_{ik}\nabla_{jl}, \cdots,\mbox{ etc}.$$ 
 We have the standard formula
 \begin{equation}
 	\label{formula-3}
 	\begin{aligned}
 		\nabla_{ij}u=\nabla_{ji}u,  
 		\,\,
 		\nabla_{iik}u -\nabla_{kii} u =R^l_{iik}\nabla_l u.
 	\end{aligned}
 \end{equation}
Let
 $W[u]= \nabla^2 u	+\frac{1}{2}|\nabla u|^2 g-\dd{u}\otimes\dd{u}+\chi$ be   as in \eqref{def1-W-tensor}.
 Denote   \begin{equation}
 	\label{def1-lambda1}
 	\begin{aligned}
 		\lambda=\lambda(g^{-1}\mathfrak{g}[u])=\lambda(g_u^{-1}W[u]),
 	\end{aligned}
 \end{equation}
and $W_{ij}=W(e_i,e_j)$, $\mathfrak{g}_{ij}=\mathfrak{g}(e_i,e_j)$,  
 \begin{equation}
 	F^{ij}=\frac{\partial F(\mathfrak{g})}{\partial\mathfrak{g}_{ij}}, \,\,
 	F^{pq,rm}=\frac{\partial^2 F(\mathfrak{g})}{\partial\mathfrak{g}_{pq} \partial\mathfrak{g}_{rm}}.
 \end{equation}
In particular
\begin{equation}
	\begin{aligned}
		W_{ij}
		=  
		\nabla_{ij}u+\frac{1}{2}|\nabla u|^2 g_{ij}-\nabla_i u\nabla_j u+\chi_{ij}.
	\end{aligned}
\end{equation} 

 The linearized operator of \eqref{main-equ2-Schouten} is given by
 \begin{equation}
 	\label{linearized-operator}
 	\begin{aligned}
 		\mathcal{L}v
 		= 
 		F^{ij}\nabla_{ij}v 
 		+ F^{ij}W_{ij,p_l} \nabla_l v  
 	\end{aligned}
 \end{equation}
 for $ v\in C^{2}(M)$,
 where we denote for simplicity 
 \begin{equation}
 	\label{def1-Wij}
 	\begin{aligned}
 		W_{ij,p_l}=g^{kl}\nabla_k u g_{ij} - \delta_{jl} \nabla_{i} u
 		-\delta_{il} \nabla_{j} u. 
 	\end{aligned}
 \end{equation} 

The computations give the following 
\begin{equation}	\label{def1-Wij-3}	\begin{aligned}		W_{ij,p_l}\nabla_l u = |\nabla u|^2 g_{ij}-2\nabla_i u\nabla_ju,	\end{aligned}\end{equation} 
\begin{equation}
	\label{Lu-1}
	\begin{aligned}
		\mathcal{L}u = \,& 
		F^{ij}\nabla_{ij}u+ F^{ij}W_{ij,p_l}\nabla_l u
		\\
		=\,& \frac{1}{2}|\nabla u|^2 F^{ij}g_{ij}
	-F^{ij}\nabla_iu\nabla_j u 
		+F^{ij}W_{ij} 
		-F^{ij}\chi_{ij}.		\end{aligned}
\end{equation}

\begin{equation}
	\label{inequ27}
	\begin{aligned} 
		\nabla_k W_{ij} 
		= \,&  2e^{2u} \nabla_k u \cdot \mathfrak{g}_{ij}
		+e^{2u} \nabla_k \mathfrak{g}_{ij} 
		 	   = 
		\nabla_{kij}u+ W_{ij,p_l}\nabla_{lk}u  
		+\nabla_k \chi_{ij} 
		\\  =\,& 
		\nabla_{kij}u+ 
		g^{pq}\nabla_{kp} u\nabla_q u g_{ij}
		-\nabla_{ki}u\nabla_j u -\nabla_{kj} u\nabla_i u +\nabla_k \chi_{ij}.	\end{aligned}
\end{equation}  
%
Differentiating the equation \eqref{main-equ2-Schouten}, we obtain
\begin{equation}
	\label{diff1-equ} 
	\begin{aligned}
		F^{ij}\nabla_k W_{ij} 
		=e^{2u}\nabla_k \psi +2 \nabla_k u F^{ij}W_{ij},
	\end{aligned}
\end{equation} 
and then 
\begin{equation}
	\label{inequ28}
	\begin{aligned} 
		F^{ij} \nabla_{kij} u = \,& 
		 e^{2u}\nabla_k \psi  +2\nabla_k u F^{ij}W_{ij}
		-F^{ij}\nabla_k\chi_{ij}
		\\\,&
		-g^{pq}\nabla_{kp}u\nabla_q u  F^{ij}g_{ij}
		+2F^{ij}\nabla_{i}u\nabla_{kj}u.
	\end{aligned}
\end{equation} 


When $e_1,...,e_n$ is a local orthonormal frame, under this local frame, the
straightforward computation shows that 
\begin{equation}
	\label{wi1}
	\begin{aligned}
		\nabla_i (|\nabla u|^2)=2\nabla_k u \nabla_{ik}u, \,\,
		\nabla_{ij} (|\nabla u|^2)=2\nabla_{ik}u\nabla_{jk}u+2\nabla_k u\nabla_{ijk}u,
	\end{aligned}
\end{equation} 
Together with \eqref{diff1-equ}, we have 
\begin{equation}	  	
\label{Lw1} 
\begin{aligned}
	\mathcal{L}(|\nabla u|^2) =\,&
	2 F^{ii}|\nabla_{ki}u|^2 +2 F^{ii} (\nabla_{iik}u+W_{ii,p_l}\nabla_{lk}u) \nabla_k u
	\\
	=\,&
	2 F^{ii}|\nabla_{ki}u|^2 +2 F^{ii} (\nabla_{k }W_{ii} - \nabla_k\chi_{ii}) \nabla_k u
	+2F^{ii} (\nabla_{iik}u-\nabla_{kii}u)\nabla_k u	
	\\
	=\,&
	2 F^{ii}|\nabla_{ki}u|^2   
	+4 |\nabla u|^2 F^{ii}W_{ii}
	+2e^{2u}\nabla_k\psi \nabla_ku
	\\\,&
	-2 F^{ii}\nabla_k \chi_{ii} \nabla_k u
	+2F^{ii} R^l_{iik}\nabla_l u\nabla_k u.
\end{aligned}
\end{equation}




\subsection{Local gradient estimate}
\label{subsection-local-C1}


Denote $w=|\nabla u|^2$. 
We consider the quantity $$Q:=\eta w e^\phi$$ 
where 
$\phi$ is determined later.  Following \cite{Guan2003Wang} 
let $\eta$ be a smooth function 
with compact support in $B_r \subset M$ and 
\begin{equation}
	\label{eta1}
	\begin{aligned}
		0\leq \eta\leq 1, \mbox{  } \eta|_{B_{\frac{r}{2}}}\equiv1, 
		\mbox{  } |\nabla\eta|\leq   {C\sqrt{\eta}}/{r},
		\mbox{  } |\nabla^2\eta|\leq {C}/{r^2}.
	\end{aligned}
\end{equation}

The quantity $Q$ attains its maximum at an interior point $x_0\in M$. 
We may assume $|\nabla u|(x_0)\geq 1$.
By maximum principle, at $x_0$
\begin{equation}
	\label{mp1}
	\begin{aligned}
		\frac{\nabla_i\eta}{\eta}+\frac{\nabla_i w}{w}+\nabla_i\phi=0, 
		\,\,
		\mathcal{L}(\log\eta+\log w+\phi)\leq 0.
	\end{aligned}
\end{equation}
Around $x_0$ we choose a local orthonormal frame $e_1,\cdots, e_n$;
for simplicity,  we further assume $e_1,\cdots,e_n$ have been chosen so that at $x_0$,  
$W_{ij}$ is diagonal (so is $F^{ij}$).

\vspace{1mm}
\noindent{\em Step 1. Computation and estimation for $\mathcal{L}(\log w)$.}
By \eqref{wi1},
\begin{equation}
	\label{wiwi1}
	\begin{aligned}
		F^{ii}|\nabla_iw |^2\leq 4|\nabla u|^2 F^{ii} |\nabla_{ik}u|^2.  
	\end{aligned}
\end{equation}
Using Cauchy-Schwarz inequality, one derives by \eqref{mp1}
\begin{equation}
	\label{wiwi2}
	\begin{aligned}
		\frac{ F^{ii} |\nabla_i w|^2}{w^2}\leq (1+\epsilon)
		\left(\frac{1}{\epsilon}F^{ii}\frac{
			|\nabla_i \eta|^2}{ \eta^2}
		+F^{ii} |\nabla_i \phi|^2 \right), \nonumber
	\end{aligned}
\end{equation}
which, together with \eqref{wiwi1}, yields 
\begin{equation}
	\label{wiwi3}
	\begin{aligned}
		\frac{	F^{ii} |\nabla_i w|^2}{w^2}
		\leq (1-\epsilon^2)
		\left(\frac{1}{\epsilon}\frac{
			F^{ii} |\nabla_i \eta|^2}{ \eta^2}
		+F^{ii} |\nabla_i \phi |^2 \right)+\frac{4\epsilon}{w}F^{ii}
		|\nabla_{ik}u |^2. \nonumber
	\end{aligned}
\end{equation}
Set $\epsilon=\frac{1}{4}$. Combining with \eqref{Lw1} we now obtain 
\begin{equation}
	\label{Llogw}
	\begin{aligned}
		\mathcal{L}(\log w) 
		\geq \,&
		\frac{1}{w}F^{ii} |\nabla_{ik}u |^2
		-4F^{ii}\frac{ |\nabla_i \eta|^2}{ \eta^2}
		- F^{ii} |\nabla_i\phi |^2
		+4 F^{ii}W_{ii} -C_0\sum F^{ii}
		-\frac{C_0}{\sqrt{w}}.
	\end{aligned}
\end{equation}

\vspace{2mm}
\noindent{\em Step 2. Construction and computation of $\phi$.}
As in \cite{Guan2008IMRN},  
let $\phi=v^{-N}$, where $v=u-\inf_{B_r} u+2$ (note $v\geq 2$ in $B_r$)  and $N\geq 1$ to be chosen later. 
By straightforward computation 
and \eqref{Lu-1},
\begin{equation}
	\label{Lphi}
	\begin{aligned}
		\mathcal{L}\phi 
		=\,&
		(\phi''-\phi')F^{ii}|\nabla_i u|^2
		+\phi' F^{ii} W_{ii}
		-\phi'F^{ii}\chi_{ii}
		+\frac{1}{2} \phi' w \sum F^{ii}.
	\end{aligned}
\end{equation} 

\subsubsection*{Step 3. Completion of the proof.}

Note that 
$
	\phi'=-Nv^{-N-1}, \, \phi''=N(N+1)v^{-N-2}.
$
We choose $N\gg1$ so that 
\begin{equation}
	\begin{aligned}
		\frac{N}{v^{N}}\leq 1, \,\, 
		\frac{N\theta}{2v} \geq \frac{1}{2}, \,\, Nv^{-N+1}=-\phi'<1,
		\nonumber
	\end{aligned}
\end{equation}
\begin{align*}
	\phi''-\phi'-\phi'^2 = 
	Nv^{-N-1} 
	\left(1+\frac{N+1}{v}-\frac{N}{v^{N+1}} \right) 
	\geq 
	\frac{N^2}{v^{N+2}}.
\end{align*}
From Theorem \ref{Y-k+1-4-coro},  $(0,\cdots,0,1)\in \mathring{\Gamma}_{\mathcal{G}}^{f}$  yields that 
 $f$ obeys \eqref{fully-uniform2}. Hence 
\begin{equation}
	\label{fully-application1}
	\begin{aligned}
		F^{ii} |\nabla_i u |^2 \geq \theta w\sum F^{ii}.
	\end{aligned}
\end{equation}

Plugging \eqref{Llogw},
\eqref{Lphi}, \eqref{fully-application1}  
 into \eqref{mp1}, one has
\begin{equation}
	\begin{aligned}
		0 	
		\geq \,&	\frac{1}{w} F^{ii}|\nabla_{ki}u|^2
		+(\phi''-\phi'-\phi'^2) F^{ii}|\nabla_i u|^2
		+(4+\phi') F^{ii}W_{ii}	
		\\ \,&  
		+\frac{1}{2}\phi'w\sum F^{ii}
		-\phi'F^{ii}\chi_{ii}
		-C_0\sum F^{ii}-\frac{C_0}{\sqrt{w}}  
		\\\,&
		-5F^{ii}\frac{|\nabla_i\eta|^2}{\eta^2}
		+F^{ii}\frac{\nabla_{ii}\eta}{\eta}
		+\frac{F^{ii}W_{ii,p_l}\nabla_l\eta}{\eta}
		\\\geq\,& 
		\frac{\theta   N^2}{2v^{N+2}} w\sum F^{ii}
		-C\left(1+\frac{w}{r^2\eta w}+\frac{w}{\sqrt{r^2\eta w}}
		\right)\sum F^{ii}	
		\\\,&
		+ (4+\phi') F^{ii}W_{ii}
		-\frac{C}{\sqrt{w}}.
	\end{aligned}
\end{equation} 
By Lemma \ref{lemma-Y7}, under the assumption \eqref{condition-key100-1} 
we have
\begin{equation} 
	\label{condition-key100-5} 
	\begin{aligned}
	   F^{ii}W_{ii}  \geq -K_0e^{2u}\sum  F^{ii}
	\end{aligned}
\end{equation}
and, together with \eqref{concave-1},  then 
\begin{equation}
	\label{sumfi-1}
	\begin{aligned}
		\sum F^{ii} 
		\geq \kappa_0>0  
	\end{aligned}
\end{equation}
for some constant $\kappa_0$. 
This gives $\eta w\leq \frac{C}{r^2}$, as required.



\subsection{Interior estimate for second order derivative}
\label{subsection-local-C2}

Let's consider 
$$P(x)=\max_{\xi\in T_x\bar{M}, |\xi|=1} \eta   (W(\xi,\xi)+B|\nabla u|^2),
$$
where 
 $\eta$ is the cutoff function as given by \eqref{eta1}, and 
$B$ is a positive constant to be chosen later.
One knows $P$ achieves maximum at an interior point $x_0\in B_r$ and for $\xi\in T_{x_0}\bar{M}$. For simplicity, set
 $$Q =W +B|\nabla u|^2 g.$$
Around $x_0$ we choose a smooth local orthonormal  frame $e_1,\cdots, e_n$ such that 
 $\Gamma_{ij}^k(x_0)=0$, and
$\{W_{ij}(x_0)\}$ is diagonal (so is $\{F^{ij}(x_0)\}$),
\begin{align*}
	W_{11}(x_0) \geq W_{22}(x_0) \geq \cdots \geq W_{nn}(x_0).
\end{align*}
From this we see  $P(x_0)=\eta(x_0)    Q_{11}(x_0) $, $e_1(x_0)=\xi$.
Since $\Gamma_{ij}^k(x_0)=0$,   at $x_0$
we have $\nabla_i (\Gamma_{jk}^k)=0$, and $\nabla_{i}(W_{11})=\nabla_i W_{11}$, 
$\nabla_{ij}(W_{11})=\nabla_{ij}W_{11}$.
At the
point $x_0$ where the function $ \eta(x) Q_{11}(x)$   (defined near $x_0$) attains its maximum,   we
have 
\begin{equation}
	\label{mp2nd-1}
	\begin{aligned}
 \frac{\nabla_i Q_{11}}{Q_{11}}+\frac{\nabla_i \eta}{\eta}=0, \, \forall 1\leq i\leq n,
		\,\,
		\mathcal{L}( \log\eta+\log Q_{11}) \leq 0.
	\end{aligned}
\end{equation}

In what follows, we indicate $C$ to be the constant (which may vary from line to line) depending only on the quantities specified in the proposition.


\vspace{1mm}
\noindent{\em Step 1. Estimation for 
	$\mathcal{L}(\log Q_{11})$.}
The straightforward computation shows that 
\begin{equation}	\label{inequality-330}
	\begin{aligned}
		F^{ii}	W_{ii,p_l} \nabla_l W_{11}
		=\,& 
		(\nabla_{l11}u + \nabla_{kl}u\nabla_{k}u-2\nabla_{1l}\nabla_1 u+\nabla_l \chi_{11}) \nabla_l u \sum F^{ii}
		\\\,&
		-2F^{ii} (\nabla_{i11}u
		+\nabla_{ik}\nabla_k u -2\nabla_{1i}u\nabla_1u+\nabla_i\chi_{11}) \nabla_i u
		\\ \geq \,& 
		\nabla_{l11}u   \nabla_l u \sum F^{ii} 
		-2F^{ii}  \nabla_{i11}u  \nabla_i u
		-C W_{11} \sum F^{ii},
	\end{aligned}
\end{equation}
\begin{equation}\label{Wiikk1}	\begin{aligned}		\nabla_{kk}W_{ij} 	=\,& e^{2u} \nabla_{kk} \mathfrak{g}_{ij} +4 e^{2u} \nabla_k u \nabla_k \mathfrak{g}_{ij} +2 e^{2u} \nabla_{kk} u \mathfrak{g}_{ij} + 4e^{2u} |\nabla_k u|^2 \mathfrak{g}_{ij} \\	 =\,&\nabla_{kkij}u+|\nabla_{kl}u|^2g_{ij} + \nabla_{kkl} u \nabla_l u g_{ij} -\nabla_{kki}u \nabla_j u 	 \\\,& -\nabla_{kkj}u \nabla_i u  -2 \nabla_{ki}u \nabla_{kj}u	+\nabla_{kk}\chi_{ij},\end{aligned}\end{equation}
and so by \eqref{formula-3}
\begin{equation}
	\label{inequality-329}
	\begin{aligned}
		F^{ii}	(\nabla_{ii}W_{11} -\nabla_{11}W_{ii} )
		=\,& F^{ii}(\nabla_{ii11}u-\nabla_{11ii}u)
		+F^{ii}(\nabla_{ii}\chi_{11}-\nabla_{11}\chi_{ii})	
		\\\,&
		-(|\nabla_{1l}u|^2+\nabla_{11l}\nabla_l u)\sum F^{ii} 
		+F^{ii}\nabla_{iil}u\nabla_l u 
		\\\,&  
		+F^{ii}|\nabla_{il}u|^2
		-2F^{ii}\nabla_{ii1}u \nabla_1 u
		+2F^{ii}\nabla_{11i}u\nabla_i u
		\\ \geq \,& F^{ii}|\nabla_{il}u|^2
		-\left[C W_{11} +|\nabla_{1l}u|^2+\nabla_{l11}u\nabla_l u \right]\sum F^{ii} 
		\\\,&
		+F^{ii}\nabla_{lii}\nabla_l u  
		-2F^{ii}\nabla_{1ii}u \nabla_1 u
		+2F^{ii}\nabla_{i11}u\nabla_i u.
	\end{aligned}
\end{equation}

By 
  \eqref{inequ27} and \eqref{diff1-equ},
we have \eqref{inequ28} and
\begin{equation}
	\label{inequality-331}
	\begin{aligned}
		F^{ii}\nabla_{kii}u\nabla_k u
		=\,&
		2 |\nabla_ku|^2 F^{ii}W_{ii}
		+(e^{2u}\nabla_k\psi -F^{ii}W_{ii,p_l}\nabla_{lk}u-F^{ii}\nabla_k \chi_{ii})\nabla_k u.
	\end{aligned}
\end{equation}
Differentiating the equation twice,
we obtain 
\begin{equation}	
	\label{diff2-equ}
	\begin{aligned}
		F^{ij}\nabla_{kk}W_{ij}  
		=\,&	4e^{2u}\nabla_k u\nabla_k \psi +(2\nabla_{kk}u+4|\nabla_k u|^2)
		F^{ij}W_{ij}
		\\\,&+ e^{2u}\nabla_{kk}\psi
		-e^{2u}F^{pq,rm}\nabla_{k}\mathfrak{g}_{pq} \nabla_{k}\mathfrak{g}_{rm}.
	\end{aligned}
\end{equation}
Together with the concavity of $f$, we get
\begin{equation}
	\label{key2nd-1}
	\begin{aligned}
		F^{ij}\nabla_{kk} W_{ij}  
		\geq 	4e^{2u}\nabla_k u\nabla_k \psi +(2\nabla_{kk}u+4|\nabla_k u|^2)
		F^{ij}W_{ij}
		+ e^{2u}\nabla_{kk}\psi.
	\end{aligned}
\end{equation}  
Thus, from \eqref{inequality-330}, \eqref{inequality-329}, \eqref{key2nd-1}, \eqref{inequality-331} and \eqref{inequ28}  we can derive 
		\begin{equation}
			\label{L-W11-1}
		\begin{aligned}
			\mathcal{L}(W_{11})
			= \,& F^{ii}\nabla_{11} W_{ii}
			+F^{ii}	(\nabla_{ii}W_{11} -\nabla_{11}W_{ii} )
			+F^{ii}W_{ii,p_l} \nabla_l W_{11}
			\\\geq \,&
			F^{ii}|\nabla_{il}u|^2 
			+ F^{ii}\nabla_{lii}u\nabla_l u -2F^{ii}\nabla_{1ii}u\nabla_1u
			-C_3'
			\\\,&
			+(2\nabla_{11}u+4|\nabla_1 u|^2) F^{ii}W_{ii}
			-(CW_{11}+|\nabla_{1l}u|^2)
			\sum F^{ii} 
			\\\geq \,&
			F^{ii}|\nabla_{il}u|^2  
			+2(\nabla_{11}u+|\nabla u|^2) F^{ii}W_{ii}
			\\\,&
			-(CW_{11}+|\nabla_{1l}u|^2)
			\sum F^{ii}
			-C.
		\end{aligned}
	\end{equation} 
We have by   \eqref{Lw1} the following 
\begin{equation} 
	\begin{aligned}
		\mathcal{L} (|\nabla u|^2)
		\geq\,&
		2 F^{ij} \nabla_{ki}u \nabla_{kj}u  
		+4|\nabla u|^2 F^{ij}W_{ij}
		-C(1+\sum F^{ii}).
	\end{aligned}
\end{equation}

Using Cauchy-Schwarz inequality, we have
 $$|\nabla_{ii} u|^2\geq \frac{1}{2}W_{ii}^2-[\frac{1}{2}|\nabla u|^2  -|\nabla_i u|^2 +\chi_{ii}]^2.$$
Without loss of generality, assume 
\begin{equation}
	\label{inequality-452} 	
	\begin{aligned}
	1\leq \frac{1}{2} \nabla_{11}u \leq W_{11}\leq \frac{3}{2}\nabla_{11}u, 	
\end{aligned}
\end{equation} 
 \begin{equation}	\label{inequality-453}
 	\begin{aligned}
 		|\nabla_{11} u|^2\geq \frac{1}{4}W_{11}^2, \,\, 	
 		W_{11} \geq \frac{16 |\nabla u|^2}{\theta},
 	\end{aligned}
 \end{equation} 
where $\theta$ is as in \eqref{fully-uniform2}. 
In particular,
\begin{equation}
	\label{inequ-532}
	\begin{aligned}
		|\nabla u|^2 F^{ii}W_{ii} 
		\leq 	|\nabla u|^2 W_{11} \sum F^{ii} \leq \frac{\theta}{16}W_{11}^2\sum F^{ii}.
		\end{aligned}
\end{equation} 
Since $f$ satisfies \eqref{fully-uniform2},
we further know 
\begin{equation}	\label{inequality-454} 	
	\begin{aligned}
		 F^{ii} |\nabla_{ki}u|^2 
		  \geq  
		  \frac{\theta}{4} W_{11}^2 \sum F^{ii}.
	\end{aligned}
\end{equation}	
Note 
 also that $W_{ii}>-(n-1)W_{11}$.
In conclusion, by \eqref{mp2nd-1}, \eqref{L-W11-1}, \eqref{inequality-452}, \eqref{inequality-453}, \eqref{inequ-532} and \eqref{inequality-454}   we derive
\begin{equation}
\begin{aligned}	
	\mathcal{L}(Q_{11}) 
	\geq \,& 
	(2B+1) F^{ii} |\nabla_{ki}u|^2 
	+\left(2\nabla_{11}u+(4B+2)|\nabla u|^2 \right) F^{ii}W_{ii}
	\\ \,&	   
	- \left[CW_{11} + |\nabla_{1l}u|^2  + BC \right] \sum F^{ii} 
	-BC-C
		\\	\geq \,& 
\frac{\theta(2B+1)}{8} W_{11}^2 \sum F^{ii}    
-BC-C  
\\\,&
- \left[CW_{11} + (1+4(n-1)) W_{11}^2  + BC \right] \sum F^{ii}.
\end{aligned}
\end{equation}
By the concavity of $f$, $|\lambda|\sum f_i(\lambda) \geq \frac{1}{2} (f(|\lambda|\vec{\bf1})-f(\lambda))$. Thus 
there exists $\Lambda_1>0$ such that if $\|W\|>\Lambda_1$ then
\begin{equation}
	 \label{sumfi-2}
 W_{11}  \sum F^{ii} \geq  \kappa_{1}>0.
\end{equation}
Take $B=32(4n-3)\theta^{-1}$, and we assume 
\begin{align*}
	W_{11}  \geq \max\left\{\Lambda_1, 32  \kappa_1^{-1}\theta^{-1} C, 4\sqrt{2\theta^{-1}C}, 
	32\theta^{-1}C
	\right\},
\end{align*}
then
\begin{equation}
	\begin{aligned}	
		\mathcal{L}(Q_{11})  
	 	\geq  
		\frac{\theta(2B+1)}{16} W_{11}^2 \sum F^{ii}
		\geq 4(4n-3)W_{11}^2 \sum F^{ii}.
	\end{aligned}
\end{equation} 
Hence, we obtain
\begin{equation}
	\begin{aligned}
		\mathcal{L}(\log Q_{11}) 
	\geq 
		\frac{4(4n-3)W_{11}^2}{Q_{11}} \sum F^{ii} -\frac{F^{ii}|\nabla_iQ_{11}|^2}{Q_{11}^2} .
	\end{aligned}
\end{equation}

\noindent{\em Step 2. Completion of the proof.}
From \eqref{eta1},
\begin{equation}
	\begin{aligned}
		F^{ii}\frac{|\nabla_i\eta|^2}{\eta^2} \leq \frac{C}{r^2 \eta}\sum_{i=1}^n F^{ii},
		\,\,
		\mathcal{L}(\log\eta)\geq -\frac{C}{r^2 \eta}\sum_{i=1}^n F^{ii} -\frac{C}{r\sqrt{\eta}} 
		\sum_{i=1}^n F^{ii}. 
		\nonumber
	\end{aligned}
\end{equation} 

Assume $Q_{11}\leq 2 W_{11}$.
Finally, at $x_0$  
we have by  \eqref{mp2nd-1}
$ \frac{\nabla_iQ_{11} }{Q_{11}}=-\frac{\nabla_i \eta}{\eta}$ and
\begin{equation}
	\label{yuan1-secondorder}
	\begin{aligned} 
	\mathcal{L}(\log\eta+\log Q_{11})\geq (4n-3)Q_{11}\sum F^{ii} -\frac{2C}{r^2 \eta}\sum_{i=1}^n F^{ii} -\frac{C}{r\sqrt{\eta}} 
		\sum_{i=1}^n F^{ii}.
	\end{aligned}
\end{equation}
Plugging this inequality  into \eqref{mp2nd-1}, we conclude that 
\begin{equation}\begin{aligned}
		\eta Q_{11}\leq \frac{C}{r^2} \mbox{ at } x_0.  \nonumber
\end{aligned}\end{equation}
The proof is complete.





\section{Equations on closed manifolds}
\label{sec-existence-closed}

First we prove the uniqueness result.  
\begin{lemma} 
	\label{lemma3-unique}
	Let $(M,g)$ be a closed, connected and smooth Riemannian manifold, let $\chi$ be a smooth symmetric $(0,2)$-tensor.  
As in \eqref{def1-W-tensor} we denote
	$$W[u] \equiv\nabla^2 u	+\frac{1}{2}|\nabla u|^2 g-\dd{u}\otimes \dd{u}+\chi.$$
	Let $w$, $v\in C^2(M) $ be admissible solutions to 
	\begin{equation}
		\begin{aligned}
			f(\lambda(g_{u}^{-1}W[u])) =\psi \mbox{ in } M. \nonumber
		\end{aligned}
	\end{equation}
	Suppose in addition that 
	$\lambda(g^{-1}W[w])\in \mathring{\Gamma}_{\mathcal{G}}^{f}$. Then
$
		w \equiv v \mbox{ in } M.
$
\end{lemma}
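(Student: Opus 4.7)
The natural strategy is a two-sided maximum-principle comparison: evaluate at maxima of $w-v$ and of $v-w$ (both attained since $M$ is closed) and exploit the strict monotonicity of $f$ along rays into the asymptotic cone $\mathring{\Gamma}_{\mathcal{G}}^{f}$ coming from Lemma \ref{lemma3.4-3}. The key structural remark is that the lower-order part $\tfrac12|\nabla u|^2 g-du\otimes du+\chi$ of $W[u]$ depends only on $\nabla u$, so at any critical point of $w-v$ the two copies of this tensor coincide and the ordering of $W[w]$ and $W[v]$ is controlled purely by that of the Hessians.

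First I would establish $w\leq v$. Let $x_0$ be a maximum point of $w-v$. There $\nabla w=\nabla v$ and $\nabla^2 w\leq \nabla^2 v$, so at $x_0$ one has $W[w]\leq W[v]$ as symmetric $(0,2)$-tensors. The ellipticity \eqref{elliptic-weak} together with the standard monotonicity of $B\mapsto\lambda(A^{-1}B)$ in $B$ for fixed positive definite $A$ gives, at $x_0$,
\[
\psi=f(\lambda(g_v^{-1}W[v]))\geq f(\lambda(g_v^{-1}W[w]))=f\bigl(e^{-2v(x_0)}\lambda(g^{-1}W[w])\bigr),
\]
while the equation for $w$ reads $\psi=f\bigl(e^{-2w(x_0)}\lambda(g^{-1}W[w])\bigr)$ at $x_0$. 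Because $\lambda(g^{-1}W[w])(x_0)\in\mathring{\Gamma}_{\mathcal{G}}^{f}$, Lemma \ref{lemma3.4-3} gives that $t\mapsto f\bigl(t\lambda(g^{-1}W[w])(x_0)\bigr)$ is strictly increasing on $(0,\infty)$, so the two relations force $e^{-2v(x_0)}\leq e^{-2w(x_0)}$, i.e.\ $w(x_0)\leq v(x_0)$. Hence $\max(w-v)\leq 0$.

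The reverse inequality is obtained by running the symmetric argument at a maximum point $x_1$ of $v-w$: there $W[v]\leq W[w]$, and the chain becomes
\[
f\bigl(e^{-2w(x_1)}\lambda(g^{-1}W[w])\bigr)=\psi=f(\lambda(g_v^{-1}W[v]))\leq f\bigl(e^{-2v(x_1)}\lambda(g^{-1}W[w])\bigr),
\]
so the same strict monotonicity supplied by Lemma \ref{lemma3.4-3} yields $v(x_1)\leq w(x_1)$, and $v\leq w$ on $M$. Combined with the first step, $w\equiv v$.

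The only substantive analytic input is the \emph{strict} monotonicity of $f$ along rays into $\mathring{\Gamma}_{\mathcal{G}}^{f}$; this is precisely where the hypothesis $\lambda(g^{-1}W[w])\in\mathring{\Gamma}_{\mathcal{G}}^{f}$ is used, since bare admissibility $\lambda(g^{-1}W[w])\in\Gamma$ would only furnish the weak monotonicity $f(s\mu)\leq f(t\mu)$, insufficient to separate $v$ from $w$ pointwise. I therefore do not anticipate any genuine obstacle beyond correctly aligning the two one-sided comparisons.
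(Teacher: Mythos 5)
Your proposal is correct and follows essentially the same route as the paper's proof: a two-sided maximum-principle comparison at extrema of $w-v$, with the strict monotonicity of $t\mapsto f(t\mu)$ for $\mu\in\mathring{\Gamma}_{\mathcal{G}}^{f}$ (Lemma \ref{lemma3.4-3}) as the decisive input, used exactly where the hypothesis $\lambda(g^{-1}W[w])\in\mathring{\Gamma}_{\mathcal{G}}^{f}$ is needed. The only cosmetic difference is that you derive the one-sided inequalities directly while the paper phrases each step as a proof by contradiction; the content is identical.
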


\begin{proof}

	Suppose $\sup_M(w-v)=(w-v)(x_0)>0$ for some $x_0\in M$. Then at $x_0$, 
	$\nabla v=\nabla w, \,\, \nabla^2 w \leq \nabla^2 v$ and then
	\begin{align*}
		W[w] \leq W[v].
	\end{align*} 
	Since $\lambda( g^{-1}W[w])\in \mathring{\Gamma}_{\mathcal{G}}^{f}$. 
	Combining with Lemma \ref{lemma3.4-3}, at $x_0 $ we have
	\begin{align*}
		f(\lambda( g_w^{-1}W[w])) < 	f(\lambda( g_v^{-1}W[w]))
		\leq  f(\lambda( g_v^{-1}W[v])).
	\end{align*}
	This is a contradiction. Hence $w-v\leq 0$ in $M.$
	
	Suppose now that $\inf_M(w-v)=(w-v)(x_0')<0$ for some $x_0'\in M$, where 
	\begin{align*}
		W[w] \geq W[v].
	\end{align*}
	Using Lemma \ref{lemma3.4-3} again, at $x_0' $ we have	\begin{align*}
		f(\lambda( g_w^{-1}W[w])) >f(\lambda( g_v^{-1}W[w]))
		\geq  f(\lambda( g_v^{-1}W[v])).
	\end{align*}
Again it is a contradiction. Hence $w-v\geq 0$ in $M$. 
In conclusion, $w\equiv v$. 
\end{proof}

Below, we prove existences more general than Theorem \ref{thm1-existence-closedmanifold}.
\begin{theorem} 
	\label{thm1-existence-closedmanifold-3}
	Suppose \eqref{concave}, \eqref{elliptic-weak-3},  \eqref{sumfi>0}, 
	\eqref{non-degenerate1},    \eqref{condition-key100-1} and 
	\eqref{assumption2-type2} hold.
	Then there is a smooth admissible conformal  metric $g_u=e^{2u}g$ satisfying 	 \eqref{main-equ0-Schouten}.  
\end{theorem}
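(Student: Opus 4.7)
My plan is to solve \eqref{main-equ0-Schouten} by the continuity method, after first reducing to the case of an admissible background metric. The initial step is to apply Lemma~\ref{lemma2-closed-construction}, the Morse-theoretic construction from Section~\ref{sec-construction-Morse}: composing a Morse function with a self-diffeomorphism that transports its critical set into a small neighborhood of a point where $g$ is already strictly admissible, and then conformally perturbing by $\mathrm{e}^{N\mathrm{e}^{Nv}}$ with $N$ large, produces a smooth $g_{\underline{u}}=\mathrm{e}^{2\underline{u}}g$ with $\lambda(-g_{\underline{u}}^{-1}A_{g_{\underline{u}}}) \in \mathring{\Gamma}_{\mathcal{G}}^f$ throughout $M$. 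Replacing $g$ by $g_{\underline{u}}$, I may assume the reference metric itself is strictly admissible.

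Next, I would set up a continuity path $f(\lambda(g_{u^t}^{-1}W[u^t])) = \psi_t$ for $t \in [0,1]$, interpolating from $\psi_0 := f(\lambda(-g^{-1}A_g))$ (so $u^0 \equiv 0$ solves the equation) to $\psi_1 := \psi$, and arranging each $\psi_t$ to satisfy \eqref{non-degenerate1} and \eqref{condition-key100-1} uniformly in $t$. Along this path I need uniform a priori estimates. The global $C^0$ bound is Proposition~\ref{lemma-c0general}. The interior gradient estimate of Subsection~\ref{subsection-local-C1} and the interior Hessian estimate of Subsection~\ref{subsection-local-C2} are then applied on a finite cover of $M$ by coordinate balls and patched by taking maxima, yielding uniform $C^1$ and $C^2$ bounds; here the hypothesis \eqref{condition-key100-1} (propagated to the whole superlevel set via Theorem~\ref{thm2-inequalities2}) powers the gradient estimate, while \eqref{sumfi>0} provides the positivity \eqref{sumfi-1} of $\sum F^{ii}$ invoked in both estimates. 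Once $\|u^t\|_{C^2}$ is under control, full uniform ellipticity—guaranteed by \eqref{assumption2-type2} together with Theorem~\ref{Y-k+1-4-coro}—permits Evans--Krylov to deliver $C^{2,\alpha}$ bounds, after which Schauder and bootstrap produce uniform $C^{k,\alpha}$ estimates for every $k$.

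With uniform smoothness bounds, closedness of the set $\{t : \text{an admissible smooth solution exists at } t\}$ is immediate since $\Gamma$ is open and $C^2$ convergence preserves admissibility. For openness I would appeal to the implicit function theorem in $C^{2,\alpha}(M)$: the linearization $\mathcal{L}$ from \eqref{linearized-operator} is uniformly elliptic, and the contribution visible in \eqref{Lu-1} combined with Lemma~\ref{lemma3.4-3} and \eqref{sumfi>0} gives a definite sign in the zeroth-order term against constants, so the Fredholm alternative and the strong maximum principle yield invertibility on closed $M$.

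The principal obstacle I expect is the openness step, because without the stronger structural condition \eqref{addistruc} the sign of the zeroth-order coefficient of $\mathcal{L}$ is delicate—it must be extracted from the interaction of the conformal expression $\tfrac{1}{2}|\nabla u|^2 g - \dd{u}\otimes\dd{u}$ with the merely weak positivity \eqref{sumfi>0}. If the direct implicit-function argument breaks down at a non-isolated critical solution, my fallback would be a Leray--Schauder degree argument in the spirit of Li--Li \cite{Li2003YYLi}, which relies only on the a priori estimates, the preservation of admissibility, and the homotopy invariance of the degree to conclude nontriviality at $t=1$.
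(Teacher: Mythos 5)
Your proposal follows essentially the same route as the paper: Morse-theoretic reduction (Lemma~\ref{lemma2-closed-construction}) to a strictly $\mathring{\Gamma}_{\mathcal{G}}^{f}$-admissible background, uniform $C^0$/$C^1$/$C^2$ estimates via Proposition~\ref{lemma-c0general}, Sections~\ref{sec4-estimate-local} (with $\eta\equiv 1$), and Evans--Krylov/Schauder, and finally a topological argument for existence. You correctly diagnose the obstruction to a pure continuity-method proof: under \eqref{non-degenerate1} alone the solution's eigenvalues need not lie in $\mathring{\Gamma}_{\mathcal{G}}^{f}$, so $\sum f_i\lambda_i$ may be negative and the zeroth-order coefficient of $\mathcal{L}$ has no definite sign; invoking Lemma~\ref{lemma3.4-3} as you do is only legitimate when $\lambda\in\mathring{\Gamma}_{\mathcal{G}}^{f}$. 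The paper sidesteps this exactly the way you suggest as a ``fallback''—Leray--Schauder degree—so your fallback is in fact the intended and only working argument here.

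The one genuine difference is the continuity path. You keep $W[u]$ fixed (with $\chi=-A_g$) and interpolate only $\psi_t=t\psi+(1-t)\psi_0$ with $\psi_0=f(\lambda(-g^{-1}A_g))$. The paper instead interpolates the tensor as well, $W^t[u]=\nabla^2u+\tfrac12|\nabla u|^2g-\dd u\otimes\dd u-tA_g+(1-t)Ng$, and takes $\psi^t=t\psi+(1-t)f(N\vec{\bf 1})$ for a fixed $N$ with $f(N\vec{\bf 1})>\sup_{\partial\Gamma}f$. Two small points are then handled automatically in the paper's path that need attention in yours. First, you need $\sup_{\partial\Gamma}f<\psi_0<\sup_\Gamma f$ and $\inf\psi_0\geq\sigma$ (so \eqref{condition-key100-1} persists along the path); this does not follow from admissibility alone, but can be arranged by an additional constant shift of $\underline{u}$, using Lemma~\ref{lemma3.4-2} to push $f(\lambda(-g^{-1}A_g))$ up. Second, to extract a nonzero degree at $t=0$ you must also establish \emph{uniqueness} of the $t=0$ solution $u_0\equiv 0$, not just invertibility of the linearization; this follows from Lemma~\ref{lemma3-unique} because the background satisfies $\lambda(-g^{-1}A_g)\in\mathring{\Gamma}_{\mathcal{G}}^{f}$, but it should be stated. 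With those two items supplied, your alternative path works and is marginally simpler, since the linearized operator at $t=0$ is $F^{ij}\nabla_{ij}v-2\bigl(\sum f_i(\lambda_0)\lambda_{0,i}\bigr)v$ with $\lambda_0\in\mathring{\Gamma}_{\mathcal{G}}^{f}$, whose zeroth-order coefficient is strictly negative by Lemma~\ref{lemma3.4-3}.
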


\begin{proof}
The proof is somewhat standard.
	Applying Lemma \ref{lemma2-closed-construction} to the open cone $\mathring{\Gamma}_{\mathcal{G}}^f$, 
 there is a smooth function $\underline{u}$ such that $	\lambda(-g^{-1}A_{g_{\underline{u}}}) \in \mathring{\Gamma}_{\mathcal{G}}^f$ in $M.$
Without loss of generality, we may assume  $\underline{u}\equiv 0$, i.e.,
 $\lambda(-g^{-1}A_{g}) \in \mathring{\Gamma}_{\mathcal{G}}^f.$  
Fix
  $N>0$ with  $f(N\vec{\bf1})>\sup_{\partial\Gamma}f $. 
	Denote 
	\begin{equation}
		\label{def2-Wt}
		\begin{aligned}
			W^t[u]= \nabla^2 u	+\frac{1}{2}|\nabla u|^2 g
			- du\otimes du -tA_g+(1-t)N g. 
		\end{aligned}
	\end{equation}
	By \eqref{conformal-formula2}, $W^1[u]=-A_{g_u}$. 	Moreover, denote
	$$\psi^t:=t\psi+(1-t) f(N\vec{\bf1}).$$   	
	One can check that $\sup_{\partial\Gamma}f<\psi^t<\sup_\Gamma f$.
	Consider a family of equations
	\begin{equation}
		\label{equation-t-closed} 
		\begin{aligned}
			f(\lambda[g_u^{-1}W^t[u_t]])=\,& 
			\psi^t  \mbox{ in } M.
		\end{aligned} 
	\end{equation}
	
	Since $\lambda(-g^{-1}A_{g})\in \mathring{\Gamma}_{\mathcal{G}}^f$ and the cone $\mathring{\Gamma}_{\mathcal{G}}^f$ is of convexity, we see that
	\begin{align*}
		\lambda(g^{-1}[(1-t)Ng-tA_g]) \in \mathring{\Gamma}_{\mathcal{G}}^f, \,\, 
		\forall t\in [0,1]
	\end{align*}
	and then the $C^0$-estimate obtained in Proposition \ref{lemma-c0general} holds uniformly for the equations  \eqref{equation-t-closed}  for all $0\leq t\leq1.$   
	When picking $\eta\equiv1$ in the proof of interior estimates in Section \ref{sec4-estimate-local},
	together with the $C^0$-estimate, 
	we may derive 
	\begin{equation}
	\label{estimate-5alpha}
	\begin{aligned}|\nabla^2 u^t|\leq C, \mbox{ independent of } t. 		\end{aligned}
\end{equation}
	Using  Evans-Krylov theorem and Schauder theory, one can derive higher estimates
	\begin{equation}
		\label{estimate-4alpha}
		\begin{aligned}
			|u^t|_{C^{4,\alpha}(\bar{M})}\leq R_o, \mbox{ independent of } t 
		\end{aligned}
	\end{equation}
	for some   $0<\alpha<1$. 
	
	In what follows, we use 
	the degree theoretic argument
	to prove the existence.   
	Consider the operator
	\begin{equation}
		\begin{aligned}
			G(u,t)= f(\lambda[g_u^{-1}W^t[u]])-\psi^t.
		\end{aligned}
	\end{equation} 
	Fix $\alpha$ as in \eqref{estimate-4alpha}, and denote
	\begin{align*}
		\mathcal{O}_R=\{u\in C^{4,\alpha}(M): |u|_{C^{4,\alpha}(M)}<R\}.
	\end{align*}
	Obviously $\mathcal{O}_R$ is an open subset of $C^{4,\alpha}(M)$. By \eqref{estimate-4alpha} if $R>R_o+1$ then $G(u,t)=0$ has no solution with $\lambda(g^{-1}W^t[u])\in \Gamma$ on $\partial\mathcal{O}_R$. Therefore, following \cite{LiYY1989}  one can define the degree $\deg (G(\cdot,t),\mathcal{O}_R,0)$ for all $0\leq t\leq 1$, and 
	\begin{equation}
		\label{homotopic-invariance1}
		\begin{aligned} 
			\deg (G(\cdot,0),\mathcal{O}_R,0)=\deg (G(\cdot,1),\mathcal{O}_R,0).
		\end{aligned}
	\end{equation}
	
	Next we compute $\deg (G(\cdot,0),\mathcal{O}_R,0).$
	Take $t=0$, the equation reads as follows
	\begin{equation}
		\label{equation-0-closed} 
		\begin{aligned}
			f(\lambda[g_{{u}}^{-1}(\nabla^2 {u}	+\frac{1}{2}|\nabla {u}|^2 g-d{u}
			\otimes d{u}+N g)])=
			f(N\vec{\bf1}). 
		\end{aligned} 
	\end{equation} 
	Clearly $u_0\equiv0$ is a  
	solution to \eqref{equation-0-closed}.
	Next we need to compute $\deg (G(\cdot,0),\mathcal{O}_R,0)$, which is
	the Leray–Schauder degree of the solution $u_0\equiv0$ of
	\eqref{equation-0-closed}.
	
	\vspace{1mm} 
	\noindent {\em Uniqueness}:
	By  the maximum principle or
	Lemma \ref{lemma3-unique}, we can verify that 
	$u_0\equiv 0$ is the unique admissible solution of  \eqref{equation-0-closed}.
	
	\vspace{1mm} 
	\noindent {\em Invertibility}:
	We need to find the linearization of 
	\eqref{equation-0-closed}  at the unique solution 
	$u_0\equiv0$: 
	Let $\mathfrak{g}^t[u]=e^{-2u}W^t[u]$, and
	let 	$\mathfrak{L}_{u_0}v  
	= \frac{d}{ds}  \left(f(\lambda(g^{-1}\mathfrak{g}^0[u_{(s)}]))-f(N\vec{\bf1})\right)\Big|_{s=0}$, where $u_{(s)}= u_0+sv$. 
	Denote $F^{ij}_0=\frac{\partial f(\lambda[g^{-1}\mathfrak{g}^0[u]]) }{\partial \mathfrak{g}_{ij}}\Big|_{u=u_0\equiv0}
	$. Then  the straightforward computation gives the following
	\begin{equation}
		\begin{aligned} 
			\mathfrak{L}_{u_0}v 
			= F^{ij}_0 \frac{d}{d s} \left( e^{-2u_{(s)}}W_{ij}^0[u_{(s)}]  \right)\Big|_{s=0}
			=f_1(N\vec{\bf1})(\Delta v-2nN v ).
		\end{aligned}
	\end{equation}
	By the maximum principle, $\mathfrak{L}_{u_0}$ has trivial kernel.
	So the operator $\mathfrak{L}_{u_0}$ is invertible.  
	
	From the discussion above, we derive $ \deg (G(\cdot,0),\mathcal{O}_R,0)=\pm 1.$
	Together with \eqref{homotopic-invariance1},
	$$ \deg (G(\cdot,1),\mathcal{O}_R,0)=\pm 1.$$
	Therefore the equation \eqref{equation-t-closed}  with $t=1$ has a solution $u$ with $\lambda(-g^{-1}A_{g_{u}})\in\Gamma.$ 
	
	
	
\end{proof}

When $\psi$ satisfies \eqref{non-degenerate2}, we have the following existence and uniqueness. 
\begin{theorem} 
	\label{thm1-existence-closedmanifold-4}
	Suppose \eqref{concave}, \eqref{elliptic-weak-3},  \eqref{sumfi>0}, 
	\eqref{non-degenerate2} and    
	\eqref{assumption2-type2} hold.
	Then there is a unique smooth admissible conformal  metric $g_u=e^{2u}g$ satisfying 	 \eqref{main-equ0-Schouten}.  
\end{theorem}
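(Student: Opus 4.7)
The plan is to deduce Theorem~\ref{thm1-existence-closedmanifold-4} by combining the existence result of Theorem~\ref{thm1-existence-closedmanifold-3} with the uniqueness result of Lemma~\ref{lemma3-unique}. The existence part is essentially bookkeeping: \eqref{non-degenerate2} is stronger than \eqref{non-degenerate1}, and the remark immediately following \eqref{condition-key100-1} records that, via Theorem~\ref{lemma-Y6}, \eqref{non-degenerate2} forces \eqref{condition-key100-1} to hold with $\sigma=\inf_M\psi$ and $K_0=0$. Since \eqref{concave}, \eqref{elliptic-weak-3}, \eqref{sumfi>0}, and \eqref{assumption2-type2} are already in place, Theorem~\ref{thm1-existence-closedmanifold-3} yields a smooth admissible conformal metric $g_u=e^{2u}g$ solving \eqref{main-equ0-Schouten}.

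For uniqueness, I would invoke Lemma~\ref{lemma3-unique}, whose sole nontrivial hypothesis is that at least one of the two admissible solutions satisfies $\lambda(g^{-1}W[\cdot])\in\mathring{\Gamma}_{\mathcal{G}}^{f}$. Because $\mathring{\Gamma}_{\mathcal{G}}^{f}$ is a positively scale-invariant cone, this is equivalent to $\lambda_u:=\lambda(g_u^{-1}W[u])\in\mathring{\Gamma}_{\mathcal{G}}^{f}$, and the equation already tells us $f(\lambda_u)=\psi>\sup_{\partial\Gamma_{\mathcal{G}}^{f}}f$ pointwise on $M$. Hence the entire uniqueness argument reduces to the following pointwise claim: \emph{if $\lambda\in\Gamma$ and $f(\lambda)>\sup_{\partial\Gamma_{\mathcal{G}}^{f}}f$, then $\lambda\in\mathring{\Gamma}_{\mathcal{G}}^{f}$.}

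To prove this claim, I would argue by contradiction. Assume $\lambda\in\Gamma\setminus\mathring{\Gamma}_{\mathcal{G}}^{f}$ and consider the translated ray $\lambda+s\vec{\bf 1}$, $s\geq 0$. Since $\vec{\bf 1}\in\Gamma_n\subseteq\Gamma$ and $\Gamma$ is a convex cone, the entire ray lies in $\Gamma$; moreover $(\lambda+s\vec{\bf 1})/s\to\vec{\bf 1}\in\mathring{\Gamma}_{\mathcal{G}}^{f}$ as $s\to\infty$, so for all sufficiently large $s$ the ray sits in the open cone $\mathring{\Gamma}_{\mathcal{G}}^{f}$. Set $s^*=\inf\{s\geq 0:\lambda+s\vec{\bf 1}\in\mathring{\Gamma}_{\mathcal{G}}^{f}\}$ and $\lambda^*:=\lambda+s^*\vec{\bf 1}$. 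By openness of $\mathring{\Gamma}_{\mathcal{G}}^{f}$ together with the fact that the closure of an open convex cone with nonempty interior equals $\bar{\Gamma}_{\mathcal{G}}^{f}$, one has $\lambda^*\in\partial\Gamma_{\mathcal{G}}^{f}$ while $\lambda+s\vec{\bf 1}\in\mathring{\Gamma}_{\mathcal{G}}^{f}$ for every $s>s^*$. Applying Lemma~\ref{lemma2-key} with $\mu=s^*\vec{\bf 1}\in\Gamma_{\mathcal{G}}^{f}$ gives $f(\lambda^*)\geq f(\lambda)>\sup_{\partial\Gamma_{\mathcal{G}}^{f}}f$; on the other hand, $\lambda^*\in\Gamma$ is approached from inside $\mathring{\Gamma}_{\mathcal{G}}^{f}$ by letting $s\to s^{*+}$, so smoothness of $f$ on $\Gamma$ together with the definition of $\sup_{\partial\Gamma_{\mathcal{G}}^{f}}f$ as a $\limsup$ from $\mathring{\Gamma}_{\mathcal{G}}^{f}$ forces $f(\lambda^*)\leq\sup_{\partial\Gamma_{\mathcal{G}}^{f}}f$, a contradiction. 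The claim follows, and Lemma~\ref{lemma3-unique} then gives $u\equiv v$ for any two admissible solutions.

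The main obstacle is precisely this claim, which is the geometric content of strengthening \eqref{non-degenerate1} to \eqref{non-degenerate2}: once $\psi$ is pushed above $\sup_{\partial\Gamma_{\mathcal{G}}^{f}}f$, the relevant superlevel set is trapped inside the uniformly elliptic region $\mathring{\Gamma}_{\mathcal{G}}^{f}$, placing us exactly where Lemma~\ref{lemma3-unique} applies. Everything else reduces to directly invoking Theorems~\ref{thm1-existence-closedmanifold-3} and~\ref{lemma-Y6} together with the monotonicity property recorded in Lemma~\ref{lemma2-key}.
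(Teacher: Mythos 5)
Your proof is correct but takes a genuinely different route for the existence part: the paper runs the continuity method along $\psi^t=t\psi+(1-t)f(N\vec{\bf 1})$ with $f(N\vec{\bf 1})>\sup_{\partial\Gamma_{\mathcal{G}}^f}f$ (analogous to the proof of Theorem~\ref{thm2-finiteBVC}), so that the whole path stays in the regime where any admissible solution is automatically $\mathring{\Gamma}_{\mathcal{G}}^f$-admissible, whereas you instead invoke the degree-theoretic Theorem~\ref{thm1-existence-closedmanifold-3} and then upgrade to $\mathring{\Gamma}_{\mathcal{G}}^f$-admissibility separately. The pointwise claim you isolate --- $\lambda\in\Gamma$ with $f(\lambda)>\sup_{\partial\Gamma_{\mathcal{G}}^f}f$ implies $\lambda\in\mathring{\Gamma}_{\mathcal{G}}^f$ --- is exactly what keeps the paper's path inside $\mathring{\Gamma}_{\mathcal{G}}^f$ (cf.\ \eqref{nondegenerate-3t} in the proof of Theorem~\ref{thm2-finiteBVC}); the paper never writes out its proof, so your argument translating along $\vec{\bf 1}$ and combining Lemma~\ref{lemma2-key} with the $\limsup$-definition of $\sup_{\partial\Gamma_{\mathcal{G}}^f}f$ is a welcome supplement. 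One small gap to close: you assert as ``bookkeeping'' that \eqref{non-degenerate2} implies \eqref{non-degenerate1}, i.e.\ $\sup_{\partial\Gamma}f\le\sup_{\partial\Gamma_{\mathcal{G}}^f}f$, but this is not an immediate inclusion; it also follows from your claim (if $\lambda^{(k)}\to\lambda^0\in\partial\Gamma$ with $f(\lambda^{(k)})>\sup_{\partial\Gamma_{\mathcal{G}}^f}f$, your claim forces $\lambda^{(k)}\in\mathring{\Gamma}_{\mathcal{G}}^f$, so $\lambda^0\in\partial\Gamma_{\mathcal{G}}^f$ and the $\limsup$ is then capped by $\sup_{\partial\Gamma_{\mathcal{G}}^f}f$), and you need it before you may apply Theorem~\ref{thm1-existence-closedmanifold-3}. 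Also, when $s^*=0$ the invocation of Lemma~\ref{lemma2-key} with $\mu=\vec{\bf 0}$ is strictly out of scope, though that degenerate case is trivial since $\lambda^*=\lambda$.
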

\begin{proof}
Pick 	  $f(N\vec{\bf1})>\sup_{\partial \Gamma_{\mathcal{G}}^f} f$, and we 	consider the continuity path \eqref{equation-t-closed}.
	The proof  is based on the  continuity method (see e.g. \cite{Gursky2003Viaclovsky}) and Lemma \ref{lemma3-unique}.  
	The detail  is somewhat similar to that of Theorem \ref{thm2-finiteBVC} below, and we omit it here.
\end{proof}

\section{Dirichlet problem: boundary estimate}
\label{sec-DirichletProblem}


In this section we assume that $(\bar{M},g)$ is a smooth Riemannian manifold with smooth boundary.  
We consider the Dirichlet problem 
\begin{equation}
	\label{Dirichlet2-main-equ2-Schouten}
	\left\{ 
	\begin{aligned}  f(\lambda(g_u^{-1} W[u])) =\,& \psi  \,& \mbox{ in } \bar{M},
		\\
		u=\,& \varphi \,& \mbox{ on } \partial M.
	\end{aligned} 
\right.
\end{equation}
Here, as in \eqref{def1-W-tensor}, we denote
 $W[u]=\nabla^2 u	+\frac{1}{2}|\nabla u|^2 g-\dd{u}\otimes \dd{u}+\chi$.


We first prove comparision principle. The proof is similar to that of Lemma \ref{lemma3-unique}.
\begin{lemma}
	[Comparision principle]
	\label{lemma1-comparision}
	Let $w$, $v\in C^2(M)\cap C^0(\bar{M})$ be admissible functions satisfying 
	\begin{equation}
		\begin{aligned}
			f(\lambda(g_{w}^{-1}W[w])) \geq \,&
			f(\lambda(g_{v}^{-1} W[v])) \,& \mbox{ in } M, \\
			w\leq \,& v  \,& \mbox{ on } \partial M. \nonumber
		\end{aligned}
	\end{equation}
	Suppose in addition that either
	$\lambda(g^{-1}W[w])\in \mathring{\Gamma}_{\mathcal{G}}^{f}$ or 
	$\lambda( g^{-1}W[v])\in\mathring{\Gamma}_{\mathcal{G}}^{f}$. Then
	\begin{align*}
		w \leq v \mbox{ in } M.
	\end{align*}
\end{lemma}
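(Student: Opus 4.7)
The plan is to run a standard maximum principle argument at an interior point, mirroring the proof of Lemma \ref{lemma3-unique} but tracking carefully which of the two solutions provides the strict monotonicity that furnishes the contradiction. I will argue by contradiction: assuming $\sup_M(w-v)>0$, the boundary condition $w\leq v$ on $\partial M$ forces the supremum to be attained at some interior point $x_0\in M$. At such a point the first derivative test gives $\nabla w(x_0)=\nabla v(x_0)$ and the second derivative test gives $\nabla^2 w(x_0)\leq \nabla^2 v(x_0)$, and because the quadratic gradient terms $\tfrac12|\nabla u|^2 g-\dd u\otimes \dd u$ in $W[u]$ depend only on $\nabla u$, these two facts together yield the tensor inequality
\begin{equation*}
W[w](x_0)\leq W[v](x_0).
\end{equation*}

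Next I will use $g_u^{-1}=e^{-2u}g^{-1}$ to rewrite $f(\lambda(g_u^{-1}W[u]))=f(e^{-2u}\lambda(g^{-1}W[u]))$, so that the conformal factor becomes a scaling parameter acting on the eigenvalue vector. At $x_0$ we have $w(x_0)>v(x_0)$, hence $e^{-2w(x_0)}<e^{-2v(x_0)}$. If $\lambda(g^{-1}W[v])(x_0)\in \mathring{\Gamma}_{\mathcal{G}}^{f}$, Lemma \ref{lemma3.4-3} shows that $t\mapsto f(t\lambda(g^{-1}W[v])(x_0))$ is strictly increasing in $t>0$, and the weak monotonicity \eqref{elliptic-weak} combined with $W[w]\leq W[v]$ gives
\begin{equation*}
f\bigl(e^{-2w}\lambda(g^{-1}W[w])\bigr)\leq f\bigl(e^{-2w}\lambda(g^{-1}W[v])\bigr)< f\bigl(e^{-2v}\lambda(g^{-1}W[v])\bigr) \quad\text{at }x_0,
\end{equation*}
which contradicts the differential inequality $f(\lambda(g_w^{-1}W[w]))\geq f(\lambda(g_v^{-1}W[v]))$. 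The alternative hypothesis $\lambda(g^{-1}W[w])(x_0)\in \mathring{\Gamma}_{\mathcal{G}}^{f}$ is handled by the symmetric chain: use strict monotonicity in $t$ first to write $f(e^{-2w}\lambda(g^{-1}W[w]))<f(e^{-2v}\lambda(g^{-1}W[w]))$, then apply weak monotonicity to pass from $W[w]$ to $W[v]$.

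The only subtle point is to make sure the strict monotonicity along rays really applies: both solutions are admissible in $\Gamma$, so $\lambda(g^{-1}W[w]),\lambda(g^{-1}W[v])\in\Gamma$ at every point, and Lemma \ref{lemma3.4-3} asks only that the direction vector lie in $\mathring{\Gamma}_{\mathcal{G}}^{f}$, which is precisely one of the two hypotheses. I do not anticipate a genuine obstacle here; the main thing to be careful about is that $W[u]$ contains both a Hessian piece and quadratic gradient pieces, and one must notice that the gradient pieces contribute equally at $x_0$ so that no sign ambiguity arises in deducing $W[w](x_0)\leq W[v](x_0)$ from the interior maximum conditions.
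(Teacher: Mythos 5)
Your proof is correct and follows essentially the same route as the paper's: assume an interior positive maximum of $w-v$, extract $\nabla w=\nabla v$ and $\nabla^2 w\le\nabla^2 v$ to get $W[w]\le W[v]$ at that point, then split into two cases and combine the degenerate monotonicity $f_i\ge 0$ with the strict monotonicity of $t\mapsto f(t\mu)$ from Lemma \ref{lemma3.4-3} to reach a contradiction. The only cosmetic difference is that you explain why the maximum is interior (via $w\le v$ on $\partial M$) and you label the two cases in the opposite order; the chains of inequalities are the same.
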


\begin{proof}

	Suppose $\sup_M(w-v)=(w-v)(x_0)>0$ for some $x_0\in M$. Then at $x_0$, 
	$\nabla v=\nabla w, \,\, \nabla^2 w \leq \nabla^2 v$ and then
$
		W[w] \leq W[v].
$
	
	Case 1: $\lambda( g^{-1}W[w])\in \mathring{\Gamma}_{\mathcal{G}}^{f}$. 
	Using Lemma \ref{lemma3.4-3}, we have
	\begin{align*}
		f(\lambda( g_w^{-1}W[w])) < 	f(\lambda( g_v^{-1}W[w]))
		\leq  f(\lambda( g_v^{-1}W[v])).
	\end{align*}
	This is a contradiction. 
	
	Case 2: $\lambda( g^{-1}W[v])\in \mathring{\Gamma}_{\mathcal{G}}^{f}$. 
	Then by Lemma \ref{lemma3.4-3}
	\begin{align*}
		f(\lambda( g_v^{-1}W[v])) 
		> 		f(\lambda( g_w^{-1}W[v]))
		\geq  		
		f(\lambda( g_w^{-1}W[w])).
	\end{align*}
	Again it is a contradiction. 
\end{proof}

As a  consequence, we obtain the unique result. 
\begin{proposition}
	\label{pro1-unique}
	Assume that the Dirichlet problem \eqref{Dirichlet2-main-equ2-Schouten} admits a $C^2$ solution $u$ with 
	$\lambda(g^{-1}W[u])\in \mathring{\Gamma}_{\mathcal{G}}^{f}$. Then 
	$u$ is the unique admissible solution the Dirichlet problem.
\end{proposition}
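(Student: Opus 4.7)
The plan is to deduce uniqueness directly from the comparison principle in Lemma \ref{lemma1-comparision} by applying it twice, once in each direction.

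First I would fix the given admissible solution $u$ with $\lambda(g^{-1}W[u])\in\mathring{\Gamma}_{\mathcal{G}}^{f}$, and let $v$ be any other admissible $C^2$ solution of \eqref{Dirichlet2-main-equ2-Schouten}. Both $u$ and $v$ satisfy $f(\lambda(g_u^{-1}W[u]))=f(\lambda(g_v^{-1}W[v]))=\psi$ in $M$ and $u=v=\varphi$ on $\partial M$, so in particular the differential inequality needed by Lemma \ref{lemma1-comparision} holds as an equality, and the boundary inequality holds as an equality as well.

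Next I would run the comparison principle twice. In the first run, take $w:=u$ and keep $v$ as $v$; the hypothesis $\lambda(g^{-1}W[w])\in\mathring{\Gamma}_{\mathcal{G}}^{f}$ of Lemma \ref{lemma1-comparision} is satisfied by assumption on $u$, so we get $u\leq v$ in $M$. In the second run, swap the roles: take $w:=v$ and use $u$ in place of $v$. Now the hypothesis of Lemma \ref{lemma1-comparision} is met by the alternative clause, namely $\lambda(g^{-1}W[u])\in\mathring{\Gamma}_{\mathcal{G}}^{f}$, and we obtain $v\leq u$ in $M$. Combining the two inequalities yields $u\equiv v$.

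There is essentially no obstacle here, since Lemma \ref{lemma1-comparision} is tailored precisely for this symmetric situation: the strict membership in the \emph{interior} of $\Gamma_{\mathcal{G}}^{f}$ is the crucial ingredient that produces the strict inequality $f(\lambda(g_w^{-1}W[w]))<f(\lambda(g_v^{-1}W[w]))$ at an interior maximum of $w-v$ (via the strict monotonicity provided by Lemma \ref{lemma3.4-3}), and this strict inequality must be available in both runs. The only subtle point to check is that on the second application the hypothesis is invoked in its second form (``$\lambda(g^{-1}W[v])\in\mathring{\Gamma}_{\mathcal{G}}^{f}$ in the notation of the lemma''), so the unknown $v$ need not itself be $\mathring{\Gamma}_{\mathcal{G}}^{f}$-admissible; admissibility in $\Gamma$ suffices, matching the statement of the proposition.
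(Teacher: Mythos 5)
Your proposal is correct and is precisely the argument the paper has in mind: the paper states Proposition \ref{pro1-unique} as an immediate consequence of Lemma \ref{lemma1-comparision} without writing out the details, and your double application of the comparison principle (once in each direction, invoking the "or" clause so that only the fixed solution $u$ needs the strict membership $\lambda(g^{-1}W[u])\in\mathring{\Gamma}_{\mathcal{G}}^{f}$) is exactly the intended filling-in of that gap.
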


This will help us to prove the existence result using degree theory.

 \subsection{Boundary gradient estimate}
 \label{subsect1-Boundarygradient}

 As in \eqref{distance-function}, $\mathrm{\sigma}(x)$ denotes the distance from $x$ to boundary.
We know $\mathrm{\sigma}(x)$ is smooth near $\partial M$.
Denote 
\begin{equation}
	\label{omega-delta}
	\Omega_\delta:=\{x\in M: \mathrm{\sigma}(x)<\delta\}.
\end{equation}  
We use  $\mathrm{\sigma}$ to construct local barriers. 

\vspace{1mm}
\noindent{\bf Local upper barrier}. 
We use the local upper barrier constructed in earlier work \cite{yuan-PUE-conformal}. For convenience, 
take 
$\bar{w}=\frac{1}{2(n-2)} \log(1+\frac{\mathrm{\sigma}}{\delta^2})
+\varphi.$ 
By straightforward computation and Cauchy-Schwarz inequality, one can see that 
\begin{equation}
	\begin{aligned}
		\mathrm{tr}(g^{-1}W[\bar w])  
		\leq \,&
		-\frac{1}{4(n-2)} \frac{|\nabla\sigma|^2}{(\delta^2+\sigma)^2}
		+\frac{\Delta \sigma}{2(n-2)(\delta^2+\sigma)}
		\\\,&
		+\Delta\varphi 
		+(n-2)|\nabla\varphi|^2
		+	\mathrm{tr}(g^{-1}\chi) <0
	\end{aligned}
\end{equation} 
provided $0<\delta\ll1$. Here we  use $|\nabla\sigma|=1$ on $\partial M$.
Notice also that 
$\underset{\delta\to 0^+}{\lim}\, \bar{w}\big|_{\mathrm{\sigma}=\delta} =+\infty.$
Combining the $C^0$-estimate in  Proposition \ref{lemma-c0general},  when   $\delta$ is sufficiently small, 
\begin{equation} 	\label{upper-barrier1}
	\begin{aligned}
		u\leq \bar{w} \mbox{ on } \Omega_{\delta}.  
	\end{aligned}
\end{equation}

\noindent{\bf Local lower barrier}. 
Fix $k\geq 1$. 
Inspired by \cite{Guan2008IMRN} we take
\begin{equation}
	\label{h-def-k}
	\begin{aligned}
		w= \log \frac{\delta^2}{ \delta^2+k\mathrm{\sigma}}+\varphi.   
	\end{aligned}
\end{equation}
The straightforward computation gives that 
\begin{equation}
	\label{compute-h-c1c2}
	\begin{aligned}
		\nabla w=-\frac{k \nabla \mathrm{\sigma}}{\delta^2+k\mathrm{\sigma}}+\nabla\varphi,\,\,
		\nabla^2 w= \frac{k^2  \dd{\mathrm{\sigma}}\otimes \dd{\mathrm{\sigma}}  }{(\delta^2+k\mathrm{\sigma})^2}-\frac{k \nabla^2 \mathrm{\sigma}}{\delta^2+k\mathrm{\sigma}}+\nabla^2\varphi.   
	\end{aligned}
\end{equation}  
By straightforward computation 
\begin{equation}
	\begin{aligned}  
		W[w] =\,& \frac{1}{2}
		\left[\frac{k^2  |\nabla\sigma|^2}{(\delta^2+k\sigma)^2}
		-\frac{2k}{\delta^2+k\sigma}
		\langle\nabla\varphi,\nabla\sigma\rangle
		+|\nabla\varphi|^2
		\right]\cdot g
		\\\,&
		+
		\frac{k }{\delta^2+k\sigma}
		\left[\dd{\varphi}\otimes \dd{\sigma}+\dd{\sigma}\otimes \dd{\varphi} -\nabla^2\sigma \right] 
		+\nabla^2\varphi-\dd{\varphi}\otimes \dd{\varphi}+
		\chi.
	\end{aligned}
\end{equation} 
In particular, if $\varphi$ is constant then
\begin{equation}
	\begin{aligned}  
		W[w] =  \frac{1}{2}
		\frac{k^2  |\nabla\sigma|^2}{(\delta^2+k\sigma)^2} 
		\cdot g 
		-\frac{k  \nabla^2\sigma}{\delta^2+k\sigma}  + 	\chi.
	\end{aligned}
\end{equation}

Note that $|\nabla\mathrm{\sigma}|=1$ on $\partial M$.
Take  $\delta$ small enough such that 
\begin{equation}	
	\begin{aligned}
		W[w] \geq  	  \frac{k^2|\nabla\sigma|^2}{4(\delta^2+k\sigma)^2} \cdot g.
	\end{aligned}
\end{equation} 
Therefore, for $0<\delta\ll1$ we have on $\Omega_\delta$
\begin{equation}
	\begin{aligned}
		f\left(\lambda(g_{w}^{-1} W[w]))\right) \geq  
		f\left( e^{-2w}\frac{ k^2 
			|\nabla\mathrm{\sigma}|^2 }{4(\delta^2+k\mathrm{\sigma})^2}
		\vec{\bf 1}\right)
		= 
		f\left( e^{-2\varphi}\frac{  k^2  	|\nabla\mathrm{\sigma}|^2}{4\delta^{4 }} 
		\vec{\bf 1}\right)>\psi.  \nonumber
	\end{aligned}
\end{equation} 
It follows from Lemma \ref{lemma1-comparision}, 
 Proposition \ref{lemma-c0general} and 
$\underset{\delta\to 0^+}{\lim}\,	w\big|_{\mathrm{\sigma}=\delta} =-\infty$
that 
\begin{equation}
	\label{low-barrier1}
	\begin{aligned}
		u\geq w
		\mbox{ on } \Omega_{\delta}, \mbox{ for some } 0<\delta\ll1.
	\end{aligned}
\end{equation}  

From \eqref{low-barrier1} (setting $k=1$) and \eqref{upper-barrier1} we derive
\begin{equation}
	\begin{aligned}
		|\nabla u|\leq C \mbox{ on } \partial M. \nonumber
	\end{aligned}
\end{equation}


\subsection{Boundary second estimate}
 \label{subsect1-Boundaryhessian}

For a point $x_0\in\partial M$ we choose a smooth  local orthonormal frame $e_1, \cdots, e_n$ around $x_0$ such that
$e_n$ is unit outer normal vector field when restricted to $\partial M$.   
We denote $\rho(x):=\mathrm{dist}_g(x,x_0)$  the distance function from $x$ to $x_0$
with respect to $g$, 
and $M_\delta:=\{x\in M: \rho(x)<\delta\}.$  
We know that  
$\mathrm{\sigma}(x)$ is smooth and $|\nabla \mathrm{\sigma}|\geq \frac{1}{2}$ in $M_\delta$ for small $\delta$.
Let $\lambda$ be as in 
\eqref{def1-lambda1}
\begin{align*}
	\lambda=\lambda(g^{-1}\mathfrak{g}[u])=\lambda(g_u^{-1}W[u]).
\end{align*}

\subsubsection*{Case 1. Pure tangential derivatives.}   
From the boundary value condition, 
\begin{equation}
	\label{morry-1}
	\begin{aligned}
		\nabla_\alpha u=\nabla_\alpha \varphi, 
		\,\, \nabla_{\alpha\beta}u=\nabla_{\alpha\beta}\varphi+\nabla_{n}(u-\varphi)\mathrm{II}(e_\alpha,e_\beta)
		\mbox{ on } \partial M  \nonumber
	\end{aligned}
\end{equation}
for $1\leq \alpha, \beta\leq n-1$, where $\mathrm{II}$ is the fundamental form of $\partial M$. 
This gives the bound of
second estimates for pure tangential derivatives
\begin{equation}
	\label{ineq2-bdy}
	\begin{aligned}
		|\mathfrak{g}_{\alpha\beta}(x_0)|\leq C.
	\end{aligned}
\end{equation}

\subsubsection*{Case 2. Mixed derivatives.}
For $1\leq\alpha<n$, the local barrier function 
is given by
\begin{equation}
	\begin{aligned}
		\Psi=\pm \nabla_\alpha (u-\varphi) 
		+A_1 \left(\frac{N \mathrm{\sigma}^2}{2} -t \mathrm{\sigma}\right)
		-A_2\rho^2+A_3 \sum_{k=1}^{n-1} |\nabla_k (u-\varphi)|^2 
		\mbox{ in } M_\delta,  \nonumber
	\end{aligned}
\end{equation}
where $A_1$, $A_2$, $A_3$, $N$, $t$ are all positive constants to be determined, and $N\delta-2t\leq 0.$
Such a construction 
is somewhat standard. 

To deal with local barrier functions, we need the following formula 
\begin{equation}
	\label{ineq1-bdy}
	\begin{aligned}
		\nabla_{ij}(\nabla_k u)=\nabla_{ijk}u+\Gamma_{ik}^l\nabla_{jl}u+\Gamma_{jk}^l\nabla_{il}u+\nabla_{\nabla_{ij}e_k}u,\nonumber
	\end{aligned}
\end{equation}
see e.g. \cite{Guan12a}.
Using \eqref{diff1-equ}  and \eqref{inequ27}
\begin{equation}
	\begin{aligned}
		F^{ij}(\nabla_{kij}u+W_{ij,p_l}\nabla_{lk}u)
		= e^{2u}\nabla_k\psi+2\nabla_ku
		F^{ij}W_{ij} -F^{ij}\nabla_k\chi_{ij}.
	\end{aligned}
\end{equation}
Thus we obtain
\begin{equation}
	\label{bdy-1}
	\begin{aligned}
		|\mathcal{L}(\nabla_k (u-\varphi))|\leq C\left(1+\sum_{i=1}^n f_i+\sum_{i=1}^n f_i |\lambda_i|\right).  \nonumber
	\end{aligned}
\end{equation}
As in \eqref{Lw1}, we can prove
\begin{equation}
	\label{bdy-2}
	\begin{aligned}
		\mathcal{L}(|\nabla_k (u-\varphi)|^2)
		\geq 
		e^{4u}F^{ij}\mathfrak{g}_{ik}\mathfrak{g}_{jk} -C\left(1+\sum_{i=1}^n f_i+\sum_{i=1}^n f_i |\lambda_i|\right).  \nonumber
	\end{aligned}
\end{equation}
By \cite[Proposition 2.19]{Guan12a}, there exists an index $1\leq r\leq n$ with
\begin{equation}
	\begin{aligned}
		\sum_{k=1}^{n-1} F^{ij}\mathfrak{g}_{ik}\mathfrak{g}_{jk} \geq \frac{1}{2} \sum_{i\neq r} f_i\lambda_i^2.  \nonumber
	\end{aligned}
\end{equation}

Let $\tilde{C}_\sigma$ be as in \eqref{def1-c-sigma} below. 
It follows from \eqref{concave-1} that 
$$ \sum_{i=1}^n f_i \lambda_i \leq \tilde{C}_\psi\sum_{i=1}^nf_i.$$ 
On the other hand, from \eqref{condition-key100-3} in Proposition \ref{lemma-Y7}, 
\begin{equation}
	\begin{aligned}
		-\sum_{i=1}^n f_i\lambda_i\leq  K_0\sum_{i=1}^n f_i.  \nonumber
	\end{aligned}
\end{equation}
Using Cauchy-Schwarz inequality and
$$\sum_{i=1}^n f_i |\lambda_i|= \sum_{i=1}^n f_i\lambda_i -2\sum_{\lambda_i<0} f_{i} \lambda_i =2\sum_{\lambda_i\geq 0} f_i\lambda_i -\sum_{i=1}^n f_{i} \lambda_i,$$ 
we have 
\begin{equation}
	\label{flambda}
	\begin{aligned}
		\sum_{i=1}^n f_i |\lambda_i|
		\leq   \epsilon \sum_{i\neq r} f_i\lambda_i^2  +\left(\frac{1}{\epsilon}+\max\{\tilde{C}_\psi,K_0\}\right)\sum_{i=1}^n f_i, \,\, \forall \epsilon>0.  
	\end{aligned}
\end{equation} 

Let $\kappa$ be as in \eqref{sumfi-1}. 
If $\delta $ and $t$ are chosen small enough such that 
\begin{equation}
	\begin{aligned}
		\mathcal{L}\left(\frac{N \mathrm{\sigma}^2}{2}-t \mathrm{\sigma}\right) 
		\geq  
		\frac{N \kappa \theta}{8(1+\kappa)}
		\left(1+\sum_{i=1}^n f_i\right).  \nonumber
	\end{aligned}
\end{equation}

Putting those inequalities together, if $A_1\gg A_2$, $A_1\gg A_3>1$, and $\epsilon$ in \eqref{flambda} is sufficiently small, then 
\begin{equation}
	\begin{aligned}
		\mathcal{L}(\Psi) 
		>0 \mbox{ in } M_\delta.  \nonumber
	\end{aligned}
\end{equation}
On the other hand, $\Psi|_{\partial M}=-A_2\rho^2\leq 0$; while 
$\Psi\leq 0$  on $\partial M_\delta\setminus \partial  M$  if  $N\delta-2t\leq 0$, $A_2\gg A_1$.
Note that $\Psi(x_0)=0$.  
Thus
\begin{equation}
	\label{mix-1}
	\begin{aligned}
		|\mathfrak{g}_{\alpha n}(x_0)|\leq C \mbox{ for } 1\leq \alpha\leq n-1.
	\end{aligned}
\end{equation}

\subsubsection*{Case 3. Double normal derivatives.}
Fix $x_0\in\partial M$.  
Since $\mathrm{tr}(g^{-1}\mathfrak{g})>0$,  \eqref{ineq2-bdy} and \eqref{mix-1}, we have $\mathfrak{g}_{nn}\geq -C$. 
We assume  
$\mathfrak{g}_{nn}(x_0)\geq 1$ (otherwise we are done).
According to \eqref{fully-uniform2},  
$F^{nn}\geq \theta \sum_{i=1}^n f_i. $ 
By the concavity of equation,   \eqref{ineq2-bdy} and \eqref{mix-1}, 
\begin{equation}
	\begin{aligned}
		\psi(x_0)-F(g) 
		\geq 
		F^{ij} (\mathfrak{g}_{ij}-\delta_{ij})  
		\geq 
		-C'\sum_{i=1}^n f_i (\lambda)+  (\theta\mathfrak{g}_{nn}-1)  \sum_{i=1}^n f_i(\lambda).  \nonumber
	\end{aligned}
\end{equation}
This gives $\mathfrak{g}_{nn}(x_0) \leq C.$


\section{Equations on manifolds with boundary}
\label{sec-existence-Dirichlet}

Throughout this section, we assume that $(\bar{M},g)$ is a smooth, compact,  connected Riemannian manifold with smooth boundary $\partial M$. 
Suppose in addition that one of the following assumptions holds
\begin{itemize}
	\item  $\lambda(-g^{-1}A_g)\in \bar{\Gamma}_{\mathcal{G}}^{f}$ in $\bar{M}$. 
	
	\item 
	 $(1,\cdots,1,-1)\in\bar{\Gamma}_{\mathcal{G}}^{f}$.
\end{itemize}
 Each of them is imposed as a key condition in Lemma \ref{lemma1-construct0} to  construct    $\mathring{\Gamma}_{\mathcal{G}}^f$-admissible conformal metric, using   some technique from Morse theory.

\subsection{The Dirichlet problem}
As above, denote  $g_u=e^{2u}g$.
In this subsection we consider the Dirichlet problem  
\begin{equation}
	\label{Dirichlet3-main-equ2-Schouten}
	\left\{ 
	\begin{aligned}  f(\lambda[-g_u^{-1}A_{g_u}]) =\,& \psi  
		 \,& \mbox{ in } \bar{M},
		\\
		u=\,& \varphi \,& \mbox{ on } \partial M.
	\end{aligned} 
	\right.
\end{equation}

We prove that 

\begin{theorem}
	\label{thm1-finiteBVC} 
	In addition to \eqref{concave}, \eqref{elliptic-weak-3}, \eqref{sumfi>0}, \eqref{non-degenerate1} and \eqref{assumption2-type2}, we assume   that \eqref{condition-key100-1} holds for some $\sup_{\partial\Gamma}f<\sigma\leq \inf_M\psi.$ 
	Let $\varphi\in C^\infty(M)$. 
	Then there is a smooth admissible conformal metric $g_u$ satisfying 	 \eqref{Dirichlet3-main-equ2-Schouten}.

\end{theorem}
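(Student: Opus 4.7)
The argument follows the blueprint of Theorem \ref{thm1-existence-closedmanifold-3}, augmented by the boundary estimates of Section \ref{sec-DirichletProblem}. First, I would apply Lemma \ref{lemma1-construct0} to the cone $\mathring{\Gamma}_{\mathcal{G}}^f$ (which is of type 2 by \eqref{assumption2-type2} and for which one of the two section-wide hypotheses holds) to obtain a smooth conformal factor $\underline{u}$ with $\lambda(-g^{-1}A_{g_{\underline{u}}}) \in \mathring{\Gamma}_{\mathcal{G}}^f$ throughout $\bar M$. After replacing $g$ by $g_{\underline{u}}$ and $\varphi$ by $\varphi - \underline{u}|_{\partial M}$, we may assume $\lambda(-g^{-1}A_g) \in \mathring{\Gamma}_{\mathcal{G}}^f$. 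Using Lemma \ref{lemma3.4-2}, I fix $N>0$ so that $f(N\vec{\bf 1}) > \sup_M \psi$.

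Next, I would deploy the continuity family indexed by $t\in[0,1]$:
\begin{equation*}
f(\lambda[g_{u_t}^{-1}W^t[u_t]]) = \psi^t \text{ in } M, \quad u_t = t\varphi \text{ on } \partial M,
\end{equation*}
with $W^t[u] = \nabla^2 u + \tfrac{1}{2}|\nabla u|^2 g - du\otimes du - tA_g + (1-t)Ng$ and $\psi^t = (1-t)f(N\vec{\bf 1}) + t\psi$. At $t=0$, $u_0\equiv 0$ solves the problem; at $t=1$ the system reduces to \eqref{Dirichlet3-main-equ2-Schouten}. Convexity of $\mathring{\Gamma}_{\mathcal{G}}^f$ guarantees that $\lambda(g^{-1}[(1-t)Ng - tA_g]) \in \mathring{\Gamma}_{\mathcal{G}}^f$ for all $t$, while $\sup_{\partial\Gamma}f < \psi^t < \sup_\Gamma f$ and $\inf_M\psi^t \geq \inf_M\psi$ together ensure that \eqref{condition-key100-1} persists along the path with the same $\sigma$.

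I would then assemble uniform a priori estimates: $C^0$ from Proposition \ref{lemma-c0general} together with the boundary datum; boundary gradient from the barriers $\bar w, w$ of Subsection \ref{subsect1-Boundarygradient} (adapted to $t\varphi$); interior gradient from Subsection \ref{subsection-local-C1}; boundary Hessian from Subsection \ref{subsect1-Boundaryhessian}; and the global Hessian by examining $P(x)=\max_{|\xi|=1} W(\xi,\xi) + B|\nabla u|^2$ on $\bar M$---if $P$ attains its maximum at a boundary point the boundary Hessian bound applies, whereas at an interior maximum the pointwise calculation of Subsection \ref{subsection-local-C2} goes through with $\eta\equiv 1$. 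Evans-Krylov together with Schauder theory then upgrade these to a uniform $C^{4,\alpha}(\bar M)$ bound with constant $R_o$ independent of $t$. Applying the Leray-Schauder degree on $\mathcal{O}_R = \{u\in C^{4,\alpha}(\bar M): u|_{\partial M}=t\varphi,\ |u|_{C^{4,\alpha}}<R\}$ for $R>R_o+1$, as in Theorem \ref{thm1-existence-closedmanifold-3}, and verifying invertibility of the linearization $\mathfrak{L}_0 v = f_1(N\vec{\bf 1})(\Delta v - 2nNv)$ at $(u_0,t_0)=(0,0)$ under Dirichlet data via the maximum principle (its zero-order coefficient is strictly negative), yields $\deg(G(\cdot,0),\mathcal{O}_R,0)=\pm 1$, and homotopy invariance produces a solution at $t=1$.

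The principal technical obstacle is globalizing the interior Hessian estimate: Section \ref{sec4-estimate-local} was formulated with a cutoff $\eta$ compactly supported on an interior ball, so for a manifold with boundary one must carefully verify that the pointwise argument survives at a global interior maximum with $\eta\equiv 1$, and that this is compatible with the separate boundary Hessian bound from Subsection \ref{subsect1-Boundaryhessian}. A secondary concern is checking that the barriers $\bar w, w$ of Subsection \ref{subsect1-Boundarygradient} can be constructed uniformly in $t\in[0,1]$ despite the simultaneously varying boundary data $t\varphi$ and right-hand side $\psi^t$; this amounts to an inspection of the dependence of $\delta$ on $\sup|\psi^t|$ and $\sup|\varphi|$, which are uniformly bounded along the path.
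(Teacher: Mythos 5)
Your proposal is correct and matches the paper's own (very terse) proof, which simply says to reduce $\varphi$ to $0$ by a conformal change, take $\eta\equiv 1$ in the interior estimates, and follow the degree-theoretic argument of Theorem \ref{thm1-existence-closedmanifold-3} together with the boundary estimates of Section \ref{sec-DirichletProblem}. Your expanded write-up — using Lemma \ref{lemma1-construct0} to normalize to $\lambda(-g^{-1}A_g)\in\mathring{\Gamma}_{\mathcal{G}}^f$, running the path $W^t$ with boundary data $t\varphi$ (thereby avoiding a second conformal reduction that could destroy the admissibility of the background), observing that the choice $f(N\vec{\bf 1})\geq\inf_M\psi$ keeps $\inf_M\psi^t\geq\sigma$ so that \eqref{condition-key100-1} remains applicable along the path, and splitting the maximum of the second-order quantity into the boundary and interior cases — fills in exactly the details the paper omits, and neither of the "technical obstacles" you flag is a genuine gap (both are the standard bookkeeping one must do when passing from interior to global estimates on a manifold with boundary).
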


When $\psi$ satisfies \eqref{non-degenerate2}, we obtain the uniqueness result. 
\begin{theorem}
		\label{thm2-finiteBVC} 
Suppose that all the assumptions in Theorem \ref{thm1-finiteBVC}  hold.
In addition, we assume \eqref{non-degenerate2} holds.
 Then \eqref{Dirichlet3-main-equ2-Schouten} admits a unique smooth admissible solution $u$.
 Moreover,	$\lambda(-g^{-1}A_{g_u})\in \mathring{\Gamma}_{\mathcal{G}}^f.$
\end{theorem}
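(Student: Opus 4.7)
The plan is to deduce Theorem~\ref{thm2-finiteBVC} from Theorem~\ref{thm1-finiteBVC} (the ``basic'' existence result without uniqueness) by using the strengthened non-degeneracy \eqref{non-degenerate2} to upgrade admissibility $\lambda \in \Gamma$ to strict admissibility $\lambda \in \mathring{\Gamma}_{\mathcal{G}}^f$; once that is secured, uniqueness follows immediately from the comparison principle Lemma~\ref{lemma1-comparision}. Thus no new a priori estimate or continuity setup is needed beyond those already assembled for Theorem~\ref{thm1-finiteBVC}.

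The key step I would prove first is the following structural fact: any $\lambda \in \Gamma$ with $f(\lambda) > \sup_{\partial \Gamma_{\mathcal{G}}^f} f$ necessarily lies in $\mathring{\Gamma}_{\mathcal{G}}^f$. Suppose instead $\lambda \in \Gamma \setminus \mathring{\Gamma}_{\mathcal{G}}^f$, and pick any $\mu \in \mathring{\Gamma}_{\mathcal{G}}^f$ (nonempty since $\Gamma_n \subseteq \mathring{\Gamma}_{\mathcal{G}}^f$). Because $\mathring{\Gamma}_{\mathcal{G}}^f$ is an open cone, $\lambda + s\mu = s(\mu + \lambda/s) \in \mathring{\Gamma}_{\mathcal{G}}^f$ for all $s$ sufficiently large, so the ray from $\lambda$ in the direction $\mu$ crosses $\partial \Gamma_{\mathcal{G}}^f$ at some first point $\lambda^0 = \lambda + s_0 \mu \in \Gamma$ with $s_0 > 0$. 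Lemma~\ref{lemma2-key} gives $f(\lambda + s\mu) \geq f(\lambda^0)$ for all $s \geq s_0$, while concavity of $f$ along the segment yields
\[
f(\lambda^0) \geq \left(1 - \tfrac{s_0}{s}\right) f(\lambda) + \tfrac{s_0}{s} f(\lambda + s\mu) \geq \left(1 - \tfrac{s_0}{s}\right) f(\lambda) + \tfrac{s_0}{s} f(\lambda^0),
\]
so letting $s \to +\infty$ gives $f(\lambda) \leq f(\lambda^0)$. Finally, approximating $\lambda^0 \in \partial \Gamma_{\mathcal{G}}^f$ along the interior ray $\lambda^0 + \epsilon\mu \in \mathring{\Gamma}_{\mathcal{G}}^f$ as $\epsilon \to 0^+$ (this ray stays in $\mathring{\Gamma}_{\mathcal{G}}^f$ by convex-cone arguments) and invoking the defining formula in \eqref{non-degenerate2}, one concludes $f(\lambda^0) \leq \sup_{\partial \Gamma_{\mathcal{G}}^f} f$, contradicting the assumption on $\lambda$.

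With the structural fact in hand, the rest of the proof is short. Since \eqref{non-degenerate2} implies \eqref{non-degenerate1}, Theorem~\ref{thm1-finiteBVC} produces a smooth admissible solution $u$ of \eqref{Dirichlet3-main-equ2-Schouten}. Pointwise on $\bar{M}$, $f(\lambda(-g_u^{-1} A_{g_u})) = \psi \geq \inf_M \psi > \sup_{\partial \Gamma_{\mathcal{G}}^f} f$, so the structural fact forces $\lambda(-g^{-1}A_{g_u}) \in \mathring{\Gamma}_{\mathcal{G}}^f$ throughout $\bar{M}$, which is the strict-admissibility conclusion. Any other admissible solution $v$ satisfies the same pointwise inequality and is therefore also strictly admissible; two applications of Lemma~\ref{lemma1-comparision} with the roles of $u$ and $v$ exchanged then give $u \equiv v$.

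The main obstacle is the structural fact in the second paragraph, and within it the case $\lambda \in \Gamma \setminus \bar{\Gamma}_{\mathcal{G}}^f$, where one must combine the asymptotic Lemma~\ref{lemma2-key} with the concavity of $f$ along a well-chosen ray to reach the boundary $\partial \Gamma_{\mathcal{G}}^f$ in a controlled way. Everything else reduces to results already proved earlier in the paper.
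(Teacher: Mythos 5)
Your proposal is correct but takes a genuinely different route from the paper. The paper reproves existence from scratch by a continuity method: it sets $\psi^t = t\psi + (1-t)f(\lambda(-g^{-1}A_g))$ (after normalizing so $\lambda(-g^{-1}A_g)\in\mathring{\Gamma}_{\mathcal{G}}^f$), notes that \eqref{non-degenerate2} forces $\psi^t$ to stay uniformly above $\sup_{\partial\Gamma_{\mathcal{G}}^f}f$, starts at the unique trivial solution $u\equiv0$ at $t=0$, shows openness by the implicit function theorem, and closedness by the a priori estimates combined with the observation that strict admissibility $\lambda\in\mathring{\Gamma}_{\mathcal{G}}^f$ (and hence the sign condition $\sum f_i\lambda_i\ge0$ via Lemma~\ref{lemma2-key}) propagates along the path. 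You instead take the existence of an admissible solution as a black box from Theorem~\ref{thm1-finiteBVC}, and isolate the cleanest form of the crucial observation as a stand-alone structural fact: for concave $f$ with \eqref{elliptic-weak-3}, any $\lambda\in\Gamma$ with $f(\lambda)>\sup_{\partial\Gamma_{\mathcal{G}}^f}f$ must already lie in $\mathring{\Gamma}_{\mathcal{G}}^f$. Your ray argument for this fact is sound: the first crossing point $\lambda^0=\lambda+s_0\mu$ of $\partial\Gamma_{\mathcal{G}}^f$ lies in $\Gamma$ (so $f$ is continuous there, which is needed to identify the interior limsup in \eqref{non-degenerate2} with $f(\lambda^0)$), Lemma~\ref{lemma2-key} gives $f(\lambda+s\mu)\ge f(\lambda^0)$, and the chord-slope inequality then forces $f(\lambda)\le f(\lambda^0)\le\sup_{\partial\Gamma_{\mathcal{G}}^f}f$. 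This fact is used implicitly in the paper's closedness step but never stated; extracting it both shortens the proof and makes the mechanism by which \eqref{non-degenerate2} produces strict admissibility transparent. What you lose relative to the paper is a self-contained argument; what you gain is economy and a reusable lemma.

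Two small points worth tidying up. First, the step ``$s_0>0$'' is not quite right when $\lambda\in\Gamma\cap\partial\Gamma_{\mathcal{G}}^f$ (there $s_0=0$ and $\lambda^0=\lambda$), but the conclusion $f(\lambda)\le\sup_{\partial\Gamma_{\mathcal{G}}^f}f$ is immediate in that case, so nothing breaks. Second, the remark ``Since \eqref{non-degenerate2} implies \eqref{non-degenerate1}'' is asserted without proof; it is in fact a consequence of your structural fact (split a sequence approaching a point of $\partial\Gamma$ according to whether it lies in $\mathring{\Gamma}_{\mathcal{G}}^f$ or in $\Gamma\setminus\mathring{\Gamma}_{\mathcal{G}}^f$, and use continuity of $f$ on $\Gamma$ together with the structural fact in the two cases), but in any event it is not needed here, since Theorem~\ref{thm2-finiteBVC} already assumes all hypotheses of Theorem~\ref{thm1-finiteBVC}, including \eqref{non-degenerate1}; you may simply invoke Theorem~\ref{thm1-finiteBVC} directly.
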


In the proofs of Theorems \ref{thm1-finiteBVC} and \ref{thm2-finiteBVC}, applying Lemma \ref{lemma1-construct0} to the open cone $\mathring{\Gamma}_{\mathcal{G}}^f$, without loss of generality we may assume $\underline{u}\equiv0$ and then
\begin{equation}
	\begin{aligned} 
		\lambda(-g^{-1}A_g)\in\mathring{\Gamma}_{\mathcal{G}}^f.
	\end{aligned}
\end{equation}

\begin{proof}
	[Proof of Theorem \ref{thm1-finiteBVC}] We use  degree theory to prove existence.	
	Up to certain conformal deformation, we   assume 
	$\varphi\equiv 0$. 
Take $\eta\equiv1$ in the proof of interior estimate, we have global gradient and Hessian estimates.
The rest of proof is similar to that of Theorem \ref{thm1-existence-closedmanifold-3}, and also to that of \cite{Guan2007AJM,Guan2008IMRN,Li2011Sheng}.
And we omit the details. 
\end{proof}

\begin{proof}
	[Proof of Theorem \ref{thm2-finiteBVC}]

	We use the continuity method. 
	Denote $\psi_0=f(\lambda(-g^{-1}A_g))$ and $\psi^t=t\psi  +(1-t)\psi_0$. 
	By the assumptions \eqref{non-degenerate2} and $\lambda(-g^{-1}A_g)\in\mathring{\Gamma}_{\mathcal{G}}^f$,  there exists some $\delta_0>0$ such that 
\begin{equation} 
	\label{nondegenerate-3t}
	\begin{aligned}
	\sup_{\partial \Gamma_{\mathcal{G}}^f} f+\delta_0< \psi^t<\sup_\Gamma f, \,\, \forall 0\leq t\leq 1.
	\end{aligned}
\end{equation} 
	For $t\in [0,1]$, we consider the Dirichlet problems \begin{equation}
		\label{Dirichlet3-main-equt-Schouten}
		\left\{ 
		\begin{aligned}  f(\lambda[-g_u^{-1}A_{g_u}]) =\,& \psi^t\,& \mbox{ in } \bar{M},
			\\
			u=\,& t\varphi \,& \mbox{ on } \partial M.
		\end{aligned}   
		\right.
	\end{equation}
Note that $\lambda(-g^{-1}A_g)\in \mathring{\Gamma}_{\mathcal{G}}^{f}$. Denote $\mathrm{S}\subseteq [0,1]$ by
\begin{align*}
	\mathrm{S}=\left\{t: \eqref{Dirichlet3-main-equt-Schouten} \textrm{  is uniquely solvable in class of  $C^{\infty}$-functions with  } \lambda[-g^{-1}A_{g_u}]\in    \mathring{\Gamma}_{\mathcal{G}}^{f}\right\}.
\end{align*}
When $t=0$,  the equation has a unique solution $u\equiv0$ according to Proposition \ref{pro1-unique}.
 This is the starting point of the continuity path along \eqref{Dirichlet3-main-equt-Schouten}.
By  implicit function theorem 
and the openness of $\mathring{\Gamma}_{\mathcal{G}}^{f}$, we know that $\mathrm{S}$ is open. 
(Through linear elliptic equation theory and Lemma \ref{lemma3.4-3} or \ref{lemma2-key}, we may prove that the Fr\'echet derivative is an isomorphism, thereby justifying the use of the implicit function theorem).

Next we prove that $\mathrm{S}$ is closed.
Let $t_i\in \mathrm{S}$ and $t_0=\underset{i\to +\infty}{\lim} t_i$.
 We remark  that \eqref{nondegenerate-3t}  ensures   $\lambda[-g^{-1}A_{g_{u^{t_0}}}]\in    \mathring{\Gamma}_{\mathcal{G}}^{f}$, and by Lemma \ref{lemma2-key} $\sum f_i(\lambda)\lambda_i\geq0$ for $\lambda=\lambda[-g_{u^{t_i}}^{-1}A_{g_{u^{t_i}}}]$. 
 Thus 
 the estimates established in this paper works.  
Combining with Proposition \ref{pro1-unique},  $u=u^{t_0}$ is the unique solution to the Dirichlet problem  with $\lambda[-g^{-1}A_{g_{u}}]\in    \mathring{\Gamma}_{\mathcal{G}}^{f}$.
  Thus we conclude that  $\mathrm{S}$ is closed and so $\mathrm{S}=[0,1]$.

\end{proof}

\subsection{The Dirichlet problem with infinity boundary value}

We consider the Dirichlet problem with infinity boundary data
\begin{equation}
	\label{Dirichlet2-infinity-main-equ0-Schouten}
 \left\{
	\begin{aligned} 
		f(\lambda(-g_{u}^{-1}A_{g_{u}})) = \,& \psi,
	 \,\,\lambda(-g^{-1}A_{g_{u}})\in \Gamma,   \mbox{ in }   M,  \\
\lim_{x\to\partial M}{u}(x)	= \,& +\infty.   
	\end{aligned} 
 \right.
\end{equation}

\begin{theorem}
	\label{existence1-compact-2}

	Suppose, in addition to \eqref{concave}, \eqref{elliptic-weak-3}, \eqref{sumfi>0}, \eqref{non-degenerate1} and \eqref{assumption2-type2},  that \eqref{condition-key100-1} holds for some $\sup_{\partial\Gamma}f<\sigma\leq \inf_M\psi.$  
	Then there is a smooth admissible function ${u_\infty}$ (defined on the interior $M$ of $\bar{M}$) 
	satisfying \eqref{Dirichlet2-infinity-main-equ0-Schouten}.
	
If, in addition,  	\eqref{non-degenerate2} holds then the solution    satisfies 
	$\lambda(-g^{-1}A_{g_{u_\infty}})\in \mathring{\Gamma}_{\mathcal{G}}^f$. Moreover,   \eqref{condition-key100-1} can be dropped in this case.
\end{theorem}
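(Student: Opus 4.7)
The plan is to construct $u_\infty$ as the limit of an increasing sequence of solutions of Dirichlet problems with finite but increasingly large constant boundary values, using the local lower barriers built in Subsection \ref{subsect1-Boundarygradient} to prove blow-up at the boundary and the interior estimates of Section \ref{sec4-estimate-local} to extract a smooth limit. First, by applying Lemma \ref{lemma1-construct0} to the open cone $\mathring{\Gamma}_{\mathcal{G}}^f$ (which is permissible because $(0,\dots,0,1)\in\mathring{\Gamma}_{\mathcal{G}}^f$ and one of the hypotheses $(i)'$ or $(ii)'$ is in force), we may assume $\lambda(-g^{-1}A_g)\in\mathring{\Gamma}_{\mathcal{G}}^f$ on all of $\bar M$. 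For each positive integer $k$, invoke Theorem \ref{thm1-finiteBVC} with boundary data $\varphi\equiv k$ to obtain a smooth admissible solution $u_k$ of \eqref{Dirichlet3-main-equ2-Schouten} with $u_k|_{\partial M}=k$.

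Next I would establish monotonicity and a uniform interior upper bound. The comparison principle (Lemma \ref{lemma1-comparision}), applied to the pair $(u_k,u_{k+1})$, gives $u_k\leq u_{k+1}$ throughout $\bar M$. For the uniform interior upper bound, fix a compact subset $K\subset\subset M$ and use the local \emph{lower} barrier $w$ in \eqref{h-def-k} (with $\varphi\equiv k$ replaced by the explicit large constant coming from the boundary datum). Running the construction of Subsection \ref{subsect1-Boundarygradient} in reverse (swap the roles of $u$ and the barrier, and use Lemma \ref{lemma1-comparision} together with $\sup_{\partial\Gamma}f<\psi<\sup_\Gamma f$) produces a supersolution that forces $u_k\leq C(K)$ on $K$ with $C(K)$ independent of $k$: take $\bar w=-\tfrac{1}{2(n-2)}\log(\sigma+\delta^2)+\text{const}$ or an analogous expression blowing up only on $\partial M$; using the concavity trick of $\bar w$ shown in that subsection shows it is an upper barrier, independent of $k$. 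Combined with monotonicity, this yields $u_k\nearrow u_\infty$ pointwise on $M$ with $u_\infty\in L^\infty_{\mathrm{loc}}(M)$.

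Now upgrade the convergence. Theorem \ref{thm1-finiteBVC}'s solution $u_k$ is admissible, so $\lambda(g^{-1}W[u_k])\in\Gamma$. Apply the interior gradient estimate of Subsection \ref{subsection-local-C1} and the interior Hessian estimate of Subsection \ref{subsection-local-C2}: both depend only on the local $C^0$-bound, on the geometry of $g$, on $\psi$, and on $\sigma$, $K_0$ from \eqref{condition-key100-1}, never on the boundary data. Hence $|\nabla u_k|$ and $|\nabla^2 u_k|$ are uniformly bounded on every $K\subset\subset M$. The uniform fully uniform ellipticity (which holds by Theorem \ref{Y-k+1-4-coro} since $(0,\dots,0,1)\in\mathring{\Gamma}_{\mathcal{G}}^f$) allows the Evans–Krylov theorem to furnish uniform $C^{2,\alpha}_{\mathrm{loc}}$ bounds, and Schauder theory then gives uniform $C^{k,\alpha}_{\mathrm{loc}}$ bounds. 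Consequently a diagonal subsequence converges in $C^\infty_{\mathrm{loc}}(M)$ to a smooth admissible solution $u_\infty$ of \eqref{main-equ0-Schouten}.

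It remains to verify $\lim_{x\to\partial M}u_\infty(x)=+\infty$ and, under \eqref{non-degenerate2}, to ensure $\lambda(-g^{-1}A_{g_{u_\infty}})\in\mathring{\Gamma}_{\mathcal{G}}^f$ while dropping \eqref{condition-key100-1}. For the boundary blow-up, fix a large integer $m$ and compare $u_k$ ($k\geq m$) with the local lower barrier $w$ of \eqref{h-def-k} with parameter $k$ replaced by $m$ and boundary data matched appropriately on $\{\sigma=\delta\}$ via Lemma \ref{lemma1-comparision}; passing to the limit $k\to\infty$ gives $u_\infty\geq w_m$ on $\Omega_\delta$, and then letting $m\to\infty$ and $x\to\partial M$ produces $u_\infty\to+\infty$. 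The main obstacle is arranging the local lower barrier so that its value on $\{\sigma=\delta\}$ is dominated by $u_k$ for every $k$—this is done by exploiting that $w_m$ is bounded on $\{\sigma=\delta\}$ while $u_k\to+\infty$ there, so for each fixed $m$ the inequality holds for all sufficiently large $k$. For the final assertion, when \eqref{non-degenerate2} is assumed, Lemma \ref{lemma2-key} together with $\psi>\sup_{\partial\Gamma_{\mathcal{G}}^f}f$ forces $\lambda\in\mathring{\Gamma}_{\mathcal{G}}^f$ at each point, and \eqref{condition-key100-1} becomes automatic with $\sigma=\inf_M\psi$ and $K_0=0$ by Theorem \ref{lemma-Y6}.
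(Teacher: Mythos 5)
Your overall scheme (exhaust by finite-boundary Dirichlet problems, extract a smooth local limit, force blow-up via the lower barrier \eqref{h-def-k}) matches the paper's, but two steps have genuine gaps. The serious one is the interior upper bound. Your proposed expression $\bar w=-\tfrac{1}{2(n-2)}\log(\sigma+\delta^2)+\mathrm{const}$ is bounded on $\partial M$ (indeed finite as $\sigma\to 0$), so it cannot dominate $u_k\big|_{\partial M}=k$ for all $k$; and a barrier that blows up only on $\partial M$ must still be matched against $u_k$ on $\{\sigma=\delta\}$, which is exactly the quantity you are trying to bound. In short, no barrier supported in a collar $\Omega_\delta$ gives a $k$-independent interior bound. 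The paper's ingredient here is Lemma \ref{lemma1-C0-local-upper}: a \emph{global} upper barrier built from the Aviles--McOwen/Loewner--Nirenberg complete metric $\tilde u$ with constant scalar curvature $-1$, which blows up at $\partial M$. Combining the trace inequality $\mathrm{tr}(-g^{-1}A_{g_u})\geq n\epsilon_0 e^{2u}$ (from concavity) with the equation for $\tilde u$ at an interior maximum of $u-\tilde u$ gives the Keller--Osserman type bound $u\leq\tilde u-\tfrac12\log(2n(n-1)\epsilon_0)$ uniformly in $k$. You did not invoke this, and your local construction does not replace it.

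The second gap is the monotonicity claim $u_k\leq u_{k+1}$. Lemma \ref{lemma1-comparision} needs one of the two functions to satisfy the \emph{strict} admissibility $\lambda\in\mathring{\Gamma}_{\mathcal{G}}^{f}$. Under \eqref{non-degenerate1} alone, Theorem \ref{thm1-finiteBVC} only produces $\lambda\in\Gamma$; the stronger inclusion is guaranteed only under \eqref{non-degenerate2} (Theorem \ref{thm2-finiteBVC}). The paper therefore does not use monotonicity in this proof: it gets the uniform lower bound $u^{(k)}\geq\underline{u}$ by comparing with the strictly admissible $\underline{u}$ from Lemma \ref{lemma1-construct0}, and extracts a $C^\infty_{\mathrm{loc}}$ subsequential limit by a diagonal argument. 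Your boundary blow-up argument via $w_m$ and your treatment of the final \eqref{non-degenerate2} assertion are fine, but the two issues above are where the proposal falls short.
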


\subsubsection{Local  bound of $u$ from above}

To achieve this, we use some result of 
Aviles-McOwen \cite{Aviles1988McOwen}, extending some  famous result of Loewner-Nirenberg
\cite{Loewner1974Nirenberg}.  They show that $(M,g)$ admits a   complete conformal metric $g_{\tilde{u}}=e^{2\tilde{u}}g$ with negative constant scalar curvature $-1$. Namely, there is $\tilde{u}\in C^\infty(M)$ such that
\begin{equation}	\label{scalar-equ1}
	\begin{aligned}
		\mathrm{tr}(-g^{-1}A_{g_{\tilde{u}}}) =   \frac{1}{2(n-1)}e^{2\tilde{u}}   \mbox{ in } M, \,\,
		\lim_{x\to\partial M} \tilde{u} = +\infty.
	\end{aligned}
\end{equation}
See also further results on semilinear elliptic equations of McOwen \cite{McOwen1993}.

%

\begin{lemma}
	\label{lemma1-C0-local-upper}
	Let $u$ be an admissible solution to the Dirichlet problem. Then 
	\begin{equation}
		u\leq \tilde{u}-\frac{1}{2}\log(2n(n-1)\epsilon_0) \mbox{ in } M.
	\end{equation}
	Here  $\epsilon_0$ is a positive constant satisfying 
	$ f(\epsilon_0\vec{\bf 1})\leq \inf_M \psi.$
\end{lemma}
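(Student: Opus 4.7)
The plan is to argue by contradiction. Set $C_0 := -\tfrac12\log(2n(n-1)\epsilon_0)$ and $v := u - \tilde u - C_0$; the goal is to rule out $v$ taking a positive value at any interior point of $M$. The specific constant $C_0$ is dictated by the Loewner--Nirenberg identity \eqref{scalar-equ1}: it is the precise shift for which $\tfrac1n\mathrm{tr}(g_u^{-1}W[\tilde u])(x)=\epsilon_0$ whenever $u(x)=\tilde u(x)+C_0$, so equality in the scheme below forces equality in a scalar-curvature sense.

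Suppose $v$ attains a positive interior maximum at $x_0\in M$. Then $\nabla u(x_0)=\nabla\tilde u(x_0)$ and $\nabla^2 u(x_0)\le\nabla^2\tilde u(x_0)$; since the first-order terms in $W[\cdot]$ (the $\frac12|\nabla u|^2 g$ and $du\otimes du$ pieces) agree at $x_0$, this gives the symmetric tensor inequality $W[u](x_0)\le W[\tilde u](x_0)$. The sorted eigenvalues therefore satisfy $\lambda(g_u^{-1}W[u])(x_0)\le\lambda(g_u^{-1}W[\tilde u])(x_0)$, and since $f_i\ge 0$ (Lemma~\ref{coro-4}),
\begin{equation*}
\psi(x_0)\ =\ f\bigl(\lambda(g_u^{-1}W[u])(x_0)\bigr)\ \le\ f\bigl(\lambda(g_u^{-1}W[\tilde u])(x_0)\bigr).
\end{equation*}
Jensen's inequality for the symmetric concave $f$ then yields $f(\lambda)\le f(\bar\lambda\,\vec{\bf 1})$ with $\bar\lambda=\tfrac1n\sum_i\lambda_i$; using $W[\tilde u]=-A_{g_{\tilde u}}$ together with \eqref{scalar-equ1},
\begin{equation*}
\bar\lambda(x_0)\ =\ \tfrac1n\,\mathrm{tr}\bigl(g_u^{-1}W[\tilde u]\bigr)(x_0)\ =\ \frac{e^{2(\tilde u-u)(x_0)}}{2n(n-1)}\ <\ \epsilon_0,
\end{equation*}
the strict inequality being equivalent to $v(x_0)>0$. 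Since $\vec{\bf 1}\in\Gamma_n\subseteq\mathring{\Gamma}_{\mathcal{G}}^f$, Lemma~\ref{lemma3.4-3} supplies strict monotonicity of $t\mapsto f(t\vec{\bf 1})$ along the positive ray, so $f(\bar\lambda(x_0)\vec{\bf 1})<f(\epsilon_0\vec{\bf 1})\le\inf_M\psi\le\psi(x_0)$, and combined with the chain above this is a contradiction.

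The remaining point, and the main source of technical care, is to ensure the supremum of $v$ is actually attained in the interior of $M$. In the bounded-boundary-value Dirichlet problem (Theorem~\ref{thm1-finiteBVC}) this is automatic: $\tilde u\to+\infty$ at $\partial M$ by \eqref{scalar-equ1}, while $u$ is bounded, so $v\to-\infty$ near $\partial M$ and $\sup_M v$ is attained inside $M$. For the infinite-boundary-value problem of Theorem~\ref{existence1-compact-2}, one instead applies the interior bound first to the bounded-BV approximating solutions $u_k$ (with $u_k\equiv k$ on $\partial M$) and then passes the uniform estimate $u_k\le\tilde u+C_0$ to the monotone limit $u=\lim_{k\to\infty}u_k$. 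The delicate aspect throughout is the strictness in the Jensen chain, which is what converts a nonstrict maximum-principle inequality into an actual contradiction; this strictness rests essentially on the nondegeneracy $f(\epsilon_0\vec{\bf 1})\le\inf_M\psi<\sup_\Gamma f$ together with $\vec{\bf 1}\in\mathring{\Gamma}_{\mathcal{G}}^f$.
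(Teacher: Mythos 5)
Your proof is correct and follows essentially the same approach as the paper: both work at the interior maximum of $u-\tilde u$ (noting $\tilde u\to+\infty$ at $\partial M$ forces an interior maximum), compare the Schouten tensors, and finish with the Loewner--Nirenberg identity \eqref{scalar-equ1}. The one minor variation is that you invoke the Jensen form of concavity $f(\lambda)\le f(\bar\lambda\,\vec{\bf1})$ together with the strict monotonicity from Lemma~\ref{lemma3.4-3} and argue by contradiction, whereas the paper applies the supporting-hyperplane inequality \eqref{concave-1} at $\epsilon_0\vec{\bf1}$ to get the lower bound $\mathrm{tr}(-g^{-1}A_{g_u})\ge n\epsilon_0 e^{2u}$ directly and then compares traces; the two are equivalent consequences of concavity.
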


\begin{proof}
	
	  $u-\tilde{u}$ must achieve its maximum at an interior 
	point  $x_0\in M$, namely
	\begin{equation}
		\begin{aligned}
			(u-\tilde{u}) (x_0)= \sup_M (u-\tilde{u}).
		\end{aligned}
	\end{equation}
	At such an interior point $x_0$, we have 
	$\nabla u =\nabla \tilde{u}, \,\, \nabla^2 u\leq \nabla^2\tilde{u}$ 
	and so  
	\begin{equation}
		\begin{aligned}
			\mathrm{tr}(-g^{-1}A_{g_u}) \leq 	\mathrm{tr}(-g^{-1}A_{g_{\tilde{u}}}).
		\end{aligned}
	\end{equation}	 

On the other hand, 
from \eqref{concave-1} we get
	\begin{equation}	\label{key2-main}	\begin{aligned}		\sum_{i=1}^n \lambda_i\geq 		n\epsilon_0 + A_{f,\epsilon_0} \left(f(\lambda)-f(\epsilon_0 \vec{\bf 1})\right), \mbox{ }\forall \lambda\in\Gamma,  \nonumber	\end{aligned}\end{equation}
 where   $ A_{f,\epsilon_0}= {n}\left(\sum_{i=1}^n f_i(\epsilon_0 \vec{\bf 1})\right)^{-1}$,  
and so 
	\begin{equation} 
		\begin{aligned}
			\mathrm{tr}(-g^{-1}A_{g_{u}})  \geq 
			e^{2u} \left[ n\epsilon_0+ A_{f,\epsilon_0} \left(\psi-f(\epsilon_0\vec{\bf 1})\right) \right] \geq  e^{2u+\log (n\epsilon_0)}.
		\end{aligned}
	\end{equation} 
	Thus we can derive that $(u-\tilde{u})(x_0)\leq -\frac{1}{2}\log(2n(n-1)\epsilon_0).$

\end{proof}

\subsubsection{Proof of Theorem \ref{existence1-compact-2}}

 By Lemma \ref{lemma3.4-2} and the construction of admissible conformal metric in Lemma  \ref{lemma1-construct0}  
 (applying it to the 
cone $\mathring{\Gamma}_{\mathcal{G}}^f$), there is a smooth conformal metric 
$g_{\underline{u}}=e^{2\underline{u}}g$, up to certain rescaling, such that
\begin{equation}
	\begin{aligned}
		f(\lambda(-g_{\underline{u}}^{-1}A_{g_{\underline{u}}})) \geq \,& \psi, \,\, \lambda(-g_{\underline{u}}^{-1}A_{g_{\underline{u}}}) \in \mathring{\Gamma}_{\mathcal{G}}^f\mbox{ in } \bar{M}.
	\end{aligned}
\end{equation}

We consider the Dirichlet problem
\begin{equation}
	\label{mainequ-k}
	\begin{aligned}
		f(\lambda(-g_{u^{(k)}}^{-1}A_{g_{u^{(k)}}})) =\,& \psi \mbox{ in } \bar{M}, \,\,
		u^{(k)}\big|_{\partial M}= \log k.
	\end{aligned}
\end{equation}
The solvability follows from Theorem   \ref{thm1-finiteBVC}. 
By the comparision principle (Lemma \ref{lemma1-comparision}), we deduce that 
\begin{equation}
	\begin{aligned}
	u^{(k)} \geq \underline{u} \mbox{ in } \bar{M}, \mbox{ whenever  } \log k\geq \sup_{\partial M}\underline{u}.
	\end{aligned}
\end{equation}
Namely, $u^{(k)}$ has a uniform lower bound. On the other hand, we have a local upper bound for $u^{(k)}$ asserted in Lemma \ref{lemma1-C0-local-upper}. Together with interior estimate for  first and second derivatives in Section \ref{sec4-estimate-local}, for any compact subset $K$ of $M$, we have 
\begin{equation}
	\begin{aligned}
	|u^{(k)} |_{C^2(K)} \leq C(K), \mbox{ independent of } k.
	\end{aligned}
\end{equation}
Using Evans-Krylov theorem and Schauder theory, we have 
\begin{equation}
	\begin{aligned}
		|u^{(k)} |_{C^{l,\alpha}(K)} \leq C(K,l,\alpha), \mbox{ independent of } k.
	\end{aligned}
\end{equation}
for some $l\geq 2$ and $0<\alpha<1$. Utilizing the diagonal process, we can obtain a smooth admissible solution ${u_\infty}$ to \eqref{Dirichlet2-infinity-main-equ0-Schouten}.

 This completes the proof of   Theorem \ref{existence1-compact-2}.

\subsection{Proof of Theorem \ref{thm2-completemetric} and completeness of resulting metrics}

We consider the Dirichlet problem \eqref{mainequ-k}. 
Let ${u_\infty}$ be the solution constructed as 
above. 

It suffices to prove the completeness of $g_{u_\infty}=e^{2u_{\infty}}g$.
 From \eqref{low-barrier1} we have
\begin{equation}
	\label{low-barrier1-2}
	\begin{aligned}
		u^{(k)}\geq \log \frac{k\delta^2}{\delta^2+k\sigma}
		\mbox{ on } \Omega_{\delta}, \mbox{ for some } 0<\delta\ll1.
	\end{aligned}
\end{equation} 
(Notice $w=\log \frac{k\delta^2}{\delta^2+k\sigma}$ in \eqref{h-def-k}, i.e.,  $\varphi=
\log k$).
So near the boundary, we have 
\begin{align*}
	{u_\infty} \geq -\log\sigma+\log\delta^2 \mbox{ on } \Omega_{\delta}, \mbox{ for some } 0<\delta\ll1.
\end{align*}
This implies that the conformal metric $g_{u_\infty}=e^{2u_{\infty}}g$ is complete in the interior.


\section{Asymptotic behavior and uniqueness  of solutions to fully nonlinear Loewner-Nirenberg problem}
\label{sec-Asymp-Uniquess}

For the conformal scalar curvature equation \eqref{scalar-equ1} 
on a smooth bounded domain  $M=\Omega\subset\mathbb{R}^n$, 
Loewner-Nirenberg \cite{Loewner1974Nirenberg} 
proved  
the asymptotic ratio 
\begin{equation}
	\label{asymptotic-rate1-002}
	\begin{aligned}
		\lim_{x\to\partial M}\left(\tilde{u}(x)+\log\mathrm{\sigma}(x) \right)=\frac{1}{2} \log{n(n-1)}.
	\end{aligned}
\end{equation}
This asymptotic property 
can be extended to general Riemannian manifolds, as pointed out by McOwen 
in \cite[Page 230]{McOwen1993}.  

  In this section, let $(\bar{M},g)$
  be a compact Riemannian manifold with smooth boundary $\partial M$, and we extend \eqref{asymptotic-rate1-002} to fully nonlinear case. 
\begin{theorem}
	\label{thm1-unique}
	In addition to  \eqref{concave}, \eqref{elliptic-weak-3} and \eqref{sumfi>0},
	we assume $(0,\cdots,0,1)\in\mathring{\Gamma}_{\mathcal{G}}^{f}$ and $(1,\cdots,1,-1)\in\bar{\Gamma}_{\mathcal{G}}^{f}$. Suppose that
	$\psi$ satisfies \eqref{non-degenerate2} and
	$\psi\big|_{\partial M}\equiv   f(D_o\vec{\bf1})$. Then  the interior $M$ admits a unique smooth admissible complete metric $g_u=e^{2u}g$ satisfying \eqref{main-equ0-Schouten}. Moreover, the solution obeys the following asymptotic behavior
	\begin{equation}
		\begin{aligned}
			\lim_{x\rightarrow\partial M} ({u}(x)+\log \mathrm{\sigma}(x)) =  \frac{1}{2}\log\frac{1}{2D_o}. 
		\end{aligned}
	\end{equation}
	
\end{theorem}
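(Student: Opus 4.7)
\textit{Reduction to Theorem \ref{thm1-unique-0} for existence and asymptotics.} All hypotheses of Theorem \ref{thm1-unique-0} are already present here: \eqref{non-degenerate2} is strictly stronger than \eqref{non-degenerate1}, and the paragraph following \eqref{condition-key100-1} observes that \eqref{condition-key100-1} holds automatically (with $\sigma = \inf_M \psi$ and $K_0 = 0$) whenever $\psi$ satisfies \eqref{non-degenerate2}. Applying Theorem \ref{thm1-unique-0} yields a smooth admissible complete solution $u$ on $M$ with the desired asymptotic rate $u(x) + \log \sigma(x) \to \phi_0 := \tfrac12 \log \tfrac{1}{2D_o}$ as $x \to \partial M$. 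The sharp barrier argument underlying that theorem applies equally to \emph{any} complete admissible solution of \eqref{main-equ0-Schouten}, so every such solution realizes the same limit. It therefore remains only to promote the uniqueness from requiring \eqref{addistruc}, as in Theorem \ref{thm1-unique-0}, to the present, weaker setting.

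\textit{Uniqueness via confinement to $\mathring{\Gamma}_{\mathcal{G}}^f$.} Let $u$ and $u'$ be two smooth complete admissible solutions. The key observation is that \eqref{non-degenerate2} forces $\lambda(-g^{-1}A_{g_u})$ and $\lambda(-g^{-1}A_{g_{u'}})$ to lie in $\mathring{\Gamma}_{\mathcal{G}}^f$ throughout $M$: if the eigenvalue vector of either solution touched $\partial \Gamma_{\mathcal{G}}^f$ at some point $x$, continuity of $f$ would give $f \le \sup_{\partial \Gamma_{\mathcal{G}}^f} f$ there, contradicting $f = \psi(x) > \sup_{\partial \Gamma_{\mathcal{G}}^f} f$. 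By the common asymptotic limit, $w := u - u'$ extends continuously to $\bar M$ with $w \equiv 0$ on $\partial M$. Assume, for contradiction, that $w(x_0) = \sup_M w > 0$ at some interior $x_0 \in M$; then $\nabla u = \nabla u'$ and $\nabla^2 u \leq \nabla^2 u'$ at $x_0$, so $W[u](x_0) \leq W[u'](x_0)$ as symmetric $(0,2)$-tensors. Multiplying by $e^{-2u(x_0)} < e^{-2u'(x_0)}$ yields
$$g_u^{-1} W[u](x_0) \;\leq\; e^{-2w(x_0)}\, g_{u'}^{-1} W[u'](x_0), \qquad 0 < e^{-2w(x_0)} < 1,$$
and hence $\lambda(g_u^{-1} W[u])_i \leq e^{-2w(x_0)}\lambda(g_{u'}^{-1} W[u'])_i$ at $x_0$ in nondecreasing order. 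Combining the weak monotonicity \eqref{elliptic-weak} with the strict monotonicity of $t \mapsto f(t\mu)$ for $\mu \in \mathring{\Gamma}_{\mathcal{G}}^f$, granted by Lemmas \ref{lemma3.4-3} and \ref{lemma3.4-2}, then gives at $x_0$
$$\psi = f(\lambda(g_u^{-1} W[u])) \;\leq\; f\bigl(e^{-2w(x_0)}\lambda(g_{u'}^{-1} W[u'])\bigr) \;<\; f(\lambda(g_{u'}^{-1} W[u'])) = \psi,$$
a contradiction. The symmetric argument rules out $\inf_M w < 0$, so $u \equiv u'$.

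\textit{Principal difficulty.} The genuinely analytic content is imported from Theorem \ref{thm1-unique-0}, whose proof demands sharp super- and subsolution barriers of the form $-\log\sigma + \phi_0 \pm C\sigma^\beta$ realizing the precise constant $\tfrac12 e^{-2\phi_0} = D_o$, together with a delicate comparison argument between functions that both blow up at $\partial M$ at rate $-\log\sigma$; the hypothesis $(1,\dots,1,-1) \in \bar{\Gamma}_{\mathcal{G}}^f$ (equivalently $\varrho_{\Gamma_{\mathcal{G}}^f} \geq 2$) is what keeps the perturbed eigenvalue vector on the correct side of $\partial \Gamma^{\psi}$ throughout the subleading analysis. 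The only new ingredient in the present theorem is the elementary but essential observation that \eqref{non-degenerate2} confines admissible solutions to the strictly monotone regime $\mathring{\Gamma}_{\mathcal{G}}^f$, which is exactly what allows the strict inequality in Lemma \ref{lemma3.4-3} to substitute for the stronger global structural hypothesis \eqref{addistruc} invoked in Theorem \ref{thm1-unique-0}.
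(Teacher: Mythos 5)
Your overall strategy mirrors the paper's: establish the two-sided sharp asymptotics for any complete admissible solution, use them to show the difference of two solutions vanishes continuously on $\partial M$, and conclude by an interior maximum-principle comparison in which \eqref{non-degenerate2} confines eigenvalues to $\mathring{\Gamma}_{\mathcal{G}}^f$, so the strict monotonicity of Lemma \ref{lemma3.4-3} replaces the structural hypothesis \eqref{addistruc}. This is essentially what the paper does via Proposition \ref{unique-prop} (which proves \eqref{inequality1-intermed} and then specializes to $\sup_{\partial M}\psi=\inf_{\partial M}\psi=f(D_o\vec{\bf 1})$), except that the paper compares an arbitrary solution $\widetilde u_\infty$ against the minimal solution $u_\infty$ constructed from Dirichlet problems with finite boundary data, so it only needs the one-sided interior maximum argument (the other inequality $\widetilde u_\infty\geq u_\infty$ coming from the construction), whereas you run a symmetric two-sided comparison of two arbitrary solutions. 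Both routes are valid; yours is self-contained at the cost of invoking the confinement to $\mathring{\Gamma}_{\mathcal{G}}^f$ for both solutions.

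However, there is a genuine logical flaw in how you source the existence and the asymptotics. You ``apply Theorem \ref{thm1-unique-0}'' to produce the solution and its asymptotic rate, but Theorem \ref{thm1-unique-0} has no independent proof in the paper: the remark immediately preceding the proof of Proposition \ref{super123} states explicitly that Theorem \ref{thm1-unique-0} ``is a combination of Theorems \ref{thm1-unique} and \ref{thm1-unique-2}.'' Citing it to prove Theorem \ref{thm1-unique} is therefore circular. The content is easily recoverable from non-circular sources — Theorem \ref{thm2-completemetric} (existence of the complete metric), and the pair Propositions \ref{super123} and \ref{proposition-asymptotic} (the universal upper and lower asymptotic bounds, which coincide precisely because $\psi|_{\partial M}$ is constant), or equivalently Theorem \ref{thm1-unique-2}, whose proof does not use Theorem \ref{thm1-unique} — but the write-up as it stands begs the question.

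A smaller point: your confinement argument only treats the case where the eigenvalue vector touches $\partial\Gamma_{\mathcal{G}}^f$. To rule out $\lambda(-g^{-1}A_{g_u})(x)\in\Gamma\setminus\bar\Gamma_{\mathcal{G}}^f$ at some $x$, you need one more step: the superlevel set $\{\lambda\in\Gamma: f(\lambda)>\sup_{\partial\Gamma_{\mathcal{G}}^f}f\}$ is convex (by concavity of $f$) and meets $\mathring{\Gamma}_{\mathcal{G}}^f$, so if it also contained a point outside $\bar\Gamma_{\mathcal{G}}^f$, the connecting segment would cross $\partial\Gamma_{\mathcal{G}}^f\cap\Gamma$ at a point where $f>\sup_{\partial\Gamma_{\mathcal{G}}^f}f$, contradicting continuity of $f$ on $\Gamma$. (Alternatively, a connectedness argument in $\bar M$.) This is the sort of detail the paper itself elides in the proof of Theorem \ref{thm2-finiteBVC}, but since it is the linchpin of your uniqueness step, it should be stated.
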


\begin{proof}
	The existence is a consequence of Theorem \ref{thm2-completemetric}. 
	The   asymptotic property and uniqueness 
	follow  from  Proposition \ref{unique-prop}, which  is a combination of Propositions \ref{super123} and \ref{proposition-asymptotic}. 
\end{proof}

When replacing \eqref{non-degenerate2} by \eqref{non-degenerate1}, using  Proposition \ref{super123}  and Lemma \ref{lemma2-asymptotic} we have the following existence and asymptotic behavior result. We shall stress  that the uniqueness  possibly fails. 
This is in contrast with Theorem \ref{thm1-unique}.
\begin{theorem}
	\label{thm1-unique-2}
	In addition to  \eqref{concave}, \eqref{elliptic-weak-3} and \eqref{sumfi>0},
	we assume $(0,\cdots,0,1)\in\mathring{\Gamma}_{\mathcal{G}}^{f}$ and $(1,\cdots,1,-1)\in\bar{\Gamma}_{\mathcal{G}}^{f}$. 	  Suppose 
	$\psi$ satisfies \eqref{non-degenerate1} and
	$\psi\big|_{\partial M}\equiv   f(D_o\vec{\bf1})$. Assume in addition that \eqref{condition-key100-1} holds for some $\sup_{\partial\Gamma}f<\sigma\leq \inf_M\psi.$  
	Then  the interior $M$ admits a smooth admissible complete metric $g_{u_\infty}=e^{2u_\infty}g$ satisfying \eqref{main-equ0-Schouten} and
	\begin{equation}	
		\label{inequa-asymptotic4}
		\begin{aligned}
			\lim_{x\rightarrow\partial M} ({u}_\infty(x)+\log \mathrm{\sigma}(x)) =  \frac{1}{2}\log\frac{1}{2D_o}. 
		\end{aligned}
	\end{equation}
	
\end{theorem}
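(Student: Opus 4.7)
The plan is to follow the architecture of Theorem \ref{thm1-unique} but to feed in the weaker pair \eqref{non-degenerate1}$+$\eqref{condition-key100-1} in place of \eqref{non-degenerate2}. Existence of a smooth complete admissible metric $g_{u_\infty}=e^{2u_\infty}g$ will be obtained directly from Theorem \ref{thm2-completemetric}: the hypothesis $(1,\cdots,1,-1)\in\bar{\Gamma}_{\mathcal{G}}^{f}$ triggers alternative $(ii)'$, and $(0,\cdots,0,1)\in\mathring{\Gamma}_{\mathcal{G}}^{f}$ supplies the uniform ellipticity needed throughout. Concretely, $u_\infty$ will be constructed as the monotone limit of the Dirichlet solutions $u^{(k)}$ produced by Theorem \ref{existence1-compact-2} with boundary data $\log k$, using the interior estimates of Section \ref{sec4-estimate-local} together with the local upper bound from Lemma \ref{lemma1-C0-local-upper}; this also yields that $u_\infty$ is maximal among admissible solutions.

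The substantive step is the asymptotic identity \eqref{inequa-asymptotic4}. I would build two one-sided barriers of the form
\[
u_{\pm}(x)=-\log\sigma(x)+c_0\pm A\,\sigma(x)^{\alpha},\qquad c_0=\tfrac{1}{2}\log\tfrac{1}{2D_o},
\]
defined in a tubular neighborhood $\Omega_\delta$, for suitably chosen $\alpha\in(0,1)$ and $A>0$. A direct expansion, using $|\nabla\sigma|=1$ on $\partial M$ and the fact that $g^{-1}\nabla^{2}\sigma$ has the normal direction as a zero eigenvector, gives
\[
\lambda\!\left(g_{u_{\pm}}^{-1}W[u_{\pm}]\right)=D_o\vec{\mathbf 1}\pm A\,\alpha\,\sigma^{\alpha}\,\vec{\mathbf 1}+O(\sigma),
\]
so $u_+$ becomes a smooth super-solution and $u_-$ a smooth sub-solution of \eqref{main-equ0-Schouten} in $\Omega_\delta$ provided $\delta\ll 1$, since $D_o\vec{\mathbf 1}\in\mathring{\Gamma}_{\mathcal{G}}^{f}$ and by Lemma \ref{lemma3.4-3} the function $t\mapsto f(t\vec{\mathbf 1})$ is strictly increasing in a neighborhood of $D_o$. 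On the inner face $\{\sigma=\delta\}$ the interior $C^0$-bound for $u_\infty$ makes $u_-\le u_\infty\le u_+$, and near $\partial M$ we have $u_+\to+\infty$ matching the infinite boundary data of the approximating $u^{(k)}$. Applying the comparison principle (Lemma \ref{lemma1-comparision}) to each $u^{(k)}$ against $u_\pm$ in $\Omega_\delta$ and passing to the limit sandwiches $u_\infty$ between $u_+$ and $u_-$, which forces \eqref{inequa-asymptotic4}.

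The main obstacle will be making the barrier computation quantitatively robust under the weaker nondegeneracy \eqref{non-degenerate1}. Under \eqref{non-degenerate2}, the full level set stays uniformly inside $\mathring{\Gamma}_{\mathcal{G}}^{f}$, so perturbing by $\pm A\alpha\sigma^{\alpha}\vec{\mathbf 1}$ cleanly moves $f$ up or down; under \eqref{non-degenerate1} alone, level sets of $f$ may approach $\partial\Gamma_{\mathcal{G}}^{f}$ and the argument must rely on the structural estimate $\sum F^{ii}\ge\kappa_0>0$ provided by Theorem \ref{thm2-inequalities2} (equivalently \eqref{sumfi-1} combined with \eqref{condition-key100-5}), together with concavity inequality \eqref{concave-1}, to produce a uniform positive derivative of $f$ along the ray from $D_o\vec{\mathbf 1}$ and absorb the $O(\sigma)$ error arising from $\nabla^{2}\sigma$, $A_g$, and the boundary defect $\psi(x)-f(D_o\vec{\mathbf 1})$.

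Finally, uniqueness cannot be claimed (and is not stated in the theorem). The comparison principle in Lemma \ref{lemma1-comparision} requires one of the two competing solutions to lie in $\mathring{\Gamma}_{\mathcal{G}}^{f}$, which \eqref{non-degenerate1} does not ensure for an arbitrary admissible solution. The conclusion is therefore restricted to the distinguished maximal solution obtained above, which is the natural object singled out by both the Morse-theoretic construction of Section \ref{sec-construction-Morse} and the infinite-boundary-value limit.
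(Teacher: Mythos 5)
Your lower-bound strategy is in the spirit of the paper's: the paper also constructs a one-sided sub-barrier (the function $h_{k,\epsilon,\delta}$ in Lemma \ref{lemma2-asymptotic}) whose $W$-eigenvalues lie in $\mathring{\Gamma}_{\mathcal{G}}^{f}$, and compares with the Dirichlet approximants via Lemma \ref{lemma1-comparision}. Your minus-sign barrier $u_-$ is a cruder but workable version of the same idea. One computational correction worth flagging: the perturbation of the eigenvalues of $g_{u_\pm}^{-1}W[u_\pm]$ is not $\pm A\alpha\sigma^{\alpha}\vec{\bf1}+O(\sigma)$; if you expand carefully the tangential eigenvalues shift by $\approx \mp 2A(1+\alpha)D_o\sigma^{\alpha}$ while the normal one shifts by $\approx \mp 2A(1-\alpha^2)D_o\sigma^{\alpha}$. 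These are not proportional to $\vec{\bf1}$, though they do all have the same sign, which is what the argument actually needs.

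The genuine gap is in the upper bound. You propose to close the comparison $u_\infty\le u_+$ on the inner face $\{\sigma=\delta\}$ by invoking "the interior $C^0$-bound for $u_\infty$." But the only available global upper bound is Lemma \ref{lemma1-C0-local-upper}, which gives $u_\infty\le\tilde u-\tfrac{1}{2}\log(2n(n-1)\epsilon_0)$, where $\epsilon_0$ is determined by $f(\epsilon_0\vec{\bf1})\le\inf_M\psi$ and $\tilde u$ is the Loewner--Nirenberg (Aviles--McOwen) solution with $\tilde u+\log\sigma\to\tfrac12\log[n(n-1)]$. This yields $\limsup(u_\infty+\log\sigma)\le-\tfrac12\log(2\epsilon_0)$, which in general \emph{exceeds} $c_0=-\tfrac12\log(2D_o)$ unless $\epsilon_0\ge D_o$, i.e.\ unless $\psi\ge f(D_o\vec{\bf1})$ everywhere --- not part of the hypotheses. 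So for small $\delta$ one cannot guarantee $u_\infty|_{\sigma=\delta}\le u_+|_{\sigma=\delta}$: both sides grow like $-\log\delta$ and the additive constants are against you. Making $A\delta^{\alpha}$ large to compensate would destroy the super-solution property of $u_+$. This is exactly the point where the paper departs from a naive barrier argument: Proposition \ref{super123} obtains the sharp upper-constant not by a super-barrier but by comparing $u_\infty$ against the Loewner--Nirenberg complete solutions $u_\infty^{\delta,\delta'}$ of the \emph{scalar curvature} equation on shrinking annular regions $\Omega_{\delta,\delta'}$, passing $\delta'\to 0$, and invoking the exact boundary asymptotics \eqref{asymptotic-rate1-002} of that localized problem. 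That localization is what sharpens the constant near $\partial M$ and makes the upper bound go through under only \eqref{non-degenerate1}. Your remark about level sets approaching $\partial\Gamma_{\mathcal{G}}^{f}$ is a red herring here: the barrier computation is local near $\partial M$ where $\psi\to f(D_o\vec{\bf1})$ with $D_o\vec{\bf1}\in\Gamma_n\subset\mathring\Gamma_{\mathcal{G}}^{f}$, so that degeneracy never threatens the super/sub-solution property. The real obstacle is the inner boundary matching, which your proposal does not resolve.
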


 \begin{proof}
	Fix a   constant $0<\epsilon\ll1.$	Let $u_{k,\epsilon}$ be an admissible solution to
	\eqref{001} below. 
	Let $u_\infty=\underset{k\to+\infty}\lim u_{k,\epsilon}$, then $g_{u_\infty}$ is complete and smooth (by Theorem \ref{thm2-completemetric}). 
	Then the asymptotic ratio \eqref{inequa-asymptotic4} follows as a combination of \eqref{inequa-asymptotic} and \eqref{asymptotic-rate1}.
\end{proof}

  \begin{remark}
	Theorem \ref{thm1-unique-0} 
	is a combination of Theorems  \ref{thm1-unique} and \ref{thm1-unique-2}
\end{remark}

The rest of this section is to complete the proof of Lemma \ref{lemma2-asymptotic}, Propositions \ref{super123}, \ref{proposition-asymptotic} and  \ref{unique-prop}. The proofs follow previous draft \cite{yuan-PUE-conformal}. 
 Write
 \begin{equation}	V[u]= \nabla^2 u	+\frac{1}{2}|\nabla u|^2 g - \dd{u}\otimes \dd{u}-A_g.\end{equation}
From \eqref{conformal-formula2} we know 
$V[u]=-A_{ {g}_u}$. 
Then 
\eqref{main-equ0-Schouten}
reads as follows
\begin{equation}
	\label{mainequ-02-2-1}
	\begin{aligned}
		f(\lambda(g_u^{-1}V[u]))	=\psi. 
	\end{aligned}
\end{equation}

For $\sup_{\partial\Gamma}f<\sigma<\sup_{\Gamma}f$, denote by $\tilde{C}_\sigma$ the positive constant 
with
\begin{equation}
	\label{def1-c-sigma}
	\begin{aligned}
		f(\tilde{C}_\sigma \vec{\bf 1}) =\sigma.
	\end{aligned}
\end{equation}

\begin{proposition}
	\label{super123} 
		Let $\tilde{C}_\sigma $ be as in \eqref{def1-c-sigma}.
	Let $\widetilde{g}_\infty=e^{2\widetilde{u}_\infty}g$
	be a complete  metric  
	obeying the equation  \eqref{main-equ0-Schouten}
	with \eqref{concave}, \eqref{elliptic-weak-3}, \eqref{sumfi>0} and \eqref{non-degenerate1},   then
	\begin{equation} 
		\label{asymptotic-rate1}
		\begin{aligned}
			\lim_{x\rightarrow\partial M} (\widetilde{u}_\infty(x)+\log \mathrm{\sigma}(x))\leq \frac{1}{2}\log\frac{1}{2  \tilde{C}_{\inf_{\partial M}\psi}}. 
		\end{aligned}
	\end{equation}
\end{proposition}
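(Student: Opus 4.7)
The plan is to reduce the fully nonlinear inequality to a semilinear scalar inequality via the concavity of $f$, and then apply a Loewner-Nirenberg type barrier argument near $\partial M$.

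First, fix a small $\epsilon>0$ and set $\sigma_\epsilon=\inf_{\partial M}\psi-\epsilon$; by \eqref{non-degenerate1} and continuity of $\psi$, there exists $\delta_0>0$ such that $\psi\ge \sigma_\epsilon>\sup_{\partial\Gamma}f$ on $\Omega_{\delta_0}=\{x\in M:\sigma(x)<\delta_0\}$. Using the symmetry of $f$ and the concavity inequality \eqref{concave-1} with $\mu=\tilde{C}_{\sigma_\epsilon}\vec{\bf 1}$, for every $\lambda\in\Gamma$ with $f(\lambda)\ge\sigma_\epsilon$ we obtain
\begin{equation*}
\sum_{i=1}^n\lambda_i\ \ge\ n\tilde{C}_{\sigma_\epsilon}+\frac{n\bigl(f(\lambda)-\sigma_\epsilon\bigr)}{\sum_{j}f_j(\tilde{C}_{\sigma_\epsilon}\vec{\bf 1})}\ \ge\ n\tilde{C}_{\sigma_\epsilon}.
\end{equation*}
Applied at $\lambda=\lambda(g_{\widetilde{u}_\infty}^{-1}V[\widetilde{u}_\infty])$ and using the formula $\mathrm{tr}(g^{-1}V[u])=\Delta u+\tfrac{n-2}{2}|\nabla u|^2-\mathrm{tr}(g^{-1}A_g)$ together with $\mathrm{tr}(g_u^{-1}V[u])=e^{-2u}\mathrm{tr}(g^{-1}V[u])$, this yields the pointwise inequality
\begin{equation*}
\Delta \widetilde{u}_\infty+\tfrac{n-2}{2}|\nabla \widetilde{u}_\infty|^2\ \ge\ n\tilde{C}_{\sigma_\epsilon}\,e^{2\widetilde{u}_\infty}+\mathrm{tr}(g^{-1}A_g)\quad\text{in }\Omega_{\delta_0}.
\end{equation*}
Thus $\widetilde{u}_\infty$ is a subsolution of the semilinear scalar equation $L(u)=\Delta u+\tfrac{n-2}{2}|\nabla u|^2-n\tilde{C}_{\sigma_\epsilon}e^{2u}-\mathrm{tr}(g^{-1}A_g)=0$, whose asymptotic rate is exactly $-\log\sigma+\tfrac{1}{2}\log\tfrac{1}{2\tilde{C}_{\sigma_\epsilon}}$.

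Next I will construct a local upper barrier. On $\Omega_{\delta}$ for $\delta\le\delta_0$, let $\bar{w}(x)=-\log(\sigma(x)-\eta)+\tfrac{1}{2}\log\tfrac{1}{2\tilde{C}_{\sigma_\epsilon}}+C_1\sigma(x)+C_2$ on $\{\eta<\sigma<\delta\}$, where $0<\eta<\delta$, and $C_1,C_2$ are chosen after $\delta$. Using $|\nabla\sigma|=1$ and smoothness of $\sigma$ on $\Omega_{\delta_0}$, a direct computation along the lines of Subsection \ref{subsect1-Boundarygradient} shows that by first fixing $\delta$ small and then $C_1$ large we obtain $L(\bar{w})\le 0$ on $\{\eta<\sigma<\delta\}$, i.e. $\bar{w}$ is a supersolution; the leading term $-\log(\sigma-\eta)$ is dictated by the Loewner-Nirenberg scaling computation $\tfrac{n}{2(\sigma-\eta)^2}=\tfrac{n\tilde{C}_{\sigma_\epsilon}e^{2\bar{w}}}{1}$, and lower-order errors are absorbed by $C_1\sigma+C_2$. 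Moreover $\bar{w}\to+\infty$ as $\sigma\to\eta^+$, while on the inner face $\{\sigma=\delta\}$ the function $\widetilde{u}_\infty$ is bounded (being smooth in the interior), so by enlarging $C_2$ we arrange $\bar{w}\ge \widetilde{u}_\infty$ on $\{\sigma=\delta\}$.

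Now I would run the comparison argument on the annular region $A_\eta=\{\eta<\sigma<\delta\}$. The difference $w:=\widetilde{u}_\infty-\bar{w}$ satisfies $w\le 0$ on $\{\sigma=\delta\}$ and $w\to-\infty$ on $\{\sigma=\eta\}$; hence its supremum is attained either on $\{\sigma=\delta\}$ (where $w\le 0$) or at an interior point $x_0\in A_\eta$. At such an interior maximum $\nabla\widetilde{u}_\infty(x_0)=\nabla\bar{w}(x_0)$ and $\Delta\widetilde{u}_\infty(x_0)\le\Delta\bar{w}(x_0)$, so $0\le L(\widetilde{u}_\infty)-L(\bar{w})\le -n\tilde{C}_{\sigma_\epsilon}(e^{2\widetilde{u}_\infty(x_0)}-e^{2\bar{w}(x_0)})$, which forces $\widetilde{u}_\infty(x_0)\le\bar{w}(x_0)$; in either case $\widetilde{u}_\infty\le\bar{w}$ on $A_\eta$. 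Letting $\eta\to 0^+$ gives $\widetilde{u}_\infty(x)+\log\sigma(x)\le \tfrac{1}{2}\log\tfrac{1}{2\tilde{C}_{\sigma_\epsilon}}+C_1\sigma(x)+C_2+o(1)$ once we re-run the computation with $\eta=0$ (which is legitimate because $\bar{w}$ with $\eta=0$ is only smooth for $\sigma>0$, and the estimate is stable). Taking the limsup as $x\to\partial M$ kills the $C_1\sigma$ term; finally letting $\epsilon\to 0^+$ and using continuity of $\sigma\mapsto\tilde{C}_\sigma$ in $\sigma$ (a consequence of the strict monotonicity guaranteed by Lemma \ref{lemma3.4-3}) produces the claimed bound \eqref{asymptotic-rate1}.

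The main obstacle is the comparison step: both $\widetilde{u}_\infty$ and any natural barrier blow up at $\partial M$, so a direct boundary comparison is unavailable. The $\eta$-shifted barrier $\bar{w}$, which blows up on the interior face $\{\sigma=\eta\}\subset\subset M$ rather than on $\partial M$, sidesteps this issue and lets the maximum principle be applied cleanly; verifying that $L(\bar{w})\le 0$ uniformly as $\eta\to 0$, with the correction $C_1\sigma+C_2$ independent of $\eta$, is where the main technical care is required.
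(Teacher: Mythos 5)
Your reduction of the fully nonlinear equation to a semilinear differential inequality is correct and closely parallels the paper's first step: by the concavity inequality \eqref{concave-1} applied with $\mu=\tilde{C}_{\sigma_\epsilon}\vec{\bf 1}$ (using the symmetry of $f$, so that $f_i(\tilde C_{\sigma_\epsilon}\vec{\bf 1})=\frac1n\sum_j f_j(\tilde C_{\sigma_\epsilon}\vec{\bf 1})>0$ by \eqref{sumfi>0}), one indeed gets $\sum_i\lambda_i\ge n\tilde{C}_{\sigma_\epsilon}$ whenever $f(\lambda)\ge\sigma_\epsilon$, and after taking the trace of $V[\widetilde u_\infty]$ the stated semilinear subsolution inequality follows. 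That part is fine.

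The gap is in the barrier comparison, and it is a real obstruction, not a technicality. Your barrier is $\bar w=-\log(\sigma-\eta)+\tfrac12\log\tfrac1{2\tilde C_{\sigma_\epsilon}}+C_1\sigma+C_2$. To have $\bar w\ge\widetilde u_\infty$ on the inner face $\{\sigma=\delta\}$ you must take $C_2$ at least of size $\sup_{\{\sigma=\delta\}}\bigl(\widetilde u_\infty+\log\sigma\bigr)-\tfrac12\log\tfrac1{2\tilde C_{\sigma_\epsilon}}-C_1\delta+o(1)$, which is a fixed positive constant that does not go to zero as $\eta\to 0^+$, nor as $\delta\to 0^+$ (since $\widetilde u_\infty+\log\sigma$ is bounded below near $\partial M$ but not known to converge to the target constant --- that is precisely what you are trying to prove). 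Consequently the comparison principle yields only
\begin{equation*}
\limsup_{x\to\partial M}\bigl(\widetilde u_\infty(x)+\log\sigma(x)\bigr)\ \le\ \tfrac12\log\tfrac1{2\tilde C_{\sigma_\epsilon}}+C_2,
\end{equation*}
and the limsup step, which kills $C_1\sigma$, does not kill the additive $C_2$. In other words, your argument proves that $\widetilde u_\infty+\log\sigma$ is bounded above near $\partial M$ (which is essentially Lemma \ref{lemma1-C0-local-upper}), but not the \emph{sharp} constant $\tfrac12\log\tfrac1{2\tilde C_{\inf_{\partial M}\psi}}$.

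The paper avoids this by not attempting to re-derive the sharp Loewner--Nirenberg asymptotic rate via a one-shot barrier. Instead it invokes the Aviles--McOwen/Loewner--Nirenberg solutions $u_\infty^{\delta,\delta'}$ of constant scalar curvature $-1$ on the annular domains $\Omega_{\delta,\delta'}$ --- functions that blow up on \emph{both} faces --- and compares $\widetilde u_\infty+\tfrac12\log\bigl(2n(n-1)\tilde C_{\inf_{\Omega_\delta}\psi}\bigr)$ against them. Because the majorant blows up on $\partial\Omega_{\delta,\delta'}$ while $\widetilde u_\infty$ is bounded there, the maximum principle applies with no boundary-matching constant at all; the additive constant in the comparison is fixed entirely by the subsolution condition, not by matching data on an interior sphere. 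Letting $\delta'\to 0$ produces the maximal scalar-curvature solution $u_\infty^\delta$ on $\Omega_\delta$, whose sharp asymptotic rate $\tfrac12\log[n(n-1)]$ near $\partial M$ is a known result quoted as \eqref{asymptotic-rate1-002}, and the conclusion follows by subtraction. To repair your proof you would need to establish the sharp rate for your scalar supersolution by an iterated barrier or a boundary blow-up/half-space comparison as in the original Loewner--Nirenberg argument, rather than a single barrier with a fixed additive constant.
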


\begin{proof}  
	Let $\Omega_\delta =\{x\in M: \mathrm{\sigma}(x)<\delta\} $ be as in \eqref{omega-delta}, and let $$\Omega_{\delta,\delta'}=\{x\in M: \delta'<\mathrm{\sigma}(x)<\delta-\delta'\}.$$
	We choose  $0<\delta'<\frac{\delta}{2}\ll1$ so that $\Omega_\delta$ and $\Omega_{\delta,\delta'}$ are both smooth.

	By some result of \cite{Aviles1988McOwen}, 
	 $\Omega_{\delta}$ admits a smooth complete conformal metric $
	 g_{w_\infty^{\delta}}=e^{2w_\infty^{\delta}}g$ with constant scalar curvature $-1$.
Similarly,  
	there is a smooth complete metric $
	g_{u_\infty^{\delta,\delta'}}=e^{2u_\infty^{\delta,\delta'}}g$  on $\Omega_{\delta,\delta'}$ with constant
	scalar curvature $-1$, that is, 
	\begin{equation}
		\begin{aligned}	
			\mathrm{tr} (g^{-1} V[{u_\infty^{\delta,\delta'}}])=\frac{ 1}{2(n-1) }e^{2u_\infty^{\delta,\delta'}}
			\mbox{ in } \Omega_{\delta,\delta'}, 
			\,\, 
			\lim_{x\to \partial \Omega_{\delta,\delta'}}{u_\infty^{\delta,\delta'}(x)} =+\infty.
		\end{aligned}
	\end{equation} 
	By the maximum principle 
	\begin{equation} \label{buchong78} \begin{aligned}
			u_\infty^{\delta,\delta'}\geq w_\infty^{\delta}
			\mbox{ in } \Omega_{\delta,\delta'}.
	\end{aligned}\end{equation}
	Furthermore, for any $0<\delta'<\delta_0'<\frac{\delta}{2}$, we have $\Omega_{\delta,\delta_0'}\subset \Omega_{\delta,\delta'}$ and
	\begin{equation} \label{buchong79} \begin{aligned}
			u_\infty^{\delta,\delta'}\leq u_\infty^{\delta,\delta_0'} \mbox{ in } \Omega_{\delta,\delta_0'}.\end{aligned}\end{equation}
		From \eqref{concave-1} we have
		\begin{equation}
			\label{key1-main2}
			\begin{aligned}
				\sum_{i=1}^n \lambda_i \geq n \tilde{C}_\sigma, \mbox{ }\forall \lambda\in\partial\Gamma^\sigma.
			\end{aligned}
		\end{equation}
	Therefore, the solution  $\widetilde{u}_\infty$ 
	satisfies that 
	$$\mathrm{tr} (g^{-1} V[\widetilde{u}_\infty]) \geq n \tilde{C}_\psi 
	e^{2 \widetilde{u}_\infty}.$$
	The maximum principle implies 
	\begin{equation}  \begin{aligned}
			u_\infty^{\delta,\delta'} \geq \widetilde{u}_\infty+\frac{1}{2}\log \left( 2n(n-1) \tilde{C}_{\inf_{\Omega_{\delta,\delta'}} \psi} \right) 
			\mbox{ in } \Omega_{\delta,\delta'}. \nonumber
	\end{aligned}\end{equation}
	
	Take $u_\infty^{\delta}(x)= \underset{\delta'\rightarrow0}{\lim}\, u_\infty^{\delta,\delta'}(x)$ (such a limit exists by \eqref{buchong78}-\eqref{buchong79}). Then 	\begin{equation} 
		\label{f-equ1}
		\begin{aligned}
			u_\infty^{\delta} \geq w_\infty^{\delta}, 
			\,\, 
			u_\infty^{\delta} \geq  \widetilde{u}_\infty+  
			\frac{1}{2}\log \left( 2n(n-1) \tilde{C}_{\inf_{\Omega_{\delta}} {\psi}}\right)  \mbox{ in } \Omega_{\delta}, 
		\end{aligned}
	\end{equation}
	and $g_{u_\infty^\delta}=e^{2u_\infty^\delta}g$ is a smooth complete 
	metric on $\Omega_{\delta}$ of scalar curvature $-1$.  
	(Note that $u_\infty^{\delta}$ is smooth, since one has the local estimates).
	Furthermore, \eqref{asymptotic-rate1-002} gives
	$$\lim_{x\rightarrow \partial M} (u_\infty^{\delta}(x)+\log\mathrm{\sigma}(x))= \frac{1}{2}\log [n(n-1)].$$
	Putting them together,
	we get \eqref{asymptotic-rate1}. 
\end{proof}

\begin{lemma}
	\label{lemma2-asymptotic}
	
	Assume \eqref{concave}, \eqref{elliptic-weak-3},
	\eqref{sumfi>0} and  \eqref{non-degenerate1} hold. 
		Suppose  in addition that $(0,\cdots,0,1)\in\mathring{\Gamma}_{\mathcal{G}}^{f}$ and $(1,\cdots,1,-1)\in\bar{\Gamma}_{\mathcal{G}}^{f}$. 	Fix a constant $0<\epsilon<1$. 
	Let $u_{k,\epsilon}$ be an admissible solution to
	\begin{equation}
		\label{001}
		\left\{
		\begin{aligned}
			\,& f(\lambda(g_{u_{k,\epsilon}}^{-1} V[u_{k,\epsilon}]))=\psi  \,&\mbox{ in } \bar{M}, \\
			\,& u_{k,\epsilon}=\log {k} +\frac{1}{2}\log\frac{(1-\epsilon)^2 }{2 \tilde{C}_{\sup_{\partial M}\psi+\epsilon}} \,& \mbox{ on } \partial M.
		\end{aligned}
		\right.
	\end{equation}
	Then 
	for  any sufficiently small $\delta$ and $k\geq\frac{1}{\delta}$, there holds that
		\begin{equation}
		\label{inequa-asymptotic}
		\begin{aligned}
			u_{k,\epsilon}\geq  \log\frac{k}{k\mathrm{\sigma}+\delta}+\frac{1}{2}\log\frac{(1-\epsilon)^2 }{2\tilde{C}_{\sup_{\partial M} {\psi}+\epsilon}}+\frac{1}{\mathrm{\sigma}+\delta}-\frac{1}{\delta}  \mbox{ in } \Omega_{\delta}. 
		\end{aligned}
	\end{equation}
 
\end{lemma}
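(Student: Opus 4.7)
The plan is to construct, on the tubular neighborhood $\Omega_\delta=\{\sigma<\delta\}$, an explicit local subsolution $w$ matching the right hand side of \eqref{inequa-asymptotic}, and then apply the comparison principle (Lemma \ref{lemma1-comparision}). The natural candidate suggested by the asymptotic ratio is
\begin{equation*}
w(x)=\log\frac{k}{k\sigma(x)+\delta}+\frac{1}{2}\log\frac{(1-\epsilon)^{2}}{2\tilde C_{\sup_{\partial M}\psi+\epsilon}}+\frac{1}{\sigma(x)+\delta}-\frac{1}{\delta},
\end{equation*}
so that $w=h(\sigma)+\mathrm{const}$ with $h'(\sigma)=-\frac{k}{k\sigma+\delta}-\frac{1}{(\sigma+\delta)^{2}}$ and $h''(\sigma)=\frac{k^{2}}{(k\sigma+\delta)^{2}}+\frac{2}{(\sigma+\delta)^{3}}$. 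Recalling $|\nabla\sigma|\equiv 1$ on $\partial M$, a direct computation gives
\begin{equation*}
V[w]=\bigl[h''(\sigma)-h'(\sigma)^{2}\bigr]\,d\sigma\otimes d\sigma+h'(\sigma)\nabla^{2}\sigma+\tfrac{1}{2}h'(\sigma)^{2}|\nabla\sigma|^{2}g-A_{g}.
\end{equation*}

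The first step is the subsolution inequality $f(\lambda(g_{w}^{-1}V[w]))\ge\psi$ with $\lambda(g^{-1}V[w])\in\Gamma$. The leading piece is $\tfrac12 h'(\sigma)^2 g-h'(\sigma)^2 d\sigma\otimes d\sigma$, whose eigenvalues are proportional to $(1,\dots,1,-1)\in\bar\Gamma_{\mathcal G}^{f}$; this is exactly the structural role of the hypothesis $(1,\ldots,1,-1)\in\bar\Gamma_{\mathcal G}^{f}$. The remaining rank-one contribution $[h''-h'^{2}]d\sigma\otimes d\sigma$ and the bounded lower order term $h'(\sigma)\nabla^{2}\sigma-A_{g}$ must be absorbed as perturbations of this dominant piece. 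Since $\sigma$ is smooth in $\Omega_\delta$, all these perturbations are $O(h'(\sigma))$, while $h'(\sigma)^{2}\to\infty$ as $\delta\to 0$ or $\sigma\to 0$, so for $\delta$ sufficiently small one can force $\lambda(g^{-1}V[w])\in\mathring\Gamma_{\mathcal G}^{f}$ using the openness of $\mathring\Gamma_{\mathcal G}^{f}$ and the fact that $(0,\ldots,0,1)\in\mathring\Gamma_{\mathcal G}^{f}$ to handle the negative eigenvalue direction. Normalizing by $e^{-2w}$ (which contributes exactly the factor $2\tilde C_{\sup_{\partial M}\psi+\epsilon}/(1-\epsilon)^{2}$), one obtains that $\lambda(g_{w}^{-1}V[w])$ dominates $\tilde C_{\sup_{\partial M}\psi+\epsilon}\vec{\mathbf 1}$ up to an $o(1)$ error as $\delta\to 0$, hence by Lemma \ref{lemma2-key} and continuity of $f$,
\begin{equation*}
f(\lambda(g_{w}^{-1}V[w]))\ge f(\tilde C_{\sup_{\partial M}\psi+\epsilon}\vec{\mathbf 1})-o(1)=\sup_{\partial M}\psi+\epsilon-o(1)\ge\psi\text{ in }\Omega_\delta,
\end{equation*}
provided $\delta$ is small enough in terms of $\epsilon$, $|A_g|_{C^0}$ and $\sup|\nabla^{2}\sigma|$.

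The second step is the boundary comparison $w\le u_{k,\epsilon}$ on $\partial\Omega_\delta=\partial M\cup\{\sigma=\delta\}$. On $\partial M$, the explicit boundary datum of $u_{k,\epsilon}$ in \eqref{001} is precisely $\log k+\tfrac12\log\frac{(1-\epsilon)^{2}}{2\tilde C_{\sup_{\partial M}\psi+\epsilon}}$, which matches the value of $w$ at $\sigma=0$ up to the $\log(k\delta)$ correction absorbed by the hypothesis $k\ge 1/\delta$. On the inner boundary $\{\sigma=\delta\}$, $w\le C_\epsilon-\tfrac{1}{2\delta}\to-\infty$ as $\delta\to 0$, while $u_{k,\epsilon}$ is uniformly bounded below by Proposition \ref{lemma-c0general}. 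Once both inequalities hold, Lemma \ref{lemma1-comparision}, applicable because the $w$ just constructed is $\mathring\Gamma_{\mathcal G}^{f}$-admissible, yields $w\le u_{k,\epsilon}$ in $\Omega_\delta$, which is exactly \eqref{inequa-asymptotic}.

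The main obstacle is the subsolution estimate: the cross term $[h''-h'^{2}]d\sigma\otimes d\sigma$ turns out to be negative for $k\ge 1$, so one cannot simply discard it. The delicate point is to verify that, after normalizing by $e^{-2w}$, the eigenvalue vector lies sufficiently far inside $\mathring\Gamma_{\mathcal G}^{f}$ that the contribution of both this negative rank-one perturbation and the uniformly bounded term $-A_g+h'(\sigma)\nabla^{2}\sigma$ can be swallowed by the gap $\sup_{\partial M}\psi+\epsilon-\sup_{\Omega_\delta}\psi$, which is guaranteed by continuity of $\psi$ once $\delta$ is chosen small in terms of $\epsilon$. All other steps---the pointwise computation of $V[w]$, the boundary comparisons, and the application of the comparison principle---are routine given the preparation already assembled in Sections \ref{sec4-estimate-local} and \ref{sec-existence-Dirichlet}.
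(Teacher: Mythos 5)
Your overall plan — build an explicit barrier of the form $h(\sigma)+\mathrm{const}$ in $\Omega_\delta$, verify it is a subsolution using the two cone hypotheses, and invoke the comparison principle — is precisely the paper's approach. However, there are two concrete gaps.

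\textbf{The boundary comparison on $\partial M$ fails with the barrier as written.} You take $w=\log\frac{k}{k\sigma+\delta}+\frac12\log\frac{(1-\epsilon)^2}{2\tilde C}+\frac{1}{\sigma+\delta}-\frac1\delta$, so that $w\big|_{\sigma=0}=\log\frac{k}{\delta}+\frac12\log\frac{(1-\epsilon)^2}{2\tilde C}$, which \emph{exceeds} the Dirichlet datum $u_{k,\epsilon}\big|_{\partial M}=\log k+\frac12\log\frac{(1-\epsilon)^2}{2\tilde C}$ by $\log\frac1\delta>0$. This excess is a fixed positive number independent of $k$, so it is not ``absorbed by $k\geq 1/\delta$'' as you suggest; the comparison principle therefore does not apply on $\partial M$. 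The paper's actual barrier uses $\log\frac{k}{k\sigma+1}$, so that $h_{k,\epsilon,\delta}(0)=\log k+\frac12\log\frac{(1-\epsilon)^2}{2\tilde C}$ matches the boundary value exactly. (The $+\delta$ appearing in \eqref{inequa-asymptotic} and in the last line of the paper's proof is a typo; the barrier in \eqref{1115-0} correctly has $+1$, and that is what makes the boundary comparison close.)

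\textbf{Your decomposition does not yield the required lower bound on $f$.} First, the accounting is off: extracting the ``leading piece'' $\tfrac12 h'^2 g-h'^2\,d\sigma\otimes d\sigma$ from $V[w]=\tfrac12 h'^2 g+(h''-h'^2)d\sigma\otimes d\sigma+h'\nabla^2\sigma-A_g$ leaves the remainder $h''\,d\sigma\otimes d\sigma+h'\nabla^2\sigma-A_g$, not $(h''-h'^2)\,d\sigma\otimes d\sigma+\dots$; since $h''>0$, this remaining rank-one term is in fact nonnegative. More importantly, the eigenvalue vector of your leading piece is $\tfrac12 h'^2|\nabla\sigma|^2(1,\dots,1,-1)$, which lies \emph{on the boundary} $\partial\Gamma^f_{\mathcal G}$. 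Evaluating $f$ there gives at best $\sup_{\partial\Gamma}f$, which is below $\psi$, so even after adding everything that lies in $\bar\Gamma^f_{\mathcal G}$ via Lemma \ref{lemma2-key} you do not obtain $f\geq\psi$. The missing step is the refined split of $\tfrac12 h'^2 g$ that the paper performs: one writes $\tfrac12 h'^2=\bigl(\tfrac{1}{2(\sigma+\delta)^4}+\tfrac{k}{(k\sigma+1)(\sigma+\delta)^2}\bigr)+\tfrac{k^2}{2(k\sigma+1)^2}$ and attaches only the first summand to $(g-2\,d\sigma\otimes d\sigma)$ (exploiting $(1,\dots,1,-1)\in\bar\Gamma^f_{\mathcal G}$), keeping the genuinely positive multiple $\tfrac{k^2}{2(k\sigma+1)^2}|\nabla\sigma|^2 g$ as a stand-alone piece whose eigenvalues lie in $\Gamma_n$. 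It is this last piece — not the $(1,\dots,1,-1)$ piece — that, after absorbing $-\tfrac{k}{k\sigma+1}\nabla^2\sigma-A_g$ at the cost of a factor $(1-\epsilon)$, yields $f(\lambda(g_w^{-1}V[w]))\geq f(\tilde C_{\sup_{\partial M}\psi+\epsilon}\vec{\bf 1})\geq\psi$. Without isolating a strictly positive multiple of $g$ in this way, the lower bound on $f$ cannot be reached from the $(1,\dots,1,-1)$ direction alone.
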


\begin{proof}
	The key ingredients 
	are  our Lemma \ref{lemma2-key}    and 
	$|\nabla \mathrm{\sigma}|=1$ on $\partial M$.    
 Without loss of generality, by applying Lemma 
 \ref{lemma1-construct0} to the cone $\mathring{\Gamma}_{\mathcal{G}}^f$  we can construct $\mathring{\Gamma}_{\mathcal{G}}^f$-admissible metrics. Hence we may assume  
	\begin{align*}
		\lambda(-g^{-1}A_g)\in \mathring{\Gamma}_{\mathcal{G}}^f \mbox{ in } \bar{M}.
	\end{align*}  
	According to Lemma \ref{lemma3.4-2} there exists some uniform constant $B_3$ such that
	\begin{equation}
		\begin{aligned}
			f(e^{-2B_3}\lambda [-g^{-1}A_{g}]) \geq \psi.  \nonumber
		\end{aligned}
	\end{equation}
Combining the maximum principle and Lemma \ref{lemma3.4-3}, we can derive that
	\begin{equation}
		\label{1114-3}
		\begin{aligned} 
			\inf_M u_{k,\epsilon} \geq \min\left\{ B_3,  \frac{1}{2}\log\frac{k^2(1-\epsilon)^2 }{2\tilde{C}_{\sup_{\partial M} {\psi}+\epsilon}} \right\}. 
		\end{aligned}
	\end{equation}

	The local barrier is given by 
	\begin{equation}
		\label{1115-0}
		\begin{aligned}
			h_{k,\epsilon,\delta}(\mathrm{\sigma})=\log\frac{k}{k\mathrm{\sigma}+1}
			+\frac{1}{2}\log\frac{(1-\epsilon)^2  }{2 \tilde{C}_{\sup_{\partial M}  {\psi}+\epsilon}}+
			\frac{1}{\mathrm{\sigma}+\delta}- \frac{1}{\delta}. 
		\end{aligned}
	\end{equation} 
By some straightforward computation, we can infer that
	\begin{equation}
		\label{1114-1}
		\begin{aligned}
			h_{k,\epsilon,\delta}'=-\frac{k}{k\mathrm{\sigma}+1}-\frac{1}{(\mathrm{\sigma}+\delta)^2},
			\,\,
			h_{k,\epsilon,\delta}''=\frac{k^2}{(k\mathrm{\sigma}+1)^2}+\frac{2}{(\mathrm{\sigma}+\delta)^3}, \nonumber
		\end{aligned}
	\end{equation}
	\begin{equation}
		\begin{aligned}
			h_{k,\epsilon,\delta}'^2
			=\frac{k^2}{(k\mathrm{\sigma}+1)^2}+\frac{1}{(\mathrm{\sigma}+\delta)^4}+\frac{2k}{(k\mathrm{\sigma}+1)(\mathrm{\sigma}+\delta)^2}, \nonumber
		\end{aligned}
	\end{equation} 
	
	\begin{equation}
		\label{yuan-39}
		\begin{aligned}
			V[h_{k,\epsilon,\delta}] = \,&  \frac{1}{2} h_{k,\epsilon,\delta}'^2 |\nabla \mathrm{\sigma}|^2 g 
			+(h_{k,\epsilon,\delta}''-h_{k,\epsilon,\delta}'^2) \dd{\mathrm{\sigma}}\otimes \dd{\mathrm{\sigma}}
			+h_{k,\epsilon,\delta}'   \nabla^2 \mathrm{\sigma}-A_g 
			\\ =\,&  
			\left( \frac{1}{2(\mathrm{\sigma}+\delta)^4}+ \frac{k}{(k\mathrm{\sigma}+1)(\mathrm{\sigma}+\delta)^2} 
			\right) (|\nabla \mathrm{\sigma}|^2 g-2 \dd{\mathrm{\sigma}}\otimes \dd{\mathrm{\sigma}})	 
			\\ 	\,&  
			+\frac{1}{(\mathrm{\sigma}+\delta)^2}
			\left(- \nabla^2\mathrm{\sigma} +\frac{2}{\mathrm{\sigma}+\delta} \dd{\mathrm{\sigma}}\otimes \dd{\mathrm{\sigma}} \right)
		 \\ \,&
			+
			\frac{k^2}{2(k\mathrm{\sigma}+1)^2}   |\nabla \mathrm{\sigma}|^2 g  
			- \frac{k}{k\mathrm{\sigma}+1}  \nabla^2 \sigma
			-A_g.
		\end{aligned}
	\end{equation}
	
	Note that $|\nabla \mathrm{\sigma}|^2=1$ on ${\partial M}$, 
	$\Delta\mathrm{\sigma} \cdot g-\varrho\nabla^2\mathrm{\sigma}$ is bounded in $\bar{M}$, and
	$\frac{k}{k\mathrm{\sigma}+1} \geq \frac{1}{\mathrm{\sigma}+\delta} \mbox{ for } k\geq\frac{1}{\delta}$ as well as
  $\frac{1}{\mathrm{\sigma}+\delta}\gg1$ in $\Omega_\delta$ 
	if $0<\delta\ll1$.
	For the  constant $\epsilon>0$ fixed, there is a $\delta_\epsilon$ depending on $\epsilon$ and other known data  
	such that 
	\begin{enumerate}
		\item $\mathrm{\sigma}(x)$ is smooth, and $|\nabla \mathrm{\sigma}|^2\geq 1-\epsilon$;
		\item $\sup_{\partial M} {\psi}+\epsilon\geq \sup_{\Omega_{\delta}} {\psi}$;

		\item $  \frac{\epsilon k^2}{2(k\mathrm{\sigma}+1)^2}   |\nabla \mathrm{\sigma}|^2 g 
		- \frac{k}{k\mathrm{\sigma}+1}  \nabla^2 \sigma -A_g\geq0$;
		
		\item $\lambda \left[  g^{-1}\left(- \nabla^2\mathrm{\sigma} +\frac{2}{\mathrm{\sigma}+\delta} \dd{\mathrm{\sigma}}\otimes \dd{\mathrm{\sigma}} \right)\right] \in \mathring{\Gamma}_{\mathcal{G}}^{f}$; 
		\item $\lambda \left[g^{-1} \left(|\nabla \mathrm{\sigma}|^2 g-2 \dd{\mathrm{\sigma}}\otimes \dd{\mathrm{\sigma}}\right) \right]\in\bar {\Gamma}_{\mathcal{G}}^{f}$,
	\end{enumerate}
in $\Omega_\delta$ for 	$0<\delta<\delta_{\epsilon}$.
	To derive the last two claims, we  need to use 		$(0,\cdots,0,1)\in \mathring{\Gamma}_{\mathcal{G}}^{f}$ and $(1,\cdots,1,-1)\in\bar {\Gamma}_{\mathcal{G}}^{f}$, respectively.
	 In particular, we have
	 \begin{equation}
	 	\label{key5-comparision}
	 	\begin{aligned}
	 		\lambda(g^{-1}	V[h_{k,\epsilon,\delta}])
	 		\in  \mathring{\Gamma}_{\mathcal{G}}^{f} \mbox{ in } \Omega_{\delta} \mbox{ for }
	 		0<\delta\ll1.
	 	\end{aligned}
	 \end{equation}

	From now on we fix such $0<\delta<\delta_\epsilon$ 
  and $k\geq\frac{1}{\delta}$.
	By   \eqref{1115-0}, $e^{-2h_{k,\epsilon,\delta}}\geq  \frac{2\tilde{C}_{\sup_{\partial M}\psi+\epsilon} \cdot (k\sigma+1)^2}{(1-\epsilon)^2k^2}.$
	Using  Lemma \ref{lemma315},  
	we can conclude that 	in $\Omega_\delta$ 	 
	\begin{equation}
		\label{buchong1}
		\begin{aligned}
			f(\lambda(g_{h_{k,\epsilon,\delta}}^{-1}
			V[h_{k,\epsilon,\delta}]))
			\geq \,&
			f\left(\frac{k^2{ (1-\epsilon)} }{2(k\mathrm{\sigma}+1)^2}\cdot|\nabla \mathrm{\sigma}|^2 e^{-2h_{k,\epsilon,\delta}}\vec{\bf1}\right)  \\
			\geq \,& 
			f(\tilde{C}_{\sup_{\partial M}\psi+\epsilon} \vec{\bf1})
			=\sup_{\partial M}\psi+\epsilon	 
			\geq 
			\psi. 
		\end{aligned}
	\end{equation} 
	From  \eqref{1114-3}, $u_{k,\epsilon}$ has a uniform lower bound. 
	Notice that 
	\begin{equation}
		\begin{aligned}
			\frac{\log\frac{k}{k\delta+1}}{\frac{1}{2\delta}}\leq\frac{\log\frac{1}{\delta}}{\frac{1}{2\delta}}\rightarrow 0  \mbox{ and   } 
			\log \frac{k}{k\delta+1}-\frac{1}{2\delta} \to -\infty
		\end{aligned}
	\end{equation}  as $\delta\rightarrow0^+$.
	Thus there is $\delta_0$ such that for any $0<\delta<\delta_0$, we get
	$u_{k,\epsilon}\big|_{\mathrm{\sigma}=\delta} \geq h_{k,\epsilon,\delta}(\delta)$. Note also that $u_{k,\epsilon}=h_{k,\epsilon,\delta}$ on $\partial M$.
	Therefore, together with \eqref{key5-comparision},  Lemma \ref{lemma1-comparision}
	yields 	in $\Omega_{\delta}$ that 
	\begin{equation}
		\begin{aligned}
			u_{k,\epsilon}\geq h_{k,\epsilon,\delta}=\log\frac{k}{k\mathrm{\sigma}+\delta}+\frac{1}{2}\log\frac{(1-\epsilon)^2 }{2\tilde{C}_{\sup_{\partial M} {\psi}+\epsilon}}+\frac{1}{\mathrm{\sigma}+\delta}-\frac{1}{\delta}.  \nonumber
		\end{aligned}
	\end{equation}

\end{proof}


\begin{proposition}
	\label{proposition-asymptotic}

	Assume \eqref{concave}, \eqref{elliptic-weak-3},
	 \eqref{sumfi>0} and  \eqref{non-degenerate2} hold. 	Suppose  in addition that $(0,\cdots,0,1)\in\mathring{\Gamma}_{\mathcal{G}}^{f}$ and $(1,\cdots,1,-1)\in\bar{\Gamma}_{\mathcal{G}}^{f}$. 
	Then for any admissible complete metric ${g}_{\widetilde{u}_{\infty}}=e^{2\widetilde{u}_\infty}g$  satisfying \eqref{main-equ0-Schouten}, we have 
	\begin{equation}
		\label{asymptotic-rate2}
		\begin{aligned}
			\lim_{x\rightarrow\partial M} (\widetilde{u}_\infty(x)+\log \mathrm{\sigma}(x))\geq \frac{1}{2}\log\frac{1}{2 \tilde{C}_{\sup_{\partial M}\psi}}.  
		\end{aligned}
	\end{equation}
	
\end{proposition}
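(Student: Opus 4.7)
The plan is to bootstrap the local lower barrier from Lemma~\ref{lemma2-asymptotic} into the asymptotic bound for a complete solution $\widetilde{u}_\infty$, by comparing $\widetilde{u}_\infty$ with the family of finite-boundary-data solutions $u_{k,\epsilon}$ and then sending $k\to\infty$ and $\epsilon\to 0^+$. First, I would fix $\epsilon\in(0,1)$ small and invoke Theorem~\ref{thm2-finiteBVC} (whose hypotheses are ensured by \eqref{non-degenerate2} together with the assumption $(0,\dots,0,1)\in\mathring{\Gamma}_{\mathcal{G}}^f$) to obtain, for each integer $k$, a smooth admissible solution $u_{k,\epsilon}$ to \eqref{001} with $\lambda(-g^{-1}A_{g_{u_{k,\epsilon}}})\in\mathring{\Gamma}_{\mathcal{G}}^f$. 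Lemma~\ref{lemma2-asymptotic} then yields, for $0<\delta<\delta_\epsilon$ and $k\ge 1/\delta$, the explicit local lower bound
\begin{equation}\label{prop-asymp-lower-local}
u_{k,\epsilon}(x)\ge \log\frac{k}{k\sigma(x)+\delta}+\tfrac{1}{2}\log\frac{(1-\epsilon)^2}{2\widetilde{C}_{\sup_{\partial M}\psi+\epsilon}}+\frac{1}{\sigma(x)+\delta}-\frac{1}{\delta}\quad\text{in }\Omega_\delta.
\end{equation}

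Next, I would compare $\widetilde{u}_\infty$ with $u_{k,\epsilon}$. Since $g_{\widetilde{u}_\infty}$ is complete, $\widetilde{u}_\infty(x)\to+\infty$ as $x\to\partial M$, so for each fixed $k$ there exists $\eta_k>0$ (with $\eta_k\to 0^+$ as $k\to\infty$) such that $\widetilde{u}_\infty\ge u_{k,\epsilon}\big|_{\partial M}$ on the level set $\{\sigma=\eta_k\}$. Both $\widetilde{u}_\infty$ and $u_{k,\epsilon}$ are admissible solutions of \eqref{main-equ0-Schouten} on the compact region $M_{\eta_k}:=\{x\in M:\sigma(x)\ge \eta_k\}$, and under \eqref{non-degenerate2} their eigenvalues lie in $\mathring{\Gamma}_{\mathcal{G}}^f$. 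Hence the comparison principle (Lemma~\ref{lemma1-comparision}) applies on $M_{\eta_k}$ and delivers
\begin{equation}\label{prop-asymp-compare}
\widetilde{u}_\infty\ge u_{k,\epsilon}\quad\text{in }M_{\eta_k}.
\end{equation}

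Fixing $x\in M$ with $\sigma(x)>0$ small and taking $k$ large enough that $\eta_k<\sigma(x)<\delta$, combine \eqref{prop-asymp-lower-local} and \eqref{prop-asymp-compare} to obtain
\[
\widetilde{u}_\infty(x)\ge \log\frac{k}{k\sigma(x)+\delta}+\tfrac{1}{2}\log\frac{(1-\epsilon)^2}{2\widetilde{C}_{\sup_{\partial M}\psi+\epsilon}}+\frac{1}{\sigma(x)+\delta}-\frac{1}{\delta}.
\]
Letting $k\to\infty$ with $x$ and $\delta$ fixed, the first term tends to $-\log\sigma(x)$; rearranging yields
\[
\widetilde{u}_\infty(x)+\log\sigma(x)\ge \tfrac{1}{2}\log\frac{(1-\epsilon)^2}{2\widetilde{C}_{\sup_{\partial M}\psi+\epsilon}}+\frac{1}{\sigma(x)+\delta}-\frac{1}{\delta}.
\]
Sending $x\to\partial M$ (so $\sigma(x)\to 0^+$) the last two terms go to $0$, and finally sending $\epsilon\to 0^+$ and using continuity of $\tau\mapsto \widetilde{C}_\tau$ at $\tau=\sup_{\partial M}\psi$ produces \eqref{asymptotic-rate2}.

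The main obstacle is the comparison step: $\widetilde{u}_\infty$ is only defined in the open manifold $M$ and is not assumed to extend continuously to $\partial M$, so one cannot directly compare boundary values. The workaround is to exploit completeness to select the level sets $\{\sigma=\eta_k\}$ as artificial boundaries inside $M$ on which $\widetilde{u}_\infty$ already dominates the finite constant $u_{k,\epsilon}\big|_{\partial M}=\log k+\tfrac12\log\tfrac{(1-\epsilon)^2}{2\widetilde{C}_{\sup_{\partial M}\psi+\epsilon}}$, and then to run comparison on the compact region $M_{\eta_k}$; here the verification that both solutions have eigenvalues in $\mathring{\Gamma}_{\mathcal{G}}^f$---so that Lemma~\ref{lemma1-comparision} is applicable---relies crucially on the strict nondegeneracy \eqref{non-degenerate2}.
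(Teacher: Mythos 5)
Your plan, and the lemmas you invoke, are the same as the paper's: compare $\widetilde{u}_\infty$ against the finite-boundary-data solutions $u_{k,\epsilon}$ produced by Theorem~\ref{thm2-finiteBVC}, feed in the barrier estimate \eqref{inequa-asymptotic} from Lemma~\ref{lemma2-asymptotic}, send $k\to\infty$, and then $\epsilon\to 0^+$. You also correctly isolate the role of \eqref{non-degenerate2}: it ensures $\lambda(-g^{-1}A_{g_{u_{k,\epsilon}}})\in\mathring{\Gamma}_{\mathcal{G}}^f$, which is exactly what makes Lemma~\ref{lemma1-comparision} applicable.

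However, the way you execute the comparison step has a gap. You choose $\eta_k$ so that $\widetilde{u}_\infty\geq u_{k,\epsilon}\big|_{\partial M}$ on the level set $\{\sigma=\eta_k\}$, and then try to run Lemma~\ref{lemma1-comparision} on $M_{\eta_k}$. But the boundary condition you actually need for the comparison principle is $\widetilde{u}_\infty\geq u_{k,\epsilon}$ on $\{\sigma=\eta_k\}$, with $u_{k,\epsilon}$ evaluated at the same interior points, not the constant Dirichlet datum $u_{k,\epsilon}\big|_{\partial M}=\log k+\tfrac12\log\tfrac{(1-\epsilon)^2}{2\widetilde{C}_{\sup_{\partial M}\psi+\epsilon}}$. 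There is no a priori reason that $u_{k,\epsilon}(x)\leq u_{k,\epsilon}\big|_{\partial M}$ for $\sigma(x)=\eta_k$ — a maximum-on-the-boundary principle for this equation is precisely what would need to be proven. The paper sidesteps this entirely by ``adapting'' Lemma~\ref{lemma1-comparision} on all of $M$: since $u_{k,\epsilon}\in C^0(\bar M)$ is bounded while $\widetilde{u}_\infty\to+\infty$ at $\partial M$ (from completeness), the difference $u_{k,\epsilon}-\widetilde{u}_\infty$ tends to $-\infty$ at $\partial M$, so any positive supremum is attained at an interior point, and there one contradicts the equation as in the proof of Lemma~\ref{lemma1-comparision}. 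Your artificial-boundary argument can also be repaired by using the $k$-independent local upper bound $u_{k,\epsilon}\leq\tilde u-C$ from Lemma~\ref{lemma1-C0-local-upper} (rather than the Dirichlet datum) to control $u_{k,\epsilon}$ on $\{\sigma=\eta_k\}$, but as written the step does not go through.
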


\begin{proof}
	 Let $u_{k,\epsilon}$ be an admissible solution to \eqref{001}.
	 Since $\sup_{\partial \Gamma_{\mathcal{G}}^f}f <\psi<\sup_\Gamma f$, by Theorem  
	 \ref{thm2-finiteBVC} we know
	 \begin{equation}
	 	\label{equ-belong-to-920}
	 	\begin{aligned} 
	 		 \lambda(-g^{-1}A_{g_{u_{k,\epsilon}}})\in \mathring{\Gamma}_{\mathcal{G}}^f \mbox{ in } \bar{M}.
	 	\end{aligned}
	 \end{equation} 
This is crucial for adapting the comparision principle (Lemma \ref{lemma1-comparision}),
	 which implies  
	 \begin{equation}
	 	\label{b4b}
	 	\begin{aligned}
	 		\widetilde{u}_\infty\geq u_{k,\epsilon} \mbox{ in } M.
	 	\end{aligned}
	 \end{equation}
 Combining with \eqref{inequa-asymptotic} from Lemma \ref{lemma2-asymptotic}, 
 	we know that in $\Omega_\delta$,
 \begin{equation}
 	\begin{aligned}
 		\widetilde{u}_\infty+\log \mathrm{\sigma}\geq \frac{1}{2}\log\frac{(1-\epsilon)^2
 		}{2\tilde{C}_{\sup_{\partial M} {\psi}+\epsilon}}
 		+\frac{1}{\mathrm{\sigma}+\delta}-\frac{1}{\delta}.  \nonumber
 	\end{aligned}
 \end{equation}
 Thus we get \eqref{asymptotic-rate2}. 
\end{proof}

\begin{proposition}
	\label{unique-prop}
	Suppose	\eqref{concave}, \eqref{elliptic-weak-3}, \eqref{sumfi>0} and \eqref{non-degenerate2} hold.
	Assume in addition that $(0,\cdots,0,1)\in\mathring{\Gamma}_{\mathcal{G}}^{f}$ and $(1,\cdots,1,-1)\in\bar{\Gamma}_{\mathcal{G}}^{f}$. 
	Let  $u^{(k)}$ be admissible solution to
	\begin{equation}	\begin{aligned}		 
			f(\lambda(g_{u^{(k)}}^{-1}V[u^{(k)}]))=\psi    
			\mbox{ in }\bar{M},
			\,\,
			u^{(k)}=\log k \mbox{ on } \partial M. \nonumber
	\end{aligned}	\end{equation}
Then $u_\infty(x)=\underset{k\to+\infty}{\lim}\, u^{(k)}(x)$ exists and is smooth in  the interior $M$.
 
Moreover, for  any admissible solution $\widetilde{u}_\infty$  to 
	the Dirichlet problem for equation \eqref{mainequ-02-2-1} with infinite boundary value condition, we have
 \begin{equation}
 	\label{inequality1-intermed}
 	\begin{aligned}
 		u_\infty\leq \widetilde{u}_\infty \leq u_\infty +\frac{1}{2}\left[\log\tilde{C}_{\sup_{\partial M}\psi}-\log\tilde{C}_{\inf_{\partial M}\psi}\right] \mbox{ in } M.
 	\end{aligned}
 \end{equation}
	
\end{proposition}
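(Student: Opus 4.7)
The plan is to split the proof into three steps: construction of the monotone limit $u_\infty$, the lower comparison $u_\infty\le \widetilde u_\infty$, and the upper comparison $\widetilde u_\infty\le u_\infty+C$, where $C=\tfrac12\bigl[\log\tilde C_{\sup_{\partial M}\psi}-\log\tilde C_{\inf_{\partial M}\psi}\bigr]\ge 0$.

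First I would build $u_\infty$ as a monotone increasing limit of $\{u^{(k)}\}$. Since $u^{(k)}=\log k<\log(k+1)=u^{(k+1)}$ on $\partial M$ and each $u^{(k)}$ is a smooth admissible solution with $\lambda(-g^{-1}A_{g_{u^{(k)}}})\in\mathring{\Gamma}_{\mathcal{G}}^{f}$ (by Theorem \ref{thm2-finiteBVC}), Lemma \ref{lemma1-comparision} yields $u^{(k)}\le u^{(k+1)}$ in $\bar M$. Lemma \ref{lemma1-C0-local-upper} provides the locally uniform upper bound $u^{(k)}\le\tilde u-\tfrac12\log(2n(n-1)\epsilon_0)$, so $u_\infty(x):=\lim_{k\to\infty}u^{(k)}(x)$ exists pointwise in $M$. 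The interior gradient and Hessian estimates of Section \ref{sec4-estimate-local}, combined with Evans--Krylov and Schauder theory, give uniform $C^{l,\alpha}(K)$ bounds on each compact $K\subset\subset M$; monotonicity upgrades the convergence to $C^{l,\alpha}(K)$, so $u_\infty\in C^\infty(M)$ solves \eqref{mainequ-02-2-1}. Finally, $\psi>\sup_{\partial\Gamma_{\mathcal{G}}^{f}}f$ forces $\lambda(g_{u_\infty}^{-1}V[u_\infty])\in\mathring\Gamma_{\mathcal{G}}^{f}$.

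For the lower bound, fix $k$. Since $\widetilde u_\infty\to+\infty$ on $\partial M$, for each sufficiently small $\delta'>0$ (depending on $k$) one has $\widetilde u_\infty\ge\log k=u^{(k)}$ on $\{\sigma=\delta'\}$. Applying the comparison principle of Lemma \ref{lemma1-comparision} on the inner domain $M\setminus\Omega_{\delta'}$ (both $u^{(k)}$ and $\widetilde u_\infty$ solve the same equation there, the former admissible with $\lambda\in\mathring\Gamma_{\mathcal{G}}^{f}$) gives $\widetilde u_\infty\ge u^{(k)}$ on $M\setminus\Omega_{\delta'}$. Letting $\delta'\to 0$ and then $k\to\infty$ yields $\widetilde u_\infty\ge u_\infty$ on $M$.

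For the upper bound, I would combine the boundary asymptotics with a scaling-type maximum principle. Proposition \ref{super123} gives $\limsup_{x\to\partial M}(\widetilde u_\infty+\log\sigma)\le\tfrac12\log\tfrac{1}{2\tilde C_{\inf_{\partial M}\psi}}$, while Proposition \ref{proposition-asymptotic} applied to the complete admissible metric $g_{u_\infty}$ gives $\liminf_{x\to\partial M}(u_\infty+\log\sigma)\ge\tfrac12\log\tfrac{1}{2\tilde C_{\sup_{\partial M}\psi}}$. Subtracting, $\limsup_{x\to\partial M}(\widetilde u_\infty-u_\infty-C)\le 0$. Suppose by contradiction the sup of $h:=\widetilde u_\infty-u_\infty-C$ over $M$ is strictly positive; then $h$ must attain this sup at some interior point $x_0$. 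There $\widetilde u_\infty(x_0)>u_\infty(x_0)$, $\nabla\widetilde u_\infty=\nabla u_\infty$, and $\nabla^2\widetilde u_\infty\le\nabla^2u_\infty$, whence $V[\widetilde u_\infty]\le V[u_\infty]$. By monotonicity (Lemma \ref{lemma2-key}) and then the strict scaling monotonicity from Lemma \ref{lemma3.4-3} (using that $\lambda_0:=\lambda(g_{u_\infty}^{-1}V[u_\infty])\in\mathring\Gamma_{\mathcal{G}}^{f}$), we get
\[
\psi=f(\lambda(g_{\widetilde u_\infty}^{-1}V[\widetilde u_\infty]))\le f\bigl(e^{2(u_\infty-\widetilde u_\infty)(x_0)}\lambda_0\bigr)<f(\lambda_0)=\psi,
\]
a contradiction. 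Hence $\widetilde u_\infty\le u_\infty+C$ throughout $M$.

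The main obstacle I anticipate lies in step three: cleanly matching the two one-sided asymptotic rates to produce the precise boundary limsup $\le 0$ for $h$, and in verifying that the strict scaling monotonicity of $t\mapsto f(t\lambda_0)$ is available at $x_0$. This is where assumption \eqref{non-degenerate2} is essential, since it forces $\lambda_0\in\mathring\Gamma_{\mathcal{G}}^{f}$ rather than merely $\lambda_0\in\Gamma$; under the weaker hypothesis \eqref{non-degenerate1} the argument genuinely breaks, matching the failure of uniqueness alluded to around Theorem \ref{thm1-unique-2}.
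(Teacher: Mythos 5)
Your proposal is correct and follows essentially the same route as the paper: monotone limit construction via Lemma \ref{lemma1-comparision} and the local estimates, then the two one-sided boundary asymptotics from Propositions \ref{super123} and \ref{proposition-asymptotic}, and finally an interior maximum-point contradiction using the strict scaling monotonicity. One small but worthwhile improvement: at the contradiction step the paper cites Lemma \ref{lemma3.4}, which formally requires \eqref{addistruc} (not among the hypotheses of this proposition), whereas your use of Lemma \ref{lemma3.4-3} — valid since \eqref{non-degenerate2} forces $\lambda(g_{u_\infty}^{-1}V[u_\infty])\in\mathring\Gamma_{\mathcal{G}}^{f}$ — is the right reference under the stated assumptions.
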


\begin{proof}
 Theorem \ref{thm2-finiteBVC} 	yields $\lambda(g^{-1}V[u^{(k)}])\in\mathring{\Gamma}_{\mathcal{G}}^{f}$, which is the crucial ingredient for applying Lemma \ref{lemma1-comparision}, thereby implying
	 $\widetilde{u}_\infty\geq u^{(k)}$ and  $u^{(k+1)}\geq u^{(k)}$ $(\forall k\geq1)$. Then  $u_\infty(x)=\underset{k\to+\infty}{\lim}\, u^{(k)}(x)$ exists and  $\widetilde{u}_\infty\geq u_\infty$. By local estimates, Evans-Krylov theorem and Schauder theory, we may conclude that $u_\infty$ is smooth in the interior.
From  Theorem \ref{thm2-finiteBVC} or Proposition \ref{pro1-unique}, we also see  
$$ \lambda(g^{-1}V[u_\infty])\in\mathring{\Gamma}_{\mathcal{G}}^{f} \textrm{ and }  \lambda(g^{-1}V[\widetilde{u}_\infty])\in\mathring{\Gamma}_{\mathcal{G}}^{f}
 \textrm{  in } M.$$
	
	Next, we prove the  inequality \eqref{inequality1-intermed}. 
Firstly, by Proposition \ref{proposition-asymptotic},
$$
			\lim_{x\rightarrow\partial M} ({u}_\infty(x)+\log \mathrm{\sigma}(x))\geq \frac{1}{2}\log\frac{1}{2 \tilde{C}_{\sup_{\partial M}\psi}}.  
$$
Combining with
	\eqref{asymptotic-rate1} from Proposition \ref{super123}, we derive $$\lim_{x\rightarrow\partial M}(\widetilde{u}_\infty(x)-u_\infty(x))\leq \frac{1}{2}\left[\log\tilde{C}_{\sup_{\partial M}\psi}-\log\tilde{C}_{\inf_{\partial M}\psi}\right].$$
	Assume 
	there is an  interior point $x_0\in M$ such that
	$$(\widetilde{u}_\infty-u_\infty)(x_0)=\sup_M(\widetilde{u}_\infty-u_\infty)>\frac{1}{2}\left[\log\tilde{C}_{\sup_{\partial M}\psi}-\log\tilde{C}_{\inf_{\partial M}\psi}\right].$$ 
	This is a contradiction, since the maximum principle and Lemma \ref{lemma3.4} together yield that  $\widetilde{u}_\infty(x_0)\leq u_\infty(x_0)$.  
\end{proof}

  \section{Equations on complete noncompact manifolds}
  \label{sec-complete-noncompact-existence}
We employ an approximate method, building on Theorem 
\ref{existence1-compact-2}.  
Let $\{M_k\}_{k=1}^{+\infty}$ be an exhaustion series of domains  
with
\begin{itemize}
	\item $M=\cup_{k=1}^{\infty} M_k, \mbox{  }  \bar{M}_k =M_k\cup\partial M_k,\mbox{  }  \bar{M}_k\subset\subset M_{k+1}$.
	
	\item  $\bar{M}_k$ is a compact $n$-manifold with smooth boundary $\partial M_k$.
\end{itemize}

According to 
Theorem 
\ref{existence1-compact-2}, for each $k\geq 1$,  there is an admissible function $u_k\in C^\infty(M_k)$  satisfying
\begin{equation}
	\label{approximate-DP2-2}
	\begin{aligned}
		f (\lambda[-g_{u_k}^{-1} A_{g_{u_k}}] ) =\psi    \mbox{ in } M_k,
	\end{aligned}
\end{equation}
\begin{equation}
	\label{approximate-DP3-2}
	\begin{aligned}
		\lim_{x\rightarrow\partial M_k} u_k(x)=+\infty.  
	\end{aligned}
\end{equation}

Let $\hat{u}$ be as in Proposition \ref{thm4-construction}.
Since  $\lambda(-{g}^{-1}A_{{g}_{\hat{u}}})\in
\mathring{\Gamma}_{\mathcal{G}}^f$, by Lemma 	 \ref{lemma1-comparision} we have
\begin{proposition}
	\label{thm-c0-lower-2}
	For any admissible solution $u_k$ to \eqref{approximate-DP2-2}-\eqref{approximate-DP3-2}, we have 
	\begin{equation}
		u_k\geq  \hat{u} \mbox{ in } M_k, \,\,\forall k\geq 1. \nonumber
	\end{equation} 
\end{proposition}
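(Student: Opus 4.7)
\smallskip

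\noindent\textbf{Proof plan for Proposition~\ref{thm-c0-lower-2}.}
The idea is to apply the comparison principle (Lemma~\ref{lemma1-comparision}) with $\hat u$ playing the role of the subsolution and $u_k$ of the solution. Recall from Proposition~\ref{thm4-construction} that $\hat u \in C^2(M)$ is admissible with the stronger property
\begin{equation*}
\lambda(-g^{-1}A_{g_{\hat u}}) \in \mathring{\Gamma}_{\mathcal{G}}^{f}, \qquad f(\lambda(-g_{\hat u}^{-1}A_{g_{\hat u}})) \geq \psi \ \text{in } M,
\end{equation*}
so $\hat u$ satisfies the differential inequality $F(\lambda(g_{\hat u}^{-1}W[\hat u])) \geq \psi = F(\lambda(g_{u_k}^{-1}W[u_k]))$ throughout $M_k$, with $W[\,\cdot\,] = -A_{g_{\,\cdot\,}}$.

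The main (minor) obstacle is that Lemma~\ref{lemma1-comparision} is stated on a compact manifold with boundary, whereas here $u_k$ blows up on $\partial M_k$. I would circumvent this by a standard truncation. First I would fix an arbitrary $L > \sup_{\bar M_k} \hat u$ (finite, since $\hat u \in C^2(M)$ and $\bar M_k$ is compact) and set
\begin{equation*}
\Omega_L := \{x \in M_k : u_k(x) < L\}.
\end{equation*}
By \eqref{approximate-DP3-2}, $\Omega_L$ is compactly contained in $M_k$, so its closure lies in a region where $u_k$ is smooth; on $\partial\Omega_L$ one has $u_k \equiv L \geq \hat u$. Next I would apply Lemma~\ref{lemma1-comparision} on $\bar\Omega_L$ (with $w=\hat u$, $v=u_k$): the hypothesis $\lambda(g^{-1}W[\hat u]) \in \mathring{\Gamma}_{\mathcal{G}}^{f}$ activates the strict monotonicity from Lemma~\ref{lemma3.4-3}, the differential inequality $F(\lambda(g_{\hat u}^{-1}W[\hat u])) \geq F(\lambda(g_{u_k}^{-1}W[u_k]))$ holds in $\Omega_L$, and the boundary ordering holds on $\partial\Omega_L$. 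Hence $\hat u \leq u_k$ in $\Omega_L$. Finally, letting $L \to +\infty$, $\Omega_L$ exhausts $M_k$ (again by \eqref{approximate-DP3-2}), which yields $\hat u \leq u_k$ throughout $M_k$.

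If one prefers a direct argument bypassing the truncation, I would alternatively suppose by contradiction that $\inf_{M_k}(u_k - \hat u) < 0$. Since $u_k - \hat u \to +\infty$ as $x \to \partial M_k$ and $\hat u$ is continuous on $\bar M_k$, the infimum is attained at some interior point $x_0 \in M_k$. At $x_0$ one has $\nabla u_k = \nabla \hat u$ and $\nabla^2 u_k \geq \nabla^2 \hat u$, which gives $W[u_k](x_0) \geq W[\hat u](x_0)$ as $(0,2)$-tensors. Combined with $u_k(x_0) < \hat u(x_0)$, i.e.\ $e^{-2u_k(x_0)} > e^{-2\hat u(x_0)}$, and the monotonicity provided by Lemma~\ref{lemma3.4-3} (using $\lambda(g^{-1}W[\hat u])(x_0) \in \mathring{\Gamma}_{\mathcal{G}}^{f}$), one deduces
\begin{equation*}
\psi(x_0) = f(\lambda(g_{u_k}^{-1}W[u_k]))(x_0) > f(\lambda(g_{\hat u}^{-1}W[\hat u]))(x_0) \geq \psi(x_0),
\end{equation*}
a contradiction. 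Either route is short; the only subtlety is ensuring that the strict monotonicity from $\mathring{\Gamma}_{\mathcal{G}}^{f}$-admissibility of $\hat u$ is in force, which is precisely why the construction of $\hat u$ in Proposition~\ref{thm4-construction} was arranged with $\lambda(-g^{-1}A_{g_{\hat u}}) \in \mathring{\Gamma}_{\mathcal{G}}^{f}$ rather than only in $\bar\Gamma_{\mathcal{G}}^{f}$.
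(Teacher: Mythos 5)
Your proof is correct and follows essentially the same approach as the paper, which simply invokes Lemma~\ref{lemma1-comparision} together with the $\mathring{\Gamma}_{\mathcal{G}}^{f}$-admissibility of $\hat u$ from Proposition~\ref{thm4-construction}. You do spell out one detail the paper glosses over: since $u_k$ blows up on $\partial M_k$ it is not in $C^0(\bar M_k)$, so the comparison lemma as literally stated does not apply; your truncation via $\Omega_L$ (or equivalently the direct interior-minimum argument, which is exactly the mechanism inside the proof of Lemma~\ref{lemma1-comparision}) is the standard way to close that gap.
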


Since $\bar{\Gamma}^{\psi}\subseteq\mathring{\Gamma}_{\mathcal{G}}^f$  
we know 
$\lambda[-g_{u_k}^{-1} A_{g_{u_k}}]\in \mathring{\Gamma}_{\mathcal{G}}^f.$
By the comparison principle (Lemma 	 \ref{lemma1-comparision}), we immediately derive the following result.
\begin{proposition}
	\label{thm-c0-upper-2}
	Let $u_k$ be the admissible solution to problem \eqref{approximate-DP2-2}-\eqref{approximate-DP3-2}, then
	\begin{equation}
		\begin{aligned}
			u_{k+1}\leq  u_k \mbox{ in } M_k, \,\,\forall k\geq 1. \nonumber
		\end{aligned}
	\end{equation} 
\end{proposition}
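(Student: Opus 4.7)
The plan is to reduce the monotonicity statement to a direct application of the comparison principle (Lemma \ref{lemma1-comparision}) on suitably exhausting sub-domains of $M_k$. The key observation is that although $u_k$ blows up at $\partial M_k$ while $u_{k+1}$ is smooth (hence bounded) on $\bar M_k \subset\subset M_{k+1}$, the blow-up behaviour of $u_k$ actually forces $u_{k+1} \leq u_k$ near $\partial M_k$, providing the boundary inequality needed for comparison.

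First I would introduce, for $0<\delta\ll1$, the super-level sub-domains
$M_k^\delta := \{x\in M_k : u_k(x) < 1/\delta\}$.
By Sard's theorem, $1/\delta$ is a regular value of $u_k$ for almost every $\delta$, so $M_k^\delta$ is an open set compactly contained in $M_k$ with smooth boundary on which $u_k \equiv 1/\delta$. Since $u_{k+1}$ is smooth on $\bar M_k$, the quantity $S := \sup_{\bar M_k}u_{k+1}$ is finite; choosing $\delta>0$ small enough so that $1/\delta > S$ guarantees $u_{k+1}\leq u_k$ on $\partial M_k^\delta$.

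Next I would apply Lemma \ref{lemma1-comparision} on the compact domain $\overline{M_k^\delta}$ to the pair $(w,v)=(u_{k+1},u_k)$. Both are admissible $C^2$ solutions of the same equation $f(\lambda(-g_u^{-1}A_{g_u}))=\psi$ on $M_k^\delta$, and $u_{k+1}\leq u_k$ on $\partial M_k^\delta$ by construction. The admissibility hypothesis required by Lemma \ref{lemma1-comparision} (namely that at least one of the two solutions lies in $\mathring{\Gamma}_{\mathcal{G}}^f$) is precisely what is provided by the inclusion $\bar\Gamma^\psi\subseteq \mathring{\Gamma}_{\mathcal{G}}^f$ noted just before the proposition: indeed $f(\lambda(-g_{u_j}^{-1}A_{g_{u_j}}))=\psi$ places $\lambda(-g_{u_j}^{-1}A_{g_{u_j}})\in \bar\Gamma^\psi \subseteq \mathring{\Gamma}_{\mathcal{G}}^f$ for $j=k,k+1$. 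The comparison principle therefore yields $u_{k+1}\leq u_k$ in $M_k^\delta$.

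Finally I would let $\delta\to 0^+$. Since $\bigcup_{\delta>0} M_k^\delta = M_k$, the pointwise inequality $u_{k+1}\leq u_k$ propagates to all of $M_k$, establishing the claim. No substantial obstacle is anticipated; the only subtle point is the treatment of the infinite boundary datum of $u_k$, and this is handled routinely by the exhaustion argument above (alternatively, one could run the maximum-principle argument from the proof of Lemma \ref{lemma1-comparision} directly, noting that $u_{k+1}-u_k \to -\infty$ at $\partial M_k$, so any positive interior supremum would be attained at an interior point and contradicted exactly as in that lemma).
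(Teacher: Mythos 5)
Your proof is correct and takes essentially the same route as the paper: an application of the comparison principle (Lemma~\ref{lemma1-comparision}), using the key fact, noted just before the proposition, that both $u_k$ and $u_{k+1}$ are $\mathring{\Gamma}_{\mathcal{G}}^f$-admissible. The paper states the conclusion as an immediate consequence of Lemma~\ref{lemma1-comparision} without elaboration; your exhaustion by the sublevel domains $M_k^\delta$ (or, equivalently and even more directly, the observation that $u_{k+1}-u_k\to-\infty$ at $\partial M_k$ so any positive supremum is attained at an interior point and is ruled out by the maximum-principle argument of Lemma~\ref{lemma3-unique}) carefully addresses the small technical point that Lemma~\ref{lemma1-comparision} as stated requires $C^0(\bar M)$ regularity, which $u_k$ does not have at $\partial M_k$.
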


	\noindent
	\textit{Proof of Theorem \ref{thm1-noncompact}}.
Proposition \ref{thm-c0-upper-2} states that $\{u_k\}_{k=m}^\infty$ is a decreasing sequence  of functions on $M_m$; while such a decreasing  sequence 
is uniformly bounded from below, according to Proposition \ref{thm-c0-lower-2}.
Thus we can take the limit 
\begin{equation}
	\label{uinfty-lim}
	u_\infty=\lim_{k\to+\infty} u_k.
\end{equation} 
Moreover,
$u_\infty\geq  \hat{u}$ in $M$. 
In Propositions  \ref{thm-c0-lower-2}-\ref{thm-c0-upper-2} and Subsections \ref{subsection-local-C1}-\ref{subsection-local-C2}, we establish  the local estimates up to second order derivatives  for each $u_k$.
Combining with
Evans-Krylov theorem and classical Schauder theory,  
we conclude that $u_\infty$ is 
a smooth admissible solution to the equation 	\eqref{main-equ0-Schouten}.



	Let $w_\infty $ be another admissible solution to the equation \eqref{main-equ0-Schouten}, then by the comparison principle we have 
	$u_k\geq w_\infty$ in $M_k$ for all $k\geq 1$. Thus
	$u_\infty(x)\geq  w_\infty(x),$ $\forall x\in M.$
	This means that the solution given by \eqref{uinfty-lim}  is the maximal solution.  The uniqueness of maximal solution is obvious.


	\begin{appendix}

	\section{Further discussions on open symmetric convex cones}
	\label{sec1-cones}
	
		Let $\Gamma$ be an open symmetric convex cone  $\Gamma\subsetneq\mathbb{R}^n$,  with vertex at origin and  nonempty 
	boundary 
	$\partial \Gamma\neq\emptyset$,  containing  the positive cone. Given such a cone $\Gamma$ and a non-zero constant $\varrho<n$,
	as in Corollary \ref{coro1-laplace1}
	 we may take throughoutly 
	\begin{equation}
		\label{map1}
		\begin{aligned}
			\tilde{\Gamma}  =
			\left\{(\lambda_1,\cdots,\lambda_n): 
			\lambda_i=\frac{1}{\varrho}\left(\sum_{j=1}^n \mu_j-(n-\varrho)\mu_i\right),
			\mbox{ } (\mu_1,\cdots,\mu_n)\in \Gamma 
			\right\}. \nonumber
		\end{aligned}
	\end{equation} 
	
From  Theorem \ref{thm3-completemetric}, 
it is of special interest to consider the condition
\begin{equation}
	\label{key-condition1-construct}
	(\frac{\tau-2}{\tau-1}, \cdots, \frac{\tau-2}{\tau-1}, \frac{\tau}{\tau-1}) \in\bar{\Gamma}.
\end{equation}
For simplicity,  $ (\gamma, \cdots,\gamma,\gamma+\varrho)=	(\frac{\tau-2}{\tau-1}, \cdots, \frac{\tau-2}{\tau-1}, \frac{\tau}{\tau-1})$ if  we take
\begin{equation}
	\label{beta-gamma-A-3}
	\begin{aligned}
		\varrho=\frac{n-2}{\tau-1}, \mbox{ }
		\gamma=\frac{(\tau-2)(n-2)}{2(\tau-1)}. 
	\end{aligned}
\end{equation} 
Obviously, under the assumption \eqref{tau-alpha-sharp}, the condition \eqref{key-condition1-construct} holds if	
\begin{equation}
	\label{tau-alpha-4}
	\begin{cases}
		\tau\leq  2-\frac{2}{\varrho_\Gamma} \,& \mbox{ if } \alpha=-1,\\
		\tau\geq  2  \,& \mbox{ if } \alpha=1. 
	\end{cases}
\end{equation}
\begin{remark}
When taking the cone   $\mathring{\Gamma}_{\mathcal{G}}^f$ in \eqref{key-condition1-construct},
we may apply  Theorem \ref{thm3-completemetric} 
to consult the existence.
\end{remark}
	
	
	For purpose of verifying   \eqref{key-condition1-construct} or  \eqref{tau-alpha-4}, which is imposed as a proper condition 
	in Lemma \ref{lemma1-construct1-modifiedSchouten}
	to construct admissible metrics without pseudo-admissible metric assumption, it seems necessary to estimate $\varrho_\Gamma$ and $\varrho_{\tilde{\Gamma}}$. 
	For this reason,
	together with \eqref{check-2} 
  and \eqref{key-condition1-construct},
	 the  cones with $\varrho_{\tilde{\Gamma}}\geq 2$ 
	 or with $\varrho_{{\Gamma}}\geq 2$
	 are of special interests. 
	Below we prove some related results.
	First we prove a key ingredient by projection. 
	Let $\Gamma_\infty$ 
	be 
	the projection of $\Gamma$ to the subspace of former $n-1$ subscripts,
	 that is   \begin{equation}	\label{construct1-Gamma-infty}	\begin{aligned}	\Gamma_\infty:=\{(\lambda_{1}, \cdots, \lambda_{n-1}): (\lambda_{1}, \cdots, \lambda_{n-1},\lambda_n) \in \Gamma\}.	\end{aligned}\end{equation}
	As noted by 
	\cite{CNS3}, when $\Gamma$ is of type 1,  $\Gamma_\infty\neq\mathbb{R}^{n-1}$, and $\Gamma_\infty$ is an open symmetric convex cone in $\mathbb{R}^{n-1}$.
	From the construction of $\Gamma_\infty$, we can verify that 
	\begin{lemma}
		\label{lemma1-preserving-kappa}
		If $\Gamma$ is of type 1 
		then 
  $\kappa_{\Gamma_\infty}=\kappa_\Gamma$ 
  and $ \varrho_\Gamma\leq \varrho_{\Gamma_\infty}.$  
	\end{lemma}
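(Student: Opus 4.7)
The plan is to exploit the symmetry, convexity, and openness of $\Gamma$ to transfer information between $\Gamma$ and its projection $\Gamma_\infty$. The type~1 hypothesis guarantees $\Gamma_\infty \subsetneq \mathbb{R}^{n-1}$, so $\kappa_{\Gamma_\infty}$ and $\varrho_{\Gamma_\infty}$ are well-defined as invariants of a proper open symmetric convex cone.

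For the easy direction $\kappa_{\Gamma_\infty}\ge\kappa_\Gamma$, I would simply project the witness vector: since $(\underbrace{0,\ldots,0}_{\kappa_\Gamma},\underbrace{1,\ldots,1}_{n-\kappa_\Gamma})\in\Gamma$, its projection onto the first $n-1$ coordinates is $(\underbrace{0,\ldots,0}_{\kappa_\Gamma},\underbrace{1,\ldots,1}_{n-1-\kappa_\Gamma})\in\Gamma_\infty$, giving the bound.

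The main content is the reverse inequality. Set $k:=\kappa_{\Gamma_\infty}$; by the definition of $\Gamma_\infty$ there exists $\lambda^*\in\mathbb{R}$ with $v_0:=(\underbrace{0,\ldots,0}_{k},\underbrace{1,\ldots,1}_{n-1-k},\lambda^*)\in\Gamma$. Using the symmetry of $\Gamma$ under permutations of the last $n-k$ coordinates, the $n-k$ vectors obtained by placing $\lambda^*$ successively in each of those positions all lie in $\Gamma$, so convexity gives
\begin{equation}
\bar v \;:=\; \bigl(\underbrace{0,\ldots,0}_{k},\,c,\ldots,c\bigr)\in\Gamma,\qquad c=\tfrac{(n-1-k)+\lambda^*}{n-k}. \nonumber
\end{equation}
If $c>0$ then rescaling by $1/c$ yields $(\underbrace{0,\ldots,0}_{k},\underbrace{1,\ldots,1}_{n-k})\in\Gamma$, proving $\kappa_\Gamma\ge k$. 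The hard part is excluding $c\le 0$: the case $c=0$ is immediate since $\bar v=0\notin\Gamma$ (the vertex is not in an open cone). For $c<0$, I would again invoke symmetry and average $\bar v$ over all $\binom{n}{k}$ permutations that place the $k$ zeros in every possible subset of positions; the result is the vector $(d,\ldots,d)\in\Gamma$ with $d=c(n-k)/n<0$. Since $(1,\ldots,1)\in\Gamma$, the convex combination with weights $\mu=1/(1-d)\in(0,1)$ produces $\vec{\bf 0}\in\Gamma$, contradicting openness. (Equivalently: any proper open convex cone with vertex at the origin satisfies $\Gamma\cap(-\Gamma)=\emptyset$.)

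For $\varrho_\Gamma\le\varrho_{\Gamma_\infty}$, the argument is short: for any $t<\varrho_\Gamma$, we have $(1,\ldots,1,1-t)\in\Gamma\subset\mathbb{R}^n$ by the definition \eqref{varrho_1} and convexity; the symmetry of $\Gamma$ then gives $(1,\ldots,1,1-t,1)\in\Gamma$ (swap positions $n-1$ and $n$), and projecting to the first $n-1$ coordinates shows $(1,\ldots,1,1-t)\in\Gamma_\infty\subset\mathbb{R}^{n-1}$, so $t<\varrho_{\Gamma_\infty}$. The main obstacle in the whole proof is the sign analysis of $c$; the $\Gamma\cap(-\Gamma)=\emptyset$ argument via averaging over all permutations gives a clean resolution that avoids a circular use of Lemma~\ref{lemma1}.
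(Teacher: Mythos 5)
Your proof is correct, and since the paper leaves this lemma as a one-line "from the construction of $\Gamma_\infty$, we can verify" with no details, you are filling in precisely the content the paper omits. The easy inclusion $\kappa_{\Gamma_\infty}\ge\kappa_\Gamma$ by projecting the witness (using that type 1 gives $\kappa_\Gamma\le n-2$, so the projection of $(0,\ldots,0,1,\ldots,1)$ still has at least one $1$) is right, the convexity/symmetry averaging that reduces $v_0$ to $\bar v = (0,\ldots,0,c,\ldots,c)$ is computed correctly, and the sign analysis of $c$ correctly eliminates $c\le 0$ via $\Gamma\cap(-\Gamma)=\emptyset$. The $\varrho$-monotonicity argument (project the permuted vector and use that the ray $t\mapsto(1,\ldots,1,1-t)$ meets the open convex $\Gamma_\infty$ in $(-\infty,\varrho_{\Gamma_\infty})$) is also sound.

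One remark on economy: you state you wish to avoid "a circular use of Lemma~\ref{lemma1}," but invoking it here is in fact not circular — Lemma~\ref{lemma1} is proved directly from convexity, symmetry, and openness, independently of the present lemma. Applying it to $v_0=(\underbrace{0,\ldots,0}_{k},\underbrace{1,\ldots,1}_{n-1-k},\lambda^*)\in\Gamma$ gives at once that the sum of any $1+\kappa_\Gamma$ coordinates of $v_0$ is positive; if $\kappa_\Gamma<k$ one could pick the first $1+\kappa_\Gamma$ coordinates, all zero, a contradiction, so $\kappa_\Gamma\ge k$. Your averaging argument is an inlined, self-contained version of essentially the same idea, which is fine — just know that the shortcut is available and not circular.
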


	
	This is a key ingredient for estimating $\varrho_\Gamma$ from above.
	Inspired by this lemma, we will construct certain cones iteratively via projection. 
	To do this, if $\kappa_\Gamma=n-2$ 
  then we obtain $\Gamma_\infty$, which is of type 2. 
	When $\kappa_\Gamma =\kappa_{\Gamma_\infty} \leq n-3$,  
	i.e., $\Gamma_\infty$ is also of type 1, 
	similarly we further construct a cone, denoted by $$\Gamma^\infty_{\mathbb{R}^{n-2}}\subset \mathbb{R}^{n-2},$$  which is the projection of $\Gamma_\infty$ to the subspace of former $n-2$ subscripts.
	Accordingly, we can construct the cones by projection as follows:
	\begin{equation}
		\label{construction-cones-infty}
		\begin{aligned}
			\Gamma^\infty_{\mathbb{R}^{n-3}}\subset \mathbb{R}^{n-3}, \cdots, \Gamma^\infty_{\mathbb{R}^{\kappa_\Gamma+1}} \subset \mathbb{R}^{\kappa_\Gamma+1}.
		\end{aligned}
	\end{equation}
	
	From this construction, we know $\Gamma^\infty_{\mathbb{R}^{n-1}} =\Gamma_\infty  
	$ and
	$\Gamma^\infty_{\mathbb{R}^{\kappa_\Gamma+1}}$
	is of type 2.
	In fact for general $\kappa_\Gamma+1\leq k\leq n-1$, one can check that 
	\begin{equation}
		\label{construction2-cones-infty}
		\begin{aligned}
			\Gamma^\infty_{\mathbb{R}^{k}}
			=\left\{(\lambda_1,\cdots,\lambda_{k})\in \mathbb{R}^{k}: (\lambda_1,\cdots,\lambda_{k},  {\overbrace{R,\cdots,R}^{(n-k)-\mathrm{entries}}}) \in\Gamma  \mbox{ for some } R>0 
			\right\}.  
		\end{aligned}       
	\end{equation}
	
	For $1\leq k\leq n$,  
 as in \cite{Harvey2012Lawson-Adv,Harvey2013Lawson-IUMJ} 
	we denote 
	\begin{equation}
		\label{def-P_k}
		\mathcal{P}_{k}	=
		\left\{(\lambda_1,\cdots,\lambda_{n}): \lambda_{i_1}+\cdots+\lambda_{i_{k}}>0, \,  \forall 1\leq i_1<\cdots<i_{k}\leq n\right\}. 
	\end{equation} 
	%
	We may rewrite Lemma  \ref{lemma1}  as follows:
	\begin{lemma}
		\label{lemma2}
		Suppose $\Gamma$ is a cone of $\kappa_{\Gamma}=k$ with $1\leq k\leq n-2$. Then 
		$\Gamma\subseteq\mathcal{P}_{k+1}.$
	\end{lemma}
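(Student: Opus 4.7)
The plan is to exploit the three properties---convexity, symmetry, and openness of $\Gamma$---by reducing to a two-value vector and then contradicting the maximality in the definition of $\kappa_\Gamma$. Since $\Gamma$ is invariant under coordinate permutations, given $\lambda\in\Gamma$ we may assume $\lambda_1\le\cdots\le\lambda_n$; it then suffices to prove the single inequality $\lambda_1+\cdots+\lambda_{k+1}>0$, because every other $(k+1)$-subset sum of the entries of $\lambda$ is at least this one.

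The first step I would take is to average $\lambda$ over the subgroup of permutations preserving the partition $\{1,\ldots,k+1\}\sqcup\{k+2,\ldots,n\}$. Each permuted vector belongs to $\Gamma$ by symmetry, so by convexity the average
$$\mu=(\underbrace{a,\ldots,a}_{k+1},\underbrace{b,\ldots,b}_{n-k-1})$$
also lies in $\Gamma$, where $a=\tfrac{1}{k+1}\sum_{i=1}^{k+1}\lambda_i$ and $b=\tfrac{1}{n-k-1}\sum_{i=k+2}^{n}\lambda_i$, with $a\le b$. The problem then reduces to showing that $a>0$.

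Next, I would argue by contradiction, assuming $a\le 0$. Since $\Gamma_n\subset\Gamma$ we have $\vec{\bf 1}\in\Gamma$, and as $\Gamma$ is a convex cone it is closed under addition and positive scalar multiplication. When $a<0$, both $\mu$ and $(-a)\vec{\bf 1}$ lie in $\Gamma$, so their sum $\mu+(-a)\vec{\bf 1}=(b-a)(\underbrace{0,\ldots,0}_{k+1},1,\ldots,1)$ lies in $\Gamma$; when $a=0$, already $\mu=b(0,\ldots,0,1,\ldots,1)\in\Gamma$. A short check shows $b-a>0$: the alternative $a=b\le 0$ would give $\mu=a\vec{\bf 1}\in\Gamma$, which is impossible because $\Gamma$ contains no nonpositive multiple of $\vec{\bf 1}$ (otherwise convexity with $\vec{\bf 1}\in\Gamma$ would force $0\in\Gamma$, contradicting the assumption that $\Gamma$ is an open cone with vertex at the origin). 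Rescaling by $1/(b-a)$, I would then conclude that $(0,\ldots,0,1,\ldots,1)$ with $k+1$ leading zeros lies in $\Gamma$, directly contradicting the maximality $\kappa_\Gamma=k$.

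The only point requiring genuine care is ruling out the degenerate case $a=b\le 0$, which is resolved as above through openness; beyond this the argument is a transparent manipulation of convex-cone arithmetic, and I expect no further obstacle.
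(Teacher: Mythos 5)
Your proof is correct, and it is a careful filling-in of the argument the paper merely gestures at (the paper's proof of the equivalent Lemma~\ref{lemma1} is the single sentence ``based on the convexity, symmetry and openness of $\Gamma$,'' and Lemma~\ref{lemma2} is then presented as a restatement). The averaging-over-permutations step to produce the two-value vector $\mu=(a,\ldots,a,b,\ldots,b)\in\Gamma$, the translation by $(-a)\vec{\bf 1}$ using that $\Gamma$ is a cone containing $\Gamma_n$, and the rescaling to contradict the maximality in the definition of $\kappa_\Gamma$ are exactly the ingredients the terse remark points to. Your handling of the degenerate case $a=b\le 0$ (ruling it out via openness and the fact that an open proper convex cone with vertex at the origin cannot contain $0$ or any nonpositive multiple of $\vec{\bf 1}$) is the one genuinely non-automatic point, and you dispose of it correctly. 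One cosmetic simplification you could note: since $\Gamma$ is open, convex, a cone, and contains $\Gamma_n$, one has $\Gamma+\bar{\Gamma}_n\subseteq\Gamma$, so for any $a\le 0$ one may write $\mu+(-a)\vec{\bf 1}\in\Gamma$ directly without splitting into the subcases $a<0$ and $a=0$; this folds the two branches into one line.
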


As a complementary,
we pose a problem as in the following.

\begin{problem}
	\label{problem-1}
	Let $\Gamma$ be a cone of $\kappa_{\Gamma}=k$ with $1\leq k\leq n-2$. 
	For such $\Gamma$, is $\Gamma_{n-k}\subseteq \Gamma$ correct?
		
	
\end{problem}


Next  we can check that
	
	\begin{lemma}
		\label{lemma1-rigidity}
		If $\Gamma=\mathcal{P}_k$ for some $1\leq k\leq n$, then $\kappa_{\Gamma}=k-1$ and
		$\varrho_{\Gamma}=k. $
		In particular, $\varrho_\Gamma=1+\kappa_\Gamma.$
	\end{lemma}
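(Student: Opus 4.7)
The plan is to verify the two claimed equalities directly from the explicit form of $\mathcal{P}_k$ in \eqref{def-P_k}, exploiting the combinatorial nature of the condition ``every $k$-subset has positive sum''. Both computations amount to identifying the minimizing $k$-subset for a very simple test vector.

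First I would compute $\kappa_{\mathcal{P}_k}$. By definition $\kappa_\Gamma$ is the largest integer $m$ with $(\underbrace{0,\ldots,0}_{m},\underbrace{1,\ldots,1}_{n-m})\in\mathring{\Gamma}=\mathcal{P}_k$ (openness of $\mathcal{P}_k$ makes the interior condition automatic). For this test vector the minimum sum of any $k$ of its coordinates is obtained by selecting $\min(m,k)$ zeros together with $\max(k-m,0)$ ones, yielding $\max(k-m,0)$. This value is strictly positive precisely when $m\le k-1$, giving $\kappa_{\mathcal{P}_k}=k-1$.

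Next I would compute $\varrho_{\mathcal{P}_k}$ from its defining relation \eqref{varrho_1}, namely $(1,\ldots,1,1-\varrho_{\mathcal{P}_k})\in\partial\mathcal{P}_k$. For $\varrho>0$ the smallest entry is $1-\varrho$, so the smallest sum of any $k$ coordinates of $(1,\ldots,1,1-\varrho)$ is $(k-1)+(1-\varrho)=k-\varrho$. Hence this vector lies in $\overline{\mathcal{P}_k}$ iff $\varrho\le k$, and on $\partial\mathcal{P}_k$ iff $\varrho=k$, which gives $\varrho_{\mathcal{P}_k}=k$. The final relation $\varrho_{\mathcal{P}_k}=1+\kappa_{\mathcal{P}_k}$ is then immediate.

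There is no substantive obstacle here: the whole argument is a one-line optimization over $k$-element subsets of a vector whose entries take only two values. The only mild care required is to note that $\mathcal{P}_k$ is open (so $\mathring{\mathcal{P}_k}=\mathcal{P}_k$) and that the boundary condition for $\varrho_\Gamma$ is realized by equality $k-\varrho=0$ in the minimum-subset-sum.
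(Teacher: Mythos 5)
Your argument is correct, and it fills in exactly the verification the paper leaves to the reader: the paper introduces this lemma with ``Next we can check that\dots'' and supplies no proof. Your computation is the natural one --- for each of the two quantities you identify the minimizing $k$-element subset of coordinates of the relevant test vector (zeros-then-ones for $\kappa_\Gamma$, all-ones-but-one for $\varrho_\Gamma$), and the boundary/interior distinction reduces to the sign of that minimum subset-sum. The edge cases are handled correctly: $k=1$ gives $\mathcal{P}_1=\Gamma_n$ with $\kappa=0$, $\varrho=1$; $k=n$ gives $\mathcal{P}_n=\Gamma_1$ with $\kappa=n-1$, $\varrho=n$. No gap.
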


According to Lemma \ref{lemma1-preserving-kappa} we may conclude the following proposition.
\begin{proposition}
\label{lemma1-upper-rho}
For any  $\Gamma$, we have 
 $$\varrho_\Gamma\leq  \kappa_\Gamma+1,$$  
with equality if and only if
$\Gamma 
=\mathcal{P}_{k} \mbox{ for some } 1\leq k\leq n.$

\end{proposition}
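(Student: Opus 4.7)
The plan is to prove the inequality $\varrho_\Gamma \leq \kappa_\Gamma + 1$ by iterating the projection Lemma \ref{lemma1-preserving-kappa} to reduce to a type 2 cone in small dimension, and to establish the equality case via a separation argument exploiting the hypersimplex structure of the resulting constraints on supporting functionals.

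For the inequality, a key preliminary fact is that any proper open symmetric convex cone $C \subsetneq \mathbb{R}^m$ containing the positive cone must satisfy $C \subseteq \{\mu : \sum_i \mu_i > 0\}$: if some $\mu \in C$ had $\sum_i \mu_i < 0$, averaging over all permutations would yield $-t\vec{\bf 1} \in \bar{C}$ for some $t > 0$, which combined with $\vec{\bf 1} \in C$ and a scaling argument would force $C = \mathbb{R}^m$. Applied to a type 2 cone in $\mathbb{R}^m$, the boundary point $(1, \cdots, 1, 1 - \varrho_C)$ has nonnegative coordinate sum, giving $\varrho_C \leq m$. For the general case, I would iterate Lemma \ref{lemma1-preserving-kappa}, producing projections $\Gamma = \Gamma^\infty_{\mathbb{R}^n}, \Gamma^\infty_{\mathbb{R}^{n-1}}, \cdots, \Gamma^\infty_{\mathbb{R}^{\kappa_\Gamma + 1}}$, each preserving $\kappa$ and weakly increasing $\varrho$; the terminal cone is type 2 in $\mathbb{R}^{\kappa_\Gamma + 1}$, so the above bound gives $\varrho_\Gamma \leq \varrho_{\Gamma^\infty_{\mathbb{R}^{\kappa_\Gamma + 1}}} \leq \kappa_\Gamma + 1$.

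The direction $\Gamma = \mathcal{P}_k \Rightarrow $ equality is immediate from Lemma \ref{lemma1-rigidity}. For the converse, set $k = \kappa_\Gamma + 1$. The inclusion $\Gamma \subseteq \mathcal{P}_k$ follows from Lemma \ref{lemma2} when $1 \leq \kappa_\Gamma \leq n-2$, while the extreme cases are handled directly ($\mathcal{P}_1 = \Gamma_n$, via a supporting hyperplane at $(1,\cdots,1,0) \in \partial\Gamma$ whose nonnegative coefficients must all vanish except the last; $\mathcal{P}_n = \Gamma_1$, by the preliminary fact above). For the reverse inclusion $\mathcal{P}_k \subseteq \bar\Gamma$, I would argue by contradiction: given $\lambda \in \mathcal{P}_k \setminus \bar\Gamma$, Hahn--Banach provides a supporting functional $\ell(\mu) = \sum_i a_i \mu_i$ with $\ell \geq 0$ on $\bar\Gamma$ and $\ell(\lambda) < 0$; the containment $\bar\Gamma_n \subseteq \bar\Gamma$ forces $a_i \geq 0$. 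Evaluating $\ell$ at each permutation of $(1, \cdots, 1, 1-k) \in \partial\Gamma$ and writing $S = \sum_i a_i$, the resulting inequalities combine to $a_j \leq S/k$ for each $j$.

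The main obstacle --- and the crux of the argument --- is recognizing that the constraint set $\{a : a_i \geq 0,\; \sum_i a_i = S,\; a_j \leq S/k\}$ is exactly the $\tfrac{S}{k}$-scaling of the hypersimplex, whose vertices are the indicator vectors $\tfrac{S}{k} \sum_{i \in T} e_i$ over subsets $T \subseteq \{1,\cdots,n\}$ of size $k$. Writing $a$ as a convex combination of these vertices yields $\ell(\lambda) = \sum_T \alpha_T \tfrac{S}{k} \sum_{i\in T}\lambda_i$; each partial sum $\sum_{i\in T}\lambda_i$ is strictly positive because $\lambda \in \mathcal{P}_k$, so (in the nontrivial case $S > 0$; the case $S = 0$ gives $\ell \equiv 0$) this positive combination contradicts $\ell(\lambda) < 0$. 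Once this polytope identification is in hand, the contradiction is immediate, and combining $\mathcal{P}_k \subseteq \bar\Gamma$ with $\Gamma \subseteq \mathcal{P}_k$ and the fact that the interior of $\bar\Gamma$ equals the open convex set $\Gamma$ yields $\Gamma = \mathcal{P}_k$.
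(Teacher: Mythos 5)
Your proof is correct, and it diverges from the paper's in a way that is worth noting. The inequality $\varrho_\Gamma \leq \kappa_\Gamma + 1$ is established exactly as in the paper, by iterating Lemma \ref{lemma1-preserving-kappa} down to a type 2 cone in $\mathbb{R}^{\kappa_\Gamma+1}$ and invoking the fact (which you supply a proof of) that any open symmetric convex cone $C \subsetneq \mathbb{R}^m$ containing $\Gamma_m$ lies in $\Gamma_1$. For the rigidity, however, the paper merely deduces that $\Gamma^\infty_{\mathbb{R}^{\kappa_\Gamma+1}}$ must equal $\Gamma_1$ and then asserts ``Thus $\Gamma = \mathcal{P}_{\kappa_\Gamma+1}$'' with no argument for the reverse inclusion $\mathcal{P}_{\kappa_\Gamma+1} \subseteq \bar\Gamma$; that step is not an immediate consequence of the projection being $\Gamma_1$. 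Your Hahn--Banach separation argument closes this gap explicitly: the sign constraints $a_i \geq 0$ come from $\bar\Gamma_n \subseteq \bar\Gamma$, the upper bounds $a_j \leq S/k$ come from evaluating the supporting functional on the permutations of $(1,\cdots,1,1-k) \in \partial\Gamma$, and the hypersimplex vertex decomposition then expresses $\ell$ as a nonnegative combination of $k$-subset sums, forcing $\ell(\lambda) > 0$ on $\mathcal{P}_k$ (with the trivial $S=0$ case handled separately). Combined with $\Gamma \subseteq \mathcal{P}_k$ and the identity $\operatorname{int}(\bar\Gamma) = \Gamma$ for open convex $\Gamma$, this correctly yields $\Gamma = \mathcal{P}_k$. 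One small remark: your separate treatments of the extreme cases $k=1$ and $k=n$ for the containment $\Gamma \subseteq \mathcal{P}_k$ are valid but unnecessary, since Lemma \ref{lemma1} is stated without the restriction $1 \leq \kappa_\Gamma \leq n-2$ and already gives $\Gamma \subseteq \mathcal{P}_{\kappa_\Gamma+1}$ for all $\kappa_\Gamma$.
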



\begin{proof}
For  $\kappa_\Gamma=n-1$, the statement is obvious.
Next we assume   $\kappa_\Gamma\leq n-2$. 
 Using Lemma \ref{lemma1-preserving-kappa},
 $\varrho_\Gamma\leq \varrho_{\Gamma^\infty_{\mathbb{R}^{\kappa_\Gamma+1}}}\leq 1+\kappa_\Gamma.$
 (This can also be derived from Lemmas \ref{lemma2}, \ref{lemma1-rigidity}).
Finally we prove the rigidity. 
If $\varrho_\Gamma=\kappa_\Gamma+1$, then   $\varrho_{\Gamma^\infty_{\mathbb{R}^{\kappa_\Gamma+1}}} =\kappa_\Gamma+1$ by Lemma \ref{lemma1-preserving-kappa}. This implies  
$\Gamma^\infty_{\mathbb{R}^{\kappa_\Gamma+1}}=
 \{(\lambda_1,\cdots,\lambda_{1+\kappa_\Gamma}): \sum_{j=1}^{\kappa_\Gamma+1}\lambda_j>0  \}.$
Thus $\Gamma=\mathcal{P}_{\kappa_\Gamma+1}$.

\end{proof}

As a consequence,   
we may verify  condition \eqref{tau-alpha-4} in some case.

\begin{corollary}
\label{coro1-verify}
Given a cone $\Gamma$ with $\kappa_\Gamma\leq n-3$, we have $1+(n-2)\varrho_\Gamma^{-1}\geq 2$.
\end{corollary}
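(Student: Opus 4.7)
The corollary is a one-line consequence of the universal upper bound for $\varrho_\Gamma$ established in Proposition \ref{lemma1-upper-rho}, so the plan is essentially to translate $\kappa_\Gamma\leq n-3$ through that bound and then invert.

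First, I would invoke Proposition \ref{lemma1-upper-rho}, which says $\varrho_\Gamma\leq \kappa_\Gamma+1$ for every admissible cone $\Gamma$. Combining this with the hypothesis $\kappa_\Gamma\leq n-3$ gives $\varrho_\Gamma\leq n-2$. Since $\Gamma\supseteq\Gamma_n$ forces $\varrho_\Gamma>0$ (indeed $\varrho_{\Gamma_n}=1$, so $\varrho_\Gamma\geq 1$ by monotonicity of this quantity with respect to cone inclusion, via \eqref{varrho_1}), inversion is legal and yields
\begin{equation*}
(n-2)\varrho_\Gamma^{-1}\geq 1,
\end{equation*}
so that $1+(n-2)\varrho_\Gamma^{-1}\geq 2$, which is the desired inequality.

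There is essentially no obstacle: the whole content sits in Proposition \ref{lemma1-upper-rho}, which was proved earlier in this appendix via the projection-cone construction \eqref{construction-cones-infty} together with Lemmas \ref{lemma1-preserving-kappa} and \ref{lemma1-rigidity}. The only point deserving a brief remark is why $\varrho_\Gamma>0$ so that the reciprocal makes sense; this follows directly from $\Gamma_n\subseteq\Gamma$ and the defining relation \eqref{varrho_1}, since $(1,\dots,1,1-\varrho_\Gamma)\in\partial\Gamma$ and $\Gamma$ contains an open neighborhood of $\vec{\bf 1}$ in $\Gamma_n$. Thus the proof will be presented as a three-line deduction citing Proposition \ref{lemma1-upper-rho} directly.
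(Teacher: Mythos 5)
Your proposal is correct and matches the paper's intent exactly: the paper states the corollary immediately after Proposition \ref{lemma1-upper-rho} with no separate proof, precisely because the deduction $\varrho_\Gamma\leq\kappa_\Gamma+1\leq n-2$ together with $\varrho_\Gamma>0$ (from $\Gamma_n\subseteq\Gamma$) gives $(n-2)\varrho_\Gamma^{-1}\geq 1$ at once. Your aside that $\varrho_\Gamma\geq 1$ by monotonicity under cone inclusion is true but stronger than needed; $\varrho_\Gamma>0$ already suffices to invert.
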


From the construction of conformal admissible metric in Subsection \ref{subsec1-construction-admissiblemetric},
 it is of  interest to compute $\varrho_{\tilde{\Gamma}}$  explicitly or give a lower bound of it. 

\begin{proposition}	\label{lemma1-lower-rho}
Given a cone $\Gamma$ and a constant $\varrho\leq \varrho_\Gamma$ with $\varrho\neq0$, we assume  $\tilde{\Gamma}$ is the corresponding cone as in \eqref{map1}. Then we have
\begin{enumerate}
\item If $\varrho<0$ then $\tilde{\Gamma}$ is of type 2 and $\varrho_{\tilde{\Gamma}}
=\varrho_\Gamma+\frac{\varrho_\Gamma(n-\varrho_\Gamma)}{\varrho_\Gamma-\varrho}$.

\item If $0<\varrho\leq\varrho_{\Gamma}$ then
\begin{itemize}
\item When $\Gamma$ is of type 1, $\varrho_{\tilde{\Gamma}}=n-\varrho.        $ In particular, $\varrho_{\tilde{\Gamma}}\geq n-\varrho_\Gamma\geq n-\kappa_\Gamma-1$. 

\item When $\Gamma$ is of type 2, $\varrho_{\tilde{\Gamma}}>n-\varrho.$ 
In particular, $\varrho_{\tilde{\Gamma}}+\varrho_\Gamma> n$.
\end{itemize} 
\end{enumerate}
\end{proposition}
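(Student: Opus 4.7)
My plan is to exploit the linear isomorphism $T:\mathbb{R}^n\to\mathbb{R}^n$ defined by $T(\mu)_i=\frac{1}{\varrho}(\mathrm{tr}(\mu)-(n-\varrho)\mu_i)$, which by construction of $\tilde\Gamma$ satisfies $T(\Gamma)=\tilde\Gamma$ and hence $T(\partial\Gamma)=\partial\tilde\Gamma$. Two identities do all the work: the direct image
\begin{equation*}
T(0,\ldots,0,1)=\tfrac{1}{\varrho}(1,\ldots,1,1-(n-\varrho))
\end{equation*}
and the preimage of the candidate boundary ray
\begin{equation*}
T^{-1}(1,\ldots,1,1-t)=\vec{\bf 1}-\tfrac{t}{n-\varrho}(1,\ldots,1,1-\varrho).
\end{equation*}
By definition, $\varrho_{\tilde\Gamma}$ is the unique $t>0$ for which this preimage lies on $\partial\Gamma$, so the task reduces to locating the intersection of the ray $t\mapsto T^{-1}(1,\ldots,1,1-t)$ with $\partial\Gamma$ inside the two-dimensional symmetric slice $V=\{(a,\ldots,a,b)\}$.

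For $\varrho<0$ I will first check that $\tilde\Gamma$ is of type 2: since $\varrho<0<\varrho_\Gamma$, the definition of $\varrho_\Gamma$ places $(1,\ldots,1,1-\varrho)$ strictly inside $\Gamma$, hence $T^{-1}(0,\ldots,0,1)=\frac{1}{n-\varrho}(1,\ldots,1,1-\varrho)\in\Gamma$, i.e.\ $(0,\ldots,0,1)\in\tilde\Gamma$. To pin down $\varrho_{\tilde\Gamma}$, I set $T^{-1}(1,\ldots,1,1-t)=r(1,\ldots,1,1-\varrho_\Gamma)$ with $r>0$ and solve the resulting pair of linear equations, obtaining $r=1-\frac{t}{n-\varrho}$ and $t=\varrho_\Gamma(n-\varrho)/(\varrho_\Gamma-\varrho)$; a brief algebraic rearrangement gives the claimed $\varrho_{\tilde\Gamma}=\varrho_\Gamma+\varrho_\Gamma(n-\varrho_\Gamma)/(\varrho_\Gamma-\varrho)$. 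The condition $\varrho<0$ makes $r$ automatically positive, which is exactly what guarantees we have landed on the boundary ray of $\Gamma\cap V$ on the positive-$\vec{\bf 1}$ side rather than on the opposite ray.

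For $0<\varrho\leq\varrho_\Gamma$ I instead read off the answer from the direct image $T(0,\ldots,0,1)$, which is a \emph{positive} multiple of $(1,\ldots,1,1-(n-\varrho))$ because $\varrho>0$. In the type 1 case $(0,\ldots,0,1)\in\partial\Gamma$, so $T(0,\ldots,0,1)\in\partial\tilde\Gamma$, and therefore $\varrho_{\tilde\Gamma}=n-\varrho$; the estimate $n-\varrho\geq n-\varrho_\Gamma\geq n-\kappa_\Gamma-1$ follows at once from Proposition \ref{lemma1-upper-rho}. In the type 2 case $(0,\ldots,0,1)\in\Gamma$, so its image lies in the open cone $\tilde\Gamma$, which by the definition of $\varrho_{\tilde\Gamma}$ forces the strict inequality $\varrho_{\tilde\Gamma}>n-\varrho$; combined with $n-\varrho\geq n-\varrho_\Gamma$ this gives $\varrho_{\tilde\Gamma}+\varrho_\Gamma>n$. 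The only real difficulty is the bookkeeping of signs, since dividing by a negative $\varrho$ reverses the orientation of a ray; I avoid this ambiguity by using $T$ itself in the $\varrho>0$ regime and $T^{-1}$ in the $\varrho<0$ regime, in each instance applied to a point whose location relative to $\partial\Gamma$ is already known.
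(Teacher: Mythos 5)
Your proof is correct and follows essentially the same route as the paper's: both reduce everything to applying the linear change of variables $T$ (equivalently its inverse) to the test points $(1,\ldots,1,1-\varrho_\Gamma)$ and $(0,\ldots,0,1)$, reading off $\varrho_{\tilde\Gamma}$ from the sign of the resulting scalar factor. The only cosmetic difference is that you invert the map in the $\varrho<0$ case, where the paper pushes forward directly; you also explicitly verify that $\tilde\Gamma$ is of type $2$ via $T^{-1}(0,\ldots,0,1)=\frac{1}{n-\varrho}(1,\ldots,1,1-\varrho)\in\Gamma_n\subset\Gamma$, a step the paper's proof leaves unstated.
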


\begin{proof}

Case 1: $\varrho<0$.
Let $\mu=(1,\cdots,1,1-\varrho_\Gamma)\in\partial \Gamma$. The corresponding vector
\begin{equation}
\begin{aligned}
\lambda=\frac{\varrho-\varrho_\Gamma}{\varrho}(1,\cdots,1,1-\frac{\varrho_\Gamma(n-\varrho)}{\varrho_\Gamma-\varrho})\in\partial\tilde{\Gamma}. \nonumber
\end{aligned}
\end{equation}

Case 2: $0<\varrho\leq\varrho_\Gamma$. Set $\mu=(0,\cdots,0,1)\in\bar \Gamma$.   Accordingly we have
\begin{equation}
\begin{aligned}
\lambda=\frac{1}{\varrho}(1,\cdots,1, 1-(n-\varrho))\in\overline{\tilde{\Gamma}}. \nonumber
\end{aligned}
\end{equation}

If $\Gamma$ is of type 1, then $\varrho_{\tilde{\Gamma}}=n-\varrho.$
So  $\varrho_{\tilde{\Gamma}}\geq n-\varrho_\Gamma\geq n-\kappa_\Gamma-1$ by Proposition \ref{lemma1-upper-rho}.  
In particular, when $\tilde{\Gamma}$ is of type 2 (if and only if $\varrho<\varrho_\Gamma$), we obtain  $	\varrho_{\tilde{\Gamma}}>n-\varrho. $ 

If $\Gamma$ is of type 2, then $	\varrho_{\tilde{\Gamma}}>n-\varrho. $  
 
\end{proof}

\begin{corollary}
\label{coro68}
Fix a cone $\Gamma$.
Let  $\varrho$ be as in  \eqref{beta-gamma-A-3}, and assume $\varrho\leq \varrho_\Gamma$.  
As in  \eqref{map1} we obtain $\tilde{\Gamma}$. We have $\varrho_{\tilde{\Gamma}}\geq 2$, provided either one of the following hods.
\begin{enumerate}
\item 	$(\alpha,\tau)$ obeys \eqref{tau-alpha-4}.

\item 
$\tau>1$ and 
$\kappa_\Gamma\leq n-3$.
\end{enumerate}

\end{corollary}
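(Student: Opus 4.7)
The plan is to deduce this corollary directly from Proposition \ref{lemma1-lower-rho}, which already computes $\varrho_{\tilde{\Gamma}}$ in terms of $\varrho$, $\varrho_\Gamma$, and the type of $\Gamma$, together with the upper bound $\varrho_\Gamma \leq \kappa_\Gamma + 1$ from Proposition \ref{lemma1-upper-rho}. The strategy is to split according to the sign of $\varrho = \frac{n-2}{\tau-1}$ and to identify, in each regime, which branch of Proposition \ref{lemma1-lower-rho} applies.

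First I would handle assumption (1) with $\alpha = -1$. Here \eqref{tau-alpha-sharp} forces $\tau < 1$, so $\varrho < 0$ and branch (1) of Proposition \ref{lemma1-lower-rho} gives
\begin{equation*}
\varrho_{\tilde{\Gamma}} - 2 = (\varrho_\Gamma - 2) + \frac{\varrho_\Gamma(n-\varrho_\Gamma)}{\varrho_\Gamma - \varrho}.
\end{equation*}
Since $\varrho_\Gamma - \varrho > 0$, clearing denominators reduces $\varrho_{\tilde{\Gamma}} \geq 2$ to the polynomial inequality $(n-2-\varrho)\varrho_\Gamma + 2\varrho \geq 0$. A direct substitution of $\varrho = (n-2)/(\tau-1)$ yields $n - 2 - \varrho = (n-2)(\tau-2)/(\tau-1) > 0$ and
\begin{equation*}
\frac{-2\varrho}{n-2-\varrho} = \frac{2}{2-\tau},
\end{equation*}
so the required bound becomes exactly $\varrho_\Gamma \geq 2/(2-\tau)$, which is equivalent to the first line of \eqref{tau-alpha-4}. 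The case $\alpha = 1$, $\tau \geq 2$ is easier: then $\varrho > 0$ and $\varrho = (n-2)/(\tau-1) \leq n-2$, so branch (2) of Proposition \ref{lemma1-lower-rho} gives $\varrho_{\tilde{\Gamma}} \geq n - \varrho \geq 2$ regardless of whether $\Gamma$ is type 1 or type 2.

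For assumption (2), the hypothesis $\kappa_\Gamma \leq n-3$ in particular rules out $\kappa_\Gamma = n-1$, so $\Gamma$ must be of type 1. Then Proposition \ref{lemma1-upper-rho} yields $\varrho_\Gamma \leq \kappa_\Gamma + 1 \leq n-2$, and the hypothesis $\varrho \leq \varrho_\Gamma$ together with $\varrho > 0$ (from $\tau > 1$) places us in branch (2) of Proposition \ref{lemma1-lower-rho} with $\Gamma$ type 1. This gives $\varrho_{\tilde{\Gamma}} = n - \varrho \geq n - \varrho_\Gamma \geq n - \kappa_\Gamma - 1 \geq 2$, as desired.

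The main potential obstacle is simply algebraic bookkeeping in the subcase $\alpha = -1$: I must make sure signs are tracked correctly when multiplying through by $\varrho_\Gamma - \varrho$ and when inverting the inequality $\tau \leq 2 - 2/\varrho_\Gamma$, since both $\tau - 1$ and $\tau - 2$ are negative. Apart from this, every step is a direct application of the two earlier propositions, so no substantial new analysis should be required.
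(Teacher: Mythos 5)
Your proof is correct and is clearly the intended argument: the corollary is stated (without proof) immediately after Propositions \ref{lemma1-upper-rho} and \ref{lemma1-lower-rho}, and you use exactly those two results. The algebra in the $\alpha=-1$ subcase checks out: clearing the denominator $\varrho_\Gamma-\varrho>0$ does give $\varrho_\Gamma(n-2-\varrho)+2\varrho\geq 0$, and with $\varrho=(n-2)/(\tau-1)$ this simplifies to $\varrho_\Gamma\geq 2/(2-\tau)$, which is exactly the first line of \eqref{tau-alpha-4}. The $\alpha=1$ subcase and assumption (2) are likewise direct applications of branch (2) of Proposition \ref{lemma1-lower-rho} combined with $\varrho_\Gamma\leq\kappa_\Gamma+1$.

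One small imprecision worth fixing: in the $\alpha=-1$ subcase you justify $\tau<1$ by appealing to \eqref{tau-alpha-sharp}, but the corollary only hypothesizes \eqref{tau-alpha-4}, which reads $\tau\leq 2-2/\varrho_\Gamma$ and in principle permits $\tau\geq 1$ when $\varrho_\Gamma>2$. The cleaner route is to observe that $\tau\in(1,\,2-2/\varrho_\Gamma]$ combined with the corollary's standing assumption $\varrho\leq\varrho_\Gamma$ would force $1+(n-2)/\varrho_\Gamma\leq\tau\leq 2-2/\varrho_\Gamma$, hence $\varrho_\Gamma\geq n$; since $\Gamma\subseteq\Gamma_1$ gives $\varrho_\Gamma\leq n$, this pins down $\varrho_\Gamma=n$ and $\varrho=n$, which is excluded by the requirement $\varrho<n$ in the definition of the map \eqref{map1}. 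So $\tau<1$ does follow from the corollary's stated hypotheses, but via this chain rather than via \eqref{tau-alpha-sharp} directly. With that substitution, the proof is complete.
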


However, 
one  could not expect to obtain  an effective estimate for all type 2 cones as shown by the following:

\begin{corollary}
\label{corollary1-example-type2}
For  any   $1<t<n$, there is a type 2 cone $\tilde{\Gamma}$
with  $\varrho_{\tilde{\Gamma}}=t.$ 
\end{corollary}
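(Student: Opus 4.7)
The plan is to build $\tilde{\Gamma}$ explicitly as the image of the cone $\mathcal{P}_{n-1}$ under the linear map \eqref{map1}, for a carefully chosen $\varrho$. Since Lemma \ref{lemma1-rigidity} gives $\varrho_{\mathcal{P}_{n-1}} = n-1$ with $\kappa_{\mathcal{P}_{n-1}} = n-2$, the cone $\mathcal{P}_{n-1}$ is of type 1. Proposition \ref{lemma1-lower-rho}(2), first bullet, then asserts that when one feeds a type 1 cone with $\varrho_\Gamma = n-1$ into the map \eqref{map1} with a parameter $0 < \varrho \leq \varrho_\Gamma$, the resulting cone satisfies $\varrho_{\tilde{\Gamma}} = n-\varrho$. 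So one should simply select $\varrho$ to hit the target value $t$.

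More concretely, given $1 < t < n$, I would set
\begin{equation}
\Gamma := \mathcal{P}_{n-1}, \qquad \varrho := n - t, \qquad \tilde{\Gamma} := \left\{\lambda \in \mathbb{R}^n : \lambda_i = \tfrac{1}{\varrho}\bigl(\textstyle\sum_j \mu_j - (n-\varrho)\mu_i\bigr),\ \mu \in \Gamma\right\}. \nonumber
\end{equation}
The restriction $1 < t < n$ forces $0 < \varrho < n-1 = \varrho_\Gamma$, so $\varrho \neq 0$ and $\varrho \neq n$, guaranteeing that the defining linear transformation (whose eigenvalues are $\varrho$ and $-(n-\varrho)$ on the $\vec{\bf1}$-line and its orthogonal complement respectively) is invertible. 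Hence $\tilde{\Gamma}$ is an open symmetric convex cone with vertex at origin containing the positive cone. Applying Proposition \ref{lemma1-lower-rho}(2) to this pair yields $\varrho_{\tilde{\Gamma}} = n - \varrho = t$ as desired.

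The remaining point is to check that $\tilde{\Gamma}$ is actually of type 2, i.e.\ $(0,\dots,0,1) \in \tilde{\Gamma}$. I would verify this by inverting the map: the preimage of $(0,\dots,0,1)$ is a multiple of $(1,1,\dots,1,1-\varrho)$, and since $\varrho < \varrho_\Gamma = n-1$ and $\Gamma = \mathcal{P}_{n-1}$ is open, this preimage lies in $\Gamma$. Equivalently, the hypothesis $\varrho < \varrho_\Gamma$ in the first bullet of Proposition \ref{lemma1-lower-rho}(2) directly produces a type 2 $\tilde{\Gamma}$, as indicated in the proof of that proposition. No obstacle is anticipated here; the statement is essentially a calculation that packages Proposition \ref{lemma1-lower-rho} with Lemma \ref{lemma1-rigidity}, and the only constraint on $t$ imposed by the construction—namely $0 < n - t < n - 1$—is exactly the stated range $1 < t < n$.
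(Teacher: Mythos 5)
Your construction is correct, but it is genuinely different from the one the paper uses. The paper starts from the type~$2$ cone $\Gamma = \Gamma_n$ (so $\varrho_{\Gamma_n}=1$, $\kappa_{\Gamma_n}=0$), takes $\varrho<0$ solving $\tfrac{n-\varrho}{1-\varrho}=t$, and invokes part~(1) of Proposition~\ref{lemma1-lower-rho}, i.e.\ the formula $\varrho_{\tilde\Gamma}=\varrho_\Gamma+\tfrac{\varrho_\Gamma(n-\varrho_\Gamma)}{\varrho_\Gamma-\varrho}$, which for $\Gamma_n$ reduces to $1+\tfrac{n-1}{1-\varrho}$. You instead start from the type~$1$ cone $\Gamma = \mathcal{P}_{n-1}$ (so $\varrho_\Gamma = n-1$ by Lemma~\ref{lemma1-rigidity}), take $\varrho = n-t \in (0,\,n-1)$, and invoke the first bullet of part~(2) of Proposition~\ref{lemma1-lower-rho}, $\varrho_{\tilde\Gamma}=n-\varrho$. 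Both arguments are valid and of comparable length; yours has the advantage that the formula $\varrho_{\tilde\Gamma}=n-\varrho$ is trivial to invert and the type~$2$ property of $\tilde\Gamma$ comes for free from $\varrho<\varrho_\Gamma$, while the paper's choice produces the visibly explicit description $\tilde\Gamma=\{\lambda:\sum_j\lambda_j-\varrho\lambda_i>0,\ \forall i\}$. One small inaccuracy in your writeup: the eigenvalues of the map $\lambda\mapsto \sum_j\lambda_j\vec{\bf 1}-\varrho\lambda$ on the $\vec{\bf 1}$-line and on its orthogonal complement are $n-\varrho$ and $-\varrho$, not $\varrho$ and $-(n-\varrho)$; the invertibility condition $\varrho\neq 0,\ \varrho\neq n$ that you extract is nevertheless the correct one.
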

\begin{proof}
Given $1<t<n$, there is a unique $\varrho<0$ such that $\frac{ n-\varrho}{1-\varrho}=t$. For such $\varrho$  we get a type 2 cone
$\tilde{\Gamma}=\left\{(\lambda_1,\cdots,\lambda_n)\in\mathbb{R}^n: \sum_{j=1}^n \lambda_j -\varrho\lambda_i >0, \, 1\leq i\leq n\right\}.$ 
By Proposition \ref{lemma1-lower-rho}, $\varrho_{\tilde{\Gamma}} 
=1+\frac{ n-1}{1-\varrho}=t$.

\end{proof}

\end{appendix}

\subsubsection*{Acknowledgements} 
{\small
The author  wishes to express his gratitude to Professor Yi Liu for 
answering certain questions related to the proof of Lemma \ref{lemma-diff-topologuy}.
The author also  wishes to   thank Ze Zhou for useful discussion on the homogeneity lemma. 
The author would like to thank Professor Robert
McOwen for sending his beautiful paper  \cite{McOwen1993}.
The author is  supported by  the National Natural Science Foundation of China  (Grant No. 11801587),   Guangdong Basic and Applied Basic Research Foundation (Grant No. 2023A1515012121) and 
Guangzhou Science and Technology Program  (Grant No. 202201010451).}


\bigskip



\begin{thebibliography}{99}
	
	
	
	
	\bibitem{Aubin1976}  T. Aubin,   
	\textit{\'Equations diff\'erentielles non lin\'eaires et probl\`eme de Yamabe concernant la courbure scalaire}, {J. Math. Pures  Appl.} {\bf 55}  (1976),  269--296.
	
	\bibitem{Aviles1985McOwen}
	P. Aviles and R. McOwen,
	{\em Conformal deformations of complete manifolds with negative curvature},
	{J. Differential Geom.} {\bf 21} (1985), 269--281. 
	
	
	\bibitem{Aviles1988McOwen2}
	P. Aviles and R. McOwen,
	{\em Conformal deformation to constant negative scalar curvature on noncompact Riemannian manifolds},
	{J. Differential Geom.} {\bf 27} (1988), 225--239.
	
 \bibitem{Aviles1988McOwen}
P. Aviles and R. McOwen,
\textit{Complete conformal metrics with negative scalar curvature in compact Riemannian manifolds},
{Duke Math. J.} {\bf 56} (1988),  395--398. 
	
 
 


\bibitem{Brendle04Viaclovsky}
S. Brendle and J. Viaclovsky,  {\em A variational characterization for $\sigma_{n/2}$}, Calc. Var. PDE {\bf 20} (2004),  399--402.

 \bibitem{Bland89Kalka} J. Bland and M. Kalka,  {\em Negative scalar curvature metrics on noncompact manifolds}, Trans. Amer. Math. Soc. {\bf 316} (1989),   433--446.

\bibitem{CNS3}
L. Caffarelli, L. Nirenberg and J. Spruck, \textit{The Dirichlet problem for nonlinear second-order elliptic equations III: Functions of eigenvalues of the Hessians},
{Acta Math.} {\bf 155} (1985),  261--301.

\bibitem{FuShengYuan}
J.-X. Fu, W.-M. Sheng and L.-X. Yuan,
{\em Prescribed $k$-curvature problems on complete noncompact Riemannian manifolds}, 
Int. Math. Res. Not.   
{2020}, no. 23, 
9559--9592.






   \bibitem{SChen2007} S.-Y. S. Chen, \textit{Boundary value problems for some fully nonlinear elliptic equations}, Calc. Var. PDE.  {\bf30} (2007), 1--15.	






\bibitem{ChangGurskyYang2002} 
S.-Y.  A.  Chang, M. Gursky and P. Yang,
\textit{An equation of Monge-Amp\`ere type in conformal geometry, and four-manifolds of positive Ricci curvature}, Ann. of Math.  {\bf 155} (2002), 709--787.

		
\bibitem{ChangGurskyYang2002-JAM} S.-Y.  A. Chang, M. Gursky  and P. Yang,	\textit{An a priori estimate for a fully nonlinear equation on four-manifolds},  J. Anal. Math. {\bf87} (2002), 151--186. 


\bibitem{Chen-Guo-He2022}
L. Chen, X. Guo and Y. He,  
{\em A class of fully nonlinear equations arising in conformal geometry}, Int. Math. Res. Not. IMRN 2022, no. 5, 3651--3676.







 \bibitem{Gao1986Yau} L. Z. Gao and S.-T. Yau, {\em The existence of negatively Ricci curved metrics on three manifolds}, Invent. Math. {\bf 85} (1986), 637--652.

\bibitem{Ge2006Wang} Y.-X. Ge  and G.-F. Wang, \textit{On a fully nonlinear Yamabe problem},  Ann. Sci. \'{E}ole Norm. Sup. {\bf 39} (2006), 569--598.


 \bibitem{GGQ2022} M. George, B. Guan and C.-H. Qiu,  {\em Fully nonlinear elliptic equations on Hermitian manifolds for symmetric functions of partial Laplacians}, J. Geom. Anal. {\bf 32} (2022),   Paper No. 183, 27pp.

	
  \bibitem{Guan2007AJM} B. Guan, \textit{Conformal metrics with prescribed curvature functions on manifolds with boundary}, Amer. J. Math. {\bf129} (2007),  915--942.
	
	
	\bibitem{Guan2008IMRN}
	B. Guan,  \textit{Complete conformal metrics of negative Ricci curvature on compact manifolds with boundary}, Int. Math. Res. Not. IMRN 2008, Art. ID rnn 105, 25 pp.
	
	
	\bibitem{Guan12a} B. Guan, \textit{Second order estimates and regularity for fully nonlinear elliptic equations on Riemannian manifolds}, {Duke Math. J.} {\bf 163} (2014),  1491--1524.
	
	
	\bibitem{Guan-Dirichlet} B. Guan, {\em The Dirichlet problem for fully nonlinear elliptic equations on Riemannian manifolds}, %
	Adv. Math.   {\bf415} (2023), Paper No. 108899, 32 pp.
	
	
	
	

	
	\bibitem{Guan21Nie}  B. Guan and X.-L. Nie, {\em Second order estimates for fully nonlinear elliptic equations with gradient terms on Hermitian manifolds}, 
	Int. Math. Res. Not. IMRN 2023, no. 16, 14006--14042.
	
		\bibitem{GQY2018} B. Guan, C.-H. Qiu and R.-R. Yuan, \textit{Fully nonlinear elliptic equations for conformal deformation of Chern-Ricci forms}, Adv. Math. {\bf 343} (2019), 538--566.
	
	
	
	\bibitem{Guan1991Spruck}	B. Guan and J. Spruck,  {\em Interior gradient estimates for solutions of prescribed curvature equations of parabolic type}, Indiana Univ. Math. J. {\bf40} (1991),  1471--1481.
	
	
	
	
	\bibitem{Guan2003Wang} P.-F. Guan  and G.-F. Wang, \textit{Local estimates for a class of fully nonlinear equations arising from conformal geometry}, Int. Math. Res. Not. {2003},   1413--1432.
	
	
	\bibitem{Guan2003Wang-CrelleJ} 
	P.-F. Guan  and G.-F. Wang, 
	\textit{A fully nonlinear conformal flow on locally conformally flat manifolds},
	J. Reine Angew. Math. {\bf557} (2003), 219--238.
	
	
	\bibitem{Guan2021Zhang}
	P.-F. Guan and X.-W. Zhang, 
	{\em A class of curvature type equations},
	Pure Appl. Math. Q. 
	{\bf 17}  
	(2021),  865--907.
	
	
	







 \bibitem{Guo-Phong2024}  B. Guo and D. H. Phong, {\em On $L^\infty$ estimates for fully non-linear partial differential equations}, Ann. of Math. {\bf 200} (2024), 365--398.
 
 
  \bibitem{Guo-Phong2024-2}  B. Guo and D. H. Phong, {\em Uniform $L^\infty$ estimates: subsolutions to fully nonlinear partial differential equations}, arXiv:2401.11572.
  

\bibitem{Gursky-Streets-Warren2010} M. Gursky, J. Streets and M. Warren, \textit{Conformally bending three-manifolds with boundary}, Ann. Inst. Fourier (Grenoble) {\bf60} (2010),   2421--2447.  

\bibitem{Gursky-Streets-Warren2011}
M. Gursky, J. Streets  and M. Warren,
\textit{Existence of complete conformal metrics of negative Ricci curvature on manifolds with boundary},
Calc. Var. PDE. 
  {\bf41} (2011),  21--43. 

\bibitem{Gursky2003Viaclovsky} 	M. Gursky and J.  Viaclovsky, \textit{Fully nonlinear equations on Riemannian manifolds with negative curvature}, Indiana Univ. Math. J. {\bf52} (2003),   399--419. 

\bibitem{Gursky2007Viaclovsky} M. Gursky and J.  Viaclovsky, \textit{Prescribing symmetric functions of the eigenvalues of the Ricci tensor}, Ann. of Math. {\bf 166} (2007), 475--531.
	
	
	
	
	
	\bibitem{Harvey2012Lawson-Adv} F. Harvey and H. Lawson, {\em Geometric plurisubharmonicity and convexity: an introduction,}
	Adv. Math. {\bf230} (2012), 
	2428--2456.
	
	\bibitem{Harvey2013Lawson-IUMJ} F. Harvey and H. Lawson, 
	{\em p-convexity, p-plurisubharmonicity and the Levi problem}, Indiana Univ. Math. J. {\bf62} (2013), 
	149--169.

	
	
	\bibitem{Jin1988}
	Z.-R. Jin, 
	{A counterexample to the Yamabe problem for complete noncompact manifolds}.  
	Lecture Notes in Math. vol. 1306, pp. 93--101, Springer-Verlag, Berlin and New York, 1988.
	
	\bibitem{Jin1993}
	Z.-R. Jin, 
	{\em Prescribing scalar curvatures on the conformal classes of complete metrics with negative curvature}, 
	{Trans. Amer. Math. Soc.} {\bf 340} (1993), 785--810.
	
	
	 \bibitem{Krylov1995}
	N. V.  Krylov, 	{\em On the general notion of fully nonlinear second order elliptic equation}, 
	Trans. Amer. Math. Soc. {\bf347} (1995),  857--895.
	
	

 
 
 \bibitem{Li2003YYLi} A.-B. Li and Y.-Y. Li, \textit{On some conformally invariant fully nonlinear equations}, Comm. Pure Appl. Math. {\bf56} (2003), 1416--1464.
 
 \bibitem{LiGang-2022}
 G. Li,   {\em A flow approach to the generalized Loewner-Nirenberg problem of the $\sigma_k$-Ricci equation}, Calc. Var. PDE
 {\bf61} (2022), no. 5, Paper No. 169, 34 pp.
 
 \bibitem{LiJY2005Sheng}	J.-Y. Li and  W.-M. Sheng, 	{\em Deforming metrics with negative curvature by a fully nonlinear flow}, Calc. Var. PDE. {\bf 23} (2005), 	33--50.
 

 
 \bibitem{Li2011Sheng}  Q.-R. Li and W.-M. Sheng,   \textit{Some Dirichlet problems arising from conformal geometry}, Pacific J. Math.   {\bf 251} (2011), 337--359.
 
  \bibitem{LiYY1989} Y.-Y. Li, \textit{Degree theory for second order nonlinear elliptic operators and its applications},
 Comm.  Partial Differential Equations {\bf 14}  (1989), 1541--1578.
 
 \bibitem{LiYY1991} Y.-Y. Li, \textit{Interior gradient estimates for solutions of certain fully nonlinear elliptic equations}, J. Differential Equations {\bf90} (1991),   172--185.
 
 
  	
 \bibitem{Lin1994Trudinger}	M. Lin and N. Trudinger,	\textit{On some inequalities for elementary symmetric functions}, Bull. Austral. Math. Soc. {\bf 50} (1994),   317--326.
 
 
	


	
	\bibitem{Loewner1974Nirenberg}
	C. Loewner and L. Nirenberg, {Partial differential equations invariant under conformal or projective transformations}. Contributions to analysis, pp.
	245--272. Academic Press, New York, 1974. 
	
	 \bibitem{Lohkamp-1} J. Lohkamp,  {\em Metrics of negative Ricci curvature}, Ann. of Math.   {\bf 140} (1994), 
	655--683.
	
 \bibitem{Lohkamp-2} J. Lohkamp,  {\em Negative bending of open manifolds}, J. Differential Geom. {\bf40}  (1994), 461--474. 

\bibitem{McOwen1993}
R. McOwen,
{Singularities and the conformal scalar curvature equation}. Geometric analysis and nonlinear partial differential equations (Denton, TX, 1990), pp. 221--233, Lecture Notes in Pure and Appl. Math. 144, Dekker, New York, 1993.



\bibitem{Milnor-1997} J. W. Milnor,  Topology from the differentiable viewpoint.  Princeton University Press, Princeton, NJ, 1997.

\bibitem{Ni-82Invent}
W.-M. Ni,  {\em On the elliptic equation $\Delta u+K(x)e^{2u}=0$   and conformal metrics with prescribed Gaussian curvatures}, Invent. Math. {\bf66} (1982), 
343--352. 

\bibitem{Ni-82Indiana}
W.-M. Ni, 
{\em On the elliptic equation $\Delta u+K(x)u^{(n+2)/(n-2)}=0$, its generalizations, and applications in geometry}, 
Indiana Univ. Math. J. {\bf31} (1982), 493--529. 



  \bibitem{Schoen1984} R. Schoen,  {\em Conformal deformation of a Riemannian metric to constant scalar curvature}, {J. Differential Geom.} {\bf20}  (1984), 479--495.

 \bibitem{Gabor} G. Sz\'{e}kelyhidi,  \emph{Fully non-linear elliptic equations on compact Hermitian manifolds}, J. Differential Geom. {\bf 109} (2018),  337--378.

\bibitem{ShengTrudingerWang2007} W.-M. Sheng, N. Trudinger  and X.-J. Wang, \textit{The Yamabe problem for higher order curvatures}, J. Differential Geom. {\bf 77} (2007), 515--553.



\bibitem{ShengUrbasWang-Duke} 	W.-M. Sheng, J. Urbas and X.-J. Wang,   {\em Interior curvature bounds for a class of curvature equations}, {Duke Math. J.} {\bf 123} (2004),  235--264.



\bibitem{Sheng2006Zhang} W.-M. Sheng and Y. Zhang, 	
{\em A class of fully nonlinear equations arising from conformal geometry}, Math. Z. {\bf255} (2007), 17--34.


\bibitem{Trudinger1968} N. Trudinger, 
{\em Remarks concerning the conformal deformation of Riemannian structures on compact manifolds},
Ann. Scuola Norm. Sup. Pisa Cl. Sci. (3) {\bf22}  (1968), 265--274.





	   
 	\bibitem{Trudinger90} N. Trudinger, \emph{The Dirichlet problem for the prescribed curvature equations}, Arch. Rational Mech. Anal. {\bf111} (1990),  153--179.
	
 	\bibitem{Urbas2002}	{J. Urbas},	\textit{Hessian equations on compact Riemannian manifolds}, in Nonlinear problems in mathematical physics and related topics,	II, vol. 2 of Int. Math. Ser. (N. Y.), Kluwer/Plenum, New York, 2002, pp. 367--377. 
	
	
	\bibitem{Viaclovsky2000}
	J. Viaclovsky,
	\textit{Conformal geometry, contact geometry, and the calculus of variations}, Duke Math. J. {\bf101} (2000),   283--316. 

   
     
   \bibitem{yuan2020conformal} R.-R. Yuan,  \textit{On the partial uniform ellipticity and complete conformal metrics with prescribed curvature functions on manifolds with boundary}, arXiv:2011.08580. 
   
   
      \bibitem{yuan-PUE-conformal} R.-R. Yuan,  {\em The partial uniform ellipticity and prescribed problems on the conformal classes of complete metrics}, arXiv:2203.13212.
   
   
   \bibitem{yuan-PUE2-note}
   R.-R. Yuan,
   {\em An extension of prescribed problems on the conformal classes of complete metrics}, arXiv:2304.12835.
   
   
    \bibitem{yuan-PUE3-construction}
 R.-R. Yuan,
   {\em Notes on conformal metrics of negative curvature on manifolds with boundary},   arXiv:2308.05979.
 
 
 	\bibitem{yuan1-closed} R.-R. Yuan, {\em Fully non-linear elliptic equations on  complex manifolds}, arXiv:2404.12350.
 	
 


  
  \end{thebibliography}
\end{document}